\newtheorem{prop}{Proposition}[section]
\newtheorem{thm}[prop]{Theorem}
\newtheorem{cor}[prop]{Corollary}
\newtheorem{conj}[prop]{Conjecture}
\newtheorem{lem}[prop]{Lemma}
\theoremstyle{definition}
\newtheorem{que}[prop]{Question}
\newtheorem{defn}[prop]{Definition}
\newtheorem{expl}[prop]{Example}
\newtheorem{rem}[prop]{\it Remark}
\newtheorem*{claim*}{Claim}
\newtheorem{lemdefn}[prop]{Lemma-Definition}
\newtheorem{case}{Case}
\newcommand{\bP}{\mathbb{P}}
\newcommand{\bC}{\mathbb{C}}
\newcommand{\bR}{\mathbb{R}}
\newcommand{\bA}{\mathbb{A}}
\newcommand{\bQ}{\mathbb{Q}}
\newcommand{\bZ}{\mathbb{Z}}
\newcommand{\bN}{\mathbb{N}}
\newcommand{\bG}{\mathbb{G}}
\newcommand{\bk}{\mathbbm{k}}
\newcommand{\tX}{\widetilde{X}}
\newcommand{\tE}{\widetilde{E}}
\newcommand{\tD}{\widetilde{D}}
\newcommand{\tDelta}{\widetilde{\Delta}}
\newcommand{\tF}{\widetilde{F}}
\newcommand{\tx}{\widetilde{x}}
\newcommand{\cX}{\mathcal{X}}
\newcommand{\cC}{\mathcal{C}}
\newcommand{\cO}{\mathcal{O}}
\newcommand{\cF}{\mathcal{F}}
\newcommand{\cD}{\mathcal{D}}
\newcommand{\cP}{\mathcal{P}}
\newcommand{\DMR}{\mathcal{DMR}}
\newcommand{\fa}{\mathfrak{a}}
\newcommand{\fb}{\mathfrak{b}}
\newcommand{\fc}{\mathfrak{c}}
\newcommand{\fm}{\mathfrak{m}}
\newcommand{\fab}{\fa_{\bullet}}
\newcommand{\fbb}{\fb_{\bullet}}
\newcommand{\fcb}{\fc_{\bullet}}
\newcommand{\bbk}{\bar{\bk}}
\newcommand{\rd}{\mathrm{d}}
\newcommand{\Spec}{\mathbf{Spec}}
\newcommand{\Proj}{\mathbf{Proj}}
\newcommand{\Supp}{\mathrm{Supp}}
\newcommand{\Hom}{\mathrm{Hom}}
\newcommand{\mult}{\mathrm{mult}}
\newcommand{\lct}{\mathrm{lct}}
\newcommand{\Ex}{\mathrm{Ex}}
\newcommand{\Aut}{\mathrm{Aut}}
\newcommand{\vol}{\mathrm{vol}}
\newcommand{\ord}{\mathrm{ord}}
\newcommand{\Val}{\mathrm{Val}}
\newcommand{\hvol}{\widehat{\rm vol}}
\newcommand{\wt}{\mathrm{wt}}
\newcommand{\Coef}{\mathrm{Coef}}
\newcommand{\Diff}{\mathrm{Diff}}
\newcommand{\mld}{\mathrm{mld}}
\newcommand{\mldk}{\mathrm{mld}^{\mathrm{K}}}
\newcommand{\an}{\mathrm{an}}
\newcommand{\Bl}{\mathrm{Bl}}
\newcommand{\unit}{(\mathrm{unit})}
\numberwithin{equation}{section}
\newcommand{\red}[1]{{\textcolor{red}{#1}}}
\title[Boundedness of singularities]{On boundedness of singularities and minimal log discrepancies of Koll\'ar components}
\author{Ziquan Zhuang}
\address{Department of Mathematics, Princeton University, Princeton, NJ 08544, USA}
\email{zzhuang@princeton.edu}
\address{Department of Mathematics, Johns Hopkins University, Baltimore, MD 21218, USA}
\email{zzhuang@jhu.edu}
\date{}
\begin{document}

\maketitle

\begin{abstract}
Recent study in K-stability suggests that klt singularities whose local volumes are bounded away from zero should be bounded up to special degeneration. We show that this is true in dimension three, or when the minimal log discrepancies of Koll\'ar components are bounded from above. We conjecture that the minimal log discrepancies of Koll\'ar components are always bounded from above, and verify it in dimension three when the local volumes are bounded away from zero. We also answer a question from \cite{HLQ-ACC} on the relation between log canonical thresholds and local volumes.
\end{abstract}

\section{Introduction}

In recent years, tremendous progress has been made towards the boundedness of Fano varieties in many different contexts, see for example \cites{HMX-ACC,Birkar-bab-1,Birkar-bab-2,Jia17}. In contrast, much less is known about the boundedness of Kawamata log terminal (klt) singularities, often viewed as the local analog of Fano varieties. Certainly, one needs to be more careful about what boundedness means in the local situation, as in general a (non-isolated) singularity can have an infinite dimensional versal deformation space. As a remedy, one considers boundedness up to special degeneration, see Definition \ref{defn:special bounded}; roughly speaking, a class of klt singularities is bounded up to special degeneration (or specially bounded) if they degenerate to a bounded family of klt singularities.

Typically, we expect a class of singularities to be specially bounded after fixing some interesting invariants. One example, studied in \cites{Moraga-exceptional-mld,HLS-epsilon-plt-blowup}, is the class of $(\varepsilon,\delta)$-lc singularities. These are $\varepsilon$-lc singularities that admit $\delta$-plt blowups (see Definition \ref{defn:kc and delta-plt blowup}), and they are known to be bounded up to special degeneration \cite{HLM-special-bounded}. Another invariant that has attracted a lot of attention is the local volume of a klt singularity, originally introduced in \cite{Li-normalized-volume} in the context of K-stability. It is expected  that these two types of invariants are closely related: a positive lower bound on the local volumes would force the singularities to be $(\varepsilon,\delta)$-lc for some fixed $\varepsilon,\delta>0$ and in particular they should be bounded up to special degeneration. Since the minimal log discrepancy is always bounded from below by the local volume up to a positive dimensional constant \cite{LLX20}*{Theorem 6.13}, the main question is the existence of $\delta$-plt blowup. The following is a precise formulation of the conjecture. 

\begin{conj}[\cite{HLQ-ACC}*{Conjectures 1.6 and 8.9}] \label{conj:vol>epsilon implies bounded}
Let $n\in\bN^*$ and let $\varepsilon,\eta>0$. Then there exists some $\delta>0$ depending only on $n,\varepsilon,\eta$ such that: if $x\in (X,\Delta=\sum_{i=1}^m a_i \Delta_i)$ is an $n$-dimensional klt singularity such that
\begin{enumerate}
    \item $a_i\ge \eta$ for all $i$,
    \item each $\Delta_i$ is an effective Weil divisor, and
    \item $\hvol(x,X,\Delta)\ge \varepsilon$,
\end{enumerate}
then $x\in (X,\Delta)$ admits a $\delta$-plt blowup. Moreover, the set of such singularities are log bounded up to special degeneration.
\end{conj}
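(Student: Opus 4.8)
The two conclusions of Conjecture~\ref{conj:vol>epsilon implies bounded} are not independent: by \cite{HLM-special-bounded} every $\varepsilon'$-lc singularity that admits a $\delta$-plt blowup is log bounded up to special degeneration, and by \cite{LLX20}*{Theorem 6.13} the hypothesis $\hvol(x,X,\Delta)\ge\varepsilon$ already forces $(X,\Delta)$ to be $\varepsilon'$-lc at $x$ for some $\varepsilon'=\varepsilon'(n,\varepsilon)>0$. So the entire statement reduces to producing a $\delta$-plt blowup with $\delta=\delta(n,\varepsilon,\eta)$. The plan is to take this blowup to be the plt blowup $\pi\colon Y\to X$ extracting a carefully chosen Koll\'ar component $S$ over $x$ (which we may assume satisfies $\rho(Y/X)=1$), and to control $\delta$ through the geometry of the log Fano pair induced on $S$.

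To choose $S$ I would use the normalized volume theory. By the resolution of the Stable Degeneration Conjecture, $\hvol(x,X,\Delta)$ is computed by a quasi-monomial minimizing valuation $v$, which induces a degeneration of $x\in(X,\Delta)$ to a $K$-semistable Fano cone singularity of local volume $\ge\varepsilon$; approximating $v$ by Koll\'ar components one obtains an $S$ for which the induced log Fano pair $(S,\Delta_S)$, with $\Delta_S=\Diff_S(\Delta_Y)$, is $K$-semistable up to a controlled perturbation. Setting $A:=A_{X,\Delta}(S)\ge\mld(x,X,\Delta)\ge\varepsilon'$, the defining formula for the log discrepancy reads $K_Y+\Delta_Y+S=\pi^*(K_X+\Delta)+A\cdot S$, and restricting to $S$ by adjunction gives $-(K_S+\Delta_S)\equiv A\cdot(-S|_S)$ with $-S|_S$ ample; since the coefficients of $\Delta$ lie in $[\eta,1]$ with each $\Delta_i$ a Weil divisor, the coefficients of $\Delta_S$ lie in a fixed DCC set $I=I(n,\eta)$ by inversion of adjunction.

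The decisive input is the identity $\hvol(\ord_S)=A^{\,n}\,(-S|_S)^{n-1}=A\cdot\bigl(-(K_S+\Delta_S)\bigr)^{n-1}$, which together with $\hvol(x,X,\Delta)\le\hvol(\ord_S)$ yields $\bigl(-(K_S+\Delta_S)\bigr)^{n-1}\ge\varepsilon/A$. \emph{Granting a uniform upper bound $A\le M(n,\varepsilon,\eta)$} --- which is exactly the statement that minimal log discrepancies of Koll\'ar components are bounded from above --- the anticanonical volume of the $(n-1)$-dimensional $K$-semistable log Fano pair $(S,\Delta_S)$ is $\ge\varepsilon/M$. Boundedness of such pairs --- Birkar's \cite{Birkar-bab-1} combined with the boundedness of $K$-semistable log Fano pairs of fixed dimension, DCC coefficient set, and volume bounded below --- then puts $(S,\Delta_S)$ in a bounded family with bounded Cartier index, and with $\varepsilon'\le A\le M$ this bounds the polarization $-S|_S$ and its index. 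Two things follow. First, the degeneration of $x\in(X,\Delta)$ to the tangent cone of $\ord_S$ is a special degeneration to the orbifold cone over $(S,\Delta_S)$ polarized by $-S|_S$, and these cones form a log bounded family, giving log boundedness up to special degeneration. Second, $\pi$ is the sought $\delta$-plt blowup: it is plt because $S$ is a Koll\'ar component, the log discrepancies over $Y$ away from $S$ are $\ge\varepsilon'$ by the first reduction, and the coefficients of $\Delta_S$ are $\le 1-\delta$ by ACC for log canonical thresholds applied inside the bounded klt family. Either way \cite{HLM-special-bounded} then finishes the proof.

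The main obstacle is precisely the uniform upper bound $A_{X,\Delta}(S)\le M(n,\varepsilon,\eta)$: everything downstream is an assembly of existing boundedness machinery, but in general this bound is exactly the conjecture the paper proposes, which I would leave open. In dimension three, however, $S$ is a normal projective surface and $(S,\Delta_S)$ a klt log del Pezzo surface, where boundedness is far more explicit: using boundedness and lower volume bounds for $\varepsilon$-lc log del Pezzo surfaces with DCC coefficients, together with the estimate $\bigl(-(K_S+\Delta_S)\bigr)^{2}\ge\varepsilon/A$ and the $\varepsilon'$-lc hypothesis upstairs, one should be able to show that a large $A$ would make $\hvol(\ord_S)=A\cdot\bigl(-(K_S+\Delta_S)\bigr)^{2}$ incompatible with the constraints $\hvol(\ord_S)\ge\hvol(x,X,\Delta)\ge\varepsilon$ and the surface-level bounds. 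Making this quantitative yields $A\le M(n,\varepsilon,\eta)$ in dimension three and hence Conjecture~\ref{conj:vol>epsilon implies bounded} in that case.
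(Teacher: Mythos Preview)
Your decomposition --- reduce to bounding $A_{X,\Delta}(S)$ for some Koll\'ar component $S$, then deduce the $\delta$-plt blowup --- matches the paper's architecture (Theorem~\ref{thm:bdd via vol and mldK} plus Conjecture~\ref{conj:bdd for mldK, vol>epsilon}, assembled in Theorem~\ref{thm:delta-plt blowup strong version}). Your mechanism for the second step, however, differs: you appeal to boundedness of K-semistable log Fano pairs, which forces you to choose $S$ as an approximation of the volume minimizer and to control how close $(S,\Delta_S)$ is to being K-semistable. The paper avoids this entirely. For \emph{any} Koll\'ar component with $A_{X,\Delta}(S)\le A$, Lemma~\ref{lem:vol comparison lc blowup} gives $\hvol(y,Y,\Delta_Y)\ge\varepsilon/A^n$ at every point of the exceptional divisor on the plt blowup $Y$, and then the finite degree formula from \cite{XZ-uniqueness} (Lemma~\ref{lem:index bound via volume}) bounds the Cartier index of $K_Y+\Delta_Y+E$, hence $\delta$. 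No K-stability of $(S,\Delta_S)$ is needed. The passage from ``coefficients $\ge\eta$'' to a finite coefficient set, which you do not address, is handled separately in Theorem~\ref{thm:delta-plt blowup strong version} using Theorem~\ref{thm:lct>=vol} and Koll\'ar's effective basepoint-free theorem.

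The genuine gap is your dimension-three argument for bounding $A$. You want a lower bound on $(-(K_S+\Delta_S))^2$ independent of $A$, so that $\hvol(\ord_S)=A\cdot(-(K_S+\Delta_S))^2$ forces $A$ to be bounded. But a uniform positive lower bound on the anticanonical volume of a log del Pezzo surface requires $(S,\Delta_S)$ to be $\varepsilon_0$-lc for some fixed $\varepsilon_0>0$, and this is precisely the $\delta$-plt property you are trying to prove; you only know $(S,\Delta_S)$ is klt. Nothing prevents $(-(K_S+\Delta_S))^2\to 0$ as $A\to\infty$ with their product staying near $\varepsilon$, so no contradiction materializes and the argument is circular. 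The paper's three-dimensional proof takes a completely different route that never tries to bound $A$ through the geometry of $(S,\Delta_S)$: it constructs bounded \emph{special} complements (Section~\ref{sect:reduce to mld^lc and 1-comp}) to descend Koll\'ar components from birational models, reduces to terminal threefolds with a reduced complement, and then uses the explicit classification of $3$-fold terminal singularities and hand-built weighted blowups (Propositions~\ref{prop:mld of lc blowup bdd, dim=3} and~\ref{prop:mldk bdd, dim=3, 1-comp}) to exhibit a Koll\'ar component of bounded log discrepancy.
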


The special boundedness part of the above conjecture is also supported by the Stable Degeneration Conjecture \cites{Li-normalized-volume,LX18} from the local K-stability theory of klt singularities. It predicts that every klt singularity admits a volume preserving special degeneration to a K-semistable log Fano cone singularity, and differential geometric considerations (see \cite{SS-two-quadric}) suggest that the set of K-semistable log Fano cone singularities may be bounded when the local volume is bounded from below. The recent proof \cite{LXZ-HRFG} of the Higher Rank Finite Generation Conjecture further suggests that Conjecture \ref{conj:vol>epsilon implies bounded} will be a crucial ingredient toward the proof of the Stable Degeneration Conjecture\footnote{Postscript note: the local version of the Higher Rank Finite Generation Conjecture is later settled in \cite{XZ-SDC}. Combined with earlier works \cites{Blu18,LX18,LwX-polystable-degeneration,Xu-quasi-monomial,XZ-uniqueness}, the proof of the Stable Degeneration Conjecture is now complete.}. Despite these heuristics, Conjecture \ref{conj:vol>epsilon implies bounded} is only known in dimension two, in some special cases in dimension three, and when $x\in X$ is already analytically bounded \cite{HLQ-ACC}*{Theorems 1.7 and 1.8}. 

The first result of this paper is the solution of Conjecture \ref{conj:vol>epsilon implies bounded} in dimension three.

\begin{thm}[=Corollary \ref{cor:special bdd dim=3}] \label{main:special bdd dim=3}
Conjecture \ref{conj:vol>epsilon implies bounded} holds in dimension three.
\end{thm}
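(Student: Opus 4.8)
The plan is to reduce the general (klt pair, boundary) version of Conjecture \ref{conj:vol>epsilon implies bounded} to the statement that the minimal log discrepancies of Koll\'ar components are bounded from above, and then to verify this boundedness in dimension three. First I would invoke the general machinery announced in the abstract: the paper presumably proves an implication of the form "if $\mld$ of Koll\'ar components is bounded above (in dimension $n$, for the relevant class of singularities with $\hvol \ge \varepsilon$ and coefficients $\ge \eta$), then Conjecture \ref{conj:vol>epsilon implies bounded} holds in dimension $n$." Granting that, the theorem in dimension three becomes: \emph{the $\mld$ of some/any Koll\'ar component of a three-dimensional klt singularity with $\hvol \ge \varepsilon$ is bounded from above by a constant depending only on $\varepsilon,\eta$.}

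The key steps I would carry out are as follows. Step one: by \cite{LLX20}*{Theorem 6.13}, the lower bound $\hvol(x,X,\Delta)\ge\varepsilon$ forces $x\in(X,\Delta)$ to be $\varepsilon'$-lc for some $\varepsilon'=\varepsilon'(n,\varepsilon)>0$; this pins down one side of the log discrepancy. Step two: extract an optimal Koll\'ar component $E$ minimizing the normalized volume among divisorial valuations, or at least a Koll\'ar component $E$ computing (up to a dimensional constant) the local volume; the induced log Fano pair $(E,\Delta_E+\Diff)$ then has anticanonical volume controlled below in terms of $\hvol(x,X,\Delta)$, hence controlled above by $\varepsilon$ and boundedness of K-semistable (or just $\delta$-plt) log Fano pairs. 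Step three: in dimension three, $E$ is a surface, and log del Pezzo surfaces with bounded volume and coefficients bounded below form a bounded family with $\mld$ bounded above — this is where the dimension hypothesis enters decisively, via explicit boundedness results for log del Pezzo surfaces (e.g.\ along the lines of \cites{Alexeev,HMX-ACC}). Step four: relate $\mld(E)$, i.e.\ the log discrepancy $A_{X,\Delta}(\ord_E)$, to the boundedness of the base surface and its different, concluding the desired upper bound. Finally, feed this back into the general implication to obtain both the $\delta$-plt blowup and log boundedness up to special degeneration.

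The main obstacle I expect is Step three together with Step four: controlling $\mld(E) = A_{X,\Delta}(\ord_E)$ from \emph{above} is genuinely delicate because a Koll\'ar component can a priori have large log discrepancy even when the quotient Fano is bounded — the subtlety is that $A_{X,\Delta}(\ord_E)$ is not an invariant of the Fano $(E,\Diff)$ alone but also of how $E$ sits inside $X$ (the "width" of the plt blowup, governed by the different and the polarization $-(K_E+\Diff)$). Bounding it requires simultaneously bounding the Fano $E$, the coefficients of $\Diff$ from below (via ACC / the fixed $\eta$), and the volume of $-(K_X+\Delta)|_E$, and then running a dimension-three-specific argument (classification of log del Pezzo surfaces, or effective Noetherianity for surface pairs) to turn these into a numerical bound on $A_{X,\Delta}(\ord_E)$. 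A secondary technical point is ensuring that the Koll\'ar component we extract can be chosen to simultaneously be a $\delta$-plt blowup and to nearly compute the local volume, so that the volume lower bound $\varepsilon$ is actually transmitted to the quotient Fano; this should follow from the normalized volume minimization theory and the finite generation results cited in the introduction, but it needs to be set up carefully so that all the constants depend only on $n=3$, $\varepsilon$, and $\eta$.
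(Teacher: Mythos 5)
Your high-level outline correctly identifies the reduction the paper makes: prove that boundedness of $\mldk$ implies Conjecture~\ref{conj:vol>epsilon implies bounded} (this is Theorem~\ref{thm:delta-plt blowup strong version}, via Theorem~\ref{thm:bdd via vol and mldK}), and then prove the $\mldk$ bound in dimension three (Corollary~\ref{cor:bdd for mld^K, vol>epsilon, dim=3}). But your proposed route for the second, decisive ingredient is genuinely different from the paper's, and as you yourself partly anticipate in your ``main obstacle'' paragraph, it has a real gap.

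The circularity in your Steps 2--4 is precisely in the direction of the volume estimate. Writing $L=-E|_E$ for a Koll\'ar component $E$, one has
\[
\hvol_{X,\Delta}(\ord_E)=A_{X,\Delta}(E)^n\cdot L^{n-1}
=A_{X,\Delta}(E)\cdot \bigl(-(K_E+\Diff_E(\Delta_Y))\bigr)^{n-1}.
\]
So to bound $A_{X,\Delta}(E)$ from \emph{above} after picking $E$ with $\hvol_{X,\Delta}(\ord_E)$ bounded above (which is easy), you would need the anticanonical volume of the log del Pezzo $(E,\Diff_E)$ bounded from \emph{below}. But such a lower bound is exactly as hard as bounding $A_{X,\Delta}(E)$ itself and does not follow from $\hvol(x,X,\Delta)\ge\varepsilon$: that hypothesis only gives you $A_{X,\Delta}(E)\cdot(-(K_E+\Diff_E))^{n-1}\ge\varepsilon$, which bounds the product from below, not the individual factors. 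Passing to the normalized volume minimizer does not help either, since the minimizer is generally not a Koll\'ar component, and even for the K-semistable degeneration one does not get a uniform lower bound on the anticanonical volume without already controlling $A$. Example~\ref{ex:mld^K depend on coef set} in the paper is a concrete witness: surface pairs with $\hvol$ bounded below but $\mldk\to\infty$, in which the log del Pezzo $(E,\Diff_E)$ has anticanonical volume tending to zero. (The bound must therefore depend on the coefficient set $I$, which your outline does acknowledge, but your mechanism still does not produce it.)

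The paper's actual route to the $\mldk$ bound in dimension three is quite different and worth describing, since it avoids the above directionality problem entirely. It first reduces to the $\bQ$-Gorenstein case and then to the statement that lc places of bounded $N$-complements include Koll\'ar components of bounded log discrepancy (Proposition~\ref{prop:reduce to N-complement}); the key new tool there is the notion of \emph{special complements}, together with Theorem~\ref{thm:lct>=vol} and Koll\'ar's effective basepoint-free theorem to keep the complement's index bounded. It then further reduces to the case of a \emph{reduced} complement (Lemma~\ref{lem:reduce to 1-comp}), passes to a terminal modification, treats the codimension-two case by a uniqueness argument on surfaces (Lemma~\ref{lem:mld minimizer unique dim=2}, Proposition~\ref{prop:bdd mldk, codim=2}), and finally handles the codimension-three terminal case by index-one covers, Mori's classification of three-dimensional terminal/cDV singularities, and explicit weighted blowups (Lemma~\ref{lem:bdd lc mld terminal case, dim=3}, Lemma~\ref{lem:mldk bdd, dim=3, 1-comp, terminal}). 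This is where the dimension-three hypothesis actually enters: through the classification of terminal singularities rather than through boundedness of log del Pezzo surfaces. To repair your proposal you would need to replace Steps 2--4 by an argument of this type; the log-del-Pezzo boundedness angle, by itself, does not close the loop.
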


In the course of the proof, we also discover the following statement.

\begin{thm}[=Theorem \ref{thm:lct>=vol}] \label{main:lct>=vol}
For any $n\in\bN^*$, there exists some constant $c=c(n)>0$ depending only on $n$ such that for any $n$-dimensional $\bQ$-Gorenstein klt singularity $x\in (X,\Delta)$ we have
\[
\lct_x(X,\Delta;\Delta)\ge c(n)\cdot \hvol(x,X,\Delta).
\]
\end{thm}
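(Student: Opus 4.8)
The plan is to produce, out of a divisor that computes $\lct_x(X,\Delta;\Delta)$, a single valuation $v\in\Val_{X,x}$ satisfying $A_{X,\Delta}(v)^n\cdot\vol_{X,x}(v)\le n^n\cdot\lct_x(X,\Delta;\Delta)$; since $\hvol(x,X,\Delta)\le A_{X,\Delta}(v)^n\vol_{X,x}(v)$ for every valuation centered at $x$ with finite log discrepancy, this yields the theorem with $c(n)=n^{-n}$. We may assume $x\in\Supp(\Delta)$, as otherwise $\lct_x(X,\Delta;\Delta)=+\infty$; and we may assume $t:=\lct_x(X,\Delta;\Delta)\le 1$, since $\hvol(x,X,\Delta)\le n^n$ always and so the inequality is automatic when $t>1$. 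As $(X,(1+t)\Delta)$ is log canonical but not klt near $x$ while $(X,\Delta)$ is klt, there is a prime divisor $E$ over $x$ with $A_{X,\Delta}(E)=t\cdot\ord_E(\Delta)$ and $\ord_E(\Delta)>0$; rescaling, we normalize $\ord_E(\Delta)=1$, so $A_{X,\Delta}(E)=t$.

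Next I would pass to a smooth model $\pi\colon Y\to X$ on which $E$ is a smooth divisor, and choose a closed point $y\in E\cap\pi^{-1}(x)$ that lies on no other $\pi$-exceptional divisor and off $\pi_*^{-1}\Delta$. After possibly blowing $Y$ up further — to make $E\cap\pi^{-1}(x)$ positive-dimensional and to separate it from the remaining components, which is possible for $n\ge 3$, the cases $n\le 2$ being handled separately — this can be arranged, and then, writing $K_Y+B_Y=\pi^*(K_X+\Delta)$, one has $B_Y=(1-A_{X,\Delta}(E))E=(1-t)E$ near $y$. Let $G$ be the exceptional divisor of the blow-up of $Y$ at $y$; since $Y$ is smooth at $y$ and $y$ lies only on $E$ among the relevant divisors, $A_{X,\Delta}(G)=n-(1-t)=n-1+t$. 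Let $v_\lambda$ be the quasi-monomial valuation along $(\widetilde E,G)$ on $\Bl_y Y$ with weights $(\lambda,1)$, $\widetilde E$ being the strict transform of $E$. As $G$, and hence $\widetilde E\cap G$, maps to $x$, we get $c_X(v_\lambda)=x$, and by linearity of quasi-monomial log discrepancies in the weights, $A_{X,\Delta}(v_\lambda)=\lambda t+(n-1+t)=\mu t+(n-1)$ with $\mu:=\lambda+1$.

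The crux is a clean, geometry-free volume bound. In local coordinates $(z_1,\dots,z_n)$ at $y$ with $E=\{z_1=0\}$, a direct chart computation on $\Bl_y Y$ shows that $v_\lambda$ restricts on $\mathcal O_{Y,y}$ to the monomial valuation of weights $(\lambda+1,1,\dots,1)$, so $\vol_{Y,y}(v_\lambda)=\tfrac1{\lambda+1}$; and because $\mathcal O_{X,x}\hookrightarrow\mathcal O_{Y,y}$ and the valuation ideals of $v_\lambda$ on $\mathcal O_{X,x}$ are the contractions of those on $\mathcal O_{Y,y}$, the colengths can only decrease, whence $\vol_{X,x}(v_\lambda)\le\vol_{Y,y}(v_\lambda)=\tfrac1{\mu}$. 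Therefore
\[
\hvol(x,X,\Delta)\ \le\ A_{X,\Delta}(v_\lambda)^n\cdot\vol_{X,x}(v_\lambda)\ \le\ \frac{(\mu t+n-1)^n}{\mu},
\]
and minimizing the right-hand side over $\mu\ge 1$ — the minimum occurring at $\mu=1/t$, which lies in the allowed range precisely because $t\le1$ — gives $\hvol(x,X,\Delta)\le n^n t$, as required. I expect the only genuine obstacle to be the model-theoretic bookkeeping in the middle step: ensuring $y$ can be placed on $E$ alone, so that $A_{X,\Delta}(G)$ is controlled by a purely dimensional constant, and disposing of $n\le 2$, where $E\cap\pi^{-1}(x)$ may be unavoidably zero-dimensional; the remaining estimates are short.
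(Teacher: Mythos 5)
Your construction is clever, but there is a genuine gap in the normalization step that derails the estimate. The lct-computing object $E$ is a prime divisor over $X$, and you write $d:=\ord_E(\Delta)$ and $A_{X,\Delta}(E)=td$. The ``rescaling $\ord_E(\Delta)=1$'' operation applies to the \emph{valuation} $\frac{1}{d}\ord_E$, but the crepant pullback $K_Y+B_Y=\pi^*(K_X+\Delta)$ is intrinsic to the divisor $E$: near a general $y\in E$ one has $B_Y=(1-A_{X,\Delta}(\ord_E))E=(1-td)E$, not $(1-t)E$. Consequently $A_{X,\Delta}(\ord_G)=n-1+td$ and, by linearity in the \emph{unnormalized} weights, $A_{X,\Delta}(v_\lambda)=\lambda\cdot td+(n-1+td)=\mu\,td+(n-1)$ with $\mu=\lambda+1$. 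Repeating your minimization (which is now over $\mu\ge 1$) yields $\hvol(x,X,\Delta)\le n^n\,td=n^n A_{X,\Delta}(E)$ when $td\le 1$, and $\hvol\le(n-1+td)^n$ when $td>1$. Neither bound controls $\hvol$ by a constant multiple of $t$ alone, because $d=\ord_E(\Delta)$ can be arbitrarily large; concretely, for $X=\bA^2$, $\Delta=a(x_1^2+x_2^{2k+1}=0)$ with $a$ slightly below $\lct(X;x_1^2+x_2^{2k+1})=\tfrac12+\tfrac{1}{2k+1}$, the lct is computed by the $(2k+1,2)$-weighted blowup with $d\sim 2k+3$, so your bound degrades with $k$ while the true $\hvol$ stays small.

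The structural reason this approach cannot work as written is that the lct-computing divisor $E$ has no a priori control on $A_{X,\Delta}(E)$: it controls only the \emph{ratio} $A_{X,\Delta}(E)/\ord_E(\Delta)=t$, which is scale-invariant, whereas the quasi-monomial valuations you build carry the unnormalized log discrepancy of $E$ through the crepant pullback. The paper circumvents exactly this by decoupling the comparison from the lct-computing divisor: it fixes a reference valuation $v_0$ with $\hvol_X(v_0)\le n^n$ (independent of $\Delta$), proves a uniform Izumi-type inequality $\lct_x(X,\Delta;D)\ge c_0\cdot\frac{\hvol(x,X,\Delta)}{\hvol_{X,\Delta}(v_0)}\cdot\frac{A_{X,\Delta}(v_0)}{v_0(D)}$ via the graded-ideal interpolation $\fab\boxplus\fbb$ and the multiplicity formula of Lemma~\ref{lem:mult sum}, and then specializes to $D=\Delta$ using $v_0(\Delta)<A_X(v_0)$. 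In your framework, what is missing is a replacement for $E$ with both $A_{X,\Delta}(E)/\ord_E(\Delta)=t$ and $A_{X,\Delta}(E)$ bounded purely in terms of $n$; this does not follow from the lct computation itself. Two secondary points: (i) the center of the lct-computing $E$ need not be $\{x\}$, so $E\cap\pi^{-1}(x)$ may be a proper (possibly non-generic) subset of $E$, complicating the choice of $y$ more than the $n\le 2$ worry you flag; and (ii) the monotonicity claim $\vol_{X,x}(v)\le\vol_{Y,y}(v)$ is fine and matches \cite{LX-cubic-3fold}*{Lemma 2.9(1)}.
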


This gives a positive answer to \cite{HLQ-ACC}*{Question 8.1} as well as further evidence for Conjecture \ref{conj:vol>epsilon implies bounded}. Indeed, if we allow the above constant $c>0$ to also rely on the local volume, then the statement is essentially implied by Conjecture \ref{conj:vol>epsilon implies bounded} and the lower semi-continuity of log canonical thresholds.

Our approach to Conjecture \ref{conj:vol>epsilon implies bounded} is through another invariant of the singularity: the minimal log discrepancy of Koll\'ar components.

\begin{defn}
Let $(X,\Delta)$ be a klt pair and let $\eta\in X$ (not necessarily a closed point). The minimal log discrepancy of Koll\'ar components, denoted by $\mldk (\eta,X,\Delta)$, is the smallest log discrepancy $A_{X,\Delta}(E)$ among all Koll\'ar components $E$ over $\eta\in (X,\Delta)$.
\end{defn}

It is not hard to see (Lemma \ref{lem:mldk bdd if delta-plt blowup exist}) that in order for a class of singularities to admit $\delta$-plt blowups, their $\mldk$ are necessarily bounded from above. Our next result is the following converse.

\begin{thm}[=Theorem \ref{thm:bdd via vol and mldK}] \label{main:bdd via vol and mldK}
Let $n$ be a positive integer, let  $\varepsilon,A>0$, and let $I\subseteq [0,1]\cap \bQ$ be a finite set. Then there exists some constant $\delta=\delta(n,\varepsilon,A,I)>0$ such that any $n$-dimensional klt singularity $x\in (X,\Delta)$ with $\hvol(x,X,\Delta)\ge \varepsilon$, $\mldk(x,X,\Delta)\le A$ and $\Coef(\Delta)\subseteq I$ admits a $\delta$-plt blowup.
\end{thm}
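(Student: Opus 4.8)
The plan is to take a Kollár component $E$ over $x \in (X,\Delta)$ that (nearly) computes $\mldk$, so that $A_{X,\Delta}(E) \le A$, and then to show that after rescaling $E$ is automatically a $\delta$-plt blowup for $\delta$ depending only on $n, \varepsilon, A, I$. Write $\pi : Y \to X$ for the extraction of $E$, and let $(E, \Delta_E)$ be the log pair obtained by adjunction, so that $(E, \Delta_E)$ is a log Fano pair (this is the defining property of a Kollár component). The $\delta$-plt condition amounts to a uniform lower bound on the log discrepancies of $(Y, E + \pi_*^{-1}\Delta)$ along $E$, equivalently on the minimal log discrepancy of the log Fano pair $(E,\Delta_E)$ together with control of how $\Delta_E$ meets the non-klt locus. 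First I would recall the two inequalities relating the local volume to invariants of $E$: by the definition of normalized volume and the work on Kollár components (e.g. via \cite{LLX20}), one has $\hvol(x,X,\Delta) \le A_{X,\Delta}(E)^n \cdot \vol(-K_E-\Delta_E)/$ (something), and more usefully a lower bound $\vol(-K_E - \Delta_E) \ge \hvol(x,X,\Delta)/A_{X,\Delta}(E)^{n-1} \ge \varepsilon/A^{n-1}$. Thus the anticanonical volume of the log Fano pair $(E,\Delta_E)$ is bounded away from zero.

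The next step is to bound $(E, \Delta_E)$. Since $\dim E = n-1$, since $\vol(-K_E-\Delta_E) \ge \varepsilon/A^{n-1}$ is bounded below, and since the coefficients of $\Delta_E$ lie in a DCC set determined by $I$ (by the standard adjunction/different computation, using that $\Coef(\Delta)\subseteq I$ and the index of $E$ is controlled), the pair $(E,\Delta_E)$ is a log Fano pair with bounded anticanonical volume and DCC coefficients. By the Birkar–BAB-type boundedness of $\varepsilon'$-lc log Fano varieties — but we do not yet know $(E,\Delta_E)$ is $\varepsilon'$-lc — this is not immediate. So the real input I would use is that $(E,\Delta_E)$ is $K$-semistable-adjacent, or rather: I would instead bound the log discrepancies of $E$ directly. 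Concretely, the crucial point is that a Kollár component is by construction plt, so $(Y, E + \pi_*^{-1}\Delta)$ is plt; the question is purely quantitative. Here I would invoke Theorem \ref{main:lct>=vol}: applied on $E$ (or rather, the local-to-global comparison), a lower bound on $\hvol$ forces a lower bound on the relevant log canonical threshold, which translates into the pair $(E,\Delta_E)$ being $\varepsilon'$-lc for $\varepsilon' = \varepsilon'(n,\varepsilon,A,I) > 0$.

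With $(E,\Delta_E)$ now an $\varepsilon'$-lc log Fano pair of dimension $n-1$ with coefficients in a fixed DCC (indeed finite, after controlling the index) set, Birkar's BAB theorem gives that $(E,\Delta_E)$ lives in a bounded family. Boundedness of $E$ together with the bound $A_{X,\Delta}(E) \le A$ and the lower bound on $\vol(-K_E-\Delta_E)$ then yields, via the usual argument (cf. \cite{HLM-special-bounded}, \cite{HLQ-ACC}), a uniform $\delta > 0$ with $A_{X,\Delta}(E)$, or more precisely the plt-defect of $(Y, E+\pi^{-1}_*\Delta)$, bounded so that $\pi$ is a $\delta$-plt blowup: one bounds the coefficients of the different and the intersection numbers, and a bounded family of plt pairs is uniformly $\delta$-plt for some $\delta>0$. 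I expect the main obstacle to be the second step — upgrading "bounded volume + DCC coefficients" on the Kollár component to a genuine $\varepsilon'$-lc bound, since a priori a log Fano pair with large anticanonical volume can still have arbitrarily bad singularities; this is exactly where the interplay between $\mldk \le A$, the local volume lower bound, and Theorem \ref{main:lct>=vol} (or a suitable refinement of the Abban–Zhuang / LLX estimates relating $\hvol$ to $\vol(-K_E-\Delta_E)$ and $A_{X,\Delta}(E)$) has to be exploited. A secondary subtlety is ensuring the coefficient set of $\Delta_E$ is actually finite rather than merely DCC, which requires bounding the Cartier index of $K_X + \Delta$ near $x$ along $E$ — likely handled by combining the $\varepsilon'$-lc bound with global ACC for log canonical thresholds.
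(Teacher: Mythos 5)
Your proposal correctly identifies the starting point (take a Kollár component $E$ with $A_{X,\Delta}(E)\le A$ and let $\pi\colon Y\to X$ be the plt blowup) and correctly flags the key difficulty — upgrading a volume lower bound plus DCC coefficients on $(E,\Delta_E)$ to an $\varepsilon'$-lc bound — but it does not resolve that difficulty, and the route you sketch (work directly on $E$ via adjunction and then invoke BAB) runs into a circularity: to apply BAB you need precisely the uniform lower bound on log discrepancies of $(E,\Delta_E)$ that you are trying to establish. Theorem~\ref{main:lct>=vol} does not obviously help here; it is a statement about the log canonical threshold of $\Delta$ with respect to itself at a single point, and it is not clear how to feed it into a bound on the global singularities of the log Fano pair $(E,\Delta_E)$.

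The paper's actual proof avoids this circle by not trying to bound the singularities of $(E,\Delta_E)$ at all. Instead it works on $Y$ and bounds a \emph{Cartier index}. The two key ingredients are:
\begin{enumerate}
\item Lemma~\ref{lem:vol comparison lc blowup}: for an lc blowup with $A_{X,\Delta}(E)\le A$, one has $\hvol(y,Y,\Delta_Y)\ge \hvol(x,X,\Delta)/\max\{1,A^n\}\ge \varepsilon/A^n$ for every $y\in E$. This is a local volume estimate at every point of the exceptional divisor, not a global anticanonical volume of $E$.
\item Lemma~\ref{lem:index bound via volume} (from the finite degree formula of \cite{XZ-uniqueness}): a lower bound on $\hvol(y,Y,\Delta_Y)$ forces an upper bound $N=N(n,\varepsilon,A,I)$ on the Cartier index of any $\bQ$-Cartier Weil divisor on $Y$ near $E$.
\end{enumerate}
It follows that $N^2(K_Y+\Delta_Y+E)$ is Cartier; combined with the fact that $(Y,\Delta_Y+E)$ is plt (so all log discrepancies over $Y$ are $>0$ and lie in $\frac{1}{N^2}\bZ$), this immediately gives that $(Y,\Delta_Y+E)$ is $\frac{1}{N^2}$-plt. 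The missing idea in your proposal is exactly this last step: plt together with a bounded Cartier index is automatically $\delta$-plt. You mention Cartier index control only as a ``secondary subtlety'' to make the coefficient set of $\Delta_E$ finite, but in the paper it is the primary mechanism, and the $\varepsilon'$-lc bound on $(E,\Delta_E)$ is a consequence rather than an input. If you want to repair your proposal, you should replace the attempted application of BAB on $E$ by the local-volume-on-$Y$ plus Cartier-index argument.
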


As a direct application, we also obtain the toric case of Conjecture \ref{conj:vol>epsilon implies bounded}, proven independently by \cite{MS-toric} using convex geometry.

\begin{cor}[=Proposition \ref{prop:toric bdd}]
Let $n\in \bN^*$ and let $\varepsilon>0$. Then there are only finitely many toric singularities $x\in X$ $($up to isomorphism$)$ that supports a klt singularity $x\in (X,\Delta)$ with $\hvol(x,X,\Delta)\ge \varepsilon$ $($for some effective $\bQ$-divisor $\Delta)$.
\end{cor}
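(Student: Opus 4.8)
The corollary follows by combining Theorem~\ref{main:bdd via vol and mldK} with two facts special to toric singularities: first, that every toric klt singularity automatically has an upper bound on $\mldk$ coming from the torus-invariant Kollár component; and second, that on a toric variety one can take the boundary $\Delta=0$, so the coefficient set issue is trivial ($I=\{0\}$). Once $\mldk(x,X,\Delta)\le A(n)$ is established, Theorem~\ref{main:bdd via vol and mldK} yields a $\delta=\delta(n,\varepsilon)$-plt blowup, and then the existing special boundedness machinery (or, in the toric case, a direct argument) upgrades this to genuine boundedness — and since a bounded family of affine toric varieties contains only finitely many isomorphism classes once the dimension is fixed (the fan lives in a fixed lattice and has bounded combinatorial complexity), we get finiteness rather than merely boundedness.

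Let me describe the steps in order. First I would reduce to the case $\Delta=0$: if $x\in(X,\Delta)$ is klt with $\hvol(x,X,\Delta)\ge\varepsilon$, then by monotonicity of the local volume under passing to a smaller boundary, $\hvol(x,X,0)\ge\hvol(x,X,\Delta)\ge\varepsilon$, so it suffices to bound the underlying toric singularities $x\in X$ that carry \emph{some} klt structure, and we may as well ask for $x\in(X,0)$ itself to be klt — which it is, being toric. Second, I would produce the upper bound on $\mldk(x,X,0)$: on an affine toric variety $X=X_\sigma$ with $\sigma$ a full-dimensional rational polyhedral cone, the barycentric (or any primitive interior) lattice point $v$ of $\sigma$ determines a torus-invariant divisorial valuation $\ord_v$, the associated weighted blowup extracts a divisor $E_v$ which is a Kollár component over $x$ (its exceptional divisor is $\bQ$-Fano and the pair is plt — this is standard toric geometry, cf. the toric description of Kollár components), and $A_X(E_v)=\langle v, \text{something}\rangle$ can be bounded purely in terms of $n$ once $v$ is chosen primitive: indeed for a suitable choice one gets $A_X(E_v)\le n$. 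So set $A=A(n)$. Third, apply Theorem~\ref{main:bdd via vol and mldK} with this $A$, $I=\{0\}$, to obtain $\delta=\delta(n,\varepsilon)>0$ and a $\delta$-plt blowup. Fourth, invoke the special boundedness of $(\varepsilon',\delta)$-lc singularities (the class is log bounded up to special degeneration, as recalled in the introduction via \cite{HLM-special-bounded}); but for toric singularities a special degeneration of a toric variety is again toric of the same dimension, and a \emph{bounded} family of toric varieties of fixed dimension has a fan with uniformly bounded number of rays and uniformly bounded lattice-point coordinates, hence falls into finitely many isomorphism types. This promotes log boundedness up to special degeneration to honest finiteness.

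The main obstacle I expect is the fourth step — specifically, ensuring that "bounded up to special degeneration" really does collapse to "finitely many" in the toric setting. One has to check that (a) a special degeneration (in the sense of Definition~\ref{defn:special bounded}) of an affine toric singularity stays within the toric world and preserves dimension and a lower bound on the local volume, and (b) boundedness of a family of affine toric varieties of fixed dimension, \emph{together with} the lower bound $\hvol\ge\varepsilon$ which itself controls the index and hence the lattice, forces only finitely many fans up to $GL_n(\bZ)$-equivalence. Part (b) is where the finiteness (as opposed to mere boundedness) is genuinely used: the point is that a toric singularity with $\hvol\ge\varepsilon$ has bounded Gorenstein index and its fan, after an $SL_n(\bZ)$ change of coordinates, is cut out by finitely many combinatorial constraints of bounded size, leaving finitely many options. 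A secondary, more routine point is verifying that the invariant torus-fixed Kollár component genuinely satisfies the definition of Kollár component (plt, $-E$ ample over the base, $K+E$ divisorially log terminal), which is classical but should be cited carefully; the numerical bound $A_X(E_v)\le n$ for the primitive generator also deserves an explicit one-line justification from the log discrepancy formula on toric varieties.
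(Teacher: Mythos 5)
Your Step~1 contains a genuine gap: you propose to pass from $\hvol(x,X,\Delta)\ge\varepsilon$ to $\hvol(x,X,0)\ge\varepsilon$ by ``monotonicity,'' but $\hvol(x,X,0)$ is not even defined unless $K_X$ is $\bQ$-Cartier. The corollary is stated (and the paper emphasizes this immediately after the statement) for toric singularities $x\in X$ that merely \emph{support} a klt pair $(X,\Delta)$: $K_X+\Delta$ is $\bQ$-Cartier, but $K_X$ alone need not be, and $\Delta$ need not be torus-invariant. So one cannot simply drop the boundary. The paper handles exactly this point by replacing $X$ with a small birational modification $\pi\colon Y\to X$ for which $-K_Y$ is $\bQ$-Cartier and $\pi$-ample (Lemma~\ref{lem:ample model of Weil div}), pushing the volume bound to $Y$ via Lemma~\ref{lem:vol under birational map}, settling the $\bQ$-Gorenstein case on $Y$, and then carefully descending the conclusion back to $X$ with a separate index-bounding argument using Lemma~\ref{lem:index bound via volume}. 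This descent is the hard part and is entirely missing from your outline.

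A secondary difference: your Step~4 deduces finiteness from ``special boundedness'' by arguing that a bounded family of affine toric varieties with bounded Gorenstein index has finitely many fans. This can be made to work, but the paper takes a shorter and more direct route: because the Kollár component $E$ coming from Lemma~\ref{lem:toric mld^K} is \emph{torus-invariant}, the test configuration it induces is a product (the degeneration is trivial), so $X$ is already the orbifold cone over $(E,\Diff_E(0))$. The base $E$ is $\delta$-klt toric and lands in a finite family by \cite{BB-toric-bab}, and the polarization has bounded degree; finiteness of the cones $X$ is then immediate, with no need to pass through the abstract special-boundedness machinery at all. If you want to salvage your proposal, the essential missing ingredient is an analogue of the paper's small-modification reduction, and the cleanest finiteness argument is the triviality-of-degeneration observation rather than boundedness of fans.
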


If we compare Theorem \ref{main:bdd via vol and mldK} with Conjecture \ref{conj:vol>epsilon implies bounded}, it is natural to expect that the assumption on $\mldk(x,X,\Delta)$ in Theorem \ref{main:bdd via vol and mldK} is redundant. This leads us to make the following conjecture.

\begin{conj}[BDD for $\mldk$ when $\hvol\ge \varepsilon$] \label{conj:bdd for mldK, vol>epsilon}
Let $n\in\bN^*$, $\varepsilon>0$ and let $I\subseteq [0,1]\cap \bQ$ be a finite set. Then there exists some $A=A(n,\varepsilon,I)$ such that 
\[
\mldk(\eta,X,\Delta)\le A
\]
for any $n$-dimensional klt pair $(X,\Delta)$ with $\Coef(\Delta)\subseteq I$ and any $($not necessarily closed point$)$ $\eta\in X$ with $\hvol(\eta,X,\Delta)\ge \varepsilon$.
\end{conj}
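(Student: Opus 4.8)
The plan is to reduce the statement---first to closed points, then to K-semistable log Fano cone singularities---and finally to invoke boundedness of Fano varieties, staying within the circle of ideas around the Stable Degeneration Conjecture. First I would dispose of non-closed $\eta$: if $\dim\overline{\{\eta\}}=k$, then cutting $X$ by $k$ general hyperplane sections through a closed point of $\overline{\{\eta\}}$ produces an $(n-k)$-dimensional klt singularity $x'\in(X',\Delta')$ with $\Coef(\Delta')\subseteq I$, with $\hvol(x',X',\Delta')=\hvol(\eta,X,\Delta)\ge\varepsilon$, and with $\mldk(x',X',\Delta')=\mldk(\eta,X,\Delta)$, since both invariants are computed on the localization of $X$ at $\eta$, which the general slice models, and Koll\'ar components match up under restriction. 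So I may assume $\eta=x$ is a closed point.

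Next, by the existence of a minimizer of the normalized volume together with the finite generation of its associated graded ring \cites{LX18,LXZ-HRFG}, the singularity $x\in(X,\Delta)$ degenerates to a K-semistable log Fano cone singularity $x_0\in(X_0,\Delta_0;\xi_0)$ with $\Coef(\Delta_0)\subseteq I$ and $\hvol(x_0,X_0,\Delta_0)=\hvol(x,X,\Delta)\ge\varepsilon$. One then checks that $\mldk$ does not increase along such a degeneration: a torus-invariant Koll\'ar component over the central fibre $x_0$ is defined by a graded sequence of monomial ideals, which lifts to a graded sequence of ideals on $X$ having it as initial degeneration, and the associated extraction spreads out to a Koll\'ar component over $x$ of the same log discrepancy, by openness of the plt and ampleness conditions in families. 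Granting this, it suffices to bound $\mldk(x_0,X_0,\Delta_0)$.

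Now I would analyze the cone. Since $\hvol(x_0,X_0,\Delta_0)\ge\varepsilon$ and the normalized volume is lower semicontinuous, every point of $X_0$ has local volume $\ge\varepsilon$; hence by the mld--volume inequality \cite{LLX20}*{Theorem 6.13} the pair $(X_0,\Delta_0)$ is $\varepsilon'$-lc for some $\varepsilon'=\varepsilon'(n,\varepsilon)>0$. Because the singularities of a cone away from its vertex are constant along the rays, its link is then an $\varepsilon'$-lc log Fano pair $(V,\Delta_V)$ of dimension $n-1$, at least when $\xi_0$ is a rational Reeb vector, so that $X_0$ is the cone over $(V,\Delta_V)$ polarized by some $L$ with $-(K_V+\Delta_V)\sim_{\bQ}rL$. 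The exceptional divisor $V$ of the blow-up of the vertex is a Koll\'ar component over $x_0$ with $A_{X_0,\Delta_0}(V)=r$, and an elementary cone computation gives
\[
\hvol(x_0,X_0,\Delta_0)=A_{X_0,\Delta_0}(V)\cdot\bigl(-(K_V+\Delta_V)\bigr)^{n-1}.
\]
By the boundedness of $\varepsilon'$-lc log Fano pairs of fixed dimension \cites{Birkar-bab-1,Birkar-bab-2}, the pair $(V,\Delta_V)$ lies in a bounded family, so $\bigl(-(K_V+\Delta_V)\bigr)^{n-1}\ge\beta(n,\varepsilon')>0$; combined with the universal bound $\hvol\le n^n$ this yields $\mldk(x_0,X_0,\Delta_0)\le A_{X_0,\Delta_0}(V)\le n^n/\beta$.

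The main obstacle is the case of an irrational Reeb vector $\xi_0$, which is the typical one: then there is no link variety, and approximating $\xi_0$ by rational Reeb vectors $\xi'$ produces Koll\'ar components $V'$ whose polarizations $L'=-V'|_{V'}$ degenerate---their denominators blow up---so that $(L')^{n-1}\to 0$ while $\hvol(\ord_{V'})\to\hvol(x_0)$ stays bounded, and since $A_{X_0,\Delta_0}(V')^n=\hvol(\ord_{V'})/(L')^{n-1}$ one loses all control of these log discrepancies. Bounding $\mldk(x_0,X_0,\Delta_0)$ in general therefore requires producing Koll\'ar components not aligned with the torus and controlling the Fano pairs they carry. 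This is why the argument goes through unconditionally only in dimension $n\le 3$: there the link, or a suitable perturbation of it, is a curve or a log del Pezzo surface, and one can use the full boundedness theory for log del Pezzo surface pairs (Alexeev) together with two-dimensional MMP---both to handle the irrational case and to make rigorous the step that $\mldk$ does not increase under the stable degeneration.
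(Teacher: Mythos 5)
Your proposal takes a fundamentally different route from the paper, and it has genuine gaps that prevent it from being a proof, even in dimension three.

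First, the reduction to closed points via general hyperplane cuts does not obviously preserve $\mldk$. A Koll\'ar component over the non-closed point $\eta$ does restrict, by Bertini, to a Koll\'ar component over the general slice $x'$; but this only gives $\mldk(x',X',\Delta')\le\mldk(\eta,X,\Delta)$, which is the useless direction. To bound $\mldk(\eta,X,\Delta)$ from a bound on $\mldk(x',X',\Delta')$ you would need to lift a Koll\'ar component from the slice back to the localization, and that is not automatic. The paper avoids this by working with the \emph{localization} at $\eta$ (a surface or lower-dimensional singularity over a non-algebraically-closed field, as in Proposition \ref{prop:mldk bdd, dim=2, non-closed field} and Proposition \ref{prop:bdd mldk, codim=2}), which requires a uniqueness result (Lemma \ref{lem:mld minimizer unique dim=2}) to descend Koll\'ar components to the base field and handle the Galois action.

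Second, and more seriously, the claim that a torus-invariant Koll\'ar component over the central fibre $x_0\in(X_0,\Delta_0)$ lifts to a Koll\'ar component over $x\in(X,\Delta)$ of the same log discrepancy is not justified and is in fact the precise difficulty the whole machinery of special complements was invented to circumvent. It is one thing to lift a divisorial valuation (say via a Rees-type construction on graded ideals); it is quite another to show that the associated prime blowup exists on the total space, is $\bQ$-Gorenstein, restricts to the expected blowups on each fibre, and that plt and ampleness deform. Koll\'ar components in general do \emph{not} descend along birational morphisms or degenerations; Lemma \ref{lem:kc descend special complement} is an affirmative statement that requires the complement to be \emph{special} with respect to the morphism, and even this needs the constructions in the proof of Proposition \ref{prop:reduce to Q-Gorenstein}. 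Without an analogue in the degeneration setting your argument stops at bounding $\mldk(x_0,X_0,\Delta_0)$.

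Third, as you acknowledge, the irrational Reeb vector case is genuinely unresolved in your framework: the natural Koll\'ar components obtained by rational approximation of $\xi_0$ have log discrepancies tending to infinity. This is not a technicality that two-dimensional MMP resolves; it is the heart of the matter. The paper sidesteps K-semistable cones entirely: after reducing to the $\bQ$-Gorenstein, bounded-complement case (Propositions \ref{prop:reduce to Q-Gorenstein}--\ref{prop:reduce to N-complement}, using Theorem \ref{thm:lct>=vol}, Koll\'ar's effective basepoint-freeness, and special complements), it splits the problem into bounding the mld of some lc place of the complement (Conjecture \ref{conj:bdd mld for lc blowup}) and the reduced-complement case (Proposition \ref{prop:mldk bdd, dim=3, 1-comp}), both of which in dimension three are handled by passing to a terminal modification and then to the index-one (cDV) cover, where explicit classification of terminal and cDV singularities and of reduced surface complements yields concrete weighted blowups. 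Your approach, if the lifting of Koll\'ar components could be made rigorous, would be illuminating conceptually, but as written it does not constitute a proof in any dimension.
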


We will show that Conjecture \ref{conj:bdd for mldK, vol>epsilon} holds in dimensions up to three (Corollary \ref{cor:bdd for mld^K, vol>epsilon, dim=3}), and in any dimension it implies Conjecture \ref{conj:vol>epsilon implies bounded} (Theorem \ref{thm:delta-plt blowup strong version}). Together with Theorem \ref{main:bdd via vol and mldK} they constitute the three major steps in the proof of Theorem \ref{main:special bdd dim=3}.

Shokurov has conjectured that the set of minimal log discrepancies (mld) satisfies the ascending chain condition (ACC) \cite{Sho-mld-conj}. In particular, there should be an upper bound on the mlds that only depends on the dimension. This is known as the boundedness (BDD) conjecture for mld. On the other hand, in the local study of klt singularities it is often more natural to consider Koll\'ar components rather than general exceptional divisors. Therefore, we are also tempted to propose the following stronger conjecture, although we are only able to partially verify it at codimension two points (Proposition \ref{prop:bdd mldk, codim=2}).

\begin{conj}[BDD for $\mldk$] \label{conj:bdd for mldK}
Let $n\in\bN^*$ and let $I\subseteq [0,1]$ be a DCC set. Then there exists some constant $A=A(n,I)$ depending only on $n$ and $I$ such that 
\[
\mldk(\eta,X,\Delta)\le A
\]
for any klt pair $(X,\Delta)$ with $\Coef(\Delta)\subseteq I$ and any point $\eta\in X$ of codimension $n$.
\end{conj}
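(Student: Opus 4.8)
The plan is to reduce the statement to a bound on Koll\'ar components of a fixed (bounded) family, via an argument in the spirit of Theorem \ref{main:bdd via vol and mldK} combined with the ACC/BDD philosophy for minimal log discrepancies. First, since $\eta\in X$ has codimension $n$, after localizing and passing to the strict henselization (or working with the formal/analytic germ) we may assume $\eta$ is a closed point of an $n$-dimensional klt singularity $x\in(X,\Delta)$ with $\Coef(\Delta)\subseteq I$. The key point is that a Koll\'ar component $E$ over $x$ gives a plt blowup $\pi\colon Y\to X$ extracting $E$, and $(E,\Delta_E)$ (the divisor obtained by adjunction) is a log Fano variety of dimension $n-1$. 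The log discrepancy $A_{X,\Delta}(E)$ can then be read off from the structure of this log Fano together with the "different" data, so a uniform upper bound on $\mldk$ is equivalent to producing, for each such singularity, \emph{some} Koll\'ar component whose associated log Fano $(E,\Delta_E)$ lies in a bounded family with controlled coefficients.

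The main steps I would carry out are the following. Step 1: Reduce to bounding the log discrepancy of one well-chosen Koll\'ar component. By \cite{LX18} (or the theory recalled in the paper) every klt singularity admits at least one Koll\'ar component, and one can arrange it to be the exceptional divisor of a suitable weighted blowup or a component computing the log canonical threshold of a general element of $|-K_X-\Delta|$ near $x$; the point is to pick $E$ so that $(E,\Delta_E)$ is \emph{$\varepsilon$-lc} for a dimensional constant $\varepsilon=\varepsilon(n)$ — this is where one would invoke an ACC-type input. Step 2: Apply boundedness of $\varepsilon$-lc log Fano varieties of dimension $n-1$ (the BAB theorem, \cite{Birkar-bab-1,Birkar-bab-2}) to conclude that $(E,\Delta_E)$ belongs to a bounded family; in particular its Cartier index, and hence the index of $E$ as a divisor over $X$, is bounded, which in turn bounds the denominators appearing in $A_{X,\Delta}(E)=\ord_E(K_{X,\Delta})+1$. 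Step 3: Combine the bounded denominators with the a priori bound $A_{X,\Delta}(E)\le n$ (valid for any divisor computing an mld-type quantity at a klt point, e.g.\ via \cite{LLX20} or a direct lc-ness argument) — or, if that bound is not directly available for the chosen $E$, bound $A_{X,\Delta}(E)$ by the dimension plus a contribution controlled by the bounded family — to get $\mldk(\eta,X,\Delta)\le A(n,I)$.

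The hard part will be Step 1: producing a Koll\'ar component $E$ whose adjunction data $(E,\Delta_E)$ is $\varepsilon(n)$-lc \emph{without already knowing a lower bound on the local volume}, since the naive choice of Koll\'ar component can be arbitrarily singular and BAB then does not apply. This is precisely the obstruction that forces the authors to prove only the codimension-two case (Proposition \ref{prop:bdd mldk, codim=2}), where $(E,\Delta_E)$ is a log Fano \emph{curve} and boundedness with bounded coefficients is automatic from the ACC for log canonical thresholds in dimension one, so no $\varepsilon$-lc-ness is needed. In higher dimension one would need either a new construction of "mild" Koll\'ar components, or an unconditional bound on the mld of the canonical/monomial valuation extracting $E$ — essentially a special case of Shokurov's BDD conjecture for mld itself — which explains why the general statement is left as a conjecture rather than a theorem.
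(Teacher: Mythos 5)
The statement you are trying to prove is Conjecture \ref{conj:bdd for mldK}: it is \emph{not} a theorem of the paper and has no proof there. Only the codimension-two case (Proposition \ref{prop:bdd mldk, codim=2}) is established. To your credit, you correctly diagnose this: your Step 1 (producing a Koll\'ar component $E$ whose adjunction pair $(E,\Delta_E)$ is $\varepsilon(n)$-lc so that BAB applies) is a genuine open problem, and the paper has no unconditional way to do it — the closest it comes is Conjecture \ref{conj:bdd for mldK, vol>epsilon}, which imposes a local volume lower bound precisely so that the plt blowup has bounded Cartier index (see the proof of Theorem \ref{thm:bdd via vol and mldK}). Note also that your proposed Step 3 bound $A_{X,\Delta}(E)\le n$ is false for Koll\'ar components: Example \ref{ex:mld^K depend on coef set} exhibits smooth surface pairs $(\bA^2,\Delta)$ with $\mldk(0,\bA^2,\Delta)$ arbitrarily large, so unlike the usual mld the bound cannot be purely dimensional and must depend on the coefficient set $I$.

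Your account of \emph{why} the codimension-two case goes through is also not how the paper actually does it. You attribute it to BAB-type boundedness of log Fano curves plus ACC for lct in dimension one. The paper's argument (Proposition \ref{prop:bdd mldk, codim=2}) is different in kind: localize at the codimension-two point to get a surface klt singularity over a (not necessarily closed) field $\bk$, take an $N$-complement $D$ via Lemma \ref{lem:strict N-complement}, and pass to the universal cover $\tx\in\tX$, which is a \emph{smooth} surface germ — trivially bounded, with no appeal to BAB. Lemma \ref{lem:mldk bdd, bdd germ} then gives a divisorial lc place of bounded log discrepancy over $\bbk$. The truly dimension-specific ingredient, which your proposal misses entirely, is Lemma \ref{lem:mld minimizer unique dim=2}: on a smooth surface germ the log discrepancy of divisorial lc places of $(X,D)$ is minimized by a \emph{unique} divisor, hence that divisor is automatically Galois-invariant, defined over $\bk$, and descends to a Koll\'ar component over the original singularity. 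This uniqueness fails in dimension $\ge 3$, and that — not any problem with boundedness of the log Fano $(E,\Delta_E)$ — is the concrete obstruction to extending the paper's surface argument to higher codimension.
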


\subsection{Outline of proofs}

One of the technical steps in proving Theorem \ref{main:special bdd dim=3} is to verify Conjecture \ref{conj:bdd for mldK, vol>epsilon} in dimension three. While it is well known that $\mld(x,X,\Delta)\le 3$ for any $3$-dimensional klt singularity $x\in (X,\Delta)$, the divisors that compute the mld are usually not Koll\'ar components, thus bounding $\mldk$ from above presents a very different problem. In fact, as Example \ref{ex:mld^K depend on coef set} shows, already on smooth surfaces $\mldk$ can be arbitrarily large if we allow the boundary $\Delta$ to vary. 

Certainly, the advantage of working in dimension three is that we have a classification result for terminal singularities \cite{Mori-terminal-classification}. Thus we need to find ways to reduce to the terminal case and to take care of the additional boundary. The first observation is that every Koll\'ar component is an lc place of a bounded complement \cite{Birkar-bab-1}. If we take a bounded complement $D$ of a klt singularity $x\in X$ and a terminal modification $\pi\colon Y\to X$, the lc places of $(X,D)$ are the same as lc places of $(Y,\pi^*D)$, so we may hope to find Koll\'ar components over $X$ with bounded log discrepancy by taking suitable Koll\'ar components over $Y$ that are lc places of $(Y,\pi^*D)$. In general, Koll\'ar components over a birational model do not descend to Koll\'ar components over the singularity, but this is the case if they are lc places of \emph{special} complements, a notion that first appears in the recent proof of the higher rank finite generation conjecture \cite{LXZ-HRFG}. Therefore, a key step in our proof of Conjecture \ref{conj:bdd for mldK, vol>epsilon} is the construction of \emph{bounded special complements}. This step works in any dimension and allows us to reduce Conjecture \ref{conj:bdd for mldK, vol>epsilon} to the terminal singularity case (with boundary). Another technical observation (see Section \ref{sect:reduce to mld^lc and 1-comp}) involving special complements further reduces the question to the case where the terminal pair $(X,\Delta)$ has a reduced complement. From there we are able to use classification results to conclude Conjecture \ref{conj:bdd for mldK, vol>epsilon} in dimension three.

To achieve the bounded part of the special complements, we need uniform control of various invariants of the singularity, and this is where Theorem \ref{main:lct>=vol} plays a crucial role. The key to the proof of Theorem \ref{main:lct>=vol} is a uniform Izumi type estimate. Recall that the usual Izumi type inequality states that for any klt singularity $x\in (X,\Delta)$, there exists some constant $c>0$ such that
\[
\lct_x(X,\Delta;D)\ge  \frac{c}{\mult_x D}
\]
for any effective $\bQ$-Cartier divisor $D$ on $X$ (see e.g. \cite{Li-normalized-volume}*{Theorem 3.1}). However, the constant $c$ in general depends on the singularity $x\in (X,\Delta)$. Our discovery is that a uniform constant can be achieved if we replace $\mult_x$ by the minimizing valuation of the normalized volume function (Corollary \ref{cor:Izumi via vol minimizer}). In particular, we can compare both sides of the inequality in Theorem \ref{main:lct>=vol} through the minimizing valuation.

Finally we make a few remarks on the proof of Theorem \ref{main:bdd via vol and mldK}. The idea is to show that the Koll\'ar components that have bounded log discrepancy belong to a bounded family of log Fano pairs, which is the case if the Cartier indices are bounded on the corresponding plt blowup. Using the finite degree formula proved in \cite{XZ-uniqueness}, this boils down to a few estimates of local volumes on the plt blowup, see Section \ref{sect:bdd via mld^K}.

\subsection*{Acknowledgement}

The author is partially supported by the NSF Grant DMS-2240926 (formerly DMS-2055531) and a Clay research fellowship. He would like to thank Yuchen Liu and Chenyang Xu for helpful discussions, as well as Jingjun Han and Joaqu\'in Moraga for comments. He is also grateful to the anonymous referees for many valuable suggestions.

\section{Preliminaries}

\subsection{Notation and conventions}

Throughout this paper, we work over an algebraically closed field of characteristic $0$. We follow the standard terminology from \cites{KM98,Kol13}.

A singularity $x\in (X,\Delta)$ consists of a pair $(X,\Delta)$ (i.e. a normal variety $X$ together with an effective $\bQ$-divisor $\Delta$) and a closed point $x\in X$. We will always assume that $X$ is affine and $x\in \Supp(\Delta)$ (whenever $\Delta\neq 0$). In general, when we discuss local properties of a pair $(X,\Delta)$ at a (not necessarily closed) point $\eta$, we will freely shrink $X$ around $\eta$.

Suppose that $X$ is a normal variety. A prime divisor $F$ on some birational model $\pi\colon Y\to X$ (where $Y$ is normal and $\pi$ is proper) of $X$ is called a \emph{divisor over} $X$. Its center, denoted $C_X(F)$, is the generic point of its image in $X$. 

A \emph{valuation} $v$ on $X$ is an $\bR$-valued valuation $v: K(X)^{\times}\to \bR$ (where $K(X)$ denotes the function field of $X$) such that $v$ has a center on $X$ and $v|_{k^\times}=0$. The set of valuations on $X$ is denoted as $\Val_X$. 

For a valuation $v$ on $X$ and $m\in\bN$, its \emph{valuation ideal sheaf} $\fa_m(v)$ is
\[
\fa_m(v):=\{f\in \cO_X\mid v(f)\ge m\}. 
\]

When we refer to a constant $A$ as $A=A(n,\varepsilon,\cdots)$ we mean it only depends on $n,\varepsilon,\cdots$.

\subsection{Koll\'ar components}

\begin{defn}
Let $(X,\Delta)$ be a sub-pair (i.e. $\Delta$ need not be effective) and let $F$ be a divisor over $X$. We will say $F$ is a divisor over $\eta\in (X,\Delta)$ if $\eta$ is the center of $F$. When $F$ is a divisor on $X$ we write $\Delta=\Delta_1+aF$ where $F\not\subseteq \Supp(\Delta_1)$; otherwise let $\Delta_1=\Delta$.
\begin{enumerate}
	\item $F$ is said to be primitive over $X$ if there exists a projective birational morphism $\pi:Y\to X$ such that $Y$ is normal, $F$ is a prime divisor on $Y$ and $-F$ is a $\pi$-ample $\bQ$-Cartier divisor. We call $\pi:Y\to X$ the associated prime blowup (it is uniquely determined by $F$).
	\item $F$ is said to be of plt (resp. lc) type over $(X,\Delta)$ if it is primitive over $X$ and the pair $(Y,\Delta_Y+F)$ is plt (resp. lc) in a neighbourhood of $F$, where $\pi:Y\to X$ is the associated prime blowup and $\Delta_Y$ is the strict transform of $\Delta_1$ on $Y$. When $(X,\Delta)$ is klt (resp. lc) and $F$ is exceptional over $X$, $\pi$ is called a plt (resp. lc) blowup over $X$. If in addition $(Y,\Delta_Y+F)$ is $\delta$-plt in a neighbourhood of $F$ for some $\delta>0$, we say that $\pi$ is a $\delta$-plt blowup.
\end{enumerate}
\end{defn}

The following result from \cite{BCHM} will be frequently used.

\begin{lem} \label{lem:BCHM extract divisor}
Let $(X,\Delta)$ be a klt pair and let $E$ be a divisor over $X$. Assume that there exists an effective $\bQ$-Cartier $\bQ$-divisor $D$ such that $(X,\Delta+D)$ is lc and $A_{X,\Delta+D}(E)=0$. Then $E$ is of lc type. 
\end{lem}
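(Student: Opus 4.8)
The plan is to construct the desired model by directly extracting $E$ from $X$ via the minimal model program of \cite{BCHM}, and then to verify the log canonical condition along the extraction by crepant descent. The role of the auxiliary divisor $D$ is precisely to guarantee that $E$ can be extracted as the \emph{only} exceptional divisor, with $-E$ relatively ample. I will spell out the case where $E$ is exceptional over $X$, which is the one we need; if $E$ is a prime divisor on $X$ itself one reduces to this after a small modification making its strict transform $\bQ$-Cartier.

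First I would perturb $D$ so as to land in the klt world while keeping $E$ under control. Since the log discrepancy is affine in the boundary, for every divisor $F$ over $X$ and every $t\in[0,1)$ we have
\[
A_{X,\Delta+tD}(F)=(1-t)\,A_{X,\Delta}(F)+t\,A_{X,\Delta+D}(F).
\]
Using that $(X,\Delta)$ is klt (so $A_{X,\Delta}(F)>0$ for all $F$) and that $(X,\Delta+D)$ is lc (so $A_{X,\Delta+D}(F)\ge 0$), the right-hand side is positive for every $F$; hence $(X,\Delta+tD)$ is klt for all $t\in[0,1)$. Since $A_{X,\Delta}(E)>0$ while $A_{X,\Delta+D}(E)=0$, the same formula gives $A_{X,\Delta+tD}(E)=(1-t)A_{X,\Delta}(E)\to 0$ as $t\to 1^-$. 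So I can fix $t_0\in[0,1)$ with $\Delta':=\Delta+t_0 D$ effective, $(X,\Delta')$ klt, and $A_{X,\Delta'}(E)<1$.

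Next I would extract $E$. Since $(X,\Delta')$ is klt and $A_{X,\Delta'}(E)<1$, the standard consequence of \cite{BCHM} yields a projective birational morphism $\pi\colon Y\to X$ with $Y$ normal and $\bQ$-factorial, $\Ex(\pi)=E$, and $-E$ $\pi$-ample: one takes a log resolution of $(X,\Delta')$ on which $E$ appears and runs a relative minimal model program contracting every exceptional divisor other than $E$, the bound $A_{X,\Delta'}(E)<1$ being exactly what prevents $E$ from being contracted and keeps the intermediate pairs klt. In particular $-E$ is a $\pi$-ample $\bQ$-Cartier divisor, so $E$ is primitive over $X$ with associated prime blowup $\pi$.

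It remains to see that $(Y,\Delta_Y+E)$ is lc near $E$, where $\Delta_Y:=\pi_*^{-1}\Delta$. Because $\Ex(\pi)=E$, the pullback of $K_X+\Delta+D$ along $\pi$ satisfies
\[
K_Y+\pi_*^{-1}(\Delta+D)+\bigl(1-A_{X,\Delta+D}(E)\bigr)E=\pi^*(K_X+\Delta+D),
\]
and $A_{X,\Delta+D}(E)=0$ makes the coefficient of $E$ equal to $1$, so $(Y,\pi_*^{-1}(\Delta+D)+E)$ is crepant to the lc pair $(X,\Delta+D)$ and hence lc. Writing $\pi_*^{-1}(\Delta+D)=\Delta_Y+D_Y$ with $D_Y:=\pi_*^{-1}D\ge 0$, we have $0\le \Delta_Y+E\le \Delta_Y+D_Y+E$, so $(Y,\Delta_Y+E)$ is lc, in particular in a neighbourhood of $E$; thus $E$ is of lc type over $(X,\Delta)$. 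The only substantial ingredient is the extraction step, i.e.\ the application of \cite{BCHM}; the perturbation and the crepant descent are routine bookkeeping with log discrepancies.
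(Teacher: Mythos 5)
Your proposal is correct and follows essentially the same route as the paper: perturb $D$ to $\Delta+t_0D$ (the paper writes $\Delta+(1-\varepsilon)D$) so that the pair is klt and $A(E)$ is small, invoke \cite{BCHM}*{Corollary 1.4.3} to obtain the prime blowup of $E$, and then conclude by crepant descent from the lc pair $(X,\Delta+D)$. The only slight imprecision is the claim that $Y$ can be taken simultaneously $\bQ$-factorial and with $-E$ $\pi$-ample: when $X$ itself is not $\bQ$-factorial, the ample model of $-E$ need not be $\bQ$-factorial, but this does not affect the argument since $K_Y+\Delta_Y+E=\pi^*(K_X+\Delta)+A_{X,\Delta}(E)\,E$ is automatically $\bQ$-Cartier on the prime blowup, so the monotonicity step goes through without $\bQ$-factoriality.
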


\begin{proof}
Since $(X,\Delta+(1-\varepsilon)D)$ is klt and $0<A_{X,\Delta+(1-\varepsilon)D}(E)\ll 1$ when $0<\varepsilon\ll 1$, we know that $E$ is primitive by \cite{BCHM}*{Corollary 1.4.3}  and the basepoint free theorem. Let $\pi\colon Y\to X$ be the associated prime blowup. By assumption we also have $\pi^*(K_X+\Delta+D)\ge K_Y+\Delta_Y+E$. Since $(X,\Delta+D)$ is lc, it follows that $(Y,\Delta_Y+E)$ is lc, i.e. $E$ is of lc type.
\end{proof}

\begin{defn}[Koll\'ar Components] \label{defn:kc and delta-plt blowup}
Let $(X,\Delta)$ be a klt pair and let $\eta\in X$ (not necessarily a closed point). A divisor over $\eta\in (X,\Delta)$ is called a {\it Koll\'ar component} over $\eta\in(X,\Delta)$ if it's of plt type over $(U,\Delta|_U)$ for some neighbourhood $U\subseteq X$ of $\eta$. We say that $\eta\in (X,\Delta)$ admits a $\delta$-plt blowup (for some $\delta>0$) if it has a Koll\'ar component whose associated prime blowup is a $\delta$-plt blowup.
\end{defn}

By \cite{Xu-pi_1-finite}, every klt singularity has a Koll\'ar component. More precisely we have:

\begin{lem} \label{lem:kc exist on every dlt model}
Let $(X,\Delta)$ be a klt pair and let $D$ be an effective $\bQ$-Cartier $\bQ$-divisor such that $(X,\Delta+D)$ is strictly lc. Let $\pi\colon (Y,\Gamma)\to (X,\Delta)$ be a dlt modification, where $K_Y+\Gamma=\pi^*(K_X+\Delta+D)$. Then at least one component of $\lfloor \Gamma \rfloor$ is of plt type over $(X,\Delta)$.
\end{lem}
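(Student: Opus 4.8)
The plan is to analyze the dlt modification $\pi\colon(Y,\Gamma)\to(X,\Delta)$ and argue that among the components of $\lfloor\Gamma\rfloor$, at least one is $\pi$-exceptional and, after running a suitable minimal model program, can be realized as the unique non-klt place of a plt pair over $(X,\Delta)$. First I would recall the standard existence of a dlt modification: since $(X,\Delta+D)$ is strictly lc and klt away from some closed subset, by \cite{KM98,Kol13} (or \cite{BCHM}) we obtain $\pi\colon Y\to X$ projective birational with $Y$ $\bQ$-factorial, $(Y,\Gamma)$ dlt, $K_Y+\Gamma=\pi^*(K_X+\Delta+D)$, and every component of $\lfloor\Gamma\rfloor$ is $\pi$-exceptional (this uses that $(X,\Delta)$ is klt, hence has no non-klt places that are divisors on $X$ itself). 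Note $\lfloor\Gamma\rfloor\neq 0$ because $(X,\Delta+D)$ is strictly lc, so the pullback has a genuine lc place, which must lie on $Y$ as a component of $\lfloor\Gamma\rfloor$ after passing to the dlt model.

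The key step is to extract a single component. Write $\lfloor\Gamma\rfloor=\sum_{i=1}^r E_i$. If $r=1$, then over a neighbourhood of the center, $(Y,\Gamma)$ is plt with the unique non-klt place $E_1$, and since $-E_1$ can be made relatively ample after a small modification (or $E_1$ is already primitive), $E_1$ is of plt type over $(X,\Delta)$ and we are done. If $r\ge 2$, I would run a $(K_Y+\Gamma-\varepsilon E_1)$-MMP over $X$ for $0<\varepsilon\ll 1$; since $K_Y+\Gamma=\pi^*(K_X+\Delta+D)$ is $\pi$-trivial, this is a $(-\varepsilon E_1)$-MMP over $X$, which terminates (the divisors $E_2,\dots,E_r$ get contracted or the MMP makes $E_1$ negative) with a model $Y'\to X$ on which $E_1$ survives as a divisor, $-E_1$ is relatively nef (indeed relatively ample after one more step), and $\lfloor\Gamma'\rfloor=E_1'$ near the center, so $(Y',\Gamma')$ is plt there. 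Then $E_1'$ is of plt type over $(X,\Delta)$: its associated prime blowup is $Y'\to X$ (or a further $\bQ$-factorialization), and plt-ness is inherited because $\Gamma'-E_1'$ is the strict transform of a sub-boundary bounded above by $\Delta$'s transform plus effective exceptional stuff, so $(Y',\Gamma_{Y'}+E_1')$ remains plt near $E_1'$ where $\Gamma_{Y'}$ denotes the strict transform of $\Delta$.

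The main obstacle I expect is ensuring the MMP in the case $r\ge 2$ behaves as claimed — specifically, that after running the $(-\varepsilon E_1)$-MMP over $X$ we do not contract $E_1$ itself and that the resulting pair is genuinely plt (not merely lc) in a neighbourhood of $E_1$. The first point follows because $E_1$ is a component of $\lfloor\Gamma\rfloor$ and the MMP is $K_Y+\Gamma$-trivial, so $E_1$ has log discrepancy exactly $0$ throughout and cannot be contracted by a $(-\varepsilon E_1)$-negative extremal contraction (a divisor can only be contracted if it is covered by MMP-negative curves, but $E_1$ meeting the flipping/divisorial locus in the right way keeps it); more carefully, one arranges $\varepsilon$ small enough that the MMP first contracts/flips only curves meeting $\bigcup_{i\ge2}E_i$. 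The plt-ness follows from the dlt property of $(Y,\Gamma)$: near a generic point of $E_1$ the pair $(Y,\Gamma)$ is already plt (dlt plus $E_1$ being the only boundary component through the generic point after we have separated the $E_i$), and plt-ness is preserved by the steps of this MMP by the usual negativity-of-contraction argument. This is exactly the content of \cite{Xu-pi_1-finite}*{Lemma 1} and I would cite that for the technical execution while recording the argument above.
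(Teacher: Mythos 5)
Your approach (running an MMP directly on the dlt model $Y$) is genuinely different from the paper's, which passes to a log resolution $W$ of $(X,\Delta+D)$ obtained by blowing up $Y$ along the non-SNC locus of $(Y,\Gamma)$, invokes \cite{Xu-pi_1-finite}*{Lemma 1} to find a plt-type lc place among the divisors on $W$, and then uses the dlt property of $(Y,\Gamma)$ to conclude that every lc place appearing on $W$ is a strict transform of a component of $\lfloor\Gamma\rfloor$. Unfortunately your direct approach has a real gap in the case $r\ge 2$, and I do not think it can be repaired without essentially going back to the tie-breaking argument that underlies the cited lemma.

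The gap is in the step where you run the $(K_Y+\Gamma-\varepsilon E_1)$-MMP over $X$ for an \emph{arbitrary} component $E_1$ of $\lfloor\Gamma\rfloor$ and assert that on the outcome $Y'$ the divisors $E_2,\dots,E_r$ are gone and $\lfloor\Gamma'\rfloor=E_1'$. Nothing forces the MMP to touch $E_j$ for $j\ge 2$: the MMP only contracts or flips rays with $E_1\cdot R>0$, and it is entirely possible that $-E_1$ is already nef along $E_j$ (for instance if $E_j$ is disjoint from $E_1$, which Koll\'ar--Shokurov connectedness does not forbid when $D$ has lc centers over more than one point, and even when $E_1\cap E_j\ne\emptyset$, $-E_1|_{E_j}$ need not be negative on a covering family of curves). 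In that case $E_j$ survives on $Y'$ with coefficient $1$ in $\Gamma'$, so $(Y',\Gamma')$ is not plt near $E_1'$ and $E_1'$ is not of plt type. The claim ``plt-ness is preserved by the steps of this MMP'' is also not correct as stated: the initial pair $(Y,\Gamma-\varepsilon E_1)$ is dlt but not plt when $r\ge 2$, and there is no general MMP step that turns a dlt pair with several reduced components into a plt one.

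There is also a structural signal that the argument cannot be right as stated: if it worked for any choice of $E_1$, it would prove that \emph{every} component of $\lfloor\Gamma\rfloor$ is of plt type over $(X,\Delta)$, whereas the lemma deliberately claims only ``at least one.'' In general only some lc places of $(X,\Delta+D)$ are Koll\'ar components of $(X,\Delta)$. The correct extraction argument requires a tie-breaking perturbation (adding a small general ample divisor over $X$ on a log resolution to single out a unique lc place, then running the MMP), and that perturbation selects a specific component rather than accepting an arbitrary one. This is precisely what \cite{Xu-pi_1-finite}*{Lemma 1} accomplishes on a log resolution; the paper's proof then needs only one additional observation, namely that since $W\to Y$ is chosen to blow up the non-SNC locus of the dlt pair $(Y,\Gamma)$, the $W/Y$-exceptional divisors all have positive log discrepancy for $(Y,\Gamma)$, so the unique lc place extracted by the tie-breaking argument is forced to be a strict transform of a component of $\lfloor\Gamma\rfloor$.

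To repair your write-up, you should (i) not fix $E_1$ in advance but instead perform the tie-breaking perturbation on a log resolution of $(X,\Delta+D)$ obtained by further blowing up $Y$, and (ii) add the dlt-based observation that the lc places on that log resolution come from $\lfloor\Gamma\rfloor$. Your case $r=1$ is essentially fine (there $(Y,\Gamma)$ is already plt, the MMP for the klt pair $(Y,\Gamma-\varepsilon E_1)$ over $X$ cannot contract $E_1$, and passing to the ample model of $-E_1$ over $X$ gives the plt blowup), and that special case is in fact what the general argument reduces to after tie-breaking.
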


\begin{proof}
This follows from the proof of \cite{Xu-pi_1-finite}*{Lemma 1}, c.f. \cite{LX-stability-kc}*{Proposition 2.10}. The point is that if $W\to X$ is a log resolution of $(X,\Delta+D)$ then some prime divisor on $W$ that is an lc place of $(X,\Delta+D)$ will be of plt type over $(X,\Delta)$. The resolution can be chosen as a blowup of $Y$ along non-SNC locus of $(Y,\Gamma)$, and the dlt assumption ensures that none of the exceptional divisors are lc places of $(Y,\Gamma)$. Thus the plt type divisor on $W$ has to be a component of $\lfloor \Gamma \rfloor$.
\end{proof}

We next describe a criterion which will be used to verify certain weighted blowups on hypersurfaces singularities provide Koll\'ar components. 

\begin{lem} \label{lem:wt blowup kc criterion}
Let $X=(f(x_1,\cdots,x_n)=0)\subseteq \bA^n$ be a hypersurface singularity. Let $E$ be the exceptional divisor of the weighted blowup with $\wt(x_i)=a_i>0$. Assume that the hypersurface $X_0=(\mathrm{in}(f)=0)\subseteq \bA^n$ has only canonical singularity at the origin. Then $E$ is a Koll\'ar component over $0\in X$.
\end{lem}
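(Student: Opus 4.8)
The plan is to exhibit the weighted blowup $\pi \colon Y \to X$ of $E$ as the associated prime blowup of a divisor of plt type over $0 \in X$. First I would set up the weighted blowup concretely: let $w$ be the monomial valuation on $\bA^n$ with $w(x_i) = a_i$, let $E \subseteq Y$ be the exceptional divisor of the induced weighted blowup of $X$ (the strict transform of the blowup of $\bA^n$, or equivalently the normalization of $\mathrm{Proj}$ of the Rees algebra of $w$ restricted to $X$). Two things need to be checked to make $E$ primitive: that $-E$ is $\pi$-ample and $\bQ$-Cartier, and that $E$ is irreducible. Irreducibility of $E$ follows from the hypothesis that $X_0 = (\mathrm{in}(f) = 0)$ is irreducible and reduced — which is part of what ``canonical at the origin'' guarantees (a non-reduced or reducible initial form would not give a normal, let alone canonical, singularity) — since the exceptional divisor $E$ of the weighted blowup of the hypersurface is, up to the weighted-projective grading, the cone over $X_0$; more precisely $E$ fibers over the weighted-projective hypersurface cut out by $\mathrm{in}(f)$ in $\bP(a_1,\dots,a_n)$. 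The $\pi$-ampleness of $-E$ is standard for weighted blowups. This identifies $\pi$ as a prime blowup.

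Next I would compute the discrepancy. The key computation is $A_X(E) = w(f_{\mathrm{special}}) \dots$ — more precisely, writing $a = \sum_i a_i$ and $d = w(\mathrm{in}(f))$ (the weighted order of $f$), the adjunction/discrepancy formula for weighted blowups of hypersurfaces gives $K_Y = \pi^* K_X + (a - d - 1) E$, so $A_X(E) = a - d$. What I really need, though, is not the numerical value but the statement that $(Y, E)$ is plt in a neighbourhood of $E$. For this I would use inversion of adjunction (see e.g. \cite{Kol13}): $(Y, E)$ is plt near $E$ if and only if $(E, \Diff_E 0)$ is klt. The crucial point is that $E$, together with its different, is closely related to the affine cone singularity over $X_0 \subseteq \bA^n$, equivalently to a weighted-projective orbifold whose affine cone is $X_0$ — and $X_0$ having only canonical singularities at the origin translates, via the cone construction, precisely into the klt-ness of $(E, \Diff_E 0)$. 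Concretely, the generic point of $E$ maps to the origin of $X$, and an analytic/étale-local description near a general point of $E$ exhibits $(Y,E)$ as ($\bA^1 \times$ (a canonical singularity)), hence plt; and the orbifold/cone structure handles the behavior over the singular locus of $X_0$, using that a cone over a canonically-singular base is klt when the polarization is the anticanonical-type class coming from the weighted blowup.

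I expect the main obstacle to be the last step: cleanly relating plt-ness of $(Y, E)$ near $E$ to the canonical singularity hypothesis on $X_0$. The honest way to organize this is: (i) observe $E$ with its divisorial log structure is the (weighted) projectivization whose affine cone is $(X_0, 0)$, and the conormal data matches up so that $\Diff_E 0$ corresponds under the cone correspondence to the boundary making the cone computation work; (ii) invoke the cone singularity criterion (\cite{Kol13}, or the local-to-global cone correspondence) to conclude $(E, \Diff_E 0)$ is klt from ``$X_0$ canonical''; (iii) apply inversion of adjunction to get $(Y, \Delta_Y + E)$ plt near $E$, where here $\Delta = 0$ on $X$ so $\Delta_Y = 0$. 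A subtlety worth flagging is that one should work étale-locally or pass to the analytic category to reduce to the case where $E$ is actually $\bQ$-Cartier and the cone picture is exact, but since Koll\'ar components and plt-type divisors can be checked in a neighbourhood of $E$ (indeed analytically-locally along $E$), this causes no real difficulty. Once $(Y, E)$ is plt near $E$ and $E$ is exceptional over $X$ with center the origin, $E$ is a Koll\'ar component over $0 \in X$ by Definition \ref{defn:kc and delta-plt blowup}.
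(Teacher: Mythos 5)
Your proposal is correct and matches the paper's argument in all essentials: identify $E\cong(\mathrm{in}(f)=0)\subseteq\bP(a_1,\dots,a_n)$ and get primitivity from irreducibility of $\mathrm{in}(f)$, reduce plt-ness of $(Y,E)$ to klt-ness of $(E,\Diff_E(0))$ by inversion of adjunction, and deduce the latter from the canonical hypothesis on $X_0$ via the cone/$\bG_m$-bundle correspondence. The paper makes the last step precise by viewing $X_0\setminus\{0\}$ as a Seifert $\bG_m$-bundle with orbifold base $(E,\Diff_E(0))$ and citing Koll\'ar's result that local orbifold covers of the base are klt; you gesture at the same mechanism without the specific reference, and you have a small slip in describing $E$ as a ``cone over $X_0$'' (it is the other way around — $X_0$ is the affine cone over $E$), but neither affects the substance.
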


Here $\mathrm{in}(f)$ denotes the sum of the monomials in $f$ with lowest weights. Note that for non-degenerate hypersurfaces this is proved in \cite{IP-hypsurf-exceptional}*{Proposition 3.3}. We essentially follow the same argument.

\begin{proof}
Let $f_0=\mathrm{in}(f)$ and let $\pi\colon Y\to X$ be the weighted blowup. Then $E\cong (f_0=0)\subseteq \bP(a_1,\cdots,a_n)$. Since $f_0$ is irreducible by assumption, we see that $E$ is a primitive divisor on $Y$. It remains to show that $(Y,E)$ is plt. By inversion of adjunction, this is equivalent to showing that $(E,\Diff_E(0))$ is klt. Note that $X_0$ admits a good $\bG_m$-action with $t\cdot x_i=t^{a_i}x_i$, hence $X_0\setminus \{0\}$ is a Seifert $\bG_m$-bundle in the sense of \cite{Kol-Seifert-bundle}. A direct calculation shows that its orbifold base is exactly $(E,\Diff_E(0))$. Since $X_0$ has canonical singularities (hence is klt) in a neighbourhood of the origin, using the $\bG_m$-action we see that $X_0$ is klt everywhere. Hence by \cite{Kol-Seifert-bundle}*{Proposition 19}, the local orbifold covers of $(E,\Diff_E(0))$ are all klt, hence $(E,\Diff_E(0))$ is klt as well. This finishes the proof.
\end{proof}

\subsection{Local volumes}

Given a pair $(X,\Delta)$, the log discrepancy function
\[
A_{X,\Delta}\colon \Val_X\to \bR \cup\{+\infty\},
\]
is defined as in \cite{JM12} and \cite{BdFFU15}*{Theorem 3.1}. It is possible that $A_{X,\Delta}(v) = +\infty$ for some $v\in \Val_X$, see e.g. \cite{JM12}*{Remark 5.12}. For a closed point $x\in X$, we denote by $\Val^*_{X,x}$ the set of valuations $v\in\Val_X$ with center $x$ and  $A_{X,\Delta}(v)<+\infty$.

\begin{defn}
Let $X$ be an $n$-dimensional normal variety and let $x\in X$ be a closed point. The \emph{volume} of a valuation $v\in\Val^*_{X,x}$ is defined as
\[
\vol(v)=\vol_{X,x}(v)=\limsup_{m\to\infty}\frac{\ell(\cO_{X,x}/\fa_m(v))}{m^n/n!}.
\]
Thanks to \cites{ELS03, LM09, Cut13}, the above limsup is actually a limit.
\end{defn}

We now briefly recall the definition of the volumes of klt singularities \cite{Li-normalized-volume}. 

\begin{defn} \label{defn:local volume}
Let $x\in (X,\Delta)$ be an $n$-dimensional klt singularity. For any $v\in \Val^*_{X,x}$, we define the \emph{normalized volume} of $v$ as
\[
\hvol_{X,\Delta}(v):=A_{X,\Delta}(v)^n\cdot\vol_{X,x}(v).
\]
The \emph{local volume} of $x\in (X,\Delta)$ is defined as
\[
  \hvol(x,X,\Delta):=\inf_{v\in\Val^*_{X,x}} \hvol_{X,\Delta}(v).
\]
\end{defn}

By \cites{Blu18}, the above infimum is a minimum. In fact, the minimizer is unique up to rescaling \cite{XZ-uniqueness}, and is a quasi-monomial valuation \cite{Xu-quasi-monomial}, but we do not need these facts in the sequel. 

Since we also study the singularities at non-closed points, we extend the above definition to those cases as follows.

\begin{defn}
Let $(X,\Delta)$ be a klt pair and let $\eta$ be the generic point of a subvariety $W\subseteq X$. The local volume of $(X,\Delta)$ at $\eta$ is defined to be
\[
\hvol(\eta,X,\Delta):=\hvol(w,X,\Delta)
\]
where $w\in W$ is a general closed point. This is well-defined since the local volume function $x\mapsto \hvol(x,X,\Delta)$ is constructible \cite{Xu-quasi-monomial}.
\end{defn}

We will frequently use the fact that the volume function $x\mapsto \hvol(x,X,\Delta)$ is lower semi-continuous \cite{BL-vol-lsc}. We recall a few more useful properties of local volumes.

\begin{lem} \label{lem:index bound via volume}
Let $x\in(X,\Delta)$ be a klt singularity of dimension $n$ and let $D$ be a $\bQ$-Cartier Weil divisor on $X$. Then the Cartier index of $D$ is at most $\frac{n^n}{\hvol(x,X,\Delta)}$.
\end{lem}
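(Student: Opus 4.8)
The plan is to realize the Cartier index of $D$ at $x$ as the degree of a finite quasi-\'etale cover of the germ $(X,x)$ and then to combine this with the multiplicativity of local volumes under such covers. After shrinking $X$ to a small affine neighbourhood of $x$ we may assume that $D$ is Cartier away from $x$, so that its Cartier index is a single integer $r$, which by definition is the order of the class $[D]$ in the local class group $\Cl(\cO_{X,x})$. First I would construct the associated index-one (cyclic) cover $\pi\colon (Y,y)\to(X,x)$ of degree $r$: writing $rD=\mathrm{div}(g)$ near $x$, one sets $Y=\Spec_X\bigl(\bigoplus_{i=0}^{r-1}\cO_X(iD)\bigr)$ with the $\cO_X$-algebra structure determined by $g$ and normalizes if necessary (see \cite{Kol13}*{\S 2.49--2.53}). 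Then $Y$ is normal and, since $[D]$ has exact order $r$, connected; $\pi$ is finite of degree $r$, \'etale in codimension one (hence $K_Y=\pi^*K_X$), and totally ramified over $x$, so that $\pi^{-1}(x)=\{y\}$. Putting $\Delta_Y:=\pi^*\Delta$, the pair $(Y,\Delta_Y)$ is klt of dimension $n$ and $K_Y+\Delta_Y=\pi^*(K_X+\Delta)$.

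The second step is to invoke the finite degree formula for local volumes under finite quasi-\'etale (crepant) morphisms, proved in \cite{XZ-uniqueness} (see also \cite{LLX20}): since $\pi$ is quasi-\'etale of degree $r$,
\[
\hvol(y,Y,\Delta_Y)=r\cdot\hvol(x,X,\Delta).
\]
It then remains to bound the left-hand side by $n^n$. For this I would use that a general closed point $p\in Y$ is a smooth point of $Y$ not contained in $\Supp\Delta_Y$, and that the divisorial valuation $\ord_p$ given by the blowup of $p$ has $A_{Y,\Delta_Y}(\ord_p)=n$ and $\vol_{Y,p}(\ord_p)=1$ (its valuation ideals being the powers of $\fm_p$), whence $\hvol(p,Y,\Delta_Y)\le n^n$ directly from the definition of the local volume. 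Since the function $q\mapsto\hvol(q,Y,\Delta_Y)$ is lower semi-continuous \cite{BL-vol-lsc} and such points $p$ are dense, we get $\hvol(y,Y,\Delta_Y)\le n^n$. Combining the two displayed facts yields $r\cdot\hvol(x,X,\Delta)\le n^n$, i.e. the Cartier index $r$ is at most $n^n/\hvol(x,X,\Delta)$.

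The only genuinely substantial input is the finite degree formula; everything else is either standard cyclic-cover bookkeeping or immediate from the definitions. The step I expect to require the most care in a careful write-up is verifying the claimed properties of $\pi$ — normality, connectedness, \'etaleness in codimension one, and total ramification over $x$ — which hold because $D$ is Cartier at every codimension-one point of the normal variety $X$, so the non-\'etale locus of the naive cyclic cover is contained in $\{x\}$ and disappears upon normalization; alternatively one may simply quote the construction of index-one covers of Weil divisors from \cite{Kol13}. One should also note that the resulting bound is sharp, as seen from cyclic quotient singularities, where a generator of the local class group has Cartier index exactly $n^n/\hvol$.
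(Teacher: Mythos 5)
Your proof is correct and takes the same route as the paper, which simply cites \cite{XZ-uniqueness}*{Corollary 1.4}; you have essentially reconstructed that corollary by combining the index-one cyclic cover with the finite degree formula of \cite{XZ-uniqueness}. One small streamlining: the lower semi-continuity detour at the end is unnecessary, since $\hvol(q,Y,\Delta_Y)\le\hvol(q,Y)\le n^n$ holds at \emph{every} klt point $q$ directly by \cite{LX-cubic-3fold}*{Lemma A.1} (which the paper itself invokes in the proof of Theorem \ref{thm:lct>=vol}).
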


\begin{proof}
This follows directly from \cite{XZ-uniqueness}*{Corollary 1.4}.
\end{proof}

\begin{lem} \label{lem:vol under birational map}
Let $\pi\colon (Y,\Delta_Y)\to (X,\Delta)$ be a proper birational morphism between klt pairs. Assume that $K_Y+\Delta_Y\le \pi^*(K_X+\Delta)$. Then $\hvol(y,Y,\Delta_Y)\ge \hvol(x,X,\Delta)$ for any $x\in X$ and any $y\in \pi^{-1}(x)$.
\end{lem}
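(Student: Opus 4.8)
The plan is to realize both local volumes as infima over essentially the \emph{same} family of valuations and then compare termwise. Concretely, I would show that every $w\in\Val^*_{Y,y}$, regarded as a valuation on $X$, lies in $\Val^*_{X,x}$, and that passing from $(Y,\Delta_Y)$ to $(X,\Delta)$ can only decrease its normalized volume; taking the infimum over all such $w$ then gives the inequality.

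For the details, fix $w\in\Val^*_{Y,y}$ and view it as a valuation on $X$ via $K(X)=K(Y)$ (which holds since $\pi$ is birational). Because $\pi$ is proper and the center of $w$ on $Y$ is the closed point $y$, the center of $w$ on $X$ is the closed point $\pi(y)=x$; in particular some power of $\mathfrak{m}_x$ lies in each valuation ideal $\fa_m(w)\subseteq\cO_{X,x}$, so the colengths below are finite. Next I would invoke the standard transformation rule for log discrepancies under a proper birational morphism,
\[
A_{X,\Delta}(w)=A_{Y,\Delta_Y}(w)+w\bigl(K_Y+\Delta_Y-\pi^*(K_X+\Delta)\bigr);
\]
since $K_Y+\Delta_Y-\pi^*(K_X+\Delta)$ is anti-effective by hypothesis and $w$ is non-negative on effective divisors, this yields $0<A_{X,\Delta}(w)\le A_{Y,\Delta_Y}(w)<+\infty$ (the strict positivity because $(X,\Delta)$ is klt), so $w\in\Val^*_{X,x}$. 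For the volumes, the dominant morphism $\pi$ induces an injection $\cO_{X,x}\hookrightarrow\cO_{Y,y}$ which identifies the valuation ideal of $w$ in $\cO_{X,x}$ with the intersection of its valuation ideal in $\cO_{Y,y}$ with $\cO_{X,x}$; hence there are injections $\cO_{X,x}/\fa_m(w)\hookrightarrow\cO_{Y,y}/\fa_m(w)$ of finite-length modules, and comparing lengths and letting $m\to\infty$ gives $\vol_{X,x}(w)\le\vol_{Y,y}(w)$.

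Combining the two inequalities and using that $t\mapsto t^n$ is non-decreasing on $[0,\infty)$,
\[
\hvol_{X,\Delta}(w)=A_{X,\Delta}(w)^n\,\vol_{X,x}(w)\le A_{Y,\Delta_Y}(w)^n\,\vol_{Y,y}(w)=\hvol_{Y,\Delta_Y}(w),
\]
so $\hvol(x,X,\Delta)\le\hvol_{Y,\Delta_Y}(w)$ for every $w\in\Val^*_{Y,y}$; taking the infimum over such $w$ proves the claim when $x,y$ are closed points, and the case of non-closed points follows by passing to general closed points together with the constructibility of the local volume. I expect the only genuinely delicate ingredient to be the log-discrepancy transformation rule for arbitrary real valuations (and without assuming $Y$ is $\bQ$-factorial), but this is standard: it follows from the description of $A_{X,\Delta}$ in \cite{JM12} and \cite{BdFFU15}, or by first checking it for divisorial valuations on a common log resolution, where it is merely a comparison of coefficients, and extending by the density of quasi-monomial valuations. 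Everything else is elementary.
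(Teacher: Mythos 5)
Your proof is correct and follows the same valuation-by-valuation comparison that underlies the reference the paper cites (\cite{LX-cubic-3fold}*{Lemma 2.9(2)}); the paper itself only states that the boundary-free argument there carries over, whereas you have written out that argument with boundary. Each ingredient checks out: the center of $w$ on $X$ is $\pi(y)$ because $\cO_{X,\pi(y)}\subseteq\cO_{Y,y}\subseteq R_w$ and $\fm_{\pi(y)}=\fm_w\cap\cO_{X,\pi(y)}$; the log-discrepancy comparison $A_{X,\Delta}(w)\le A_{Y,\Delta_Y}(w)$ is the pullback identity applied to the anti-effective $\bQ$-Cartier divisor $K_Y+\Delta_Y-\pi^*(K_X+\Delta)$; the volume comparison $\vol_{X,x}(w)\le\vol_{Y,y}(w)$ follows from the injection $\cO_{X,x}/\fa_m(w)\hookrightarrow\cO_{Y,y}/\fa_m(w)$ (both quotients have finite length over the same residue field since the base field is algebraically closed); and taking the infimum over $w\in\Val^*_{Y,y}$ gives the claim for closed points, with the non-closed case reduced to general closed points via surjectivity of $\pi|_{\overline{\{y\}}}\colon\overline{\{y\}}\to\overline{\{x\}}$ and constructibility of the local volume.
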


\begin{proof}
This follows from the same proof of \cite{LX-cubic-3fold}*{Lemma 2.9(2)}, which tackles the boundary-free case.
\end{proof}

\begin{lem}[\cite{HLQ-ACC}*{Lemma 2.16}] \label{lem:vol inequality}
Let $x\in (X,\Delta)$ be a klt singularity of dimension $n$, let $\lambda>0$, and let $D$ be an effective $\bQ$-Cartier divisor on $X$ such that $(X,\Delta+(1+\lambda)D)$ is lc. Then
\[
\hvol(x,X,\Delta+D)\ge \left(\frac{\lambda}{1+\lambda} \right)^n \hvol(x,X,\Delta).
\] 
\end{lem}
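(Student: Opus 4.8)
The plan is to reduce the stated inequality of local volumes to a pointwise comparison of normalized volumes of individual valuations, exploiting that both $\hvol(x,X,\Delta+D)$ and $\hvol(x,X,\Delta)$ are infima over the \emph{same} set $\Val^*_{X,x}$. First I would record that $(X,\Delta+D)$ is indeed klt, so that the left-hand side is defined: from the identity
\[
K_X+\Delta+D=\tfrac{1}{1+\lambda}\bigl(K_X+\Delta+(1+\lambda)D\bigr)+\tfrac{\lambda}{1+\lambda}(K_X+\Delta)
\]
and the corresponding convex combination of log discrepancies along any divisor over $X$, the right-hand pair being klt and the left-hand one being lc force $A_{X,\Delta+D}>0$. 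The same $\bQ$-Cartier hypothesis on $D$ guarantees $v(D)<+\infty$ for every $v\in\Val_X$, so by the additivity $A_{X,\Delta+D}(v)=A_{X,\Delta}(v)-v(D)$ one has $A_{X,\Delta+D}(v)<+\infty$ exactly when $A_{X,\Delta}(v)<+\infty$; hence $\Val^*_{X,x}$ is literally the same set for the two pairs, and both local volumes are infima over it.

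Next, I would fix $v\in\Val^*_{X,x}$ and prove the key estimate $A_{X,\Delta+D}(v)\ge \tfrac{\lambda}{1+\lambda}A_{X,\Delta}(v)$. Since $(X,\Delta+(1+\lambda)D)$ is lc, its log discrepancy function is everywhere non-negative, so
\[
A_{X,\Delta}(v)-(1+\lambda)v(D)=A_{X,\Delta+(1+\lambda)D}(v)\ge 0,
\]
i.e. $v(D)\le \tfrac{1}{1+\lambda}A_{X,\Delta}(v)$. Plugging this into $A_{X,\Delta+D}(v)=A_{X,\Delta}(v)-v(D)$ gives the claimed bound, and in particular both quantities are non-negative. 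Then, since $\vol_{X,x}(v)\ge 0$, raising the estimate to the $n$-th power and multiplying by $\vol_{X,x}(v)$ yields
\[
\hvol_{X,\Delta+D}(v)=A_{X,\Delta+D}(v)^n\,\vol_{X,x}(v)\ge\left(\frac{\lambda}{1+\lambda}\right)^n A_{X,\Delta}(v)^n\,\vol_{X,x}(v)=\left(\frac{\lambda}{1+\lambda}\right)^n\hvol_{X,\Delta}(v),
\]
and the right-hand side is at least $(\lambda/(1+\lambda))^n\,\hvol(x,X,\Delta)$ by definition of the local volume. Taking the infimum over $v\in\Val^*_{X,x}$ on the left gives the inequality.

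This argument is short, and the only point requiring genuine care — what I regard as the main technical input — is the additivity $A_{X,\Delta+D}(v)=A_{X,\Delta}(v)-v(D)$ for \emph{arbitrary} valuations rather than just divisorial ones, together with the resulting identification of $\Val^*_{X,x}$ for the two pairs; this rests on the construction of the log discrepancy function in \cite{JM12} and \cite{BdFFU15}. If one prefers to avoid invoking this for general valuations, I would instead use the characterization of $\hvol$ as an infimum of $\hvol_{X,\bullet}$ over divisorial valuations (equivalently over Koll\'ar components), for which the additivity is elementary, and run the identical computation there.
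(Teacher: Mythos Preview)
Your proof is correct and follows essentially the same approach as the paper's own proof: derive $A_{X,\Delta}(v)\ge(1+\lambda)v(D)$ from the lc hypothesis, deduce $A_{X,\Delta+D}(v)\ge\tfrac{\lambda}{1+\lambda}A_{X,\Delta}(v)$, and pass to the infimum. You have simply been more careful in spelling out why $(X,\Delta+D)$ is klt and why the set $\Val^*_{X,x}$ is unchanged, details the paper leaves implicit.
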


\begin{proof}
By assumption, for any $v\in \Val^*_{X,x}$, we have $A_{X,\Delta}(v)\ge (1+\lambda)v(D)$, thus
\[
A_{X,\Delta+D}(v)=A_{X,\Delta}(v)-v(D)\ge \left(1-\frac{1}{1+\lambda}\right) A_{X,\Delta}(v)
\]
and hence
\[
A_{X,\Delta+D}(v)^n\cdot \vol(v)\ge \left(\frac{\lambda}{1+\lambda} \right)^n A_{X,\Delta}(v)^n\cdot \vol(v).
\]
Taking the infimum over all $v\in \Val^*_{X,x}$, the lemma follows.
\end{proof}

\subsection{Complements}

In this subsection we recall some results about complements of singularities. A DCC set is a subset of $\bR$ that satisfies the descending chain condition.

\begin{defn}
A $\bQ$-complement of an lc pair $(X,\Delta)$ is an effective $\bQ$-divisor $D\sim_\bQ -(K_X+\Delta)$ such that $(X,\Delta+D)$ is lc. A \emph{$\bQ$-complement} of $\eta\in (X,\Delta)$ is an effective $\bQ$-Cartier $\bQ$-divisor $D$ such that $(X,\Delta+D)$ is lc at $\eta$ and has $\eta$ as the generic point of an lc center\footnote{Morally speaking, this should be called a strictly lc $\bQ$-complement since we also require that the complement has an lc center at $\eta$. For simplicity, we drop the phrase ``strictly lc'' when we talk about complement if the center $\eta$ is specified.}. In either case, a $\bQ$-complement $D$ is called an \emph{$N$-complement} for $N\in\bN^*$ if $N(K_X+\Delta+D)\sim 0$.
\end{defn}


\begin{lem} \label{lem:strict N-complement}
Let $n\in\bN^*$ and let $I=\bar{I}\subseteq [0,1]\cap \bQ$ be a DCC set. Then there exists some integer $N>0$ depending only on $n$ and $I$ such that if $(X,\Delta)$ is a klt pair of dimension $n$ with $\Coef(\Delta)\subseteq I$ and $\eta\in X$, then any Koll\'ar component $E$ over $\eta\in (X,\Delta)$ is an lc place of some $N$-complement. In particular, every such $\eta\in (X,\Delta)$ has an $N$-complement.
\end{lem}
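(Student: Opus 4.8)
The plan is to reduce the statement to Birkar's theorem on boundedness of $N$-complements \cite{Birkar-bab-1} by restricting, via a plt blowup and adjunction, to the Koll\'ar component itself.

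First I would pass to the plt blowup. Since the assertion is local around $\eta$, after shrinking $X$ I may assume that $E$ is extracted by an honest plt blowup $\pi\colon Y\to X$: the variety $Y$ is normal, $E$ is the unique $\pi$-exceptional prime divisor, $-E$ is $\pi$-ample, and $(Y,\Delta_Y+E)$ is plt, where $\Delta_Y$ is the strict transform of $\Delta$. From
\[
K_Y+\Delta_Y+E=\pi^*(K_X+\Delta)+A_{X,\Delta}(E)\cdot E
\]
and the $\pi$-ampleness of $-E$, one gets that $-(K_Y+\Delta_Y+E)$ is $\pi$-ample. Set $Z:=\overline{\{\eta\}}=\pi(E)$ and let $\Delta_E:=\Diff_E(\Delta_Y)$ be the different, so that $K_E+\Delta_E=(K_Y+\Delta_Y+E)|_E$. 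By inversion of adjunction $(E,\Delta_E)$ is klt, and as $-(K_E+\Delta_E)$ is ample over $Z$, the $(n-1)$-dimensional $E$ is of Fano type over $Z$. Crucially, by the standard description of the coefficients of the different (see e.g.\ \cite{Kol13}), $\Coef(\Delta_E)$ lies in a DCC set $D(I)\subseteq [0,1]\cap\bQ$ that depends only on $I$.

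Next I would invoke boundedness of complements. Applying \cite{Birkar-bab-1} to the Fano type pair $(E,\Delta_E)$ of dimension $n-1$ with coefficients in the DCC set $D(I)$ --- relatively over $Z$ and over a neighbourhood of $\eta$ when $\eta$ is not closed --- produces an integer $N=N(n,I)>0$ and, after further shrinking $X$, an $N$-complement $D_E\ge 0$ of $(E,\Delta_E)$. I would then lift $D_E$ to $X$: since $-(K_Y+\Delta_Y+E)$ is $\pi$-ample and $(Y,\Delta_Y+E)$ is plt, the standard lifting argument for complements from a divisorial lc place, based on relative Kawamata--Viehweg vanishing (cf.\ \cite{Birkar-bab-1}), yields an effective $\bQ$-divisor $D_Y$ with $E\not\subseteq\Supp D_Y$, $D_Y|_E=D_E$, $(Y,\Delta_Y+E+D_Y)$ lc near $E$, and $N(K_Y+\Delta_Y+E+D_Y)\sim 0$ over $X$. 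Setting $D:=\pi_*D_Y$, the divisor $K_Y+\Delta_Y+E+D_Y$ is $\pi$-trivial with pushforward $K_X+\Delta+D$, hence equals $\pi^*(K_X+\Delta+D)$; therefore $D$ is $\bQ$-Cartier, $N(K_X+\Delta+D)\sim 0$, the pair $(X,\Delta+D)$ is lc at $\eta$, and $E$ is an lc place of $(X,\Delta+D)$ with center $\eta$ --- i.e.\ $D$ is an $N$-complement of $\eta\in(X,\Delta)$ having $E$ as an lc place. The last sentence of the lemma then follows because every $\eta\in(X,\Delta)$ carries a Koll\'ar component (Lemma \ref{lem:kc exist on every dlt model}).

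The hard part is the uniformity of $N$, and it comes down to the coefficient bookkeeping: one must be sure the coefficients of the different $\Delta_E$ remain in one fixed DCC set depending only on $n$ and $I$, so that a single $N$ from Birkar's theorem works for all the singularities at once; this is exactly what the hypotheses $\Coef(\Delta)\subseteq I$ and ``$I$ DCC'' are there for. A secondary technical point is running Birkar's complement machinery relatively over $Z$ for non-closed $\eta$ and combining it with the lifting step; this is where the defining property of a primitive divisor, namely $\pi$-ampleness of $-E$ (equivalently of $-(K_Y+\Delta_Y+E)$), is indispensable, as it both puts $E$ into Fano type over $Z$ and provides the positivity needed for the vanishing used to lift $D_E$ back to $X$.
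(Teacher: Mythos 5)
Your proposal is correct, but it takes a genuinely different (and longer) route than the paper's. You restrict to the Koll\'ar component itself: you pass to the plt blowup $\pi\colon Y\to X$, take adjunction to $(E,\Delta_E=\Diff_E(\Delta_Y))$, note that $\Coef(\Delta_E)$ lies in a DCC set depending only on $I$, apply boundedness of complements to the $(n-1)$-dimensional Fano-type pair $(E,\Delta_E)\to Z=\pi(E)$, and then lift the resulting complement back to $Y$ via a relative Kawamata--Viehweg vanishing argument. The paper instead observes that $-(K_Y+\Delta_Y+E)$ is $\pi$-ample so $Y$ itself is of Fano type over $X$, and applies the relative boundedness-of-complements theorem (\cite{HLS-epsilon-plt-blowup}*{Theorem 1.10}, built on \cite{Birkar-bab-1}*{Theorem 1.8}) directly to the $n$-dimensional contraction $(Y,\Delta_Y+E)\to X$, with $E$ in the round-down; this produces $D_Y$ with $(Y,\Delta_Y+E+D_Y)$ lc and $N(K_Y+\Delta_Y+E+D_Y)\sim 0$ in one step, with no restriction or lifting needed. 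The adjunction-and-lift you perform is precisely the inductive mechanism inside the cited theorem, so the paper's reference is a genuine shortcut; conversely your route is a bit more self-contained but forces you to re-do the lifting step, which has some unglamorous bookkeeping (one must shrink $\Delta_Y$ to a nearby boundary with coefficients in $\frac1N\bZ$ so that the relevant sheaves are honest line bundles before invoking vanishing, and one should justify, as the paper does via Shokurov's basepoint-free theorem, that $N(K_Y+\Delta_Y+E+D_Y)$ descends to $N(K_X+\Delta+D)\sim 0$ rather than merely being $\pi$-trivial). Those are standard technicalities and your outline is sound; they are just handled for you if you apply the theorem at the level of $Y\to X$.
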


\begin{proof}
This should be well known to experts but we provide a proof for the readers' convenience. We may assume that $1\in I$. Let $\pi\colon Y\to X$ be the plt blowup that extracts $E$. Since $E$ is a Koll\'ar component, $(Y,\Delta_Y+E)$ is plt, $-(K_Y+\Delta_Y+E)$ is $\pi$-ample and hence $Y$ is of Fano type over $X$. By \cite{HLS-epsilon-plt-blowup}*{Theorem 1.10} (which builds on \cite{Birkar-bab-1}*{Theorem 1.8}), after possibly replacing $X$ by a neighbourhood of $\eta$, there exists an integer $N>0$ that only depends on $n$ and $I$, and an effective $\bQ$-Cartier $\bQ$-divisor $D_Y$ on $Y$ such that $N(\Delta_Y+D_Y)$ has integer coefficients, $(Y,\Delta_Y+E+D_Y)$ is lc, and
\[
N(K_Y+\Delta_Y+E+D_Y)\sim 0.
\]
It follows that if we let $D=\pi_* D_Y$, then $K_Y+\Delta_Y+E+D_Y=\pi^*(K_X+\Delta+D)$ and hence $(X,\Delta+D)$ is lc with $E$ as an lc place. Since $\eta$ is the center of $E$, this implies that $D$ is a $\bQ$-complement of $\eta\in (X,\Delta)$. Moreover, the line bundle $N(K_Y+\Delta_Y+E+D_Y)$ descends to the line bundle $N(K_X+\Delta+D)$ by Shokurov's basepoint-free theorem (see e.g. \cite{KM98}*{Theorem 3.3}). Thus $D$ is also an $N$-complement. Since every klt singularity has a Koll\'ar component, this finishes the proof.
\end{proof}

\begin{lem} \label{lem:strict comp quasi-etale pullback}
Let $f\colon \big(y\in (Y,\Delta_Y)\big)\to \big(x\in(X,\Delta)\big)$ be a finite morphism between lc singularities such that $f^*(K_X+\Delta)=K_Y+\Delta_Y$. Let $D$ be a divisor on $X$. Then $D$ is a $\bQ$-complement of $x\in (X,\Delta)$ if and only if $f^*D$ is a $\bQ$-complement of $y\in (Y,\Delta_Y)$. 
\end{lem}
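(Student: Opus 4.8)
The plan is to reduce, by passing to a Galois closure, to the case of a finite crepant \emph{Galois} cover, and then to run the ramification formula for log discrepancies together with the transitivity of the Galois action on the fiber over $x$.

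Two reductions are immediate. Since $K_X+\Delta$ is $\bQ$-Cartier and $f$ is finite and surjective, and since $D$ is (as part of being a $\bQ$-complement, or in order for $f^*D$ to be written) $\bQ$-Cartier, we have $K_Y+\Delta_Y+f^*D=f^*(K_X+\Delta+D)$, and $D$ is effective if and only if $f^*D$ is. Thus, by the definition of a $\bQ$-complement of a closed point, it suffices to show: $(X,\Delta+D)$ is lc at $x$ with $\{x\}$ an lc center if and only if $(Y,\Delta_Y+f^*D)$ is lc at $y$ with $\{y\}$ an lc center. I will freely use the ramification formula: if $\pi\colon V\to U$ is finite surjective between normal varieties and $B_V$ is defined by $K_V+B_V=\pi^*(K_U+B_U)$, then for every prime divisor $E$ over $V$ lying over a prime divisor $F$ over $U$ one has $A_{V,B_V}(E)=r_E\cdot A_{U,B_U}(F)$ with $r_E$ a positive integer; moreover every divisor over $U$ lifts, after normalizing a suitable fiber product, to a divisor over $V$ lying over it, whose center maps onto $C_U(F)$ --- in particular, if $C_U(F)$ is a closed point $u$ then the lift is centered at a closed point of $\pi^{-1}(u)$.

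Now let $\tilde Y$ be the normalization of $X$ in the Galois closure of $K(Y)/K(X)$, set $G=\Gal(K(\tilde Y)/K(X))$, and let $H\le G$ be the subgroup with $K(\tilde Y)^H=K(Y)$, so the induced maps $q\colon\tilde Y\to Y$ and $h:=f\circ q\colon\tilde Y\to X$ are finite Galois with groups $H$ and $G$. Define $\Delta_{\tilde Y}$ by $K_{\tilde Y}+\Delta_{\tilde Y}=h^*(K_X+\Delta)$. Since $h\circ\sigma=h$ for each $\sigma\in G$ and $\sigma$ permutes the prime divisors of $\tilde Y$ over a fixed divisor of $X$ without altering ramification indices, both $h^*\Delta$ and the ramification divisor of $h$ are $G$-invariant; hence $\Delta_{\tilde Y}$, and so the pair $(\tilde Y,\Delta_{\tilde Y}+h^*D)$, is $G$-invariant. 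Moreover $K_{\tilde Y}+\Delta_{\tilde Y}+h^*D=q^*(K_Y+\Delta_Y+f^*D)$, so this pair is simultaneously the crepant pullback of $(X,\Delta+D)$ via $h$ and of $(Y,\Delta_Y+f^*D)$ via $q$.

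The heart of the matter is the following assertion for a finite crepant Galois cover $\pi\colon V\to U$ with group $\Gamma$ and $\Gamma$-invariant pair $(V,B_V)$, and a closed point $u\in U$: $(U,B_U)$ is lc at $u$ with $\{u\}$ an lc center if and only if $(V,B_V)$ is lc at some --- equivalently, every --- point of $\pi^{-1}(u)$ with that point an lc center. This follows from the ramification formula and the lifting statement above, using that $\Gamma$ acts transitively on $\pi^{-1}(u)$ and that, $(V,B_V)$ being $\Gamma$-invariant, both the log discrepancy and the center of a divisor over $V$ are carried along by any $\sigma\in\Gamma$: negative log discrepancies and lc-ness pass up and down between $(U,B_U)$ near $u$ and $(V,B_V)$ near the points of $\pi^{-1}(u)$, and a divisor realizing $\{u\}$ as an lc center lifts to one centered at a point of $\pi^{-1}(u)$ (and conversely, a divisor realizing any point of $\pi^{-1}(u)$ as an lc center pushes down to one centered at $u$); transitivity then lets us move between the different points of $\pi^{-1}(u)$. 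Applying this first to $h$ and then to $q$, we obtain that $D$ is a $\bQ$-complement of $x\in(X,\Delta)$ if and only if $(\tilde Y,\Delta_{\tilde Y}+h^*D)$ is lc along $h^{-1}(x)$ with every point of $h^{-1}(x)$ an lc center, and that $f^*D$ is a $\bQ$-complement of $y\in(Y,\Delta_Y)$ if and only if the same holds along $q^{-1}(y)$. Since $q^{-1}(y)\subseteq h^{-1}(x)$ the first condition implies the second; conversely, picking any $\tilde y\in q^{-1}(y)$, which lies in $h^{-1}(x)$, the $G$-invariance of the pair and the transitivity of $G$ on $h^{-1}(x)$ propagate the condition from $\tilde y$ to all of $h^{-1}(x)$. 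This yields the desired equivalence. The only genuinely delicate point is exactly this passage to the Galois closure: without it a divisor over $X$ lifts only to a divisor over $Y$ centered at \emph{some} point of $f^{-1}(x)$, and the transitivity of the Galois action is what forces the prescribed point $y$ to be reached.
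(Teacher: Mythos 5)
Your proof is correct, but heavier than the paper's, which simply invokes the proof of \cite{KM98}*{Proposition 5.20(2)} (discrepancy comparison under finite crepant covers via normalized fibre products). The concern you isolate --- that lifting an lc place of $(X,\Delta+D)$ along the fibre product produces a divisor centred at \emph{some} point of $f^{-1}(x)$, not necessarily $y$ --- is legitimate, and your Galois-closure construction correctly handles it. It is not strictly needed, though: for a divisorial valuation $v$ of $K(X)$ centred at $x$ and any $y\in f^{-1}(x)$, some extension $w$ of $v$ to $K(Y)$ has centre $y$. To see this, let $A'$ be the integral closure of $\cO_{X,x}$ in $K(Y)$ (its maximal ideals are the points of $f^{-1}(x)$) and $C$ the integral closure of the valuation ring $R_v$ in $K(Y)$ (its maximal ideals are the extensions of $v$); then $A'\otimes_{\cO_{X,x}}R_v$ embeds in $K(Y)$ as a subring between $A'$ and $C$, finite flat over $R_v$ with maximal ideals in bijection with those of $A'$, and $C$ is integral over it, so lying over gives the claim. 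Thus the fibre-product lifts of an lc place already have centres at every point of $f^{-1}(x)$, and no passage to $\tilde Y$ is required; the converse direction is the easy pushforward you describe. A small point in your write-up: when $h$ ramifies, $\Delta_{\tilde Y}$ may fail to be effective, so $(\tilde Y,\Delta_{\tilde Y}+h^*D)$ should be read as a sub-pair (this does not affect the argument). Both routes are valid; yours makes the fibre-transitivity issue explicit via the Galois action, while the paper's one-line reference presumes the lying-over fact implicitly.
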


\begin{proof}
This is a direct consequence of the proof of \cite{KM98}*{Proposition 5.20(2)}.
\end{proof}

\subsection{Index one covers} \label{sect:index one covers}

The index one cover of a $\bQ$-Cartier Weil divisor $D\subseteq X$  \cite{KM98}*{Definition 5.19} is a cyclic cover $\tX=\Spec\left( \bigoplus_{0\le m\le r-1} \cO_X(mD)\right)$, where $r$ is the Cartier index of $D$. It has the property that the preimage of $D$ becomes Cartier. We will need a similar construction for multiple divisors. 

\begin{lemdefn}
Let $x\in X$ be a normal singularity and let $D_1,\cdots,D_m$ be $\bQ$-Cartier Weil divisors on $X$. Let $H$ be the subgroup of the local class group of $x\in X$ generated by all the $D_i$'s and consider
\[
\tX:=\Spec \bigoplus_{D\in H} \cO_X(D).
\]
It comes with a natural quasi-\'etale Galois morphism $\pi\colon \tX\to X$ with abelian covering group $\widehat{H}:=\Hom(H,\bC^*)$. The preimage $\pi^{-1}(x)$ consists of a single point $\tx$ and $\pi^*D_i$ is Cartier for all $1\le i\le m$. We call $\tx\in \tX$ the (simultaneous) index one cover of $D_1,\cdots,D_m$.
\end{lemdefn}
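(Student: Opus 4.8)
The plan is to realize $\tX$ as the relative spectrum over $X$ of an explicit sheaf of $\cO_X$-algebras, generalizing the cyclic index-one cover \cite{KM98}*{Definition 5.19} so as to allow an arbitrary finite abelian covering group. First I would pass to a convenient global situation. Each $D_i$ is $\bQ$-Cartier, so its class is torsion in the local class group; hence $H$ is a finite abelian group, generated by $D_1,\dots,D_m$ with only finitely many relations among them. Since $\Cl(\cO_{X,x})=\varinjlim_{x\in U}\Cl(U)$ over affine neighbourhoods $U$ of $x$, after shrinking $X$ I may assume that $D_1,\dots,D_m$ already generate a finite subgroup $H\subseteq\Cl(X)$ mapping isomorphically onto its image in $\Cl(\cO_{X,x})$. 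I then fix a decomposition $H=\bigoplus_{j=1}^{k}(\bZ/n_j)\,g_j$, choose effective Weil divisors $G_j$ representing $g_j$, and choose $h_j\in K(X)^{\times}$ with $\operatorname{div}(h_j)=n_jG_j$ (possible since $n_jg_j=0$ in $\Cl(X)$); for $D=\sum_ja_jg_j$ with $0\le a_j<n_j$ set $\sigma(D):=\sum_ja_jG_j$, an effective representative of the class $D$.

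Next I would build the algebra. Regarding each $\cO_X(\sigma(D))$ as a subsheaf of the constant sheaf $K(X)$, ordinary multiplication of functions gives $\cO_X(\sigma(D))\cdot\cO_X(\sigma(D'))\subseteq\cO_X(\sigma(D)+\sigma(D'))$; and since $\sigma(D)+\sigma(D')=\sigma(D+D')+\sum_j\varepsilon_j(D,D')\,n_jG_j$ with $\varepsilon_j(D,D')\in\{0,1\}$ the "carry digits", multiplication by $c(D,D'):=\prod_jh_j^{\varepsilon_j(D,D')}$ carries $\cO_X(\sigma(D)+\sigma(D'))$ isomorphically onto $\cO_X(\sigma(D+D'))$. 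Declaring the product of $f\in\cO_X(\sigma(D))$ and $f'\in\cO_X(\sigma(D'))$ to be $c(D,D')ff'$ then makes $\mathcal{A}:=\bigoplus_{D\in H}\cO_X(\sigma(D))$ into a coherent sheaf of commutative $\cO_X$-algebras; associativity holds because the exponent of each $h_j$ occurring on either side of $c(D,D')c(D+D',D'')=c(D',D'')c(D,D'+D'')$ equals the $j$-th total carry incurred in forming $\sigma(D)+\sigma(D')+\sigma(D'')$, which is independent of the order of addition. I would then put $\tX:=\mathbf{Spec}_X\mathcal{A}$, with $\pi\colon\tX\to X$ the induced finite morphism; as with the cyclic cover, a different choice of representatives alters $\mathcal{A}$ only by an $\cO_X$-algebra isomorphism.

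Then I would read off the stated properties. The $H$-grading makes $\widehat H=\Hom(H,\bk^{\times})$ act on $\tX$ over $X$ (a character $\chi$ acting on the degree-$D$ summand by $\chi(D)$), and since $\widehat H$ separates the points of $H$ in characteristic $0$ one has $\mathcal{A}^{\widehat H}=\cO_X$, i.e.\ $X=\tX/\widehat H$. On the open set $U_0\subseteq X$ where every $\cO_X(\sigma(D))$ is invertible — whose complement has codimension $\ge 2$ since $X$ is normal — the algebra $\mathcal{A}$ is locally obtained from $\cO_X$ by adjoining roots of units, hence \'etale; so $\pi$ is quasi-\'etale, and $\tX$, being reflexive (so $S_2$) over $\cO_X$ and regular in codimension $1$ (every codimension-$1$ point of $\tX$ lies over $U_0$), is normal. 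Moreover $\tX$ is irreducible: otherwise $\widehat H$ would permute its finitely many components transitively and faithfully on each, so some component would be Galois over $X$ with group a proper subgroup $\widehat H_1\subsetneq\widehat H$; choosing $0\ne h=\sum_ja_jg_j\in H$ annihilated by $\widehat H_1$, the rational function $\prod_jt_j^{a_j}$ on that component — where $t_j$ is the tautological $n_j$-th root of $h_j$, namely the element $1$ in the degree-$g_j$ summand — is $\widehat H_1$-invariant, hence lies in $K(X)$, yet its divisor is $\pi^{*}\bigl(\sum_ja_jG_j\bigr)$, forcing $\sum_ja_jG_j$ to be principal and thus $h=0$ in $\Cl(X)$, a contradiction. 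Hence $\pi$ is an abelian Galois cover with group $\widehat H$.

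For the two remaining assertions: since $\sigma(D_i)$ is effective, the constant function $1$ lies in $\cO_X(\sigma(D_i))$ and defines a global function $s_i\in\Gamma(\tX,\cO_{\tX})$; comparing orders at the codimension-$1$ points of $\tX$, all of which lie over $U_0$ where $\pi$ is \'etale, yields $\operatorname{div}_{\tX}(s_i)=\pi^{*}\sigma(D_i)$, so $\pi^{*}D_i\sim\operatorname{div}_{\tX}(s_i)$ is principal, in particular Cartier; and the points of $\pi^{-1}(x)$ are the maximal ideals of the semilocal ring $\mathcal{A}_x$, equivalently of its completion $\widehat{\mathcal{A}_x}$, which is the analogous algebra over the complete normal local ring $\widehat{\cO}_{X,x}$. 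Because $\Cl(\cO_{X,x})\to\Cl(\widehat{\cO}_{X,x})$ is injective (a reflexive module that becomes free after completion is free, by faithfully flat descent), the classes $g_j$ keep order $n_j$ there, so $\widehat{\mathcal{A}_x}$ is a domain by the same irreducibility argument, and being finite over a complete local ring it is local; hence $\mathcal{A}_x$ is local and $\pi^{-1}(x)$ is a single point $\tx$. The hard part is really the algebra construction together with the control of the fibers: one must check that $\mathcal{A}$ is a well-defined associative algebra independent of the choices (the carry/cocycle bookkeeping), show that $\tX$ is irreducible so that $\pi$ is genuinely Galois with the full group $\widehat H$ rather than a disjoint union of smaller covers, and use the injectivity of $\Cl$ under completion to pin $\pi^{-1}(x)$ down to one point; the remainder — quasi-\'etaleness, normality, the Cartier property of $\pi^{*}D_i$ — is routine codimension-$1$ bookkeeping.
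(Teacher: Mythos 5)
Your proof is correct, but it follows a route genuinely different from the paper's, which is much shorter. The paper treats the $\cO_X$-algebra structure, quasi-\'etaleness, normality and the $\widehat{H}$-Galois property of $\tX$ as a standard generalization of the cyclic case of \cite{KM98}*{Definition 5.19}, and proves only the two statements it deems non-trivial. For the claim that $\pi^{-1}(x)$ is a single point, the paper invokes the local nilpotency argument of \cite{Kol13}*{2.48(1)}: since $D\neq 0$ in the local class group, the evaluation map $\cO_X(D)\otimes\cO_X(-D)\to\cO_X/\fm$ vanishes, so every element of $\bigoplus_{D\neq 0}\cO_X(D)$ is nilpotent in $\cO_{\tX}/\fm\cO_{\tX}$, which forces $\pi^{-1}(x)$ to be a point. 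You instead pass to the completion, invoke the injectivity of $\Cl(\cO_{X,x})\hookrightarrow\Cl(\widehat{\cO}_{X,x})$ (relying implicitly on excellence of $\cO_{X,x}$ so that the completion is normal), and argue that $\widehat{\mathcal{A}_x}$ is a domain, hence local; this works but uses heavier input than the paper's elementary argument. For the Cartier claim, the paper simply observes that $\pi$ factors through the cyclic index-one cover of each $D_i$, so $\pi^*D_i$ is Cartier by functoriality; you exhibit the tautological section $s_i\in\Gamma(\tX,\cO_{\tX})$ with $\operatorname{div}(s_i)=\pi^{*}\sigma(D_i)$ and conclude that $\pi^*D_i$ is a principal divisor. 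A genuine merit of your treatment is that you construct the carry/cocycle algebra, the $\widehat{H}$-action, and the irreducibility of $\tX$ from scratch -- in particular, your verification that the covering group is the full $\widehat{H}$ via the $\widehat{H}_1$-invariant function $\prod_j t_j^{a_j}$ is a nice self-contained check that the paper leaves implicit -- at the cost of considerably more length than the paper's two-step proof.
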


\begin{proof}
The only non-trivial statement is that $\pi^{-1}(x)$ consists of a single point $\tx$ and $\pi^*D_i$ is Cartier. The first claim can be proved as in \cite{Kol13}*{2.48(1)}: the evaluation map $\cO_X(D)\otimes \cO_X(-D)\to \cO_X/\fm$ is zero for all $D\in H\setminus\{0\}$, thus every $f\in \bigoplus_{D\in H\setminus\{0\}} \cO_X(D)$ is nilpotent in $\cO_{\tX}/\fm\cO_{\tX}$ and this implies that the preimage of $x$ is a single point in $\tX$. To see the other claim, note that the map $\pi\colon \tX\to X$ factors through the index one cover of $D_i$, thus $\pi^*D_i$ is Cartier.
\end{proof}

\subsection{Family of singularities and special boundedness}

In this subsection, we recall the definition for special boundedness of singularities, following \cite{HLQ-ACC}.

\begin{defn}
We call $B\subseteq (\cX,\cD)\to B$ a $\bQ$-Gorenstein family of klt singularities (over a normal but possibly disconnected base $B$) if
\begin{enumerate}
    \item $\cX$ is flat over $B$, and $B\subseteq \cX$ is a section of the projection,
    \item For any closed point $b\in B$, $\cX_b$ is connected, normal and is not contained in $\Supp(\cD)$,
    \item $K_{\cX/B}+\cD$ is $\bQ$-Cartier and $b\in (\cX_b,\cD_b)$ is a klt singularity for any $b\in B$.
\end{enumerate}
Given a $\bQ$-Gorenstein family $B\subseteq \cX\to B$ of klt singularities and a klt singularity $x\in X$, we denote by $(x\in X^{\an})\in (B\subseteq \cX^{\an}\to B)$, if $\widehat{\cO_{X,x}}\cong \widehat{\cO_{\cX_b,b}}$ for some closed point $b\in B$.
\end{defn}

\begin{defn}
A special test configuration of a klt singularity $x\in (X,\Delta)$ is a $\bQ$-Gorenstein family $\bA^1\subseteq (\cX,\cD)\to \bA^1$ of klt singularities over $\bA^1$, together with a $\bG_m$-action on $(\cX,\cD)$ that commutes with the standard $\bG_m$-action on $\bA^1$, such that $\left(t\in (\cX_t,\cD_t)\right)\cong \left( x\in (X,\Delta)\right)$ for all $t\in \bA^1\setminus \{0\}$. Its central fiber $0\in (\cX_0,\cD_0)$ is called a \emph{special degeneration} of $x\in (X,\Delta)$.  
\end{defn}

\begin{defn} \label{defn:special bounded}
A set $\cP$ of klt singularities is said to be \emph{log bounded up to special degeneration}
if there is a log bounded set $\cC$ of pairs, such that the following holds.

For any klt singularity $x\in (X,\Delta)$ in $\cP$, there exist a special degeneration $x_0\in (X_0,\Delta_0)$ of $x\in (X,\Delta)$,
a pair $(Y,D)\in \cC$ and a closed point $y\in Y$, such that $\left(y\in (Y,\Supp(D))\right) \cong \left(x_0\in (X_0,\Supp(\Delta_0))\right)$ in some neighborhoods of $y\in Y$ and $x_0\in X_0$ respectively.
\end{defn}

When $\cP$ is log bounded and the coefficients set $I\subseteq \bQ$ is finite, we will simply say that $\cP$ is bounded up to special degeneration, since in this case there is a $\bQ$-Gorenstein family of klt singularities such every $x\in (X,\Delta)$ in $\cP$ specially degenerates to at least one of them.

The following result is useful when showing that a class of klt singularities is log bounded up to special degeneration.

\begin{lem} \label{lem:delta-plt implies special bounded}
Let $n\in\bN^*$ and let $\varepsilon,\delta,c>0$. Then the set of $n$-dimensional $\varepsilon$-lc singularities $x\in (X,\Delta)$ with $\Coef(\Delta)\ge c$ that admits a $\delta$-plt blowup is log bounded up to special degeneration.
\end{lem}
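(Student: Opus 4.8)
The plan is to deduce this from \cite{HLM-special-bounded}: up to the presence of the boundary $\Delta$, it is precisely their boundedness-up-to-special-degeneration theorem for $(\varepsilon,\delta)$-lc singularities, an $\varepsilon$-lc singularity admitting a $\delta$-plt blowup being exactly an $(\varepsilon,\delta)$-lc singularity. I nonetheless record the scheme of the argument, since the same strategy is reused (with further invariants fixed) elsewhere in the paper. Given such $x\in(X,\Delta)$, I would fix a Koll\'ar component $E$ over $x\in(X,\Delta)$ whose associated prime blowup $\pi\colon Y\to X$ is a $\delta$-plt blowup, write $\Delta_Y$ for the strict transform of $\Delta$, so $(Y,\Delta_Y+E)$ is $\delta$-plt near $E$, and degenerate $x\in(X,\Delta)$ to the orbifold cone over the base of $E$ via the Rees construction of $\ord_E$: with $\fa_m:=\pi_*\cO_Y(-mE)$, the algebra $\bigoplus_{m\in\bZ}t^{-m}\fa_m\subseteq\cO_X[t,t^{-1}]$ defines a $\bG_m$-equivariant degeneration of $(X,\Delta)$ over $\bA^1$ whose central fibre $(X_0,\Delta_0)$ is the affine orbifold cone over $(E,\Delta_E:=\Diff_E(\Delta_Y))$ with polarization $L:=-E|_E$. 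Since $x\in(X,\Delta)$ is klt and $E$ is a Koll\'ar component, $x_0\in(X_0,\Delta_0)$ is again klt, i.e.\ this is a special test configuration (cf.\ \cite{LX-stability-kc}), so it suffices to bound the family of pairs $(X_0,\Supp\Delta_0)$.

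Next I would check that the base is bounded. By adjunction $-(K_E+\Delta_E)=-(K_Y+\Delta_Y+E)|_E$ is ample, and since $(Y,\Delta_Y+E)$ is $\delta$-plt the pair $(E,\Delta_E)$ is $\delta$-klt; the coefficients of $\Delta_E$ lie in $[0,1)$ and, by the different formula, every non-zero one is $\ge\min\{c,1/2\}$. Hence $(E,\Delta_E)$ ranges over the $(n-1)$-dimensional $\delta$-klt weak log Fano pairs with boundary coefficients bounded away from $0$, which is a bounded family by \cite{Birkar-bab-1} (boundedness of $E$, then of the bounded-degree effective divisor $\Delta_E$). Restricting $\pi^*(K_X+\Delta)=K_Y+\Delta_Y+E-A_{X,\Delta}(E)E$ to $E$ gives $-(K_E+\Delta_E)\sim_\bQ A_{X,\Delta}(E)\cdot L$; in particular $L$ is ample and $x_0\in(X_0,\Delta_0)$ is $\bQ$-Gorenstein klt with $K_E+\Delta_E$ proportional to the polarizing class, as a cone must satisfy.

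The remaining and, I expect, principal difficulty is to bound, uniformly in $n,\varepsilon,\delta,c$ only, the Cartier index of $L$ — equivalently the $\bQ$-Gorenstein index of $x_0\in X_0$, or the Cartier index of $E$ on $Y$ near $E$, or, via the displayed relation, the log discrepancy $A_{X,\Delta}(E)$. This is where the $\varepsilon$-lc hypothesis on $(X,\Delta)$ and the fact that $\pi$ is a genuine $\delta$-plt blowup are used together: $E$ is an lc place of a bounded $N$-complement of $x\in(X,\Delta)$ (in the spirit of Lemma \ref{lem:strict N-complement}) and $Y$ is of Fano type over $X$, and from this one forces $(Y,\Delta_Y+E)\to X$ into a bounded family over a neighbourhood of $E$ — this is the technical core of \cite{HLM-special-bounded} — whence the index bound. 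Granting it, the orbifold cones $(X_0,\Delta_0)$ are built from a bounded base and a polarization of bounded denominator, hence form a log bounded family; organising them into a single $\bQ$-Gorenstein family of klt singularities and using that each $x\in(X,\Delta)$ in our class specially degenerates to the corresponding $x_0\in(X_0,\Delta_0)$ completes the proof.
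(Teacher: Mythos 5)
Your first two paragraphs follow the paper's own proof closely: degenerate to the orbifold cone over $(E,\Delta_E)$ with polarization $L=-E|_E$ via the Rees construction, and bound the base $(E,\Delta_E)$ as a $\delta$-klt log Fano pair with boundary coefficients bounded away from zero. (Your lower bound $\min\{c,1/2\}$ on the coefficients via the different formula is a perfectly good variant of the paper's $c/m$; the paper cites \cite{Birkar-bab-2} rather than \cite{Birkar-bab-1} but both work here.)

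The gap is in the third paragraph, where you misidentify what remains to be controlled. You assert that bounding the Cartier index of $L$ is ``equivalently'' bounding the log discrepancy $A_{X,\Delta}(E)$, and then defer to the ``technical core'' of \cite{HLM-special-bounded} involving $E$ being an lc place of a bounded complement. This conflates two genuinely distinct quantities and points at the wrong mechanism. The denominator of $L$ (i.e.\ the smallest $m$ with $mL$ integral, equivalently the Cartier index of $E$ at codimension-one points of $E$) has nothing to do with $A_{X,\Delta}(E)$: it is controlled \emph{directly} by the $\delta$-plt hypothesis by a purely local argument in codimension two. Localizing $(Y,E)$ at a codimension-one point of $E$ gives a two-dimensional $\delta$-plt germ, which by the classification of surface plt pairs (cyclic quotient singularities) has order bounded in terms of $\delta$ alone; hence $mE$ is Cartier in codimension one on $E$ for some $m=m(\delta)$, so $mL$ has integer coefficients. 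What the $\varepsilon$-lc hypothesis is then used for is entirely different: it gives the \emph{lower} bound $A_{X,\Delta}(E)\ge\varepsilon$, which combined with $-(K_E+\Delta_E)\sim_\bQ A_{X,\Delta}(E)\cdot L$ and the boundedness of $(E,\Delta_E)$ yields an \emph{upper} bound on the degree of $mL$. No upper bound on $A_{X,\Delta}(E)$ is needed at any point, and invoking bounded complements or the full machinery of \cite{HLM-special-bounded} for this step is a detour; the codimension-two local computation plus the $\varepsilon$-lc lower bound already finish the boundedness of the triple $(E,\Supp(\Delta_E),L)$.
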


\begin{proof}
This is essentially \cite{HLM-special-bounded}*{Theorem 4.1}, at least when $X$ is $\bQ$-Gorenstein. In general, let $\pi\colon Y\to X$ be a $\delta$-plt that extracts a Koll\'ar component $E$, and let $L=-E|_E$ be the $\bQ$-divisor defined by \cite{HLS-epsilon-plt-blowup}*{Definition A.4}. Then there is a special degeneration of $x\in (X,\Delta)$ to the orbifold cone over $(E,\Delta_E:=\Diff_E(\Delta_Y))$ with polarization $L$ (see e.g. \cite{LX-stability-kc}*{Section 2.4} or \cite{LZ-Tian-sharp}*{Proposition 2.10}). Thus it suffices to show that the triple $(E,\Supp(\Delta_E),L)$ is bounded. 

Since $(Y,E)$ is $\delta$-plt, there exists some integer $m$ depending only on $\delta$ such that $mE$ is Cartier away from a codimension two set in $E$ (it suffices to inspect the codimension one points of $E$ as in the proof of \cite{HLM-special-bounded}*{Theorem 4.1}). Thus $mL$ has integer coefficients. By adjunction, we also see that $(E,\Delta_E)$ is $\delta$-klt and log Fano, and $\Coef(\Delta_E)\ge \frac{c}{m}$. By \cite{Birkar-bab-2}*{Theorem 1.1}, we first deduce that $E$ belongs to a bounded family. But as $\Coef(\Delta_E)\ge \frac{c}{m}$ and $-K_E-\Delta_E$ is ample, we further see that the degree of $\Supp(\Delta_E)$ is bounded from above, thus $(E,\Supp(\Delta_E))$ is log bounded. Finally $-(K_E+\Delta_E)\sim_{\bQ} A_{X,\Delta}(E)\cdot L\ge \varepsilon L$, thus the Weil divisor $mL$ also has bounded degree, hence the triple $(E,\Supp(\Delta_E),L)$ is bounded as desired.
\end{proof}







\section{Comparison between lct and volume}

In this section we prove the following statement which gives a positive answer to \cite{HLQ-ACC}*{Question 8.1}. It will play an important role in many of the reduction steps that we will carry out in the next few sections.

\begin{thm} \label{thm:lct>=vol}
For any $n\in\bN^*$, there exists some constant $c(n)>0$ depending only on $n$ such that for any $n$-dimensional $\bQ$-Gorenstein klt singularity $x\in (X,\Delta)$ we have
\[
\lct_x(X,\Delta;\Delta)\ge c(n)\cdot \hvol(x,X,\Delta).
\]
\end{thm}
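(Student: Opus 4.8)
The plan is to fix the valuation $v_0\in\Val^*_{X,x}$ that minimizes $\hvol_{X,\Delta}$ at $x$, so that $\hvol(x,X,\Delta)=A_{X,\Delta}(v_0)^n\cdot\vol(v_0)$, and to compare the two sides of the asserted inequality through $v_0$. We may assume $\Delta\neq 0$ (otherwise the left-hand side is $+\infty$), rescale $v_0$ so that $A_{X,\Delta}(v_0)=1$ (hence $\hvol(x,X,\Delta)=\vol(v_0)$), and note that $v_0(\Delta)>0$ since $x\in\Supp(\Delta)$. Recall that $\lct_x(X,\Delta;\Delta)=\inf\{A_{X,\Delta}(v)/v(\Delta)\}$, where a priori the infimum runs over all valuations whose center contains $x$; using that this threshold is the minimum of the analogous thresholds at the generic points of all subvarieties through $x$, and that $\hvol$ is lower semi-continuous (so $\hvol(\eta,X,\Delta)\ge\hvol(x,X,\Delta)$ for any such $\eta$), it suffices to prove the inequality at each such generic point, i.e. we may as well assume the infimum runs over $v\in\Val^*_{X,x}$. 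Here $v\mapsto v(\Delta)$ is meaningful because $\Delta$ is $\bQ$-Cartier ($X$ being $\bQ$-Gorenstein).

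The crucial ingredient is a \emph{uniform Izumi type estimate}: there is a constant $C=C(n)>0$ such that for every $v\in\Val^*_{X,x}$ and every effective $\bQ$-Cartier divisor $D$ on $X$,
\[
\frac{v(D)}{A_{X,\Delta}(v)}\ \le\ C(n)\cdot\frac{v_0(D)}{A_{X,\Delta}(v_0)}\,.
\]
This is the analogue of the classical Izumi inequality $\lct_x(X,\Delta;D)\ge c/\mult_x(D)$ with $\mult_x$ replaced by the minimizing valuation $v_0$ and, crucially, with a constant depending only on $n$. To prove it I would first establish the corresponding bound for the valuation ideals of $v_0$, namely $v(\fa_m(v_0))\le C(n)\,m\,A_{X,\Delta}(v)/A_{X,\Delta}(v_0)$ for all $m$, which expresses that no valuation computes $\lct_x\!\big(X,\Delta;\fa_\bullet(v_0)\big)$ essentially more efficiently than $v_0$ does; this is exactly the type of estimate produced by the minimality of $v_0$ for $\hvol_{X,\Delta}$ (comparing $\hvol_{X,\Delta}(v_0)$ with the normalized volumes of perturbations of $v_0$), combined with the fact that $\vol(v_0)=\hvol(x,X,\Delta)$ controls the colength growth of $\fa_\bullet(v_0)$. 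One then bootstraps from the ideals $\fa_m(v_0)$ to an arbitrary $f\in\cO_{X,x}$, using $f\in\fa_{v_0(f)}(v_0)$ and an integral-closure/Skoda argument, and hence to arbitrary $D$. I expect this uniform Izumi estimate, and in particular the passage to arbitrary $f$ with a purely dimensional constant, to be the main obstacle.

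Granting the uniform Izumi, taking $D=\Delta$ gives
\[
\lct_x(X,\Delta;\Delta)=\inf_{v\in\Val^*_{X,x}}\frac{A_{X,\Delta}(v)}{v(\Delta)}\ \ge\ \frac{A_{X,\Delta}(v_0)}{C(n)\,v_0(\Delta)}\,,
\]
so it remains to bound $v_0(\Delta)$ in terms of the other invariants of $v_0$: it suffices to show
\[
v_0(\Delta)\cdot A_{X,\Delta}(v_0)^{n-1}\cdot\vol(v_0)\ \le\ C'(n)\,,
\]
equivalently, with the normalization above, $v_0(\Delta)\cdot\hvol(x,X,\Delta)\le C'(n)$. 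This too should follow from the minimality of $v_0$: were $v_0$ to see too much of $\Delta$ relative to its log discrepancy, a small perturbation would strictly lower $\hvol_{X,\Delta}$. Concretely I would derive it from the variational inequality satisfied by a minimizer of the normalized volume, which bounds $A_{X,\Delta}(v_0)$ from below in terms of how much $v_0$ sees of $\Delta$, together with the universal bound $\hvol\le n^n$ and the monotonicity $\hvol(x,X,\Delta)\le\hvol(x,X,0)$; Lemma~\ref{lem:vol inequality}, applied to the klt pair $(X,0)$ and the divisor $\Delta$, should supply the quantitative input.

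Putting the last two displays together yields
\[
\lct_x(X,\Delta;\Delta)\ \ge\ \frac{A_{X,\Delta}(v_0)}{C(n)\,v_0(\Delta)}\ \ge\ \frac{A_{X,\Delta}(v_0)^n\cdot\vol(v_0)}{C(n)\,C'(n)}\ =\ \frac{\hvol(x,X,\Delta)}{C(n)\,C'(n)}\,,
\]
so the theorem holds with $c(n)=\big(C(n)\,C'(n)\big)^{-1}$.
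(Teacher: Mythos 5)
Your overall architecture is the right one, and you have correctly identified the uniform Izumi estimate (the paper's Lemma~\ref{lem:uniform Izumi} / Corollary~\ref{cor:Izumi via vol minimizer}) as the key input: compare $\lct_x(X,\Delta;\Delta)$ to $\hvol(x,X,\Delta)$ through a single valuation, with the constant depending only on $n$. The gap is in the second half. You insist on taking $v_0$ to be the $\hvol_{X,\Delta}$-minimizer (so the Izumi bound reads $\lct_x(X,\Delta;\Delta)\ge c_0\,A_{X,\Delta}(v_0)/v_0(\Delta)$), and you are then left to show
\[
v_0(\Delta)\cdot A_{X,\Delta}(v_0)^{n-1}\cdot\vol(v_0)\ \le\ C'(n).
\]
But once the Izumi inequality is in place, this claim is \emph{equivalent} to the theorem: the Izumi bound plus $\lct_x(X,\Delta;\Delta)\le A_{X,\Delta}(v_0)/v_0(\Delta)$ pins $\lct$ to within constants of $A_{X,\Delta}(v_0)/v_0(\Delta)$, so the remaining claim is just a restatement of $\lct\gtrsim\hvol$. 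Declaring that it "should follow from minimality" is therefore not a reduction. Worse, the route you propose does not work: Lemma~\ref{lem:vol inequality} applied to $(X,0)$ with $D=\Delta$ says that \emph{if} $(X,(1+\lambda)\Delta)$ is lc then $\hvol(x,X,\Delta)\ge(\lambda/(1+\lambda))^n\,\hvol(x,X)$, i.e.\ it gives a lower bound on $\hvol$ in terms of $\lambda=\lct_x(X,\Delta;\Delta)$ — equivalently an \emph{upper} bound on $\lambda$ in terms of $\hvol$ — which is the opposite direction from what you need. And there is no obvious reason the minimizer of $\hvol_{X,\Delta}$ has bounded $\hvol_X(v_0)$ (equivalently bounded $v_0(\Delta)/A_{X,\Delta}(v_0)$); if $v_0(\Delta)$ is close to $A_X(v_0)$ this ratio blows up.

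The paper sidesteps this by \emph{not} using the $\hvol_{X,\Delta}$-minimizer. Lemma~\ref{lem:uniform Izumi} is stated for an arbitrary $v_0\in\Val^*_{X,x}$, with the extra factor $\hvol(x,X,\Delta)/\hvol_{X,\Delta}(v_0)$; one then picks $v_0$ with $\hvol_X(v_0)\le n^n$ (this exists by \cite{LX-cubic-3fold}*{Lemma A.1}, with respect to $X$ alone, not $(X,\Delta)$). For such $v_0$, the klt condition gives $v_0(\Delta)<A_X(v_0)$ and $A_{X,\Delta}(v_0)\le A_X(v_0)$, so
\[
A_{X,\Delta}(v_0)^{n-1}\cdot v_0(\Delta)\cdot\vol(v_0)\ \le\ A_X(v_0)^n\cdot\vol(v_0)\ =\ \hvol_X(v_0)\ \le\ n^n,
\]
and the theorem drops out with $c(n)=c_0(n)/n^n$. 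In short: keep the Izumi estimate in its general form (with the $\hvol/\hvol_{X,\Delta}(v_0)$ factor), choose $v_0$ to control $\hvol_X(v_0)$ rather than $\hvol_{X,\Delta}(v_0)$, and let the klt inequality $v_0(\Delta)<A_X(v_0)$ do the work that you were trying to extract from minimality.
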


The proof will be divided into several steps, but first we shall consider an interpolation (based on a construction from \cite{XZ-uniqueness}) of the the $\bQ$-divisor $\Delta$ and the valuation ideals of the minimizing valuations of the normalized volume function. For this purpose we first revisit some results from \cite{XZ-uniqueness}.

\subsection{A multiplicity formula}

Let $x\in (X,\Delta)$ be a singularity of dimension $n$ and let $\fab$, $\fbb$ be two graded sequence of ideals. Following \cite{XZ-uniqueness}*{Section 3.3}, we define $\fa_{\bullet}\boxplus\fb_{\bullet}$ to be the graded sequence of ideals with
\[
(\fa_\bullet\boxplus\fb_\bullet)_m=\sum_{i=0}^m \fa_i\cap\fb_{m-i}.
\]
In this section, we give slight generalizations of some results from \cite{XZ-uniqueness}*{Section 3.3}.

\begin{lem} \label{lem:lct sum}
Assume that $x\in (X,\Delta)$ is klt. Then $\lct_x(\fab\boxplus \fbb)\le \lct_x(\fab)+\lct_x(\fbb)$.
\end{lem}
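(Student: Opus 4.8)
The plan is to adapt the argument of \cite{XZ-uniqueness}*{Section 3.3}, checking that the presence of the boundary $\Delta$ changes nothing essential. First I would reduce to a statement at each finite level. Since $\lct_x(\cI_\bullet)=\sup_{m\ge 1}m\cdot\lct_x(\cI_m)$ for any graded sequence $\cI_\bullet$ of ideals, it suffices to prove
\[
m\cdot\lct_x\big((\fab\boxplus\fbb)_m\big)\le\lct_x(\fab)+\lct_x(\fbb)\qquad\text{for every }m\in\bN^*,
\]
and we may of course assume the right-hand side is finite.

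Fix $m$ and set $\fc_i:=\fa_i\cap\fb_{m-i}$, so that $(\fab\boxplus\fbb)_m=\sum_{i=0}^m\fc_i$; fix $\epsilon>0$. The key input is the summation theorem for multiplier ideals, which holds on the klt pair $(X,\Delta)$: it produces rational numbers $c_0,\dots,c_m\ge 0$ with $\sum_i c_i>\lct_x((\fab\boxplus\fbb)_m)-\epsilon$ such that $\big(X,\Delta+\sum_i c_i\,\fc_i\big)$ is klt at $x$, i.e. $A_{X,\Delta}(v)>\sum_i c_i\, v(\fc_i)$ for every $v\in\Val^*_{X,x}$. I would then test this against two valuations, one tuned to $\fab$ and one to $\fbb$. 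Using the valuative description $\lct_x(\fab)=\inf_v A_{X,\Delta}(v)/v(\fab)$ (see \cite{JM12}), choose $v_1\in\Val^*_{X,x}$ with $v_1(\fab)>0$ and $A_{X,\Delta}(v_1)/v_1(\fab)<\lct_x(\fab)+\epsilon$. Since $\fc_i\subseteq\fa_i$ and $v_1(\fa_i)\ge i\cdot v_1(\fab)$ (as $v_1(\fab)=\inf_k v_1(\fa_k)/k$), we get $v_1(\fc_i)\ge i\cdot v_1(\fab)$, hence
\[
A_{X,\Delta}(v_1)>\sum_i c_i\, v_1(\fc_i)\ge\Big(\sum_i i\,c_i\Big)v_1(\fab),\qquad\text{so}\qquad \sum_i i\,c_i<\lct_x(\fab)+\epsilon .
\]
Symmetrically, using $\fc_i\subseteq\fb_{m-i}$ and a valuation $v_2\in\Val^*_{X,x}$ with $A_{X,\Delta}(v_2)/v_2(\fbb)<\lct_x(\fbb)+\epsilon$, one obtains $\sum_i (m-i)\,c_i<\lct_x(\fbb)+\epsilon$. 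Adding the two inequalities gives $m\sum_i c_i<\lct_x(\fab)+\lct_x(\fbb)+2\epsilon$, so $m\big(\lct_x((\fab\boxplus\fbb)_m)-\epsilon\big)<\lct_x(\fab)+\lct_x(\fbb)+2\epsilon$; letting $\epsilon\to 0$ and then taking the supremum over $m$ completes the proof.

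The point I expect to require some care is recognizing why a direct single-valuation estimate cannot work, and hence why one must pass to finite levels and invoke the summation theorem. From $v(\fa_i\cap\fb_j)\ge\max\{v(\fa_i),v(\fb_j)\}$ one only extracts $v(\fab\boxplus\fbb)\ge v(\fab)v(\fbb)/(v(\fab)+v(\fbb))$, which yields $\lct_x(\fab\boxplus\fbb)\le\inf_v\big(A_{X,\Delta}(v)/v(\fab)+A_{X,\Delta}(v)/v(\fbb)\big)$ — a quantity that in general is $\ge\lct_x(\fab)+\lct_x(\fbb)$ rather than $\le$, because no single valuation need be simultaneously optimal for $\fab$ and for $\fbb$. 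Separating the exponents among the $\fc_i$ via the multiplier ideal summation theorem and then testing with two distinct valuations is exactly what circumvents this; and the only thing to verify for the generalization is that this theorem and the valuative formula for $\lct$ of a graded sequence are unaffected by the boundary, which holds since $(X,\Delta)$ is klt.
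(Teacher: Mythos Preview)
Your proof is correct and follows essentially the same approach as the paper, which simply defers to \cite{XZ-uniqueness}*{Theorem 3.11} and notes that the $\fm_x$-primary hypothesis there is irrelevant once all log canonical thresholds are taken at $x$. Your write-up is in fact a clean, self-contained version of that argument: reduce to finite level via $\lct_x(\cI_\bullet)=\sup_m m\cdot\lct_x(\cI_m)$, apply the multiplier-ideal summation theorem on the klt pair $(X,\Delta)$ to split the exponent among the $\fc_i=\fa_i\cap\fb_{m-i}$, and then test against two near-optimal valuations using $v(\fa_i)\ge i\cdot v(\fab)$ (which holds since $m\mapsto v(\fa_m)$ is subadditive for a graded sequence, so $v(\fab)=\inf_m v(\fa_m)/m$).
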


\begin{proof}
This follows from the same proof of \cite{XZ-uniqueness}*{Theorem 3.11}: the $\fm_x$-primary assumption there is only used to ensure that all the lct are taken at $x$.
\end{proof}

Assume next that both sequences $\fab$, $\fbb$ are decreasing and that $\fab$ is $\fm_x$-primary. Note that the latter condition implies that $\fab\boxplus\fbb$ is also $\fm_x$-primary. We will give a formula for the multiplicity of $\fab\boxplus\fbb$. Let $R_m:=\cO_{X,x}/\fa_m$. The decreasing sequence $\fbb$ induces an $\bN$-filtration $\cF$ on all $R_m$:
\[
\cF^j R_m := (\fb_j+\fa_m)/\fa_m,
\]
which also induces a filtration (still denoted as $\cF$) on the subspaces $\fa_{m-1}/\fa_m\subseteq R_m$. It is not hard to check that $\cF^j (\fa_{m-1}/\fa_m) \cong \fa_{m-1}\cap \fb_j/\fa_m\cap \fb_j$. We further assume that:
\begin{enumerate}
    \item[($\dagger$)] the limit
    \[
    \vol(\fab;\fbb^t):=\lim_{m\to \infty} \frac{\ell(\cF^{mt} R_m)}{m^n/n!} 
    \]
    exists for all $t>0$.
\end{enumerate}
Note that under this assumption, the function $t\mapsto \vol(\fab;\fbb^t)$ is decreasing and hence continuous at almost all $t$.

\begin{lem} \label{lem:mult sum}
Under the above assumptions, we have
\[
\mult(\fab\boxplus\fbb)=\mult(\fab)-(n+1) \int_0^\infty \frac{\vol(\fab;\fbb^t)}{(1+t)^{n+2}} \rd t
\]
\end{lem}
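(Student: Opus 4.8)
I would follow the method of \cite{XZ-uniqueness}*{Section 3.3}, where an analogous formula is proved under the extra hypothesis that $\fbb$ is $\fm_x$-primary; there that hypothesis is used only to guarantee that the limit in $(\dagger)$ exists, which we have instead built into our assumptions. The first step is to reduce the statement to an asymptotic count of bigraded lengths. Set $R_m:=\cO_{X,x}/\fa_m$ and equip it with the two decreasing filtrations $\bar\fa_i:=(\fa_i+\fa_m)/\fa_m$ $(0\le i\le m)$ and $\cF^j:=(\fb_j+\fa_m)/\fa_m$ $(j\ge 0)$. Using only that $\fab,\fbb$ are decreasing one checks $\bar\fa_i\cap\cF^j=(\fa_i\cap\fb_j+\fa_m)/\fa_m$, so the bigraded piece $\mathrm{gr}^{i,j}R_m$ is naturally identified with $\mathrm{gr}^j_\cF(\fa_i/\fa_{i+1})$ whenever $0\le i\le m-1$; write $d_i(j):=\ell\big(\mathrm{gr}^j_\cF(\fa_i/\fa_{i+1})\big)$, which is independent of $m$. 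Since $(\fab\boxplus\fbb)_m/\fa_m=\sum_{i=0}^{m}\bar\fa_i\cap\cF^{m-i}$ is a ``staircase'' submodule, an elementary length count (as in \cite{XZ-uniqueness}*{Section 3.3}) shows that the bigraded pieces surviving in the quotient $R_m\big/\big((\fab\boxplus\fbb)_m/\fa_m\big)$ are exactly those with $i+j\le m-1$, whence
\[
\ell\big(\cO_{X,x}/(\fab\boxplus\fbb)_m\big)=\sum_{i+j\le m-1}d_i(j),\qquad \ell(R_m)=\sum_{\substack{i\le m-1\\ j\ge 0}}d_i(j),\qquad \ell(\cF^l R_m)=\sum_{\substack{i\le m-1\\ j\ge l}}d_i(j).
\]
Dividing by $m^n/n!$, the first two quantities tend to $\mult(\fab\boxplus\fbb)$ and $\mult(\fab)$, while the third (with $l=\lceil mt\rceil$) tends to $V(t):=\vol(\fab;\fbb^t)$ by $(\dagger)$; so it remains to show that $\lim_m\frac{n!}{m^n}\sum_{i\le m-1,\ i+j\ge m}d_i(j)$ equals $(n+1)\int_0^\infty V(t)(1+t)^{-n-2}\,\rd t$.

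The second step evaluates this limit. For $K/m\to s\in(0,1]$ and $l/m\to\tau\ge0$, applying $(\dagger)$ to $R_K$ (and using monotonicity, hence almost-everywhere continuity, of $t\mapsto V(t)$) gives
\[
\frac{n!}{m^n}\sum_{i<K}\sum_{j\ge l}d_i(j)=\frac{n!}{m^n}\,\ell(\cF^{l}R_K)\ \longrightarrow\ s^n\,V(\tau/s)=:\Psi(s,\tau).
\]
Thus the normalized bigraded ``corner sums'' converge to $\Psi$, which pins down the limiting bigraded measure $\mu$ (so that $\mu(\{s\le s_0,\tau\ge\tau_0\})=\Psi(s_0,\tau_0)$). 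Re-slicing the triangular region $\{i\le m-1,\ i+j\ge m\}$ by $s:=i/m$, on which $m-i\approx m(1-s)$, a Riemann-sum argument then identifies the limit with $\mu(\{0\le s\le1,\ \tau\ge1-s\})$, and since $\partial_s\Psi(s,\tau)$ (partial derivative in the first slot) is the $s$-marginal density of the upper-tail measure $\{\tau'\ge\tau\}$,
\[
\lim_{m\to\infty}\frac{n!}{m^n}\sum_{\substack{i\le m-1\\ i+j\ge m}}d_i(j)=\int_0^1(\partial_s\Psi)(s,1-s)\,\rd s=\int_0^1\Big(n s^{n-1}V\big(\tfrac{1-s}{s}\big)-(1-s)s^{n-2}V'\big(\tfrac{1-s}{s}\big)\Big)\rd s.
\]
Finally, since $0\le V\le\mult(\fab)<\infty$ and $V$ is non-increasing, the boundary terms vanish and an integration by parts in $s$ turns this into $(n+1)\int_0^1 s^nV\big(\tfrac{1-s}{s}\big)\,\rd s$; the substitution $u=1/(1+t)$ rewrites it as $(n+1)\int_0^\infty V(t)(1+t)^{-n-2}\,\rd t$, as wanted. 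As a sanity check, for $\fbb=\fab$ one has $(\fab\boxplus\fbb)_m=\fa_{\lceil m/2\rceil}$ and $V(t)=\mult(\fab)\cdot\max(1-t^n,0)$, and both sides of the asserted identity equal $2^{-n}\mult(\fab)$.

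The step I expect to be the main obstacle is the second one: upgrading the convergence of the corner sums $\Psi_m\to\Psi$ to convergence of the sum over the triangular region $\{i+j\ge m\}$, whose boundary is not a coordinate hyperplane. This requires some uniform control — monotonicity of $d_i(j)$ in each variable together with the a priori bound $\sum_{i+j\ge m}d_i(j)\le\ell(R_m)=O(m^n)$ — to run a dominated-convergence/Riemann-sum argument, and it is exactly here that the assumption $(\dagger)$ does the work played by the $\fm_x$-primary hypothesis on $\fbb$ in \cite{XZ-uniqueness}.
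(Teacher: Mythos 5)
Your Step 1 — expressing $\ell(\cO_{X,x}/\fc_m)$ as the bigraded sum $\sum_{i+j\le m-1}d_i(j)$ — is in substance the same as the paper's telescoping short-exact-sequence computation: both amount to the identity $\ell(R/\fc_j)=\ell(R/\fa_j)-\sum_{i=1}^j\ell\bigl(\cF^i(\fa_{j-i}/\fa_{j-i+1})\bigr)$, and your compatibility check $\bar\fa_i\cap\cF^j=(\fa_i\cap\fb_j+\fa_m)/\fa_m$ is exactly what makes the paper's sequence exact. Where you genuinely diverge from the paper is in the passage to the limit. The paper does \emph{not} try to compute the $m$-th term directly: it first replaces $\ell(R/\fc_m)$ by the Cesàro average $\frac{(n+1)!}{m^{n+1}}\sum_{j=1}^m\ell(R/\fc_j)$ (legitimate since the $m$-th term limit exists), and then swaps the order of summation so that the error becomes $\sum_{i=1}^m\ell\bigl(\cF^i(R/\fa_{m-i+1})\bigr)$, a \emph{coordinate-aligned} sum: the index $i$ runs over a single range and the hypothesis $(\dagger)$ applies slot-by-slot to each summand, so dominated convergence finishes the job immediately. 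You instead try to evaluate $\frac{n!}{m^n}\sum_{i\le m-1,\,i+j\ge m}d_i(j)$ directly by producing a limiting bigraded measure $\mu$ and integrating it over the slanted region $\{s+\tau\ge 1\}$.

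The gap you flag at the end is real, and it is precisely the gap that the paper's Cesàro step is designed to sidestep. Knowing convergence of the corner sums $\Psi_m(s,\tau)\to s^n V(\tau/s)$ gives you control over coordinate rectangles; integrating the discrete measures against the indicator of a triangle (whose boundary is not a coordinate hyperplane) requires a portmanteau-type argument, which in turn needs continuity of the limit measure across the diagonal and some equicontinuity/monotonicity of the $\Psi_m$'s in both variables. This can probably be pushed through — monotonicity of $d_i(j)$ in $j$ and the bound $\sum_{i+j\ge m}d_i(j)\le\ell(R_m)=O(m^n)$ are the right ingredients — but it is several lemmas of work that the paper's one-line resummation makes unnecessary. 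A second, smaller, issue: your integration by parts invokes $V'$, but $V$ is only known to be monotone (hence differentiable a.e. but not $C^1$); you would need to either interpret the computation distributionally or approximate $V$, whereas the paper's change of variable $y=t/(1+t)$ never differentiates $V$. Your formal manipulations (the identity $\int_0^1(\partial_s\Psi)(s,1-s)\,\rd s=(n+1)\int_0^1 s^n V(\tfrac{1-s}{s})\,\rd s$, the substitution $u=1/(1+t)$, and the $\fbb=\fab$ sanity check) are all correct; the missing piece is a rigorous justification of the measure-convergence step, which the paper's route avoids entirely.
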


\begin{proof}
This follows from the same proof of \cite{XZ-uniqueness}*{Lemma 3.13}. We sketch the main steps for the reader's convenience. Let $R=\cO_{X,x}$ and let $\fcb=\fab\boxplus\fbb$. We have
\[
\mult(\fcb)=\lim_{m\to\infty} \frac{\ell (R/\fc_m)}{m^n/n!}=\lim_{m\to\infty} \frac{\sum_{j=1}^m \ell (R/\fc_j)}{m^{n+1}/(n+1)!}.
\]
From the short exact sequence
\[
0\to \frac{\fa_{j-\ell-1}\cap \fb_{\ell+1}}{\fa_{j-\ell}\cap \fb_{\ell+1}} \to \frac{R}{\sum_{i=0}^\ell \fa_{j-i}\cap \fb_i}\to \frac{R}{\sum_{i=0}^{\ell+1} \fa_{j-i}\cap \fb_i}\to 0
\]
for all $\ell$, we get
\[
\ell(R/\fc_j)=\ell(R/\fa_j)-\sum_{i=1}^{j} \ell( \cF^i (\fa_{j-i}/\fa_{j-i+1}) ).
\]
Summing over $j=0,1,\cdots,m$ we obtain
\[
\sum_{j=1}^m \ell (R/\fc_j) = \sum_{j=1}^m \ell (R/\fa_j) - \sum_{i=1}^m \ell( \cF^i (R/\fa_{m-i+1}) )
\]
Note that $\lim\limits_{m\to\infty} \frac{\ell( \cF^{\lceil my\rceil} (R/\fa_{m-\lceil my\rceil + 1}) )}{m^n/n!}=(1-y)^n\vol(\fab;\fbb^{\frac{y}{1-y}})$ for almost all $0<y<1$ (i.e. wherever the right hand side is continuous). Thus after dividing the above equality by $\frac{m^{n+1}}{(n+1)!}$ and using the dominated convergence theorem, we get
\begin{align*}
    \mult(\fcb) & = \mult(\fab) - (n+1)\int_0^1 (1-y)^n\vol(\fab;\fbb^{\frac{y}{1-y}}) \rd y \\
    & = \mult(\fab)-(n+1) \int_0^\infty \frac{\vol(\fab;\fbb^t)}{(1+t)^{n+2}} \rd t
\end{align*}
as desired.
\end{proof}

\subsection{Izumi inequality}

The next ingredient in the proof of Theorem \ref{thm:lct>=vol} is an Izumi type inequality as follows.

\begin{lem} \label{lem:uniform Izumi}
Let $x\in (X,\Delta)$ be an $n$-dimensional klt singularity, and let $v_0\in \Val_{X,x}^*$ be a valuation. Then there exists some constant $c_0=c_0(n)>0$ depending only on $n$ such that 
\[
\lct_x(X,\Delta;D)\ge c_0\cdot  \frac{\hvol(x,X,\Delta)}{\hvol_{X,\Delta}(v_0)}  \cdot \frac{A_{X,\Delta}(v_0)}{v_0(D)}
\]
for any effective $\bQ$-Cartier divisor $D$ on $X$.
\end{lem}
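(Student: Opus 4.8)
The plan is to reduce the general Izumi-type estimate to the valuation-ideal comparison supplied by the $\boxplus$-construction of the previous subsection. First I would fix an effective $\bQ$-Cartier divisor $D$ and set $\lambda:=\lct_x(X,\Delta;D)$; the goal is a lower bound on $\lambda$. Let $v_\star\in\Val^*_{X,x}$ be a valuation computing (or nearly computing) $\hvol(x,X,\Delta)$, normalized so that $A_{X,\Delta}(v_\star)=A_{X,\Delta}(v_0)$; write $\fab=\fa_\bullet(v_\star)$ and let $\fbb$ be the graded sequence attached to $D$, i.e. $\fb_m=\cO_X(-mD)$ (or the integral closure thereof), so that $\lct_x(\fbb)=\lambda$ in the sense that $(X,\Delta+\lambda D)$ is lc. Since $v_\star$ has center $x$, the sequence $\fab$ is $\fm_x$-primary, so the hypotheses preceding Lemma \ref{lem:mult sum} are available once we check that the volume function $t\mapsto\vol(\fab;\fbb^t)$ exists, which follows from Okounkov body / Fujita-approximation arguments as in \cite{XZ-uniqueness}.

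The heart of the argument is to play off Lemma \ref{lem:lct sum} against Lemma \ref{lem:mult sum}. From Lemma \ref{lem:lct sum}, $\lct_x(\fab\boxplus\fbb)\le \lct_x(\fab)+\lct_x(\fbb)$. On the other hand, $\lct_x(\fab)\le A_{X,\Delta}(v_\star)$ by definition of log discrepancy, and one controls $\lct_x(\fab\boxplus\fbb)$ from below by comparing it with $\mult(\fab\boxplus\fbb)$ via the inequality $\lct_x(\fcb)\ge \frac{n}{\mult(\fcb)^{1/n}}$-type bound — more precisely, using that for an $\fm_x$-primary graded sequence $\fcb$ one has $\hvol$-style estimates relating $\lct$, multiplicity, and the log discrepancy of the quasi-monomial valuation computing $\mult$. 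The multiplicity formula of Lemma \ref{lem:mult sum} shows $\mult(\fab\boxplus\fbb)=\mult(\fab)-(n+1)\int_0^\infty\frac{\vol(\fab;\fbb^t)}{(1+t)^{n+2}}\,\rd t$, and the key quantitative input is that $\vol(\fab;\fbb^t)$ vanishes once $t$ exceeds roughly $v_\star(D)/\lambda$ — because $v_\star(D)\le \frac{1}{\lambda}A_{X,\Delta}(v_\star)$ forces the filtration $\cF^{mt}R_m$ to be trivial for $t$ large relative to $v_\star(D)$. Combining: $\mult(\fab\boxplus\fbb)$ is bounded below by a dimensional constant times $\mult(\fab)=\vol(v_\star)$, while $\lct_x(\fab\boxplus\fbb)-\lct_x(\fab)$ is at most $\lambda$; converting $\mult(\fab)=\vol(v_\star)$ and $A_{X,\Delta}(v_\star)$ into $\hvol_{X,\Delta}(v_\star)=\hvol(x,X,\Delta)$ yields exactly an inequality of the shape
\[
\lambda \ge c_0(n)\cdot\frac{\hvol(x,X,\Delta)}{\hvol_{X,\Delta}(v_0)}\cdot\frac{A_{X,\Delta}(v_0)}{v_0(D)},
\]
after tracking how the normalizations of $v_\star$ and $v_0$ interact (this is where the ratio $\hvol(x,X,\Delta)/\hvol_{X,\Delta}(v_0)$ and the factor $A_{X,\Delta}(v_0)/v_0(D)$ materialize — replacing $v_\star$ by $v_0$ costs precisely the volume ratio, and $v_0(D)$ enters through $v_0(D)\le \frac1\lambda A_{X,\Delta}(v_0)$).

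The main obstacle I anticipate is making the cutoff estimate on $\vol(\fab;\fbb^t)$ genuinely uniform — i.e. extracting a dimensional constant $c_0(n)$ rather than one depending on $x\in(X,\Delta)$ — since a priori the interplay between the two graded sequences could degenerate. The resolution should be that $\fab$ is the valuation ideal sequence of a single valuation, so $\vol(\fab;\fbb^t)\le\vol(v_\star)$ trivially and $\vol(\fab;\fbb^t)=0$ for $t>v_\star(D)/\lct_x(\fbb)$, making the integral in Lemma \ref{lem:mult sum} at most $\vol(v_\star)\cdot\bigl(1-(1+v_\star(D)/\lambda)^{-(n+1)}\bigr)$; a routine convexity estimate then forces $\mult(\fab\boxplus\fbb)\ge \vol(v_\star)(1+v_\star(D)/\lambda)^{-(n+1)}$ and hence, feeding this into the $\lct$--$\mult$ inequality, gives the claimed bound with an explicit $c_0(n)$. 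A secondary technical point is verifying hypothesis $(\dagger)$ in enough generality; I would handle it by passing to the graded ring of $v_\star$ and citing the existence of Okounkov bodies, as in \cite{XZ-uniqueness}.
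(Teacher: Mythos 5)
Your overall skeleton — form $\fab\boxplus\fbb$, invoke Lemma \ref{lem:lct sum} and Lemma \ref{lem:mult sum}, and close the loop with a normalized-volume inequality $\lct(\fcb)^n\cdot\mult(\fcb)\ge\hvol(x,X,\Delta)$ — is exactly the paper's strategy, and you correctly identify this inequality (due to Liu, \cite{Liu18}*{Theorem 27}) as the crucial input. However, there are two genuine gaps.

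First, your multiplicity estimate runs in the wrong direction. Bounding $\vol(\fab;\fbb^u)\le \vol(v_\star)$ together with the cutoff of the integrand at $u=1/c$ yields, as you compute, a \emph{lower} bound
\[
\mult(\fab\boxplus\fbb)\ \ge\ \vol\cdot(1+1/c)^{-(n+1)},
\]
and this degenerates (is $\sim c^{n+1}\vol$) rather than being a dimensional constant times $\vol(\fab)$. But the structure of the argument — $\hvol(x,X,\Delta)\le\lct(\fcb)^n\mult(\fcb)$ and $\lct(\fcb)\le\lct(\fab)+\lct(\fbb)$ — requires an \emph{upper} bound $\mult(\fcb)\le M c\cdot\vol(v_0)$ with $M=M(n)$, so that the forced lower bound on the product pushes $c$ (and hence $\lambda=\lct(D)$) away from zero. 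A lower bound on $\mult(\fcb)$ simply cannot feed into $\lct^n\mult\ge\hvol$ to produce a lower bound on $\lambda$; the logic inverts. To get the needed upper bound one must use the \emph{exact} identity $\vol(\fab;\fbb^u)=(1-cu)^n\vol(v_0)$, which is available precisely because $\fbb$ is the sequence of powers of the single principal ideal $(f)$ cutting out $D$, and then analyze $S(c):=1-(n+1)\int_0^{1/c}\frac{(1-cu)^n}{(1+u)^{n+2}}\,\rd u$ carefully (expanding $(1-cu)^n=(1+c-c(1+u))^n$) to establish $\lim_{c\to 0}S(c)/c=1$ and hence $S(c)\le Mc$ for a dimensional $M$. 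The crude cutoff estimate loses exactly the cancellation that makes $S(c)=O(c)$.

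Second, introducing a normalized-volume minimizer $v_\star$ is both unnecessary and creates a problem you cannot resolve: there is no general comparison between $v_\star(D)$ and $v_0(D)$, so the final ``replace $v_\star$ by $v_0$'' step is not available. The paper works with $\fab=\fa_\bullet(v_0)$ directly. The factor $\hvol(x,X,\Delta)$ in the conclusion comes for free from Liu's inequality (which holds for \emph{any} $\fm_x$-primary graded sequence, with $\hvol(x,X,\Delta)$ on the right), while everything else — $\lct(\fab)\le A_{X,\Delta}(v_0)$, the normalization $t=A_{X,\Delta}(v_0)/\lct(D)$, and $c=v_0(D)/t$ — is phrased in terms of $v_0$, which is why the ratio $\hvol(x,X,\Delta)/\hvol_{X,\Delta}(v_0)$ and the factor $A_{X,\Delta}(v_0)/v_0(D)$ appear in the final bound without any valuation-comparison step.
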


As an immediate corollary, we have

\begin{cor} \label{cor:Izumi via vol minimizer}
Let $n\in \bN^*$. Then there exists some constant $c_0>0$ depending only on $n$ such that for any $n$-dimensional klt singularity $x\in (X,\Delta)$ and any effective $\bQ$-Cartier divisor $D$ on $X$, we have 
\[
\lct_x(X,\Delta;D)\ge c_0\cdot \frac{A_{X,\Delta}(v_0)}{v_0(D)},
\]
where $v_0$ is the minimizing valuation of the normalized volume function, i.e. $\hvol_{X,\Delta}(v_0)=\hvol(x,X,\Delta)$.
\end{cor}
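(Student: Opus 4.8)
The plan is to obtain this as a direct specialization of Lemma \ref{lem:uniform Izumi}. First I would recall that, $x\in(X,\Delta)$ being klt, the infimum defining $\hvol(x,X,\Delta)$ is a minimum by \cite{Blu18}, so there is a valuation $v_0\in\Val^*_{X,x}$ with $\hvol_{X,\Delta}(v_0)=\hvol(x,X,\Delta)$; in particular $A_{X,\Delta}(v_0)<+\infty$, so $v_0$ is an admissible input for Lemma \ref{lem:uniform Izumi}. Applying that lemma to $v_0$, the factor $\hvol(x,X,\Delta)/\hvol_{X,\Delta}(v_0)$ appearing in its conclusion equals $1$, and we are left with exactly
\[
\lct_x(X,\Delta;D)\ \ge\ c_0\cdot\frac{A_{X,\Delta}(v_0)}{v_0(D)}
\]
for every effective $\bQ$-Cartier divisor $D$ on $X$, with $c_0=c_0(n)$ the dimensional constant from Lemma \ref{lem:uniform Izumi}. (If $x\notin\Supp(D)$ both sides are $+\infty$; otherwise $v_0(D)>0$ because $v_0$ is centered at the closed point $x$, so there is no division-by-zero issue.)

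Consequently there is really nothing to overcome at the level of the corollary: it is a purely formal consequence of Lemma \ref{lem:uniform Izumi}, the single observation being that the normalized-volume minimizer trivializes the ratio $\hvol(x,X,\Delta)/\hvol_{X,\Delta}(v_0)$. The main obstacle is entirely upstream, in the proof of Lemma \ref{lem:uniform Izumi}, where one needs a uniform (dimension-only) Izumi-type bound against an \emph{arbitrary} reference valuation $v_0$, with the loss measured by how far $v_0$ is from minimizing. I would attack that by interpolating the valuation ideals of a divisor computing $\lct_x(X,\Delta;D)$ with the valuation ideals of $v_0$ via the $\boxplus$-construction, and combining the subadditivity of lct (Lemma \ref{lem:lct sum}) with the multiplicity formula (Lemma \ref{lem:mult sum}) and the very definition of $\hvol(x,X,\Delta)$ — but that is a separate matter from deducing the corollary.
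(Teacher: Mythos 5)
Your deduction is correct and matches the paper's intent exactly: the paper introduces Corollary \ref{cor:Izumi via vol minimizer} with the words ``As an immediate corollary,'' and the only observation needed is that taking $v_0$ to be the normalized-volume minimizer (which exists by \cite{Blu18}) makes the factor $\hvol(x,X,\Delta)/\hvol_{X,\Delta}(v_0)$ in Lemma \ref{lem:uniform Izumi} equal to $1$. Nothing further is required.
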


This can be seen as a uniform Izumi type estimate. Recall that the classical Izumi inequality (see e.g. \cite{Laz-positivity-2}*{Proposition 9.5.13}) says that if $x\in X$ is a smooth point and $D$ is an effective divisor on $X$, then $\lct_x(X;D)\ge \frac{1}{\mult_x D}$. More generally, for any klt singularity $x\in (X,\Delta)$, there exists some constant $c>0$ such that
\[
\lct_x(X,\Delta;D)\ge  \frac{c}{\mult_x D}
\]
for any effective $\bQ$-Cartier divisor $D$ on $X$ (see e.g. \cite{Li-normalized-volume}*{Theorem 3.1}). However, the constant $c$ in general depends on the singularity $x\in (X,\Delta)$. Therefore, the above corollary asserts that a uniform constant can be achieved if we replace $\mult_x$ by the minimizing valuation of the normalized volume function. Similarly, Lemma \ref{lem:uniform Izumi} suggests that the constant in the Izumi inequality tend to get worse if we choose a valuation that's further away from the normalized volume minimizer.

\begin{proof}[Proof of Lemma \ref{lem:uniform Izumi}]
After rescaling the coefficient, we may assume that $D=(f=0)$ is Cartier. For ease of notation, we will abbreviate $\lct_x(X,\Delta;\cdot)$ as $\lct(\cdot)$. Let $\fab=\fa_\bullet (v_0)$ be the graded sequence of valuation ideals of $v_0$, i.e. $\fa_m=\{s\in \cO_{X,x}\,|\,v_0(s)\ge m\}$. For each $t>0$, we also set $\fb_{m,t}=(f)^{\lceil\frac{m}{t}\rceil}$ and $\fc_{\bullet,t}=\fa_\bullet(v_0)\boxplus\fb_{\bullet,t}$. Roughly speaking, $\fc_{m,t}$ is generated by $s\in \cO_{X,x}$ such that $v_0(s) + t\cdot \ord_D(s)\ge m$, so as $t$ varies they interpolate between $\fab (v_0)$ and the ideals $\cO_X(-mD)$ ($m\in\bN$). The idea of the proof is to analyze the inequality (see \cite{Liu18}*{Theorem 27})
\[
\lct(\fc_{\bullet,t})^n\cdot \mult(\fc_{\bullet,t}) \ge \hvol(x,X,\Delta)
\]
for some suitably chosen value of $t$. Note that $\lct(\fb_{\bullet,t})=t\cdot \lct(D)$ and $\lct(\fab)\le A_{X,\Delta}(v_0)$ (the latter follows from the fact that $v_0(\fab(v_0))=1$). We take
\[
t=\frac{A_{X,\Delta}(v_0)}{\lct(D)}>0
\]
and write $\fb_{\bullet,t}$, $\fc_{\bullet,t}$ simply as $\fbb$, $\fcb$ from now on. We then have
\begin{equation} \label{eq:lct<=2A(v_0)}
    \lct(\fcb)\le \lct(\fab)+\lct(\fbb)\le 2A_{X,\Delta}(v_0)
\end{equation}
by Lemma \ref{lem:lct sum} and the above discussions. We claim that
\begin{equation} \label{eq:mult c_t}
    \frac{\mult(\fcb)}{\vol(v_0)}=1-(n+1)\int_0^{1/c}\frac{(1-cu)^n}{(1+u)^{n+2}}\rd u.
\end{equation}
where $c=\frac{v_0(D)}{t}=\lct(D)\cdot \left(\frac{A_{X,\Delta}(v_0)}{v_0(D)}\right)^{-1}$. Granting this for the moment, let us finish the proof of the lemma. To this end, denote the right hand side of \eqref{eq:mult c_t} by $S(c)$ and treat it as a function of $c>0$. Note that $S(c)\ge 1-\int_0^{1/c}\frac{n+1}{(1+u)^{n+2}}\rd u = \int_{1/c}^{+\infty}\frac{n+1}{(1+u)^{n+2}}\rd u$, hence $S(c)>0$ and $\lim\limits_{c\to \infty} S(c)=1$. We also have
\begin{equation} \label{eq:lim S(c)/c}
    \lim_{c\to 0} \frac{S(c)}{c} = 1.
\end{equation}
Indeed, we have
\begin{align*}
    S(c) & = 1 - (n+1) \int_0^{1/c} \frac{(1+c-c(1+u))^n}{(1+u)^{n+2}} \rd u \\
    & = 1 - (n+1)\sum_{i=0}^n \int_0^{1/c} \frac{\binom{n}{i}(1+c)^{n-i}(-c)^i}{(1+u)^{n+2-i}} \rd u \\
    & = 1 - (n+1) \int_0^{1/c} \left( \frac{1+cn}{(1+u)^{n+2}} - \frac{cn}{(1+u)^{n+1}} \right)\rd u + O(c^2),
\end{align*}
from which \eqref{eq:lim S(c)/c} follows easily (after a direct calculation). It then follows that there exists some constant $M>0$ depending only on $n$ such that $S(c)\le M\cdot c$ for all $c>0$.

On the other hand, by \eqref{eq:lct<=2A(v_0)} and \eqref{eq:mult c_t} we have
\[
\hvol(x,X,\Delta)\le \lct(\fcb)^n\cdot \mult(\fcb) \le 2^n S(c)\cdot \hvol_{X,\Delta}(v_0)\le 2^n M c\cdot \hvol_{X,\Delta}(v_0).
\]
Therefore, 
\[
c\ge \frac{1}{2^n M}\cdot  \frac{\hvol(x,X,\Delta)}{\hvol_{X,\Delta}(v_0)} .
\]
Recall that $c=\lct(D)\cdot \left(\frac{A_{X,\Delta}(v_0)}{v_0(D)}\right)^{-1}$, this gives the statement of the lemma with $c_0=\frac{1}{2^n M}$ (which only depends on the dimension $n$). 

It remains to prove the multiplicity formula \eqref{eq:mult c_t} using Lemma \ref{lem:mult sum}. First let us verify that the assumption $(\dagger)$ in Lemma \ref{lem:mult sum} is satisfied. Recall that (using the notation in Lemma \ref{lem:mult sum})
\[
\cF^j R_m = (\fb_j+\fa_m)/\fa_m\cong \fb_j/(\fb_j\cap \fa_m).
\]
Since $\fb_j=(f)^{\lceil\frac{j}{t}\rceil}$ and $\fa_m=\{s\in R\,|\,v_0(s)\ge m\}$, we see that $s\in \fb_j\cap \fa_m$ if and only if $s=f^{\lceil\frac{j}{t}\rceil}\cdot s_1$ for some $s_1\in R$ with $v_0(s_1)\ge m-\lceil\frac{j}{t}\rceil v_0(f)$. Thus
\[
\cF^j R_m \cong (f)^{\lceil\frac{j}{t}\rceil} / (f)^{\lceil\frac{j}{t}\rceil} \cdot \fa_{m-\lceil\frac{j}{t}\rceil v_0(f)} \cong R/\fa_{m-\lceil\frac{j}{t}\rceil v_0(f)}.
\]
It is then clear that 
\[
\lim_{m\to \infty} \frac{\ell(\cF^{mu} R_m)}{m^n/n!} = \max\{0,(1-cu)^n\} \cdot \vol(v_0),
\]
hence Lemma \ref{lem:mult sum} applies and gives
\[
\mult(\fcb) = \mult(\fab)-(n+1) \int_0^{1/c} \frac{(1-cu)^n \cdot \vol(v_0)}{(1+u)^{n+2}} \rd u.
\]
As $\mult(\fab)=\vol(v_0)$, this is exactly \eqref{eq:mult c_t}. The proof is now complete.
\end{proof}

\subsection{Completion of the proof}

We are now ready to give the proof of Theorem \ref{thm:lct>=vol}.

\begin{proof}[Proof of Theorem \ref{thm:lct>=vol}]
By \cite{LX-cubic-3fold}*{Lemma A.1}, there exists some $v_0\in \Val^*_{X,x}$ such that $\hvol_X(v_0)\le n^n$. Since $(X,\Delta)$ is klt, we have $v_0(\Delta)<A_X(v_0)$. Clearly we also have $A_{X,\Delta}(v_0)\le A_X(v_0)$. Hence by Lemma \ref{lem:uniform Izumi} we have
\begin{align*}
    \lct_x(X,\Delta;\Delta) & \ge c_0(n)\cdot  \frac{\hvol(x,X,\Delta)}{\hvol_{X,\Delta}(v_0)}  \cdot \frac{A_{X,\Delta}(v_0)}{v_0(\Delta)} \\
    & = \frac{c_0(n)}{A_{X,\Delta}(v_0)^{n-1}\cdot v_0(\Delta)\cdot \vol(v_0)} \cdot \hvol(x,X,\Delta)\\
    & \ge \frac{c_0(n)}{A_X(v_0)^{n-1}\cdot A_X(v_0)\cdot \vol(v_0)} \cdot \hvol(x,X,\Delta)\\
    & = \frac{c_0(n)}{\hvol_X(v_0)} \cdot \hvol(x,X,\Delta)\\
    & \ge \frac{c_0(n)}{n^n} \cdot \hvol(x,X,\Delta)
\end{align*}
for some $c_0(n)>0$ that only depends on $n$. Thus the theorem holds with $c(n)=\frac{c_0(n)}{n^n}$.
\end{proof}

\section{Boundedness of singularities} \label{sect:bdd via mld^K}

In this section, we study the boundedness of klt singularities using the minimal log discrepancies of their Koll\'ar components. The first result is the following theorem, which gives a criterion for special boundedness of singularities.

\begin{thm} \label{thm:bdd via vol and mldK}
Let $\varepsilon,A>0$, let $n$ be a positive integer, and let $I\subseteq [0,1]\cap \bQ$ be a finite set. Then there exists some constant $\delta=\delta(n,\varepsilon,A,I)>0$ such that any $n$-dimensional klt singularity $x\in (X,\Delta)$ with $\hvol(x,X,\Delta)\ge \varepsilon$, $\mldk(x,X,\Delta)\le A$ and $\Coef(\Delta)\subseteq I$ admits a $\delta$-plt blowup.
\end{thm}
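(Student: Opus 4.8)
The plan is to take a Koll\'ar component $E$ over $x\in(X,\Delta)$ realizing (or nearly realizing) $\mldk$, with associated plt blowup $\pi\colon Y\to X$, and to show that the log Fano pair $(E,\Delta_E:=\Diff_E(\Delta_Y))$ one gets by adjunction belongs to a bounded family, with the coefficient of $E$ in the adjunction identity (equivalently, the Cartier index of $E$ near its codimension one points) uniformly bounded. Once the Cartier index of $E$ along $Y$ is bounded, say by some $m=m(n,\varepsilon,A,I)$, the pair $(Y,\Delta_Y+E)$ is $\delta$-plt for $\delta=\delta(m)>0$ by the standard argument (inspect codimension one points of $E$, where an $m$-torsion phenomenon forces a gap in log discrepancies bounded away from zero), exactly as in the proof of Lemma \ref{lem:delta-plt implies special bounded}.

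The key steps, in order. First, by Lemma \ref{lem:strict N-complement} the chosen Koll\'ar component $E$ is an lc place of an $N$-complement $D$ with $N=N(n,I)$, so $A_{X,\Delta}(E)\le A$ together with $N$-complementary structure pins down the coefficient $a$ of $E$ in $\Delta_Y+aE$ to lie in a finite set depending on $N$ and $A$; in particular $a$ is bounded away from $1$, which is what gives control on the torsion of $E$. Second, I want to bound $\hvol(y,Y,\Delta_Y+E)$ from below where $y$ is the generic point of $E$ (a codimension one point of $Y$): here the finite degree formula of \cite{XZ-uniqueness} together with Lemma \ref{lem:vol under birational map} (comparing $(Y,\Delta_Y+E)$ to $(X,\Delta)$ via $K_Y+\Delta_Y+E\le\pi^*(K_X+\Delta+D)$, using $\hvol\ge\varepsilon$ and Lemma \ref{lem:vol inequality} to absorb the extra complement $D$) should yield $\hvol(y,Y,\Delta_Y+E)\ge\varepsilon'(n,\varepsilon,A,I)>0$. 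By adjunction/inversion of adjunction this local volume at the generic point of $E$ is, up to a constant depending on $n$, the local volume of the corresponding klt singularity of $(E,\Delta_E)$, so $\hvol$ of $(E,\Delta_E)$ is bounded below at its worst point. Third, I pass from this local volume bound to global boundedness of the log Fano pair $(E,\Delta_E)$: its coefficients lie in a DCC (in fact, after clearing the bounded Cartier index, finite) set, $-K_E-\Delta_E$ is ample, and a lower bound on local volumes forces a lower bound on the global $\delta$-invariant or at least on the alpha-type invariant along with bounded index, hence boundedness of the family by Birkar's theorems \cite{Birkar-bab-1, Birkar-bab-2} — here is where I invoke that here $\mldk(x,X,\Delta)\le A$ translates into $A_{X,\Delta}(E)\le A$, which bounds the "size" $-(K_E+\Delta_E)\sim_\bQ A_{X,\Delta}(E)\cdot L$. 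Finally, boundedness of $(E,\Delta_E,L)$ plus the orbifold cone construction (as in Lemma \ref{lem:delta-plt implies special bounded}) gives the $\delta$-plt blowup with $\delta$ depending only on $n,\varepsilon,A,I$.

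I expect the main obstacle to be Step 2–3: getting a genuinely \emph{uniform} lower bound on the local volume of $(E,\Delta_E)$ at its closed points from the data $\hvol(x,X,\Delta)\ge\varepsilon$, $\mldk\le A$, and then converting that into boundedness of the log Fano $(E,\Delta_E)$ with control on its polarization $L$. The delicate point is that a local volume lower bound at \emph{one} point of $E$ (the generic point of the image, or rather the relevant codimension one point) must be propagated — one needs the finite degree formula of \cite{XZ-uniqueness} to relate $\hvol$ on $Y$ along $E$ to an orbifold/log-Fano invariant of $(E,\Delta_E)$, and then a boundedness statement for log Fano pairs of fixed dimension whose local volumes (equivalently, stability thresholds) are bounded below and whose anticanonical degree is controlled by $A$ and the bounded index $m$. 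Arranging that the Cartier index $m$ is bounded \emph{before} invoking boundedness — rather than as a consequence — is the crux, and that is exactly what the $N$-complement structure from Lemma \ref{lem:strict N-complement} combined with $\mldk\le A$ is designed to deliver.
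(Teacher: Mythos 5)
Your overall strategy is aimed in the right direction — bound the Cartier index on the plt blowup $Y$ and then read off $\delta$-plt — but the specific steps you propose have genuine gaps, and the paper's actual argument is both simpler and avoids the problems.

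The main gap is in your Step 2. You want to bound $\hvol(y,Y,\Delta_Y+E)$ from below at the generic point $y$ of $E$, by comparing $K_Y+\Delta_Y+E\le\pi^*(K_X+\Delta+D)$ and then ``absorbing $D$'' via Lemma \ref{lem:vol inequality}. This cannot work: $(Y,\Delta_Y+E)$ is plt but not klt along $E$, and $(X,\Delta+D)$ is strictly lc at $x$, so the local volumes you are comparing are zero (or ill-defined). Lemma \ref{lem:vol inequality} requires $(X,\Delta+(1+\lambda)D)$ to be lc for some $\lambda>0$, which fails whenever $(X,\Delta+D)$ is strictly lc. The paper instead drops the divisor $E$ entirely and bounds $\hvol(y,Y,\Delta_Y)$ at \emph{closed} points $y\in E$; this is Lemma \ref{lem:vol comparison lc blowup}. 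The essential trick there, which your proposal is missing, is to introduce a Bertini divisor $D\sim_\bQ -E$ with $\pi^*D=D_Y+E$ and to run Lemma \ref{lem:vol inequality} on the \emph{klt} pair $(X,\Delta+aD)$ with $a=A_{X,\Delta}(E)-1$ (so that $(X,\Delta+(a+1)D)$ is lc, giving $\lambda=1/a$). Pulling back, the $E$-terms cancel, so $\pi^*(K_X+\Delta+aD)=K_Y+\Delta_Y+aD_Y$ and Lemma \ref{lem:vol under birational map} applies with a klt pair on both sides. That is the step your writeup does not supply.

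A second issue is your Step 1: the $N$-complement structure together with $A_{X,\Delta}(E)\le A$ only tells you $A_{X,\Delta}(E)\in\frac{1}{N}\bZ\cap(0,A]$, a finite set. This does \emph{not} bound the Cartier index of $E$ along $Y$ (which is what you need); knowing the coefficient of $E$ in $\pi^*(K_X+\Delta)$ says nothing a priori about how many times $E$ is torsion in the local class groups of $Y$ at points of $E$. In the paper that Cartier-index control comes purely from the volume bound on $(Y,\Delta_Y)$ via Lemma \ref{lem:index bound via volume} (which is the finite degree formula of \cite{XZ-uniqueness} in disguise); once you have $\hvol(y,Y,\Delta_Y)\ge\varepsilon/A^n$ for all closed $y\in E$, \emph{every} $\bQ$-Cartier Weil divisor on $Y$ has bounded Cartier index near $E$, hence $N^2(K_Y+\Delta_Y+E)$ is Cartier and the pair is $\frac{1}{N^2}$-plt. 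Your detour through boundedness of the log Fano pair $(E,\Delta_E)$ is therefore unnecessary (and partly circular, since showing $(E,\Delta_E,L)$ is bounded \emph{before} you know $\delta$-plt is essentially the proof of Lemma \ref{lem:delta-plt implies special bounded} run backwards, which requires the index bound anyway). Once you repair Step 2 along the lines of Lemma \ref{lem:vol comparison lc blowup}, the rest collapses to the two-line application of Lemma \ref{lem:index bound via volume} and you should drop the boundedness detour entirely.
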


Let us first note that having bounded $\mldk$ is a necessary condition for the above conclusion.

\begin{lem} \label{lem:mldk bdd if delta-plt blowup exist}
Let $n\in\bN^*$ and let $\delta>0$. Then there exists some $A=A(n,\delta)>0$ such that $\mldk(x,X,\Delta)\le A$ for any $n$-dimensional klt singularity $x\in (X,\Delta)$ that admits a $\delta$-plt blowup.
\end{lem}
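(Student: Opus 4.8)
The plan is to bound $\mldk(x,X,\Delta)$ by bounding the single log discrepancy $A_{X,\Delta}(E)$ of the Koll\'ar component $E$ extracted by the given $\delta$-plt blowup $\pi\colon Y\to X$, since $\mldk(x,X,\Delta)\le A_{X,\Delta}(E)$ by definition. We may assume $n\ge 2$ (the case $n=1$ is trivial). First I would transfer the problem to $E$: writing $\Delta_Y$ for the strict transform of $\Delta$ and $\Delta_E:=\Diff_E(\Delta_Y)$, the identity $K_Y+\Delta_Y+E=\pi^*(K_X+\Delta)+A_{X,\Delta}(E)\cdot E$ restricted to $E$ shows, since $\pi^*(K_X+\Delta)$ is numerically trivial on $E$, that
\[
-(K_E+\Delta_E)\equiv A_{X,\Delta}(E)\cdot L,\qquad L:=-E|_E,
\]
where $L$ is ample and $\bQ$-Cartier, being the restriction of the $\pi$-ample $\bQ$-Cartier divisor $-E$. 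Moreover, exactly as in the proof of Lemma \ref{lem:delta-plt implies special bounded}, the $\delta$-plt hypothesis implies that $(E,\Delta_E)$ is a $\delta$-lc log Fano pair of dimension $n-1$.

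Next I would invoke boundedness of $\delta$-lc Fano type varieties (\cite{Birkar-bab-2}*{Theorem 1.1}) to conclude that $E$ lies in a bounded family, and fix a very ample Cartier divisor $H$ on $E$ with $H^{n-1}\le c_1(n,\delta)$. Taking $C$ to be a general complete intersection of $n-2$ members of $|H|$, the curve $C$ is smooth, contained in the smooth locus of $E$, meets $\Supp(\Delta_E)$ properly, and avoids the (codimension $\ge 2$) locus where $L$ fails to be Cartier. Adjunction $K_C=(K_E+(n-2)H)|_C$ then bounds $-K_E\cdot C=(n-2)H^{n-1}-2g(C)+2$ from above in terms of $n$ and $\delta$ only; and since $\Delta_E\ge 0$ this gives $A_{X,\Delta}(E)\cdot(L\cdot C)=-(K_E+\Delta_E)\cdot C\le -K_E\cdot C\le c_2(n,\delta)$. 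The point of working with the general curve $C$ rather than the full anticanonical class is precisely that it lets us discard $\Delta_E$, whose coefficients are not controlled.

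It remains to bound $L\cdot C$ from below, and this is the step where the $\delta$-plt hypothesis (as opposed to mere existence of a plt blowup) is essential. As in the proof of Lemma \ref{lem:delta-plt implies special bounded}, $\delta$-plt-ness bounds the indices of the surface quotient singularities of $Y$ along the codimension-one points of $E$, so there is an integer $m=m(\delta)$ with $mE$ Cartier outside a codimension $\ge 2$ subset of $E$; hence $mL$ is an integral Weil divisor, Cartier near $C$, and ample, so $mL\cdot C$ is a positive integer and $L\cdot C\ge 1/m$. Combined with the previous paragraph this yields $A_{X,\Delta}(E)\le m\,c_2$, a bound depending only on $n$ and $\delta$, so one takes $A(n,\delta):=m(\delta)\,c_2(n,\delta)$. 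I expect the only delicate points to be the bookkeeping in the first paragraph — that adjunction preserves $\delta$-lc-ness and that the numerical proportionality $-(K_E+\Delta_E)\equiv A_{X,\Delta}(E)\,L$ holds without assuming $Y$ is $\bQ$-factorial — but these are standard; everything else reduces to the Birkar citation and an elementary degree computation on $C$.
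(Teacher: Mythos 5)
Your argument is correct and closes the lemma from first principles, whereas the paper simply cites \cite{HLS-epsilon-plt-blowup}*{Proposition 4.3(1)} for the upper bound on $A_{X,\Delta}(E)$. The ingredients are essentially the same that underlie the cited proposition: adjunction on the $\delta$-plt blowup puts one in the world of $\delta$-lc log Fano pairs of dimension $n-1$, Birkar's boundedness (\cite{Birkar-bab-2}) gives a bounded family for $E$, and the index bound along codimension-one points of $E$ (as in the paper's proof of Lemma \ref{lem:delta-plt implies special bounded}) supplies the lower bound on $L\cdot C$. What your version buys is a transparent route that makes explicit why the coefficient set of $\Delta$ plays no role here — by cutting down to the general curve $C$ you can throw away $\Delta_E\ge 0$ before any degree count, so no DCC or finiteness hypothesis on coefficients is needed. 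The calculation itself is fine: $-(K_E+\Delta_E)\equiv A_{X,\Delta}(E)\,L$ does not require $Y$ to be $\bQ$-factorial since $-E$ is already $\bQ$-Cartier by primitivity, and a general $C$ misses both $\mathrm{Sing}(E)$ and $\mathrm{Supp}(\Delta_E)$ in codimension one. One wording slip: in the first paragraph you should say that $C$ avoids the locus where $mL$ (not $L$) fails to be Cartier — it is $mL$, not $L$, that is Cartier outside a codimension $\ge 2$ set — but you use this correctly in the final paragraph, so the argument is unaffected.
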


\begin{proof}
By \cite{HLS-epsilon-plt-blowup}*{Proposition 4.3(1)}, there exists some constant $A=A(n,\delta)>0$ such that for any $\delta$-plt blowup $\pi\colon Y\to X$ of an $n$-dimensional klt singularity $x\in (X,\Delta)$ with exceptional divisor $E$, we have $A_{X,\Delta}(E)\le A$. In particular, $\mldk(x,X,\Delta)\le A$.
\end{proof}

The key to the proof of Theorem \ref{thm:bdd via vol and mldK} is to bound the Cartier index of divisors on the plt blowups that extract the Koll\'ar components. Using \cite{XZ-uniqueness}, this essentially boils down to the following volume estimate.

\begin{lem} \label{lem:vol comparison lc blowup}
Let $(X,\Delta)$ be a klt pair of dimension $n$ and let $\pi\colon Y\to X$ be an lc blowup with exceptional divisor $E$. Then
\[
\hvol(y,Y,\Delta_Y)\ge \frac{\hvol(\pi(y),X,\Delta)}{\max\{1,A_{X,\Delta}(E)^n\}}
\]
for all $y\in Y$, where $\Delta_Y=\pi_*^{-1}\Delta$.
\end{lem}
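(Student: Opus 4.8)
The plan is to compare the two normalized volume functions valuation by valuation. Write $\lambda:=A_{X,\Delta}(E)$, so that $K_Y+\Delta_Y+(1-\lambda)E=\pi^*(K_X+\Delta)$ and consequently $A_{X,\Delta}(v)=A_{Y,\Delta_Y}(v)+(\lambda-1)\,v(E)$ for every valuation $v$ on the common function field. If $y\notin E$ then $\pi$ is an isomorphism near $y$ carrying $\Delta$ to $\Delta_Y$, so $\hvol(y,Y,\Delta_Y)=\hvol(\pi(y),X,\Delta)$ and the asserted inequality is immediate. In general, since $\pi$ restricts to a dominant morphism $\overline{\{y\}}\to\overline{\{\pi(y)\}}$ and the local volume function is constructible, a general closed point $w\in\overline{\{y\}}$ satisfies both $\hvol(w,Y,\Delta_Y)=\hvol(y,Y,\Delta_Y)$ and $\hvol(\pi(w),X,\Delta)=\hvol(\pi(y),X,\Delta)$; hence we may assume $y$ is a closed point lying on $E$, and we set $x:=\pi(y)$.

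Before comparing volumes I would record that $(Y,\Delta_Y)$ is klt near $y$, so that the left-hand side is defined. When $\lambda\le 1$ this is clear, as $\Delta_Y\le\Delta_Y+(1-\lambda)E$ and the latter pair is crepant to the klt pair $(X,\Delta)$. When $\lambda>1$, the hypothesis that $(Y,\Delta_Y+E)$ is lc near $E$ gives $A_{X,\Delta}(F)-\lambda\,\ord_F(E)=A_{Y,\Delta_Y+E}(F)\ge 0$ for every divisor $F$ centered in $E$, whence $A_{Y,\Delta_Y}(F)=A_{X,\Delta}(F)-(\lambda-1)\ord_F(E)\ge\lambda^{-1}A_{X,\Delta}(F)>0$; for $F$ not centered in $E$ one has $A_{Y,\Delta_Y}(F)=A_{X,\Delta}(F)>0$ directly. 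The same computation for a valuation $v\in\Val^*_{Y,y}$, together with the bound $0\le v(E)\le A_{Y,\Delta_Y}(v)$ (again just the lc-ness of $(Y,\Delta_Y+E)$ at $y$), shows that $A_{X,\Delta}(v)\le\max\{1,\lambda\}\cdot A_{Y,\Delta_Y}(v)$ for all such $v$.

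For the main step, fix $v\in\Val^*_{Y,y}$ and regard it as a valuation on $X$ with center $x$; it lies in $\Val^*_{X,x}$ since $A_{X,\Delta}(v)=A_{Y,\Delta_Y}(v)+(\lambda-1)v(E)<\infty$. Because $\cO_{X,x}\subseteq\cO_{Y,y}$ and the valuation ideal of $v$ in $\cO_{X,x}$ is the intersection with $\cO_{X,x}$ of the valuation ideal of $v$ in $\cO_{Y,y}$, passing to quotients yields injections $\cO_{X,x}/\big(\fa_m(v)\cap\cO_{X,x}\big)\hookrightarrow\cO_{Y,y}/\fa_m(v)$ and hence $\vol_{X,x}(v)\le\vol_{Y,y}(v)$. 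Combining this with the log-discrepancy bound of the previous paragraph gives
\[
\hvol_{X,\Delta}(v)=A_{X,\Delta}(v)^n\vol_{X,x}(v)\le\max\{1,\lambda^n\}\,A_{Y,\Delta_Y}(v)^n\vol_{Y,y}(v)=\max\{1,\lambda^n\}\,\hvol_{Y,\Delta_Y}(v).
\]
Since $\hvol_{X,\Delta}(v)\ge\hvol(x,X,\Delta)$, we conclude $\hvol_{Y,\Delta_Y}(v)\ge\hvol(x,X,\Delta)/\max\{1,\lambda^n\}$ for every $v\in\Val^*_{Y,y}$, and taking the infimum over $v$ proves the lemma.

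The genuinely non-formal points are the klt-ness of $(Y,\Delta_Y)$ in the range $\lambda>1$ and the interplay of the two ``opposite'' comparisons — $\vol$ increasing and $A$ increasing in a controlled way under pushforward to $X$ — whose product conspires to produce exactly the factor $\max\{1,A_{X,\Delta}(E)^n\}$; the inequality $v(E)\le A_{Y,\Delta_Y}(v)$ is precisely where the ``lc blowup'' hypothesis is used. The remaining ingredients (the reduction to closed points, and the fact that the relevant valuation ideals are $\fm$-primary so that the volumes are finite, via the elementary estimate $v\ge v(\fm_y)\cdot\ord_y$) are routine.
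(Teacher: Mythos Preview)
Your proof is correct. The paper, however, takes a slightly different route: when $\lambda:=A_{X,\Delta}(E)>1$ it manufactures an auxiliary effective $\bQ$-divisor $D$ on $X$ (a general average of members of the ideal $\pi_*\cO_Y(-mE)$) with $\lct(X,\Delta;D)\ge \lambda$ and $\pi^*(K_X+\Delta+(\lambda-1)D)=K_Y+\Delta_Y+(\lambda-1)D_Y$; it then invokes Lemma~\ref{lem:vol inequality} to get $\hvol(x,X,\Delta+(\lambda-1)D)\ge \lambda^{-n}\hvol(x,X,\Delta)$ and Lemma~\ref{lem:vol under birational map} (crepant pullback) to pass to $Y$. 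Your argument bypasses this auxiliary divisor and instead compares $A_{X,\Delta}(v)$ with $A_{Y,\Delta_Y}(v)$ directly via the single inequality $v(E)\le A_{Y,\Delta_Y}(v)$ encoded in the lc hypothesis, together with the elementary colength inequality $\vol_{X,x}(v)\le\vol_{Y,y}(v)$. In effect you have unpacked the proofs of Lemmas~\ref{lem:vol under birational map} and~\ref{lem:vol inequality} and merged them into one valuation-by-valuation estimate; this is a touch more self-contained (no Bertini-type choice of $D$), while the paper's version is more modular. Both arguments use exactly the same amount of the lc-blowup hypothesis.
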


\begin{proof}
Let $a=A_{X,\Delta}(E)-1$ and let $x=\pi(y)$. If $a\le 0$, then $K_Y+\Delta_Y\le \pi^*(K_X+\Delta)$, hence by Lemma \ref{lem:vol under birational map}, we get $\hvol(y,Y,\Delta_Y)\ge \hvol(x,X,\Delta)$ for all $y\in E$. Thus we may assume that $a>0$. Let $m>0$ be a sufficiently divisible integer, and let $\fa=\pi_*\cO_Y(-mE)$. We further let $D=\frac{1}{mp}(\{f_1=0\}+\cdots+\{f_p=0\})$ for some sufficiently large integer $p$ and some general $f_1,\cdots,f_p\in \fa$. Since $(Y,\Delta_Y+E)$ is lc by assumption, and
\[
K_Y+\Delta_Y+E=\pi^*(K_X+\Delta+\fa^{\frac{a+1}{m}}),
\]
we see that $\lct(X,\Delta;D)= \lct(X,\Delta;\fa^{1/m})\ge a+1$ and $\lct(X,\Delta;aD)\ge 1+\frac{1}{a}$. 
By Lemma \ref{lem:vol inequality} we obtain
\[
\hvol(x,X,\Delta+aD)\ge \frac{\hvol(x,X,\Delta)}{A_{X,\Delta}(E)^n}.
\]
Notice that $\pi^*(K_X+\Delta+aD)=K_Y+\Delta_Y+aD_Y$ where $D_Y=\pi_*^{-1}D$, hence another application of Lemma \ref{lem:vol under birational map} implies 
\[
\hvol(y,Y,\Delta_Y)\ge \hvol(y,Y,\Delta_Y+aD_Y)\ge \hvol(x,X,\Delta+aD)\ge A_{X,\Delta}(E)^{-n}\cdot \hvol(x,X,\Delta)
\]
for all $y\in E$. This proves the lemma.
\end{proof}

\begin{proof}[Proof of Theorem \ref{thm:bdd via vol and mldK}]
Without loss of generality we may assume that $A\ge 1$. By assumption, there exists a Koll\'ar component $E$ over $x\in (X,\Delta)$ such that $A_{X,\Delta}(E)\le A$. Let $\pi\colon Y\to X$ be the corresponding plt blowup and let $\Delta_Y$ be the strict transform of $\Delta$. By Lemma \ref{lem:vol comparison lc blowup}, we have
\begin{equation} \label{eq:vol(y)>=}
    \hvol(y,Y,\Delta_Y)\ge \frac{\varepsilon}{A^n}
\end{equation}
for all $y\in \pi^{-1}(x)=E$. 
Thus by Lemma \ref{lem:index bound via volume}, we deduce that there exists a positive integer $N$ depending only on $\varepsilon,n,A$ and the coefficient set $I$ such that $N\Delta$ has integer coefficients and $ND$ is Cartier for any $\bQ$-Cartier Weil divisor $D$ on $Y$. In particular, $N^2(K_Y+\Delta_Y+E)$ is Cartier. Since $(Y,\Delta_Y+E)$ is plt, this implies that it is $\frac{1}{N^2}$-plt and we are done. 
\end{proof}

Using Lemma \ref{lem:delta-plt implies special bounded}, this immediately implies:

\begin{cor} 
Let $\varepsilon,A>0$, $n\in\bN^*$ and let $I\subseteq [0,1]\cap \bQ$ be a finite set. Then the set of $n$-dimensional klt singularities $x\in (X,\Delta)$ with $\hvol(x,X,\Delta)\ge \varepsilon$, $\mldk(x,X,\Delta)\le A$ and $\Coef(\Delta)\subseteq I$ is bounded up to special degeneration.
\end{cor}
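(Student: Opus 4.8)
The plan is to reduce the statement to Lemma \ref{lem:delta-plt implies special bounded} by way of Theorem \ref{thm:bdd via vol and mldK}, after checking that the singularities in the given class come equipped with uniform lower bounds on their log discrepancies and on the coefficients of their boundaries.

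First I would invoke Theorem \ref{thm:bdd via vol and mldK}: it produces a constant $\delta=\delta(n,\varepsilon,A,I)>0$, independent of the individual singularity, such that every $n$-dimensional klt singularity $x\in (X,\Delta)$ with $\hvol(x,X,\Delta)\ge\varepsilon$, $\mldk(x,X,\Delta)\le A$ and $\Coef(\Delta)\subseteq I$ admits a $\delta$-plt blowup. Next I would record two auxiliary uniform bounds. On the one hand, since the minimal log discrepancy is bounded from below by the local volume up to a positive dimensional constant (by \cite{LLX20}*{Theorem 6.13}, as recalled in the introduction), we get $\mld(x,X,\Delta)\ge c(n)\cdot\hvol(x,X,\Delta)\ge c(n)\varepsilon=:\varepsilon_0>0$, so $x\in (X,\Delta)$ is $\varepsilon_0$-lc with $\varepsilon_0$ depending only on $n$ and $\varepsilon$. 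On the other hand, since $I\subseteq[0,1]\cap\bQ$ is finite and, by our conventions, $x\in\Supp(\Delta)$ whenever $\Delta\neq 0$, every coefficient of $\Delta$ is either $0$ or at least $c_0:=\min\big((I\setminus\{0\})\cup\{1\}\big)>0$; in particular $\Coef(\Delta)\ge c_0$.

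Finally I would apply Lemma \ref{lem:delta-plt implies special bounded} with the uniform data $(n,\varepsilon_0,\delta,c_0)$ to conclude that the class is log bounded up to special degeneration. Because $I$ is finite, log boundedness upgrades to boundedness up to special degeneration, as noted immediately after Definition \ref{defn:special bounded}, and this is exactly the asserted conclusion. There is no real obstacle here: the only thing to be careful about is the bookkeeping of constants, i.e.\ making sure that $\delta$, $\varepsilon_0$ and $c_0$ depend solely on the permitted data $n,\varepsilon,A,I$ and not on the particular singularity; granting that, the corollary is a direct concatenation of the quoted results.
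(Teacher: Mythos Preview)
Your proposal is correct and follows exactly the paper's approach: the paper simply states that the corollary follows from Theorem \ref{thm:bdd via vol and mldK} together with Lemma \ref{lem:delta-plt implies special bounded}, and you have filled in the implicit bookkeeping (the uniform $\varepsilon_0$-lc bound via \cite{LLX20}, the coefficient lower bound from finiteness of $I$, and the upgrade from log boundedness to boundedness via the remark after Definition \ref{defn:special bounded}) that makes the application of Lemma \ref{lem:delta-plt implies special bounded} go through.
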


For toric singularities we have the following uniform bound on $\mldk$. As an application, we get the boundedness of toric singularities whose volumes are bounded from below.

\begin{lem} \label{lem:toric mld^K}
Let $x\in (X,\Delta)$ be a klt toric singularity of dimension $n$. Then there exists a torus invariant Koll\'ar component $E$ such that $A_{X,\Delta}(E)\le n$. In particular,
\[
\mldk(x,X,\Delta)\le n.
\]
\end{lem}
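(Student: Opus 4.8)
The statement is about toric singularities, so I would work entirely in the language of toric geometry: write $X = X_\sigma$ for a full-dimensional rational cone $\sigma \subseteq N_\bR \cong \bR^n$, and let $\Delta = \sum_i a_i \Delta_i$ be a torus-invariant $\bQ$-divisor corresponding to the primitive ray generators $v_1, \dots, v_s$ of $\sigma$. The torus-invariant divisors over $X$ correspond to primitive lattice points $w$ in the relative interior of $\sigma$; extracting the ray through $w$ by a toric (weighted) blowup produces a toric exceptional divisor $E_w$, and the pair $(Y, \Delta_Y + E_w)$ is automatically toric, hence log canonical. So the first point is that \emph{every} such $E_w$ with $w \in \mathrm{int}(\sigma)$ is at least of lc type; to get a Koll\'ar component one needs the associated prime blowup to be plt near $E_w$, which toriticity makes checkable combinatorially (the orbifold structure on $E_w$ is again toric, and one needs klt-ness of that toric pair).

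\textbf{Key steps.} First, recall that the log discrepancy of $E_w$ has the explicit toric formula $A_{X,\Delta}(E_w) = \psi(w)$, where $\psi$ is the piecewise-linear support function on $\sigma$ with $\psi(v_i) = 1 - a_i$ for the boundary rays $v_i$ of $\Delta_i$ and $\psi(v_j) = 1$ for the other rays; klt-ness of $x \in (X,\Delta)$ means $\psi(w) > 0$ throughout $\sigma \setminus \{0\}$, i.e. all the slopes $1 - a_i$ are positive. Second, I would produce a \emph{single} ray $w$ inside $\sigma$ on which $\psi$ is small: take $w$ to be (a primitive multiple of) the sum $v_1 + \dots + v_s$ of the primitive ray generators, or more carefully a suitable lattice point in the interior. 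Since $\psi$ is piecewise-linear and concave-like on each simplicial subcone, and $\psi(v_i) \le 1$ for every ray generator, writing $w$ as a nonnegative combination of at most $n$ of the $v_i$'s (Carath\'eodory) with the coefficients summing to at most $n$ — which one can arrange by scaling $w$ down to be primitive — gives $\psi(w) \le n$. Third, I would verify that for such a $w$ the weighted blowup extracting $E_w$ is in fact a plt (Koll\'ar) blowup; here one uses that the toric variety obtained by starring $\sigma$ at $w$ and the boundary $\Delta_Y + E_w$ form a toric lc pair, and toric lc pairs with a reduced boundary component are plt along that component precisely when the "orbifold base" $(E_w, \Diff_{E_w}(\Delta_Y))$ is klt, which again reduces to the klt-ness of $x \in (X,\Delta)$ restricted to the star — this is where I would invoke a criterion in the spirit of Lemma \ref{lem:wt blowup kc criterion}, or simply the standard fact that a torus-invariant divisorial valuation centered at the torus-fixed point always gives a Koll\'ar component (this is in the toric literature, e.g. via \cite{Kol-Seifert-bundle}).

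\textbf{Main obstacle.} The genuinely delicate point is not the log-discrepancy bound (that is a short convexity/Carath\'eodory argument) but checking that the chosen $w$ yields a \emph{Koll\'ar} component rather than merely an lc-type divisor — i.e. plt-ness of $(Y, \Delta_Y + E_w)$ near $E_w$. One cannot choose $w$ completely freely if one wants primitivity of $E_w$ and plt-ness simultaneously: the blowup must be a weighted blowup at the fixed point, and the strict transform $\Delta_Y$ must not create a non-klt locus on $E_w$. I expect the cleanest route is to note that any torus-invariant valuation $v_w$ with $w$ a primitive interior lattice point is monomial, its associated prime blowup $Y \to X$ contracts exactly $E_w$, and the pair $(Y,\Delta_Y+E_w)$ near $E_w$ is the toric pair associated to the "link" fan of $w$ in the star subdivision — a toric pair with reduced component $E_w$ — so plt-ness follows from the fact that a toric lc pair is plt along a reduced prime toric divisor iff the corresponding face pair is klt, and this holds because $(X,\Delta)$ is klt. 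Granting this, one concludes $\mldk(x,X,\Delta) \le A_{X,\Delta}(E_w) = \psi(w) \le n$, which is the asserted bound; the "in particular" statement is then immediate from the definition of $\mldk$ as the infimum of $A_{X,\Delta}(E)$ over Koll\'ar components.
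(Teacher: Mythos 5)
Your proposal follows essentially the same route as the paper: pick a torus-invariant valuation in the direction of the sum of (at most $n$) primitive ray generators to get $A_{X,\Delta}(E_w)\le n$ via linearity of the log discrepancy function, and then check that any torus-invariant divisor over a klt toric singularity is a Koll\'ar component. For the second step the paper does not use Lemma~\ref{lem:wt blowup kc criterion} (that lemma is about weighted blowups of \emph{hypersurface} singularities, not toric ones); instead it runs the toric MMP to produce the relative ample model $Y$ of $-E_w$ over $X$, which is the prime blowup, and observes directly that the toric pair $(Y,\Delta_Y+E_w)$ has all boundary coefficients $\le 1$ with $E_w$ the unique coefficient-$1$ component, hence is plt. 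Your secondary suggestion --- invoke the standard toric fact directly rather than Lemma~\ref{lem:wt blowup kc criterion} --- is the right call; the Carath\'eodory phrasing for the discrepancy estimate is a modest sharpening of the paper's ``primitive generator in the direction $\rho_1+\cdots+\rho_n$'' argument, but both rest on the same convexity observation.
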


\begin{proof}
This follows from two well-known results:
\begin{enumerate}
    \item $\mld(x,X,\Delta)\le n$ for toric singularities (see \cites{Borisov-toric-mld,Ambro-toric-mld}) and the mld can be computed by some torus invariant divisor over $x\in (X,\Delta)$;
    \item every torus invariant divisor over $x\in (X,\Delta)$ is a Koll\'ar component.
\end{enumerate}
For the reader's convenience we sketch the proof. We refer to \cite{Fulton-toric} for basics on toric varieties. We have $X=X(\sigma)$ for some rational convex polyhedral cone $\sigma\subseteq N_\bR=\bR^n$. Let $\rho_i\in N=\bZ^n$ be the primitive generator of the $1$-dimensional faces of $\sigma$. We may write $\Delta=\sum a_i \Delta_i$ where the $\Delta_i$'s are torus invariant prime divisors corresponding to $\rho_i$. The pair $(X,\Delta)$ is klt if and only if $0\le a_i<1$ for all $i$ and there exists a linear function $f\colon N_\bR\to \bR$ such that $f(\rho_i)=1-a_i$. Every torus invariant divisor $E_w$ over $x\in (X,\Delta)$ comes from a primitive vector $w\in N\cap \mathrm{int}(\sigma)$ and the corresponding log discrepancy is $A_{X,\Delta}(E_w)=f(w)$. In particular, if $w$ is the primitive generator in the direction $\rho_1+\cdots+\rho_n$ then clearly $A_{X,\Delta}(E_w)=f(w)\le f(\rho_1)+\cdots+f(\rho_n)\le n$.

Next, let $E$ be a torus invariant divisor over $x\in (X,\Delta)$ that corresponds to $w\in N\cap \mathrm{int}(\sigma)$. After adding the vertex $w$, any simplicial subdivision of $\sigma$ gives rise to a birational morphism $Y_1\to X$ such that $Y_1$ is $\bQ$-factorial and $E$ is the only exceptional divisor (since $\bC_{\ge 0}\cdot w$ is the only $1$-dimensional face that's new). Let $Y$ be the ample model of $-E$ over $X$, i.e. $Y_1\dashrightarrow Y$ is a birational contraction over $X$ and $-E$ is $\pi$-ample (where $\pi\colon Y\to X$); in other words, $Y$ is the outcome of the $(-E)$-MMP over $X$ followed by the corresponding relative ample model (we can run any MMP on toric varieties). Note that the toric pair $(Y,\Delta_Y+E)$ is plt (where $\Delta_Y=\pi^{-1}_* \Delta$) since $\Coef(\Delta_Y+E)\le 1$ and $E$ is the only prime divisor with coefficient $1$. It follows that $E$ is a Koll\'ar component. This completes the proof.
\end{proof}

\begin{prop} \label{prop:toric bdd}
Let $n\in \bN^*$ and let $\varepsilon>0$. Then there are only finitely many toric singularities $x\in X$ $($up to isomorphism$)$ that supports a klt singularity $x\in (X,\Delta)$ with $\hvol(x,X,\Delta)\ge \varepsilon$ $($for some effective $\bQ$-divisor $\Delta)$.
\end{prop}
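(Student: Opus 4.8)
The plan is to combine Lemma~\ref{lem:toric mld^K} with the volume and index estimates behind Theorem~\ref{thm:bdd via vol and mldK}, and then to upgrade \emph{boundedness} to genuine finiteness by keeping everything torus-invariant. First I would reduce to the case $\Delta=0$: since $\Delta$ is effective, discarding it only enlarges the log discrepancy of every valuation, so $\hvol(x,X)\ge\hvol(x,X,\Delta)\ge\varepsilon$, and it suffices to bound klt toric $X$ with $\hvol(x,X)\ge\varepsilon$. By Lemma~\ref{lem:toric mld^K} there is a torus-invariant Koll\'ar component $E$ over $x\in X$ with $A_X(E)\le n$; let $\pi\colon Y\to X$ be the associated (toric) plt blowup, so $(Y,E)$ is plt and $-E$ is $\pi$-ample. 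By Lemma~\ref{lem:vol comparison lc blowup}, $\hvol(y,Y)\ge\hvol(x,X)/\max\{1,A_X(E)^n\}\ge\varepsilon/n^n$ for every $y\in E$, and hence by Lemma~\ref{lem:index bound via volume} there is an integer $N=N(n,\varepsilon)$ such that $ND$ is Cartier near $E$ for every $\bQ$-Cartier Weil divisor $D$ on $Y$; in particular $NK_Y$, $NE$, and therefore $N(K_Y+E)$, are Cartier near $E$.

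Next I would descend to the Koll\'ar component itself. By adjunction $K_E+\Delta_E=(K_Y+E)|_E$ with $\Delta_E=\Diff_E(0)$, so $N(K_E+\Delta_E)$ is Cartier; by inversion of adjunction $(E,\Delta_E)$ is klt, and since $\pi$ contracts $E$ to the point $x$ we have $(\pi^*K_X)|_E\sim_\bQ 0$, giving $-(K_E+\Delta_E)\sim_\bQ A_X(E)\cdot L$ with $L:=-E|_E$ ample. Thus $(E,\Delta_E)$ is an $(n-1)$-dimensional toric log Fano pair of Gorenstein index dividing $N$. Combining $A_X(E)\le n$ with the lower bound $A_X(E)\ge\mld(x,X)\ge c(n)\,\hvol(x,X)\ge c(n)\varepsilon$ from \cite{LLX20}*{Theorem 6.13}, and the fact that $NL$ is a Cartier divisor on $E$, one sees that $NL$ is an ample Cartier divisor whose degree $N^{n-1}\big(-(K_E+\Delta_E)\big)^{n-1}/A_X(E)^{n-1}$ is bounded above by a constant depending only on $n$ and $\varepsilon$. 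Now the finiteness of toric log Fano pairs of a fixed dimension with bounded Gorenstein index (equivalently, finiteness of Fano polytopes of bounded index, going back to Borisov--Borisov) shows that $(E,\Delta_E)$ lies in a finite set of isomorphism classes, and on each such $E$ there are only finitely many ample Cartier divisors of bounded degree; hence the triple $(E,\Delta_E;L)$ ranges over a finite set.

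Finally I would recover $X$ from this data. As in the proof of Lemma~\ref{lem:delta-plt implies special bounded}, $x\in X$ admits a special degeneration to the orbifold cone $\Spec\bigoplus_{m\ge0}H^0(E,\cO_E(\lfloor mL\rfloor))$ over $(E,\Delta_E;L)$; but because $E$ is torus-invariant the valuation $\ord_E$ is monomial on the toric ring $\cO_{X,x}$, so its associated graded ring is canonically $\cO_{X,x}$ itself and this degeneration is an isomorphism. Hence $X$ is isomorphic to that orbifold cone, which depends only on $(E,\Delta_E;L)$, and finitely many triples produce finitely many $X$ up to isomorphism. The crux is precisely this last step: one needs the \emph{finiteness} (not merely boundedness) of toric Fanos of bounded index together with the triviality of the canonical torus-equivariant degeneration. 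A secondary point to be handled carefully is the reduction of the boundary when $X$ is only $\bQ$-Gorenstein after adding $\Delta$ --- there one should instead reduce to a torus-invariant boundary of controlled index, or appeal to the convex-geometric argument of \cite{MS-toric}.
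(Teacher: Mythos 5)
Your argument for the case when $K_X$ itself is $\bQ$-Cartier is essentially sound, and it is close in spirit to the first paragraph of the paper's proof (you bound Cartier indices directly via Lemma~\ref{lem:vol comparison lc blowup} and Lemma~\ref{lem:index bound via volume} rather than going through $\delta$-plt as the paper does, but the skeleton --- torus-invariant Koll\'ar component of log discrepancy $\le n$, boundedness of $(E,\Diff_E(0))$ by Borisov--Borisov, and the observation that the degeneration induced by a torus-invariant divisor is trivial --- is the same).

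However, there is a genuine gap at the very first step. You write that ``discarding $\Delta$ only enlarges the log discrepancy, so $\hvol(x,X)\ge\hvol(x,X,\Delta)$''; this only makes sense when $K_X$ alone is $\bQ$-Cartier, which is \emph{not} part of the hypotheses. A toric $X$ need not be $\bQ$-Gorenstein, and the klt pair $(X,\Delta)$ only forces $K_X+\Delta$ to be $\bQ$-Cartier. You flag this at the end as a ``secondary point,'' but it is in fact the main content of the general case, and roughly half of the paper's proof is devoted to it. The paper resolves it by passing to the small birational modification $\pi\colon Y\to X$ (from Lemma~\ref{lem:ample model of Weil div}) on which $-K_Y$ is $\bQ$-Cartier and $\pi$-ample, running the boundary-free argument on $Y$ to produce a torus-invariant $E$ over $Y$ with bounded $A_Y(E)$ and $\vol_{Y,y}(\ord_E)$, and then --- crucially --- descending the Cartier-index bound for $E$ from $Z=\Bl_E X$ by cooking up a torus-invariant $\bQ$-Gorenstein boundary $G=\frac{M-1}{M}\Gamma+\frac{1}{M}\pi_*D_Y$ (with $\Gamma$ the toric boundary) so that $(X,G)$ is a klt pair with $\hvol(x,X,G)\ge M^{-n}\varepsilon$ and $A_{X,G}(E)\le 1$, whence Lemma~\ref{lem:vol under birational map} and Lemma~\ref{lem:index bound via volume} apply on $Z$. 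None of this is in your proposal, and the two avenues you gesture at (``reduce to a torus-invariant boundary of controlled index'' or ``appeal to \cite{MS-toric}'') are not spelled out; the first is precisely what needs doing and is nontrivial, and the second would make the statement circular as written. Until this is filled in, the argument only proves the proposition under the extra assumption that $X$ is $\bQ$-Gorenstein.
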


Note that we do not require the boundary $\Delta$ to be torus invariant.

\begin{proof}
First we treat the $\Delta=0$ case, i.e. when $x\in X$ itself is $\bQ$-Gorenstein and $\hvol(x,X)\ge \varepsilon$. By the proof of Theorem \ref{thm:bdd via vol and mldK} and Lemma \ref{lem:toric mld^K}, we see that there exists some $\delta>0$ depending only on $n,\varepsilon$ such that $x\in X$ admits a $\delta$-plt toric blowup. If $E$ is the corresponding torus invariant Koll\'ar component, then by adjunction $(E,\Diff_E(0))$ is $\delta$-klt. By \cite{BB-toric-bab}, there are only finitely many such variety $E$. As in the proof of Lemma \ref{lem:delta-plt implies special bounded}, the divisor $E$ induces a degeneration of $x\in X$ to the orbifold cone over $E$. But $E$ is torus invariant, so the degeneration is trivial (the corresponding test configuration is $X\times \bA^1$ with a diagonal $\bG_m$-action), thus $X$ itself is an orbifold cone over $E$. The finiteness of $E$ then implies the finiteness of $x\in X$ as in proof of Lemma \ref{lem:delta-plt implies special bounded}.

For the general case, let $\pi\colon Y\to X$ be a small birational modification such that $-K_Y$ is $\bQ$-Cartier and $\pi$-ample. Such modification exists and is torus equivariant by the following Lemma \ref{lem:ample model of Weil div}. Let $y\in \pi^{-1}(x)$ be a torus invariant closed point. By Lemma \ref{lem:vol under birational map}, we have $\hvol(y,Y)\ge \hvol(y,Y,\Delta_Y)\ge \hvol(x,X,\Delta)\ge \varepsilon$, thus from the boundary-free case treated above we know that there are only finitely many such toric singularities $y\in Y$. Hence there exists some constant $M\gg 0$ depending only on $n,\varepsilon$ and some torus invariant divisor $E$ over $y\in Y$ such that $A_Y(E)\le M$ and $\vol_{Y,y}(\ord_E)\le M$. 

We now view $E$ as a divisor over $x\in X$. Let $\Gamma$ be the sum of all torus invariant prime divisors on $X$, and let $\varphi\colon Z\to X$ be the associated blowup that extracts the divisor $E$. Note that $K_X+\Gamma\sim 0$ and $E$ is an lc place of $(X,\Gamma)$. Since $E$ is torus invariant, we know from the same proof above that $x\in X$ is an orbifold cone over $E$. We aim to show that there exists some integer $N>0$ depending only on $n,\varepsilon$ such that $N\cdot E$ is Cartier on $Z$. Since $-E|_E$ is ample and $(-E|_E^{n-1})=\vol_{X,x}(\ord_E)\le \vol_{Y,y}(\ord_E)\le M$ (the first inequality comes from \cite{LX-cubic-3fold}*{Lemma 2.9(1)}), it would follow that there are only finitely many such toric varieties $E$, and we get the finiteness of $x\in X$.

To this end, we choose some sufficiently divisible integer $m$, let $D_Y=\frac{1}{m}D_0\sim_\bQ -K_Y$ for some general $D_0\in |-mK_Y|$, and let $G=\frac{M-1}{M}\Gamma+\frac{1}{M}\pi_* D_Y$. Then $K_X+G\sim_\bQ 0$ and by Bertini theorem we know that $(Y,\Gamma_Y+D_Y)$ is lc (as usual $\Gamma_Y$ etc. denotes the strict transform of $\Gamma$ etc. on $Y$), hence for any valuation $v\in \Val_{X,x}^*$, we get
\[
A_{Y,G_Y}(v)\ge A_{Y}(v)-\frac{M-1}{M}v(\Gamma_Y+D_Y)\ge  \frac{1}{M}A_Y(v)
\]
(the first inequality holds as long as $M\ge 2$).  It follows that $A_{X,G}(v)=A_{Y,G_Y}(v)\ge \frac{1}{M}A_Y(v)\ge \frac{1}{M}A_{X,\Delta}(v)$, thus $\hvol(x,X,G)\ge M^{-n}\hvol(x,X,\Delta)\ge M^{-n}\varepsilon$ as in Lemma \ref{lem:vol inequality}. On the other hand, recall that $E$ is an lc place of $(Y,\Gamma_Y)$, hence $A_{X,G}(E)=A_{Y,G_Y}(E)\le A_{Y,\frac{M-1}{M}\Gamma_Y}(E)=\frac{1}{M}A_Y(E)\le 1$ by our choice of $M$. This means that $K_Z+G_Z\le \varphi^*(K_X+G)$, and an application of Lemma \ref{lem:vol under birational map} gives $\hvol(z,Z,G_Z)\ge M^{-n}\varepsilon$ for all $z\in E$. By Lemma \ref{lem:index bound via volume} we deduce that the Cartier index of $E$ on $Z$ is bounded from above by some constant that only depends on $n,\varepsilon$. As explained earlier, this concludes the proof.
\end{proof}

The following result is used in the above proof, and will be needed again in the proof of Theorem \ref{thm:delta-plt blowup strong version}.

\begin{lem} \label{lem:ample model of Weil div}
Let $(X,\Delta)$ be a klt pair and let $D$ be a Weil $\bQ$-divisor on $X$. Then there exists a unique small birational modification $\pi\colon Y\to X$ such that the strict transform $D_Y=\pi^{-1}_* D$ is $\bQ$-Cartier and $\pi$-ample.
\end{lem}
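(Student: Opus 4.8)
The plan is to realize $Y$ as the relative ample model over $X$ of the strict transform of $D$: I would build it from a small $\bQ$-factorialization by running a minimal model program, and then verify separately that the resulting model is small over $X$. First I would reduce to the case $D\ge 0$. Since $X$ is affine (our standing assumption for singularities), the rank-one reflexive sheaf $\cO_X(\lfloor D\rfloor)$ has a nonzero global section, so there is $f\in K(X)^\times$ with $D+\mathrm{div}(f)\ge 0$; for any small birational $\pi\colon Y\to X$ one has $\pi^{-1}_*(D+\mathrm{div}(f))=\pi^{-1}_*D+\mathrm{div}(f\circ\pi)$, so replacing $D$ by $D+\mathrm{div}(f)$ affects neither the hypothesis, nor the conclusion, nor the uniqueness assertion. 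Hence we may assume $D\ge 0$, and (the case $D=0$ being trivial) $D\ne 0$.

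Next I would take, by \cite{BCHM}, a small $\bQ$-factorialization $\phi\colon W\to X$ — a projective birational morphism, an isomorphism in codimension one, with $W$ normal and $\bQ$-factorial — and set $\Delta_W=\phi^{-1}_*\Delta$, $D_W=\phi^{-1}_*D$, so that $K_W+\Delta_W=\phi^*(K_X+\Delta)$, $(W,\Delta_W)$ is klt, and $D_W$ is effective and $\bQ$-Cartier. Choosing $0<\varepsilon\ll 1$ with $(W,\Delta_W+\varepsilon D_W)$ klt, and noting that $\Delta_W+\varepsilon D_W$ is big over $X$ because $\phi$ is birational, I would run the $(K_W+\Delta_W+\varepsilon D_W)$-MMP over $X$; by \cite{BCHM} it terminates with a good minimal model and hence with a relative ample model $\pi\colon Y\to X$. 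Since $K_W+\Delta_W\equiv_X 0$ we have $K_W+\Delta_W+\varepsilon D_W\equiv_X\varepsilon D_W$ at every stage, so on $Y$ the divisor $D_W$ descends to a $\bQ$-Cartier divisor $D_Y$ that is ample over $X$.

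The step I expect to be the main obstacle is showing that $\pi$ is small, which is needed both to identify $D_Y$ with $\pi^{-1}_*D$ and for the statement itself. Here I would use crucially that $\phi$ is small. The composite $W\dashrightarrow Y$ is a birational contraction over $X$, so every prime divisor of $Y$ is a prime divisor of $W$; suppose some prime divisor $E$ of $W$ were contracted by $W\dashrightarrow Y$. Since $\phi$ is small, $E$ is not $\phi$-exceptional, so $E$ dominates a prime divisor $B\subseteq X$, and — again because $\phi$ is small — $E$ is the \emph{only} prime divisor of $W$ dominating $B$. On the other hand, $\pi\colon Y\to X$ is proper and birational, so $B$ is dominated by some prime divisor of $Y$; that divisor is then a prime divisor of $W$ dominating $B$, hence equals $E$ — contradicting that $E$ was contracted. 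Therefore $W\dashrightarrow Y$ is an isomorphism in codimension one, the prime divisors of $Y$ coincide with those of $W$, and none of them is exceptional over $X$ (as $\phi$ is small); thus $\pi$ is small and $D_Y=\pi^{-1}_*D$ is $\bQ$-Cartier and $\pi$-ample. The same observation shows that no divisorial contraction can occur in the MMP above, and explains why one must start from a \emph{small} $\bQ$-factorialization rather than from an arbitrary $\bQ$-factorial (or dlt) model.

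Finally, for uniqueness: if $\pi_i\colon Y_i\to X$ ($i=1,2$) both satisfy the conclusion, the induced birational map $Y_1\dashrightarrow Y_2$ over $X$ is an isomorphism in codimension one and carries $D_{Y_1}$ to $D_{Y_2}$; since each $D_{Y_i}$ is relatively ample, the negativity lemma (equivalently, uniqueness of relative ample models) forces $Y_1\cong Y_2$ over $X$.
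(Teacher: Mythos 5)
Your proposal is correct and follows the same route as the paper: reduce to $D\ge 0$, pass to a small $\bQ$-factorialization, take the relative log canonical (= ample) model of $(W,\Delta_W+\varepsilon D_W)$ over $X$ via BCHM, and use $K_W+\Delta_W\equiv_X 0$ to conclude that $D_Y$ is $\pi$-ample. The only substantive differences are cosmetic: the paper reduces to effective $D$ by writing $D=A-B$ with $B$ Cartier rather than adding a principal divisor, and derives uniqueness from the $\Proj$-description $Y=\Proj_X\bigoplus_m\cO_X(mrD)$ instead of the negativity lemma; meanwhile you spell out the verification that $Y\to X$ is small, which the paper dismisses with a single ``clearly'' — your codimension-one bookkeeping (every divisor of $Y$ comes from $W$, none is $\phi$-exceptional, and strict transforms of divisors on $X$ survive on $Y$) is exactly the argument the paper is implicitly invoking, so writing it out is a useful clarification rather than a deviation.
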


\begin{proof}
The uniqueness part follows from the existence since we necessarily have 
\[
Y=\Proj_X \bigoplus_{m\in\bN} \pi_*\cO_Y(mrD_Y)=\Proj_X \bigoplus_{m\in\bN} \cO_X(mrD)
\]
for some sufficiently divisible $r\in \bN$. Every Weil divisor $D$ on $X$ can be written as $D=A-B$ where $A$ is effective and $B$ is Cartier. Since $\pi^{-1}_*(A-B) = \pi^{-1}_* A - \pi^* B$ is $\pi$-ample if and only if $\pi^{-1}_* A$ is $\pi$-ample, we may assume that $D$ is effective. Let $\pi_1\colon Y_1\to X$ be a small $\bQ$-factorial modification of $X$, which exists by \cite{BCHM}*{Corollary 1.4.3} (and the remarks thereafter). Let $\Delta_1$ (resp. $D_1$) be the strict transform of $\Delta$ (resp. $D$). Note that $(Y_1,\Delta_1)$ is klt (it is crepant to $(X,\Delta)$), thus the pair $(Y_1,\Delta_1+\varepsilon D_1)$ is also klt for $0<\varepsilon\ll 1$. By \cite{BCHM}*{Theorem 1.2(2)}, it has a log canonical model over $X$, i.e. there exists a birational contraction $Y_1\dashrightarrow Y$ over $X$ such that $K_Y+\Delta_Y+\varepsilon D_Y$ is ample over $X$ (where $\Delta_Y, D_Y$ are the strict transforms of $\Delta,D$). Denote the map $Y\to X$ by $\pi$. Clearly it is a small birational contraction (as $\pi_1$ is), and hence $K_Y+\Delta_Y=\pi^*(K_X+\Delta)$ and 
\[
K_Y+\Delta_Y+\varepsilon D_Y \sim_{\pi,\bQ} \varepsilon D_Y.
\]
It follows that $D_Y$ is also $\pi$-ample and we finish the proof.
\end{proof}

We next show that Conjecture \ref{conj:bdd for mldK, vol>epsilon} and Theorem \ref{thm:bdd via vol and mldK} together imply Conjecture \ref{conj:vol>epsilon implies bounded}. The main ingredients come from Theorem \ref{thm:lct>=vol} and Koll\'ar's effective basepoint-free theorem \cite{Kollar-effective-bpf}.

\begin{thm} \label{thm:delta-plt blowup strong version}
Assume that Conjecture \ref{conj:bdd for mldK, vol>epsilon} holds in dimension $n$. Then for any $\varepsilon>0$ and any $n$-dimensional klt singularity $x\in (X,\Delta=\sum_{i=1}^m a_i \Delta_i)$ such that
\begin{enumerate}
    \item $a_i\ge \varepsilon$ for all $i$,
    \item each $\Delta_i$ is an effective Weil divisor, and
    \item $\hvol(x,X,\Delta)\ge \varepsilon$,
\end{enumerate}
there exists some $\delta=\delta(n,\varepsilon)>0$ such that $x\in (X,\Delta)$ admits a $\delta$-plt blowup. In particular, the set of such singularities is log bounded up to special degeneration.
\end{thm}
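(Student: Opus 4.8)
The plan is to reduce the statement to a situation with a \emph{finite} coefficient set, where Theorem~\ref{thm:bdd via vol and mldK} applies once Conjecture~\ref{conj:bdd for mldK, vol>epsilon} is invoked. The first move is to gain room in the boundary using Theorem~\ref{thm:lct>=vol}: it gives $\lct_x(X,\Delta;\Delta)\ge c(n)\hvol(x,X,\Delta)\ge c(n)\varepsilon$, and after shrinking $c(n)$ so that $c(n)\varepsilon<1$ and setting $\lambda:=c(n)\varepsilon/2$, the pair $(X,(1+\lambda)\Delta)$ is klt near $x$ and $(X,(1+2\lambda)\Delta)$ is lc near $x$. In particular every coefficient satisfies $a_i\le\tfrac{1}{1+\lambda}=1-\tau$ for some $\tau=\tau(n,\varepsilon)>0$ (after replacing $X$ by a small neighbourhood of $x$, so that every component of $\Delta$ passes through $x$).

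The obstruction to making the coefficients lie in a finite set is that the individual $\Delta_i$ need not be $\bQ$-Cartier, so I would next pass to a small $\bQ$-factorial modification $\sigma\colon X'\to X$, which is crepant. Then $(X',\Delta')$ with $\Delta'=\sum a_i\Delta'_i$, $\Delta'_i=\sigma^{-1}_*\Delta_i$, is klt, $(X',(1+\lambda)\Delta')$ is klt and $(X',(1+2\lambda)\Delta')$ is lc near $\sigma^{-1}(x)$, and for a closed point $x'\in\sigma^{-1}(x)$ one has $\hvol(x',X',\Delta')\ge\hvol(x,X,\Delta)\ge\varepsilon$ by Lemma~\ref{lem:vol under birational map}. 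Since $X'$ is $\bQ$-factorial, Lemma~\ref{lem:index bound via volume} bounds the Cartier indices of $K_{X'}$ and of each $\Delta'_i$ by $N_0:=n^n/\varepsilon$. Now choose $k=k(n,\varepsilon)\in\bN^*$ with $\tfrac1{kN_0}\le\lambda\varepsilon$, let $a_i^+$ be the least multiple of $\tfrac1{kN_0}$ with $a_i^+\ge a_i$, and put $\Delta'^+:=\sum a_i^+\Delta'_i$. Then $\Delta'\le\Delta'^+\le(1+\lambda)\Delta'$, whence $(X',\Delta'^+)$ is a klt pair (dominated by the klt pair $(X',(1+\lambda)\Delta')$), $K_{X'}+\Delta'^+$ is $\bQ$-Cartier of bounded index, $\Coef(\Delta'^+)$ lies in a finite set $I_0=I_0(n,\varepsilon)$, and—combining $\Delta'^+\le(1+\lambda)\Delta'$, log canonicity of $(X',(1+2\lambda)\Delta')$, and Lemma~\ref{lem:vol inequality}—one gets $\hvol(x',X',\Delta'^+)\ge\varepsilon'$ for some $\varepsilon'=\varepsilon'(n,\varepsilon)>0$.

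With this in place, Conjecture~\ref{conj:bdd for mldK, vol>epsilon} (in dimension $n$) yields $\mldk(x',X',\Delta'^+)\le A(n,\varepsilon',I_0)=A(n,\varepsilon)$, and Theorem~\ref{thm:bdd via vol and mldK} then produces a Koll\'ar component $E$ over $x'\in(X',\Delta'^+)$ whose prime blowup $\pi'\colon Y\to X'$ is a $\delta_0$-plt blowup with $\delta_0=\delta_0(n,\varepsilon)>0$. Since $\Delta'_Y\le\Delta'^+_Y$, one has $A_{Y,\Delta'_Y+E}(F)\ge A_{Y,\Delta'^+_Y+E}(F)\ge\delta_0$ for every divisor $F$ exceptional over $Y$, so $(Y,\Delta'_Y+E)$ is still $\delta_0$-plt near $E$; hence $E$ is already a Koll\'ar component over $x'\in(X',\Delta')$ with a $\delta_0$-plt blowup. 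The remaining task is to descend along $\sigma$: as $\sigma$ is small and crepant, $E$ is the only divisor on $Y$ exceptional over $X$, and passing to the relative ample model of $-E$ over $X$ (using Lemma~\ref{lem:ample model of Weil div} together with the MMP and the fact that $Y$ is of Fano type over $X'$) yields a prime blowup $\bar\pi\colon\bar Y\to X$ extracting $E$ with $A_{X,\Delta}(E)=A_{X',\Delta'}(E)$ and $(\bar Y,\Delta_{\bar Y}+E)$ plt; here Koll\'ar's effective basepoint-free theorem \cite{Kollar-effective-bpf} enters to bound the multiple of $-E$ defining this ample model, keeping the relevant Cartier indices—and with them the plt constant $\delta=\delta(n,\varepsilon)>0$—uniformly bounded. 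Finally, $\Coef(\Delta)\ge\varepsilon$ and, by \cite{LLX20}*{Theorem 6.13}, $\hvol(x,X,\Delta)\ge\varepsilon$ forces $(X,\Delta)$ to be $\varepsilon''$-lc for some $\varepsilon''=\varepsilon''(n,\varepsilon)>0$, so Lemma~\ref{lem:delta-plt implies special bounded} gives the asserted log boundedness up to special degeneration.

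I expect the hardest part to be this last descent—transporting a $\delta$-plt blowup back across the small $\bQ$-factorialization with $\delta$ still depending only on $n$ and $\varepsilon$—which is exactly where Theorem~\ref{thm:lct>=vol} (to secure the coefficient room, and hence uniform control of the ambient invariants) and Koll\'ar's effective basepoint-free theorem (to render the passage to the ample model over $X$ effective) play their essential roles.
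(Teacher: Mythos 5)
Your overall strategy genuinely differs from the paper's. You pass to a small $\bQ$-factorialization $\sigma\colon X'\to X$, round the coefficients there to land in a finite set, apply Conjecture~\ref{conj:bdd for mldK, vol>epsilon} and Theorem~\ref{thm:bdd via vol and mldK} over $X'$, and then try to transport the resulting $\delta_0$-plt blowup back to $X$. The paper instead stays on $X$ the entire time: it uses the $\bQ$-factorial model only to construct an auxiliary divisor $D_Y\sim_{\pi,\bQ} -(K_Y+\Delta'_Y)$ (with coefficients controlled by Lemma~\ref{lem:index bound via volume} and Koll\'ar's effective basepoint-free theorem) such that $K_Y+\Delta'_Y+D_Y$ is $\pi$-numerically trivial; this forces $K_X+\Delta'+D$ to be $\bQ$-Cartier for $D=\pi_*D_Y$, and then the conjecture and Theorem~\ref{thm:bdd via vol and mldK} are applied directly to $(X,\Delta'+D)$. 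Since $\Delta'+D\ge\Delta$, a $\delta$-plt blowup of $(X,\Delta'+D)$ is automatically a $\delta$-plt blowup of $(X,\Delta)$, so no descent is required.

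The gap in your argument is precisely the descent step that you flag as the hardest. You have a Koll\'ar component $E$ over $x'\in(X',\Delta'^+)$ with plt blowup $\pi'\colon Y\to X'$ and $(Y,\Delta'_Y+E)$ $\delta_0$-plt, and you assert that passing to the ample model of $-E$ over $X$ keeps $(\bar Y,\Delta_{\bar Y}+E)$ plt with controlled discrepancy. But over $X$ the divisor $-E$ need not be nef: a curve $C\subseteq Y$ that is $\sigma\circ\pi'$-contracted but not $\pi'$-contracted can meet $E$ transversally, giving $E\cdot C>0$ and $(-E)\cdot C<0$. So one must run a $(-E)$-MMP over $X$. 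Now observe that $K_Y+\Delta'_Y+E\sim_{X,\bQ} A_{X,\Delta}(E)\cdot E$ with $A_{X,\Delta}(E)>0$, so a $(-E)$-MMP over $X$ is a $-(K_Y+\Delta'_Y+E)$-MMP over $X$, which is the \emph{wrong} direction: discrepancies with respect to $(\,\cdot\,,\Delta+E)$ strictly \emph{decrease} along its flips. Concretely, for a common resolution $p\colon W\to Y$, $q\colon W\to Y''$ of a flip, the negativity lemma gives $p^*E\le q^*E_{Y''}$, and since $K_Y+\Delta'_Y+E=(\sigma\pi')^*(K_X+\Delta)+A_{X,\Delta}(E)E$ (and similarly on $Y''$), one gets $A_{Y,\Delta'_Y+E}(G)\ge A_{Y'',\Delta_{Y''}+E}(G)$ for every divisor $G$. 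So the $\delta_0$-plt property can be lost, and even plt-ness can fail on the ample model. Neither the Fano-type property over $X'$ nor Koll\'ar's effective basepoint-free theorem addresses this; those help with existence of the MMP and with Cartier index bounds, not with preserving log discrepancies in a $-K$-ascending MMP. This is exactly the obstruction that the paper's special complement machinery (Lemma~\ref{lem:kc descend special complement}, used in Proposition~\ref{prop:reduce to Q-Gorenstein}) is designed to overcome, by arranging for $E$ to be the unique lc place of a carefully chosen complement on $X$ itself.

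To fix your route you would either need to construct $E$ as an lc place of a complement of $(X,\Delta)$ that is \emph{special} with respect to $\sigma$ (so Lemma~\ref{lem:kc descend special complement} applies), or adopt the paper's boundary-enlargement trick and avoid descent entirely. Your opening steps (using Theorem~\ref{thm:lct>=vol} to gain a margin $\lambda$, rounding coefficients with Lemma~\ref{lem:index bound via volume}, preserving a volume lower bound via Lemmas~\ref{lem:vol under birational map} and~\ref{lem:vol inequality}, and the closing appeal to Lemma~\ref{lem:delta-plt implies special bounded} for special boundedness) are all sound and parallel the paper's reductions; only the descent is missing.
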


\begin{proof}
We focus on the existence of $\delta$-plt blowup, since the special boundedness would then follow from Lemma \ref{lem:delta-plt implies special bounded}. If $\Delta^+\ge \Delta$ and $(X,\Delta^+)$ is klt, then any $\delta$-plt blowup of $(X,\Delta^+)$ is also a $\delta$-plt blowup of $(X,\Delta)$. By Conjecture \ref{conj:bdd for mldK, vol>epsilon} and Theorem \ref{thm:bdd via vol and mldK}, if $r\Delta^+$ has integral coefficients for some integer $r>0$ and $\hvol(x,X,\Delta^+)\ge \varepsilon_0$ for some $\varepsilon_0>0$, then we may choose $\delta$ to only depend on $n,r$ and $\varepsilon_0$. So our goal is to prove that we can find $\Delta^+$, $r$ and $\varepsilon_0$ as above such that $r,\varepsilon_0$ only depends on $n,\varepsilon$.

To this end, let $\gamma=c(n)\cdot \varepsilon$ where $c(n)>0$ is the constant from Theorem \ref{thm:lct>=vol}. Let $\ell=\lceil \frac{2+\gamma}{\varepsilon \gamma} \rceil$, and let 
\[
\Delta'=\sum_{i=1}^m \frac{1}{\ell}\left\lfloor\frac{(1+\gamma)\ell a_i}{1+\frac{\gamma}{2}}\right\rfloor \Delta_i.
\]
Using the assumption that $a_i\ge \varepsilon$, it is not hard to check that $\Delta\le \Delta'$ and 
\begin{equation} \label{eq:Delta'<=(1+gamma)Delta}
    \left(1+\frac{\gamma}{2}\right)\Delta'\le (1+\gamma)\Delta.
\end{equation}
If both $K_X$ and $\Delta'$ were $\bQ$-Cartier, we could simply take $r=\ell$, $\Delta^+=\Delta'$: by Theorem \ref{thm:lct>=vol}, the pair $(X,(1+\gamma)\Delta)$ is klt, hence so is $(X,(1+\frac{\gamma}{2})\Delta')$ by \eqref{eq:Delta'<=(1+gamma)Delta}. By Lemma \ref{lem:vol inequality} (with $\lambda=\frac{\gamma}{2}$), we get $\hvol(x,X,\Delta')\ge \left( \frac{\gamma}{\gamma+2} \right)^n \hvol(x,X,\Delta)\ge \varepsilon_0$ for some constant $\varepsilon_0$ that depends only on $n,\varepsilon$. In other words, the desired conditions on $r,\Delta^+:=\Delta'$ and $\varepsilon$ are satisfied.

To deal with the general case, the idea is to find another divisor $D\ge 0$ such that $(X,\Delta'+D)$ is klt (in particular, $K_X+\Delta'+D$ is $\bQ$-Cartier), while keeping the coefficients in a fixed finite set. To this end, we take a $\bQ$-factorial modification $\pi\colon Y\to X$ such that $-(K_Y+\Delta'_Y)$ is $\pi$-nef where $\Delta'_Y=\pi^{-1}_* \Delta'$. Such map exists by Lemma \ref{lem:ample model of Weil div}: first take a small birational modification such that the strict transform of $-(K_X+\Delta')$ is ample over $X$, then take a further small $\bQ$-factorial modification using \cite{BCHM}*{Corollary 1.4.3}. We also set $\Delta_Y=\pi^{-1}_* \Delta$. The strict transform of the sought divisor $D$ should be in the $\bQ$-linear system $|-(K_Y+\Delta'_Y)|_\bQ$. To control the coefficient of $D$ we first proceed to verify the effective basepoint-freeness of this linear system. Since $(Y,\Delta_Y)$ is crepant to $(X,\Delta)$, we have 
\[
\hvol(y,Y,\Delta_Y)\ge \hvol(\pi(y),X,\Delta)\ge \varepsilon
\]
for all $y\in Y$ by Lemma \ref{lem:vol under birational map}. Since $\ell \Delta'_Y$ has integer coefficients, we deduce from Lemma \ref{lem:index bound via volume} that there exists a positive integer $N_0$ depending only on $n$ and $\varepsilon$ such that $L:=-N_0(K_Y+\Delta'_Y)$ is Cartier. Note that $L$ and $L-(K_Y+\Delta'_Y)$ are both $\pi$-nef and $\pi$-big by our construction of $Y$, therefore by Koll\'ar's effective base-point-free theorem \cite{Kollar-effective-bpf}*{Theorem 1.1} (see also \cite{Fujino-effective-bpf}*{Theorem 1.3} for the relative version), there exists another positive integer $m_0$ depending only on the dimension $n$ such that $m_0 L$ is $\pi$-generated. In particular, we get an integer $r_0=m_0 N_0$ which only depends on $n$ and $\varepsilon$ such that $-r_0(K_Y+\Delta'_Y)$ is Cartier and $\pi$-generated. By the same argument as in the special case above (where $K_X$ and $\Delta'$ are assumed to be $\bQ$-Cartier), we also know that $(Y,(1+\frac{\gamma}{2})\Delta'_Y)$ is klt. Thus by Bertini's theorem, we can choose some effective $\bQ$-divisor $D_Y\sim_{\pi,\bQ} -(K_Y+\Delta'_Y)$ such that $2r_0 D_Y$ has integral coefficients and the pair $(Y,(1+\frac{\gamma_0}{2})(\Delta'_Y+D_Y))$ is klt for $\gamma_0=\min\{1,\gamma\}$. 

As $K_Y+\Delta'_Y+D_Y\sim_{\pi,\bQ} 0$, we have
\[
K_Y+\Delta'_Y+D_Y = \pi^*(K_X+\Delta'+D)
\]
where $D=\pi_* D_Y$. Note that $(X,\Delta'+D)$ is klt since the same holds for $(Y,\Delta'_Y+D_Y)$. We also know that $2r_0\ell (\Delta'+D)$ has integral coefficients by our construction. According to the discussion at the beginning of the proof, it remains to check that $\hvol(x,X,\Delta'+D)\ge \varepsilon_0$ for some constant $\varepsilon_0>0$ that only depends on $n$ and $\varepsilon$. But by construction, it is not hard to see that $\Delta'+D-\Delta$ is $\bQ$-Cartier and that $(X,\Delta+(1+\frac{\gamma_0}{2})(\Delta'+D-\Delta))$ is klt: indeed, the strict transform of the boundary on $Y$ is at most $(1+\frac{\gamma_0}{2})(\Delta'_Y+D_Y)$. Hence by Lemma \ref{lem:vol inequality} (with $\lambda=\frac{\gamma_0}{2}$) we obtain
\[
\hvol(x,X,\Delta'+D)\ge \left(\frac{\gamma_0}{2+\gamma_0} \right)^n \hvol(x,X,\Delta)\ge \varepsilon_0
\]
for some constant that only depends on $n$ and $\varepsilon$ (as the same holds for $\gamma_0$). This finishes the proof.
\end{proof}

\begin{rem}
While the results in this paper are stated for pairs with rational coefficients, the general real coefficient case can often be reduced to the rational case by perturbing the coefficients. In Conjecture \ref{conj:bdd for mldK, vol>epsilon} we can even relax the assumption that $\Coef(\Delta)$ belongs to a fixed finite set $I$ to $\Coef(\Delta)\ge \varepsilon$ and get an equivalent conjecture. The reason is as follows: by perturbation, we may further assume $\Coef(\Delta)\subseteq \bQ$; if the original version of Conjecture \ref{conj:bdd for mldK, vol>epsilon} holds, then by Theorem \ref{thm:delta-plt blowup strong version} and Lemma \ref{lem:mldk bdd if delta-plt blowup exist}, for any klt pair $(X,\Delta)$ with $\Coef(\Delta)\ge \varepsilon$ and any $\eta\in X$ with $\hvol(\eta,X,\Delta)\ge \varepsilon$, we have $\mld^K(\eta,X,\Delta)\le A$ for some constant $A=A(n,\varepsilon)>0$.

Using \cite{HLS-epsilon-plt-blowup}, we can also extend Theorem \ref{thm:bdd via vol and mldK} to the real coefficient case. Indeed, given any finite set $I\subseteq [0,1]$ (not necessarily $\subseteq \bQ$), by \cite{HLS-epsilon-plt-blowup}*{Theorem 5.6} we can find a finite set $I'\subseteq [0,1]\cap\bQ$ such that: for any $n$-dimensional klt singularity $x\in (X,\Delta)$ with $\Coef(\Delta)\subseteq I$, there exists some effective $\bQ$-divisor $\Delta'\ge \Delta$ on $X$ with $\Coef(\Delta')\subseteq I'$, such that $x\in (X,\Delta')$ is klt, $\hvol(x,X,\Delta')\ge 2^{-n}\hvol(x,X,\Delta)$ and $\mldk(x,X,\Delta')\le 2\cdot \mldk(x,X,\Delta)$ (c.f. \cite{Z-mld^K-2}*{Lemma 2.17}). Since $\Delta'\ge \Delta$, any $\delta$-plt blowup of $(X,\Delta')$ is also a $\delta$-plt blowup of $(X,\Delta)$. The real coefficient case of Theorem \ref{thm:bdd via vol and mldK} then follows from the rational case applied to the coefficient set $I'$.
\end{rem}

\section{Reduction steps}

In this section, we work out some reduction steps for Conjectures \ref{conj:bdd for mldK, vol>epsilon} and \ref{conj:bdd for mldK} which apply in any dimension. They will be combined with classification results in the next section to prove both conjectures in low dimensions.

\subsection{Special complements}

One of our main tools is the notion of special complements. It helps us descend Koll\'ar components over birational models of the singularity to Koll\'ar components of the singularity itself. This notion was introduced in \cite{LXZ-HRFG} to prove the Higher Rank Finite Generation Conjecture, here we need a slight variant.

\begin{defn}
Let $(X,\Delta)$ be a klt pair and let $\pi\colon Y\to X$ be a proper birational morphism. A $\bQ$-complement $\Gamma$ of $(X,\Delta)$ is said to be \emph{special} (with respect to $\pi$) if for any $y\in \Ex(\pi)$, there exists some effective $\bQ$-Cartier $\pi$-ample $\bQ$-divisor $G\le \pi^* \Gamma$ such that $y\not\in \Supp(G)$.
\end{defn}

\begin{lem} \label{lem:kc descend special complement}
Let $(X,\Delta)$ be a klt pair, let $\pi\colon Y\to X$ be a proper birational morphism and let $(Y,\Delta_Y)$ be the crepant pullback of $(X,\Delta)$. Let $\Gamma$ be a special $\bQ$-complement with respect to $\pi$, and let $E$ be an lc place of $(X,\Delta+\Gamma)$. Assume that $E$ is of plt type over $(Y,\Delta_Y)$. Then $E$ is also of plt type over $(X,\Delta)$. 
\end{lem}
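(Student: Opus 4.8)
The goal is to show that $E$, which we assume is of plt type over the crepant model $(Y,\Delta_Y)$, is actually of plt type over $(X,\Delta)$ itself, using the special complement $\Gamma$. The key point is that, a priori, extracting a Koll\'ar component over $Y$ produces a model $Z \to Y$ on which $-E$ is $Y$-ample, and composing with $\pi$ need not make $-E$ ample over $X$; the special complement is precisely the device that corrects this. First I would set up notation: let $\sigma\colon Z \to Y$ be the prime blowup of $E$ over $Y$, so $Z$ is normal, $E\subseteq Z$ is a prime divisor with $-E$ being $\sigma$-ample, and $(Z,\Delta_Z+E)$ is plt in a neighbourhood of $E$, where $\Delta_Z$ is the strict transform of $\Delta_Y$. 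Let $\tau = \pi\circ\sigma\colon Z\to X$. Since $E$ is an lc place of $(X,\Delta+\Gamma)$ and $(Y,\Delta_Y)$ is crepant to $(X,\Delta)$, we have $A_{Y,\Delta_Y+\pi^*\Gamma}(E) = A_{X,\Delta+\Gamma}(E) = 0$, so $\tau^*(K_X+\Delta+\Gamma) \ge K_Z+\Delta_Z+E$ with $E$ appearing with coefficient exactly $1$.

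The heart of the argument is to show that $E$ is primitive over $X$, i.e.\ that there is a model on which $-E$ is $\tau$-ample (after running a suitable MMP). Here I would use the special complement: for the generic point $\eta$ of $C_Y(E)\in \Ex(\pi)$ (and more precisely for all $y\in \Ex(\pi)$ near it) there is an effective $\pi$-ample $\bQ$-Cartier $G \le \pi^*\Gamma$ with $\eta\notin\Supp(G)$. Pulling back to $Z$, $\sigma^*G$ is an effective $\bQ$-Cartier divisor whose strict transform $G_Z$ does not contain $E$, and $\tau^*\Gamma - G$ being $\pi$-relatively... — more usefully, I would consider the divisor $\tau^*(K_X+\Delta+\Gamma) - (K_Z+\Delta_Z+E) \ge 0$, which is $\tau$-trivial, hence $K_Z+\Delta_Z+E \sim_{\tau,\bQ} -(\text{effective divisor})$; combined with the $\pi$-ampleness of $G$ and the fact that $G$ misses $E$, one gets that $-E$ (or a small perturbation $K_Z+\Delta_Z+(1-t)E$ for $0<t\ll 1$) is big over $X$ and its only $\tau$-positive-dimensional exceptional behaviour along $E$ can be resolved. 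Concretely: run the $(K_Z+\Delta_Z+(1-t)E)$-MMP over $X$; since this divisor is klt and, by the special complement, $-E$ can be shown to be movable/effective-big over $X$ away from the bad locus, the MMP terminates with a model $Z'\to X$ on which the strict transform of $E$ is the exceptional divisor of a genuine prime blowup over $X$, and plt-ness is preserved because we only contract divisors with negative discrepancy and $(Z,\Delta_Z+E)$ is already plt near $E$ (invoking Lemma~\ref{lem:BCHM extract divisor} / \cite{BCHM}*{Corollary 1.4.3} to guarantee primitivity once we know $0 < A_{X,\Delta+(1-t)\Gamma}(E) \ll 1$).

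More cleanly, the cleanest route is: apply Lemma~\ref{lem:BCHM extract divisor} directly. We know $(X,\Delta+\Gamma)$ is lc with $A_{X,\Delta+\Gamma}(E)=0$, so $E$ is of lc type over $(X,\Delta)$; let $\pi'\colon Y'\to X$ be its associated prime blowup. It then remains to identify $(Y',\Delta_{Y'}+E)$ with (a neighbourhood of $E$ in) the plt model coming from $Y$, i.e.\ to show $Y'$ is plt along $E$. For this one compares $Y'$ with $Z$: both are crepant over $(X,\Delta+\Gamma)$ along $E$, so $Y' \dashrightarrow Z$ is an isomorphism in codimension one near $E$ at the level of the exceptional divisor, and the special-complement hypothesis is exactly what forces $-E$ to be relatively ample over $X$ (not merely over $Y$), pinning down $Y'$ as the ample model of $-E$ over $X$. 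Then plt-ness of $(Y',\Delta_{Y'}+E)$ follows from plt-ness of $(Z,\Delta_Z+E)$ near $E$ since the birational map between them is crepant and does not extract divisors over $E$.

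**Main obstacle.** The delicate point is the passage from "$-E$ ample over $Y$" to "$-E$ ample (or at least semiample/big with the right ample model) over $X$": without the special-complement hypothesis this simply fails, and the argument must use $G\le\pi^*\Gamma$ with $G$ $\pi$-ample and $C_Y(E)\notin\Supp(G)$ in an essential way — presumably by showing that on $Z$, for small $t>0$, $-(K_Z+\Delta_Z+(1-t)E)$ is $\tau$-big and its stable base locus over $X$ is disjoint from $E$, so the $(-E)$-MMP over $X$ does nothing bad to $E$. Verifying that this MMP terminates on the correct model and that no auxiliary divisor over $C_X(E)$ with discrepancy $<1$ interferes with plt-ness is where the real work lies; I expect one cites \cite{BCHM} for termination and uses the ACC-free, single-divisor nature of the construction (as in the proof of Lemma~\ref{lem:BCHM extract divisor}) to keep everything plt.
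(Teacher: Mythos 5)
Your proposal takes a genuinely different route from the paper's proof, and I think the ``cleanest route'' you offer has a real gap. You correctly observe, via Lemma~\ref{lem:BCHM extract divisor}, that $E$ is of \emph{lc type} over $(X,\Delta)$ and so admits a prime blowup $\pi'\colon Y'\to X$ with $-E$ already $\pi'$-ample. This means the worry you flag as the ``main obstacle''---passing from ``$-E$ ample over $Y$'' to ``$-E$ ample over $X$''---is not actually where the difficulty lies: primitivity over $X$ comes for free, and the special complement is not needed to pin down $Y'$ as the ample model of $-E$ over $X$. What must be shown is that $(Y',\Delta_{Y'}+E)$ is \emph{plt} near $E$, not merely lc, and this is where your argument breaks down.

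The gap in your comparison of $Y'$ with $Z$: you assert that plt-ness transfers from $(Z,\Delta_Z+E)$ to $(Y',\Delta_{Y'}+E)$ because the birational map is crepant (for $K_X+\Delta+\Gamma$) and extracts no divisors over $E$. But the issue is the opposite direction. Suppose $(Y',\Delta_{Y'}+E)$ fails to be plt near $E$; then there is a divisor $F\neq E$ with center contained in $E_{Y'}$ and $A_{Y',\Delta_{Y'}+E}(F)=0$, so $F$ is an lc place of $(X,\Delta+\Gamma)$ with $C_X(F)\subseteq C_X(E)$. On the model $Z$, the center $C_Z(F)$ need \emph{not} be contained in $E_Z$: it could sit on a $\pi$- or $\rho$-exceptional divisor other than $E$, and the plt-ness of $(Z,\Delta_Z+E)$ in a neighbourhood of $E_Z$ says nothing about such $F$. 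So the crepant comparison by itself does not rule it out. The special complement must be used precisely to exclude such extra lc places, and neither of your two approaches makes this use of it precise; your MMP variant acknowledges the gap but does not close it.

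The paper's argument instead goes through uniqueness. By Lemma~\ref{lem:kc exist on every dlt model}, it suffices to produce a single effective $D$ with $\lambda=\lct(X,\Delta;D)$ such that $E$ is the \emph{unique} lc place of $(X,\Delta+\lambda D)$; then the dlt modification of $(X,\Delta+\lambda D)$ is forced to be the plt blowup of $E$. The divisor $D$ is built on $Z$ by combining two parts: the residual special complement $\pi^*\Gamma-G$ (with $G$ $\pi$-ample, $G\le\pi^*\Gamma$, and $C_Y(E)\not\subseteq\Supp G$), which keeps $E$ an lc place; and a general divisor $G_1\in |\rho^*(G+\pi^*H)-\varepsilon E|_{\bQ}$ (with $H$ ample on $X$ and $\varepsilon$ small), for which $E$ is the \emph{only} valuation computing $\lct(Y,\Delta_Y;\rho_*(G_1+\varepsilon E))$ because $(Z,\Delta_Z\vee E)$ is plt and $G_1$ is general. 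Adding the two log-discrepancy inequalities gives $\ord_F(\pi^*D)\le (1+\mu^{-1})A_{X,\Delta}(F)$ with equality if and only if $F=E$, which is exactly the desired uniqueness. So the special complement enters not to control ampleness of $-E$ but to provide the $\pi$-ample $G$ missing $C_Y(E)$, which is what makes $G_1$ and hence the ``generality'' step available. I would encourage you to rebuild the proof around this uniqueness mechanism rather than trying to transport plt-ness across the birational map $Z\dashrightarrow Y'$ directly.
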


\begin{proof}
This basically follows from the same proof of \cite{LXZ-HRFG}*{Lemma 3.5}, which we provide here for reader's convenience. By Lemma \ref{lem:kc exist on every dlt model} it suffices to find an effective divisor $D$ such that $E$ is the unique lc place of $(X,\Delta+\lambda D)$, where $\lambda=\lct(X,\Delta;D)$. 

Let $\rho\colon Z\to Y$ be the plt blowup of $E$. By assumption, there exists an effective $\pi$-ample $\bQ$-divisor $G$ on $Y$ such that $\pi^*\Gamma\ge G$ and $\rho(E)\not\subseteq \Supp(G)$. Let $H$ be a sufficiently ample divisor on $X$ such that $G+\pi^*H$ is ample. Since $-E$ is $\rho$-ample, there exists some rational number $\varepsilon>0$ such that $\rho^*(G+\pi^*H)-\varepsilon E$ is ample on $Z$. Let $G_1$ be a general divisor in the $\bQ$-linear system $|\rho^*(G+\pi^*H)-\varepsilon E|_\bQ$ and consider the effective divisor $D$ on $X$ satisfying $\rho^*\pi^*D=\rho^*(\pi^*\Gamma-G)+G_1+\varepsilon E$ (this is possible since the right hand side is $\sim_\bQ 0$ over $X$). We claim that this divisor $D$ satisfies the desired condition.

Let $K_Z+\Delta_Z=\rho^*(K_Y+\Delta_Y)$ be the crepant pullback. We first note that the above claim is a consequence of the following two properties:
\begin{enumerate}
    \item $(Y,\Delta_Y+\pi^*\Gamma-G)$ is sub-lc and $E$ is an lc place of this sub-pair;
    \item $E$ is the only divisor that computes $\lct(Y,\Delta_Y;\rho_*(G_1+\varepsilon E))$.
\end{enumerate}
This is because, (1) implies that 
\[
A_{X,\Delta}(F)=A_{Y,\Delta_Y}(F)\ge \ord_F(\pi^*\Gamma-G)
\]
for divisor $F$ over $X$, and the equality holds when $F=E$; on the other hand, if we let $\mu=\lct(Y,\Delta_Y;\rho_*(G_1+\varepsilon E))>0$, then (2) implies that 
\[
A_{X,\Delta}(F)=A_{Y,\Delta_Y}(F)\ge \mu\cdot \ord_F(\rho_*(G_1+\varepsilon E)),
\]
and equality holds if and only if $F=E$. Combining the two inequalities we have
\[
\ord_F(\pi^*D)=\ord_F(\pi^*\Gamma-G+\rho_*(G_1+\varepsilon E))\le (1+\mu^{-1})A_{X,\Delta}(F),
\]
and equality holds if and only if $F=E$. In particular, $\lct(X,\Delta;D)=\frac{1}{1+\mu^{-1}}$ and $E$ is the unique lc place that computes this lct, which is exactly what we want.

It remains to prove the two properties above. Point (1) is quite straightforward since by assumption $E$ is an lc place of the sub-lc sub-pair $(Y,\Delta_Y+\pi^*\Gamma)$ and $G$ does not contain the center of $E$. To see point (2), we note that by assumption the sub-pair $(Z,\Delta_Z\vee E)$ is plt. Here we denote by $D_1\vee D_2$ the smallest $\bQ$-divisor $D$ such that $D\ge D_i$ for $i=1,2$. Let $t=\frac{A_{Y,\Delta_Y}(E)}{\varepsilon}$. Then
\[
\rho^*(K_Y+\Delta_Y+t\rho_*(G_1+\varepsilon E))=K_Z+\Delta_Z\vee E+t G_1
\]
by construction. Since $G_1$ is general, the pair $(Z,\Delta_Z\vee E+t G_1)$ is also plt. This implies that $\lct(Y,\Delta_Y;\rho_*(G_1+\varepsilon E))=t$ and $E$ is the only divisor that computes the lct. In particular (2) holds. The proof is now complete.
\end{proof}

As an application, we have:

\begin{lem} \label{lem:kc descend along lc blowup}
Let $(X,\Delta)$ be a klt pair, let $\pi\colon Y\to X$ be an lc blowup with exceptional divisor $E$, and let $\Delta_Y=\pi^{-1}_*\Delta$. Then any lc place of $(Y,\Delta_Y+E)$ that is of plt type over $(Y,\Delta_Y)$ is also of plt type over $(X,\Delta)$.
\end{lem}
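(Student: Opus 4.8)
The plan is to apply Lemma~\ref{lem:kc descend special complement} directly to the morphism $\pi$, after manufacturing a suitable \emph{special} $\bQ$-complement out of the exceptional divisor $E$. Write $F$ for the given lc place of $(Y,\Delta_Y+E)$ and $\eta=C_X(E)$; after localizing at $\eta$ we may assume that being lc (resp. plt) near $E$ is the same as being lc (resp. plt) near $\eta$, so that $\Gamma$ below will automatically be a $\bQ$-complement of $(X,\Delta)$. The case $F=E$ is immediate (then $\pi$ is a plt blowup), so assume $F$ is exceptional over $Y$. Since $K_Y+\Delta_Y+E\sim_{\pi,\bQ}A_{X,\Delta}(E)\cdot E$ and $-E$ is $\pi$-ample with $A_{X,\Delta}(E)>0$, the divisor $-(K_Y+\Delta_Y+E)$ is $\pi$-ample; hence for $m\gg 0$ the relative system $|-m(K_Y+\Delta_Y+E)|_{/X}$ is basepoint-free.

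Now I would build the complement. Choose $\ell=n$ general members $G_1,\dots,G_\ell$ of $|-m(K_Y+\Delta_Y+E)|_{/X}$, set $B_Y=\tfrac1{\ell m}\sum_j G_j\sim_{\pi,\bQ}-(K_Y+\Delta_Y+E)$ and $\Gamma=\pi_*B_Y$, so that $\pi^*(K_X+\Delta+\Gamma)=K_Y+\Delta_Y+E+B_Y$. Since the $G_j$ are general, $\ord_F(B_Y)=0$; and each $(Y,\Delta_Y+E+\tfrac1m G_j)$ is lc near $E$ by Bertini, so their convex combination $(Y,\Delta_Y+E+B_Y)$ is lc near $E$ because log discrepancies are affine in the boundary. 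Pushing down, $(X,\Delta+\Gamma)$ is lc at $\eta$ with both $E$ and $F$ as lc places, so $\Gamma$ is a $\bQ$-complement of $\eta\in(X,\Delta)$ having $F$ as an lc place. To see that $\Gamma$ is special with respect to $\pi$: the restrictions $G_j|_E$ are general members of a basepoint-free system on the $(n-1)$-dimensional variety $E$, so $\bigcap_{j=1}^{\ell}(\Supp G_j\cap E)=\emptyset$ as soon as $\ell=n$; therefore for any $y\in E=\Ex(\pi)$ there is some $j$ with $y\notin\Supp G_j$, and $G:=\tfrac1{\ell m}\sum_{j:\,y\notin\Supp G_j}G_j$ is an effective $\bQ$-Cartier divisor with $0\le G\le B_Y\le\pi^*\Gamma$, is a nonempty sum of $\pi$-ample divisors hence $\pi$-ample, and satisfies $y\notin\Supp G$.

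It remains to check that $F$ is of plt type over the crepant pullback $(Y,\Delta_Y^{\mathrm{c}})$ of $(X,\Delta)$, where $\Delta_Y^{\mathrm{c}}=\Delta_Y+(1-A_{X,\Delta}(E))E$. First, $C_Y(F)\subseteq E$: otherwise $F$ would be an lc place of the klt pair $(Y,\Delta_Y)$ over a locus where $\pi$ is an isomorphism. Let $\rho\colon Z\to Y$ be the plt blowup extracting $F$, with $\Delta_Z=\rho_*^{-1}\Delta_Y$ and $E_Z=\rho_*^{-1}E$. Then $(Z,\Delta_Z+F)$ is plt near $F$ by hypothesis, while $\rho^*(K_Y+\Delta_Y+E)=K_Z+\Delta_Z+E_Z+F$ is the crepant pullback of a pair lc near $C_Y(F)$, hence $(Z,\Delta_Z+E_Z+F)$ is lc near $F$. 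Using affineness of log discrepancies once more, the sub-pair $(Z,\Delta_Z+(1-A_{X,\Delta}(E))E_Z+F)$ — a convex combination of these two when $A_{X,\Delta}(E)\le 1$, and a sub-boundary of $(Z,\Delta_Z+F)$ when $A_{X,\Delta}(E)>1$ — is again plt near $F$, which is precisely the statement that $F$ is of plt type over $(Y,\Delta_Y^{\mathrm{c}})$. Applying Lemma~\ref{lem:kc descend special complement} with the morphism $\pi$, the special complement $\Gamma$, and the lc place $F$ now gives that $F$ is of plt type over $(X,\Delta)$. The one genuinely delicate point is the specialness of $\Gamma$, i.e. the general-position count $\bigcap_j\Supp G_j\cap E=\emptyset$ for $\ell=n$; everything else is routine bookkeeping with crepant pullbacks and Bertini.
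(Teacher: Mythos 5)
Your proof is correct and follows essentially the same strategy as the paper's: reduce to Lemma~\ref{lem:kc descend special complement} by (i) checking $F$ is of plt type over the crepant pullback of $(X,\Delta)$ on $Y$ via affinity of log discrepancies, and (ii) manufacturing a special $\bQ$-complement by pushing forward a general average of $n=\dim X$ members of a $\pi$-ample, $\pi$-free relative linear system so that no point of $E$ lies on all of them. The only cosmetic difference is that you sample from $|{-m(K_Y+\Delta_Y+E)}|_{/X}$ whereas the paper samples $D_i\sim_{\pi,\bQ}-E$; since $-(K_Y+\Delta_Y+E)\sim_{\pi,\bQ}A_{X,\Delta}(E)\cdot(-E)$ these agree up to rescaling, and your complement $\Gamma=\pi_*B_Y$ is exactly the paper's $cD$. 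You are a little more careful than the printed proof in one spot: the paper calls the crepant pullback boundary $\Delta_Y+(1-c)E$ a ``convex combination'' of $\Delta_Y$ and $\Delta_Y+E$, which is literally true only when $c=A_{X,\Delta}(E)\le 1$; your case split ($c\le 1$ vs.\ $c>1$, where one instead uses that decreasing the boundary preserves plt) closes that small informal gap.
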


\begin{proof}
By assumption $(Y,\Delta_Y+E)$ is lc and its lc centers are contained in $E$, thus $(Y,\Delta_Y)$ is klt. Let $c=A_{X,\Delta}(E)>0$ and let $\Gamma=\Delta_Y+(1-c)E<\Delta_Y+E$. Let $F$ be an lc place of $(Y,\Delta_Y+E)$ that is of plt type over $(Y,\Delta_Y)$. Since $\Gamma$ is a convex combination of $\Delta_Y$ and $\Delta_Y+E$, by interpolation we know that $F$ is also of plt type over $(Y,\Gamma)$. Note that $K_Y+\Gamma=\pi^*(K_X+\Delta)$. By Lemma \ref{lem:kc descend special complement}, it suffices to show that $F$ is an lc place of some special complement with respect to $\pi$. Since $-E$ is $\pi$-ample, by Bertini theorem we can choose some general effective $\bQ$-divisor $D_i\sim_{\pi,\bQ} -E$ $(i=1,\cdots,n=\dim X)$ on $Y$ such that $(Y,\Delta_Y+E+cD_i)$ remains lc for all $i$ and that for any $y\in \Ex(\pi)$ we have $y\not\in\Supp(D_i)$ for some $i\in\{1,\cdots,n\}$. Let $D_Y=\frac{1}{n}(D_1+\cdots+D_n)\sim_{\pi,\bQ} -E$ and let $D=\pi_* D_Y$. Then $\pi^*D=D_Y+E$ and hence $\pi^*(K_X+\Delta+cD)=K_Y+\Delta_Y+cD_Y+E$. By construction, locally on $X$ the divisor $cD$ is a special $\bQ$-complement that has $F$ as an lc place, hence we are done.
\end{proof}

In the rest of this subsection, we include two auxiliary results that will be useful later.

\begin{lem} \label{lem:closure of plt blowup over min lc center}
Let $(X,\Delta)$ be a klt pair, let $D$ be a $\bQ$-complement of $(X,\Delta)$ and let $W\subseteq X$ be a minimal lc center of $(X,\Delta+D)$. Let $E$ be an lc place of $(X,\Delta+D)$ with center $W$. Assume that $E$ is of plt type over $(X,\Delta)$ in a neighbourhood of the generic point of $W$. Then $E$ is of plt type over $(X,\Delta)$.
\end{lem}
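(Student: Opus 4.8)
The plan is to reduce the statement to a claim about lc centers on the prime blowup of $E$, and then run a short argument by contradiction using the minimality of $W$. First I would extract $E$ globally: since $(X,\Delta+D)$ is lc and $A_{X,\Delta+D}(E)=0$, Lemma \ref{lem:BCHM extract divisor} shows that $E$ is of lc type over $(X,\Delta)$; in particular it is primitive over $X$, so we get its associated prime blowup $\pi\colon Y\to X$, and, writing $\Delta_Y=\pi^{-1}_*\Delta$ and $D_Y=\pi^{-1}_*D$, the crepant identity $\pi^*(K_X+\Delta+D)=K_Y+\Delta_Y+D_Y+E$ together with lc-ness of $(X,\Delta+D)$ shows that $(Y,\Delta_Y+D_Y+E)$, and hence $(Y,\Delta_Y+E)$, is lc in a neighbourhood of $E$. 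By uniqueness of prime blowups, the hypothesis that $E$ is of plt type over $(X,\Delta)$ near the generic point $\eta_W$ of $W$ says precisely that $(Y,\Delta_Y+E)$ is plt near $E\cap\pi^{-1}(U)$ for some open $U\ni\eta_W$. So it remains to upgrade this to plt-ness of $(Y,\Delta_Y+E)$ along all of $E$.

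Next I would argue by contradiction. If $(Y,\Delta_Y+E)$ is not plt near $E$, then, as it is lc near $E$ with $\lfloor\Delta_Y+E\rfloor=E$ there, there is an lc place $F$ of $(Y,\Delta_Y+E)$ whose center $Z:=C_Y(F)$ is a proper subvariety of $E$. The key point is that $F$ is then also an lc place of the larger pair $(Y,\Delta_Y+D_Y+E)$: indeed $A_{Y,\Delta_Y+D_Y+E}(F)=A_{Y,\Delta_Y+E}(F)-\ord_F(D_Y)=-\ord_F(D_Y)\le 0$, and since $(Y,\Delta_Y+D_Y+E)$ is crepant to the lc pair $(X,\Delta+D)$ the left-hand side must equal $0$. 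Hence $Z$ is an lc center of $(Y,\Delta_Y+D_Y+E)$, so $\pi(Z)$ is an lc center of $(X,\Delta+D)$ contained in $\pi(E)=W$; minimality of $W$ forces $\pi(Z)=W$. Since $\pi$ is proper, $\eta_W\in\pi(Z)$, so $Z$ meets $\pi^{-1}(U)$. But then $Z\cap\pi^{-1}(U)$ is a non-empty lc center of $(Y,\Delta_Y+E)$ over $U$ that is properly contained in $E$, contradicting plt-ness near $E\cap\pi^{-1}(U)$. Therefore $(Y,\Delta_Y+E)$ is plt in a neighbourhood of $E$, i.e. $E$ is of plt type over $(X,\Delta)$.

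The only technical care needed is bookkeeping with lc centers and non-klt loci: one should check that the center of any lc place of $(Y,\Delta_Y+E)$ relevant near $E$ is actually contained in $E$ — this follows since $(Y,\Delta_Y)$ is sub-klt away from $E$, using $K_Y+\Delta_Y+(1-A_{X,\Delta}(E))E=\pi^*(K_X+\Delta)$ and $A_{X,\Delta}(E)>0$ — and that restricting $F$ to $\pi^{-1}(U)$ gives an honest lc place there with center $Z\cap\pi^{-1}(U)$. Beyond this, the argument is essentially formal once $E$ has been extracted globally via Lemma \ref{lem:BCHM extract divisor}; no estimates are involved, and the role of the $\bQ$-complement $D$ is purely to record that $E$ is an lc place of something lc whose only candidate center is the minimal lc center $W$.
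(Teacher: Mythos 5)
Your proof is correct and follows essentially the same route as the paper's: extract $E$ via Lemma \ref{lem:BCHM extract divisor}, note that $(Y,\Delta_Y+E)$ is lc and plt near $\pi^{-1}(\eta_W)$, observe that any additional lc center of $(Y,\Delta_Y+E)$ is automatically an lc center of the crepant pullback $(Y,\Delta_Y+D_Y+E)$ and hence maps to an lc center of $(X,\Delta+D)$ inside $W$, and then combine minimality of $W$ with plt-ness over $\eta_W$ to rule it out. The only difference is cosmetic ordering of the contradiction (you force $\pi(Z)=W$ by minimality first and contradict plt-ness, whereas the paper first deduces $Z$ does not dominate $W$ from plt-ness and then contradicts minimality); the ingredients used are identical.
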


\begin{proof}
Let $\pi\colon Y\to X$ be the prime blowup of $E$ (which exists by Lemma \ref{lem:BCHM extract divisor}). Since $E$ is an lc place of the lc pair $(X,\Delta+D)$, we have
\[
K_Y+\Delta_Y+D_Y+E=\pi^*(K_X+\Delta+D)
\]
and $(Y,\Delta_Y+D_Y+E)$ is lc. In particular, the pair $(Y,\Delta_Y+E)$ is also lc. By assumption, it is also plt over the generic point of $W$. If it is not plt everywhere, then it has some lc center that does not dominate $W$. However, any such lc center of $(Y,\Delta_Y+E)$ is also an lc center of $(Y,\Delta_Y+D_Y+E)$ and therefore maps to an lc center of $(X,\Delta+D)$ that is strictly contained in $W$. This contradicts the assumption that $W$ is a minimal lc center. Hence $(Y,\Delta_Y+E)$ is plt.
\end{proof}

\begin{lem} \label{lem:every kc is lc place}
Let $n\in\bN^*$ and let $I\subseteq [0,1]\cap \bQ$ be a finite set. Let $(X,\Delta)$ be an lc pair of dimension $n$, let $D$ be a $\bQ$-complement of $(X,\Delta)$ and let $W\subseteq X$ be an lc center of $(X,\Delta+D)$. Assume that $\Coef(\Delta),\Coef(D)\subseteq I$. Then there exists some rational number $\varepsilon_0\in (0,1)$ depending only on $n$ and $I$ such that every lc type divisor over $(X,\Delta+(1-\varepsilon_0)D)$ with center $W$ is also an lc place of $(X,\Delta+D)$.
\end{lem}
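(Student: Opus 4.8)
The plan is to reduce the problem to a single log canonical threshold computation on the prime blowup of the divisor in question, and then feed that into the ACC for log canonical thresholds of \cite{HMX-ACC}. First, if $\codim_X W=1$, then $W$ being an lc center of $(X,\Delta+D)$ forces $\Coef_W(\Delta+D)=1$, so $W$ is already an lc place of $(X,\Delta+D)$ and there is nothing to do; hence I may assume $\codim_X W\ge 2$, so that every divisor with center $W$ is exceptional over $X$. Let $E$ be a divisor of lc type over $(X,\Delta+(1-\varepsilon_0)D)$ with $C_X(E)=W$; being of lc type it is primitive, so let $\pi\colon Y\to X$ be its associated prime blowup, with $E=\Ex(\pi)$ and $-E$ a $\pi$-ample $\bQ$-Cartier divisor. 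Since $E$ is the \emph{only} $\pi$-exceptional divisor, for every $t\ge 0$ one has
\[
K_Y+\Delta_Y+tD_Y+E=\pi^*(K_X+\Delta+tD)+A_{X,\Delta+tD}(E)\cdot E,
\]
where $\Delta_Y,D_Y$ are strict transforms. Comparing the $t=0$ and $t=1$ instances, I would extract that $D_Y=\pi^*D-\ord_E(D)\,E$ and $K_Y+\Delta_Y+E=\pi^*(K_X+\Delta)+A_{X,\Delta}(E)\,E$ are both $\bQ$-Cartier on $Y$, even though $K_Y$ and $\Delta_Y$ need not be — this is exactly why one must work on the prime blowup. So $(Y,\Delta_Y+E)$ is a genuine pair (and lc near $E$, being a sub-boundary of $(Y,\Delta_Y+(1-\varepsilon_0)D_Y+E)$), $D_Y$ is a genuine effective $\bQ$-Cartier $\bQ$-divisor, $\Coef(\Delta_Y+E)\subseteq I\cup\{1\}$, and $\Coef(D_Y)\subseteq I$.

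Next I would reformulate the hypothesis through the threshold $\tau:=\lct_E(Y,\Delta_Y+E;D_Y)$, the supremum of those $t$ for which $(Y,\Delta_Y+tD_Y+E)$ is lc in a neighbourhood of $E$. Because the prime blowup is determined by $E$ alone, "$E$ is of lc type over $(X,\Delta+sD)$" is equivalent to $\tau\ge s$, so the assumption gives $\tau\ge 1-\varepsilon_0$. All such $\tau$ are local log canonical thresholds of $n$-dimensional pairs with boundary coefficients in the finite set $I\cup\{1\}$ and moving-part coefficients in the finite set $I$, so by \cite{HMX-ACC} the set of possible values of $\tau$ satisfies the ACC; in particular the supremum $\sigma$ of those values that are $<1$ is itself $<1$. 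I then declare $\varepsilon_0=\varepsilon_0(n,I)$ to be any rational number in $(0,1-\sigma)$. With this choice $\tau\ge 1-\varepsilon_0>\sigma$ forces $\tau\ge 1$, i.e. $(Y,\Delta_Y+D_Y+E)$ is lc in a neighbourhood of $E$.

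Finally I would upgrade "$(Y,\Delta_Y+D_Y+E)$ lc near $E$" to "$A_{X,\Delta+D}(E)=0$" by invoking the lc center hypothesis. Pick an lc place $F$ of $(X,\Delta+D)$ with $C_X(F)=W$. Since $E$ is the unique $\pi$-exceptional divisor and $\pi(E)=\overline W$, the entire fibre of $\pi$ over the generic point of $W$ is contained in $E$; hence $C_Y(F)\subseteq E$ and $\ord_F(E)>0$. Applying $\ord_F$ to the $t=1$ identity above yields
\[
A_{Y,\Delta_Y+D_Y+E}(F)=A_{X,\Delta+D}(F)-A_{X,\Delta+D}(E)\cdot\ord_F(E)=-A_{X,\Delta+D}(E)\cdot\ord_F(E),
\]
which would be strictly negative if $A_{X,\Delta+D}(E)>0$; but this contradicts $(Y,\Delta_Y+D_Y+E)$ being lc near $E$, since $C_Y(F)\subseteq E$. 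Therefore $A_{X,\Delta+D}(E)=0$, i.e. $E$ is an lc place of $(X,\Delta+D)$, which is what we want.

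The genuinely delicate point is the middle step. One has to check carefully that $\tau$ really is an honest log canonical threshold — which is exactly what the $\bQ$-Cartier bookkeeping on the prime blowup provides — and that the \emph{local} form of the ACC from \cite{HMX-ACC} is what is being used. Once that is granted, Steps 1 and 3 are purely formal discrepancy manipulations, and the uniform $\varepsilon_0$ comes out of the ACC with no further work.
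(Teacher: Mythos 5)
Your proof is correct and follows essentially the same strategy as the paper's: pass to the prime blowup $\pi\colon Y\to X$ of $E$, invoke the ACC for log canonical thresholds from \cite{HMX-ACC} applied to $(Y,\Delta_Y+E;D_Y)$ (with coefficients in $I\cup\{1\}$ and $I$) to produce a uniform gap below $1$, conclude $(Y,\Delta_Y+D_Y+E)$ is lc near $E$, and then use the hypothesis that $W$ is an lc center of $(X,\Delta+D)$ to force $A_{X,\Delta+D}(E)=0$. The only cosmetic differences are that you explicitly package the ACC input as a threshold $\tau=\lct_E(Y,\Delta_Y+E;D_Y)$ and dispose of the $\codim_X W=1$ case separately, and in the final contradiction you apply the log discrepancy identity directly to an lc place $F$ of $(X,\Delta+D)$ centered at $W$, whereas the paper phrases the same computation in terms of an lc center of the crepant pullback $(Y,\Delta_Y+D_Y+\lambda E)$ lying inside $E$ — these are the same argument.
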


\begin{proof}\footnote{The author learned this argument from Yuchen Liu.}
We may assume that $1\in I$. By the ACC of log canonical thresholds \cite{HMX-ACC}, there exists some $\varepsilon_0\in (0,1)$ such that if $(X_0,\Delta_0)$ is an $n$-dimensional lc pair and $D_0$ is an effective $\bQ$-Cartier divisor such that $\Coef(\Delta_0),\Coef(D_0)\subseteq I$ and $(X_0,\Delta_0+(1-\varepsilon_0)D_0)$ is lc, then $(X_0,\Delta_0+D_0)$ is also lc. Let us show that this $\varepsilon_0$ satisfies the statement of the lemma. Let $E$ be an lc type divisor over $(X,\Delta+(1-\varepsilon_0)D)$ with center $W$ and let $\pi\colon Y\to X$ be the associated prime blowup. Then $(Y,\Delta_Y+(1-\varepsilon_0)D_Y+E)$ is lc. By our choice of $\varepsilon_0$, this implies that $(Y,\Delta_Y+D_Y+E)$ is lc. Suppose that $E$ is not an lc place of $(X,\Delta+D)$. Then we have
\[
K_Y+\Delta_Y+D_Y+\lambda E = \pi^*(K_X+\Delta+D)
\]
for some $\lambda<1$. In particular, $(Y,\Delta_Y+D_Y+\lambda E)$ is lc. Moreover, $E$ contains some lc center of $(Y,\Delta_Y+D_Y+\lambda E)$ since $\pi(E)=W$ is an lc center of $(X,\Delta+D)$. As $\lambda<1$, it follows that $(Y,\Delta_Y+D_Y+E)$ cannot be lc, a contradiction. Thus $E$ is an lc place of $(X,\Delta+D)$ as desired.
\end{proof}

\subsection{Plt type lc place of bounded complements}

The goal of this subsection is to reduce Conjecture \ref{conj:bdd for mldK, vol>epsilon} to the following special case, whose statement is motivated by Lemma \ref{lem:strict N-complement}. 

\begin{conj} \label{conj:bdd mldk, Q-Gor, N-comp}
Let $n,N\in\bN^*$ and let $\varepsilon>0$. Then there exists some $A=A(n,N,\varepsilon)$ such that for any $N$-complement $D$ of an $n$-dimensional klt variety $X$ and any lc center $W$ of $(X,D)$ such that $\hvol(\eta,X)\ge \varepsilon$ $($where $\eta$ is the generic point of $W)$, there exists a Koll\'ar component $E$ over $\eta\in X$ such that
\[
A_{X,D}(E)=0, \quad\text{and}\quad A_X(E)\le A.
\]
\end{conj}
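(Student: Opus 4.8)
\emph{The plan.} In full generality Conjecture~\ref{conj:bdd mldk, Q-Gor, N-comp} appears to lie deeper than what is currently available; the scheme below establishes it for $n\le 3$ and, in arbitrary dimension, reduces it to bounding the log discrepancies of Koll\'ar components that are lc places of \emph{reduced} complements on terminal pairs. The first step is to reduce to a terminal base. Since $\hvol(\eta,X)\ge\varepsilon$, Lemma~\ref{lem:index bound via volume} applied at a general closed point of $W$ bounds the Cartier index near $\eta$ of every Weil $\bQ$-divisor, while Theorem~\ref{thm:lct>=vol}, together with the uniform Izumi estimate it rests on, makes the relevant log canonical thresholds near $\eta$ uniformly bounded below; these inputs will force every auxiliary invariant introduced below to come out bounded in terms of $n,N,\varepsilon$. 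Take a $\bQ$-factorial terminal modification $\pi\colon Y\to X$ with $K_Y+\Delta_Y=\pi^*K_X$, so that $(Y,\Delta_Y)$ is klt, crepant to $X$, and $Y$ is terminal. The crepant pullback of $(X,D)$ is $(Y,\Delta_Y+\pi^*D)$, again lc, and log discrepancies --- in particular $A_X$, and the property of being an lc place of $(X,D)$ --- are computed on $Y$. So it suffices to produce a Koll\'ar component over $(X,0)$, centered over $\eta$, that is an lc place of $(Y,\Delta_Y+\pi^*D)$ and has bounded $A_X$. The bridge from $Y$ back to $X$ is Lemma~\ref{lem:kc descend special complement}: if $D$ can be enlarged to a bounded $\bQ$-complement $\Gamma$ of $(X,0)$ that is \emph{special} with respect to $\pi$ and still carries the desired divisor as an lc place, then plt type over $(Y,\Delta_Y)$ descends to plt type over $(X,0)$.

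\emph{The bounded special complement.} This is the heart of the matter. One engineers an effective $\bQ$-divisor $\Gamma\ge D$ on $X$ so that $\pi^*\Gamma$ minus its strict transform is a combination of the $\pi$-exceptional divisors large enough to contain a $\pi$-ample $\bQ$-divisor avoiding any prescribed point of $\Ex(\pi)$ --- exactly the condition that $\Gamma$ be special for $\pi$ --- while keeping $(X,\Gamma)$ log canonical with a \emph{uniform} lower bound on the amount added to $D$. The latter is precisely what Theorem~\ref{thm:lct>=vol} and Lemma~\ref{lem:vol inequality} supply, and the bounded Cartier indices from the previous step then force $\Gamma$ to be an $N'$-complement for some $N'=N'(n,N,\varepsilon)$. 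After this step we are reduced to finding a Koll\'ar component over the terminal pair $(Y,\Delta_Y)$, centered over a point of $\pi^{-1}(\eta)$, that is an lc place of the bounded complement $\Delta_Y+\pi^*\Gamma$ and has bounded log discrepancy.

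\emph{Reduction to a reduced complement and the low-dimensional case.} Passing to a $\bQ$-factorial dlt modification of $(Y,\Delta_Y+\pi^*\Gamma)$ and invoking the analogues of Lemmas~\ref{lem:kc descend along lc blowup} and~\ref{lem:closure of plt blowup over min lc center}, the non-reduced part of the complement gets absorbed into the (still terminal) klt base, so it suffices to treat a terminal pair carrying a \emph{reduced} bounded complement $B=\lfloor B\rfloor$ and to exhibit, among the lc places of $B$ over the prescribed center, a Koll\'ar component of bounded log discrepancy. For $n\le 3$ this is where Mori's classification of terminal singularities enters: at a codimension-two point the germ is analytically a Du Val singularity times a curve, or smooth; at a codimension-three point it is a cyclic quotient $\frac{1}{r}(1,a,r-a)$ or a $\bZ_r$-quotient of an isolated $cA/cD/cE$ hypersurface singularity. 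In each case one writes down a weighted blowup whose exceptional divisor is a Koll\'ar component by the criterion of Lemma~\ref{lem:wt blowup kc criterion}, arranged so as to be an lc place of $B$; the relevant weights, and hence $A_X(E)$, are bounded because the Cartier index of $X$ near $\eta$ and the coefficients and denominators of $B$ are bounded by the first step.

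\emph{Main obstacle.} The crux is the construction of the bounded special complement together with the requirement that the component produced remain an lc place of the \emph{original} complement $D$ (so that $A_{X,D}(E)=0$): one needs a single $\bQ$-divisor $\Gamma$ that is at once an honest $\bQ$-complement of $(X,0)$, special with respect to the \emph{full} terminal modification (not merely a single lc blowup, for which Lemma~\ref{lem:kc descend along lc blowup} alone would suffice), bounded, and compatible with the chosen lc place. By comparison, the case analysis in the last step is long but routine, and the uniformity of all the bounds rests entirely on the volume lower bound through Theorem~\ref{thm:lct>=vol} and Lemma~\ref{lem:index bound via volume}.
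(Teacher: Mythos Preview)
Your plan correctly identifies the main ingredients --- terminal modification, bounded special complements, reduction to reduced complements, and the classification --- but assembles them in a way that creates the very obstacle you name at the end. The paper's route sidesteps it.

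The difference is this. You attempt to descend Koll\'ar components directly along the full terminal modification $\pi\colon Y\to X$, which forces you to build a special complement with respect to $\pi$. But $\pi$ can have many exceptional divisors and no evident $\pi$-ample divisor sitting inside $\pi^*\Gamma$; the construction in Proposition~\ref{prop:reduce to Q-Gorenstein} works only for \emph{small} modifications with a relatively ample canonical (or anticanonical) class, via effective basepoint-freeness, and it is not clear how to adapt it to a terminal modification. You flag this as the main obstacle, but it is a genuine gap rather than a technicality you can expect to close.

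The paper instead decouples the two requirements. First (Conjecture~\ref{conj:bdd mld for lc blowup}, proved for $n=3$ as Proposition~\ref{prop:mld of lc blowup bdd, dim=3}) one finds \emph{any} divisor $E$ over $\eta$ that is an lc place of $(X,D)$ with $A_X(E)$ bounded --- no plt type required. This step does use a terminal modification, but since one is not asking for a Koll\'ar component there is no descent to perform and hence no special complement to construct: log discrepancies and the lc-place condition pass for free under crepant pullback. Second, one extracts this single $E$ via its prime blowup $\pi\colon Y\to X$, which is an \emph{lc blowup} with $-E$ $\pi$-ample. On $Y$ the divisor $E$ is now a \emph{reduced} complement, and one seeks a Koll\'ar component over $Y$ that is an lc place of $(Y,E)$ with bounded $A_Y$ (Proposition~\ref{prop:mldk bdd, dim=3, 1-comp}). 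The descent back to $X$ is Lemma~\ref{lem:kc descend along lc blowup}, whose proof manufactures the special complement simply by perturbing $E$ with general members of $|{-}mE|$; this is easy precisely because there is a single exceptional divisor and it is anti-ample. The two steps are glued by Lemma~\ref{lem:reduce to 1-comp}.

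Your dlt-modification step also does not do what you claim: a dlt modification of $(Y,\Delta_Y+\pi^*\Gamma)$ will not in general leave the base terminal, and Lemma~\ref{lem:kc descend along lc blowup} applies only to a single lc blowup, not to a modification with several exceptional divisors. In short, the paper's trick is to arrange that every descent of a Koll\'ar component happens across a map with exactly one anti-ample exceptional divisor, so that the needed special complement is trivial to write down; your plan puts the full terminal modification in that role, where it is not.
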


Roughly speaking, we expect that all bounded complements have plt type lc places of bounded log discrepancy. Its connection with Conjecture \ref{conj:bdd for mldK, vol>epsilon} is given by the following result.

\begin{prop} \label{prop:reduce to N-complement}
For any fixed dimension $n$, Conjecture \ref{conj:bdd for mldK, vol>epsilon} and Conjecture \ref{conj:bdd mldk, Q-Gor, N-comp} are equivalent.
\end{prop}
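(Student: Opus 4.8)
The plan is to establish both implications; the nontrivial one is that Conjecture~\ref{conj:bdd mldk, Q-Gor, N-comp} implies Conjecture~\ref{conj:bdd for mldK, vol>epsilon}, while the reverse is a short argument built from the perturbation idea used elsewhere in the paper.

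\emph{Conjecture~\ref{conj:bdd for mldK, vol>epsilon} $\Rightarrow$ Conjecture~\ref{conj:bdd mldk, Q-Gor, N-comp}.} Fix an $N$-complement $D$ of an $n$-dimensional klt variety $X$ and an lc center $W$ of $(X,D)$ with generic point $\eta$ and $\hvol(\eta,X)\ge\varepsilon$; since $N(K_X+D)\sim 0$ forces $ND$ to be integral, $\Coef(D)$ lies in the finite set $\{0,\tfrac1N,\dots,1\}$. Let $\varepsilon_0=\varepsilon_0(n,N)\in(0,1)$ be the constant of Lemma~\ref{lem:every kc is lc place} for this coefficient set. Applying Lemma~\ref{lem:vol inequality} to $(X,0)$ with $\lambda=\tfrac{\varepsilon_0}{1-\varepsilon_0}$ — legitimate because $(X,(1+\lambda)(1-\varepsilon_0)D)=(X,D)$ is lc — yields $\hvol(\eta,X,(1-\varepsilon_0)D)\ge\varepsilon_0^{\,n}\varepsilon$. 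The pair $(X,(1-\varepsilon_0)D)$ is klt with coefficients in a finite set depending only on $n,N$, so Conjecture~\ref{conj:bdd for mldK, vol>epsilon} gives a Koll\'ar component $E$ over $\eta\in(X,(1-\varepsilon_0)D)$ with $A_{X,(1-\varepsilon_0)D}(E)\le A_1=A_1(n,N,\varepsilon)$. By Lemma~\ref{lem:strict N-complement} applied to $(X,(1-\varepsilon_0)D)$ and Lemma~\ref{lem:BCHM extract divisor}, after shrinking $X$ around $\eta$ the divisor $E$ is of lc type over $(X,(1-\varepsilon_0)D)$ with center $W$, so Lemma~\ref{lem:every kc is lc place} shows $E$ is an lc place of $(X,D)$, i.e. $A_{X,D}(E)=0$. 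Consequently $A_X(E)=\ord_E(D)$ and $A_{X,(1-\varepsilon_0)D}(E)=\varepsilon_0\,\ord_E(D)$, whence $A_X(E)\le A_1/\varepsilon_0$; and $E$, being of plt type over $(X,(1-\varepsilon_0)D)$ near $\eta$, is a fortiori a Koll\'ar component over $\eta\in X$ because erasing the boundary preserves plt. Thus Conjecture~\ref{conj:bdd mldk, Q-Gor, N-comp} holds with $A=A_1/\varepsilon_0$.

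\emph{Conjecture~\ref{conj:bdd mldk, Q-Gor, N-comp} $\Rightarrow$ Conjecture~\ref{conj:bdd for mldK, vol>epsilon}.} Let $(X,\Delta)$ be klt of dimension $n$ with $\Coef(\Delta)\subseteq I$ and $\eta\in X$ with $\hvol(\eta,X,\Delta)\ge\varepsilon$; we must bound $\mldk(\eta,X,\Delta)$. By Lemma~\ref{lem:strict N-complement} fix $N=N(n,I)$ and an $N$-complement $D$ of $\eta\in(X,\Delta)$, and shrink $X$ so that $(X,\Delta+D)$ is lc with $W=\overline{\{\eta\}}$ an lc center. The strategy is to transport the problem to a boundary-free klt variety and then invoke Conjecture~\ref{conj:bdd mldk, Q-Gor, N-comp}. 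Using the simultaneous index one cover of Section~\ref{sect:index one covers} on $K_X$ and the components of $\Delta+D$, we reduce — a quasi-\'etale crepant cover only multiplies local volumes, so $\hvol\ge\varepsilon$ persists — to the case where $K_X$ and every component of $\Delta+D$ are $\bQ$-Cartier, indeed Cartier; then a cyclic cover $q\colon\widetilde X\to X$ ramified along $\Delta$ to the orders prescribed by the (bounded) denominators occurring in $I$ makes the fractional boundary disappear, producing a klt variety $\widetilde X$ with $K_{\widetilde X}+\widetilde\Delta'=q^*(K_X+\Delta)$ for some $\widetilde\Delta'\le 0$, together with a bounded complement inherited from $D$. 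The key point — and the heart of the argument — is that $\hvol$ of the boundary-free variety $\widetilde X$ at a preimage $\widetilde\eta$ of $\eta$ is still bounded below by a constant depending only on $n,\varepsilon,I$: writing $\widetilde\Delta'=-G$ with $G$ effective $\bQ$-Cartier of bounded coefficients and noting $\hvol(\widetilde\eta,\widetilde X,-G)\ge\varepsilon$ by the crepant finite degree formula, one compares $\hvol(\widetilde\eta,\widetilde X,0)$ with $\hvol(\widetilde\eta,\widetilde X,-G)$ by evaluating both at the minimizer of the latter and controlling $v_0(G)/A_{\widetilde X}(v_0)$ via the uniform Izumi estimate of Theorem~\ref{thm:lct>=vol} and Corollary~\ref{cor:Izumi via vol minimizer}. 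Applying Conjecture~\ref{conj:bdd mldk, Q-Gor, N-comp} to $\widetilde X$ produces a Koll\'ar component $\widetilde E$ over $\widetilde\eta$ that is an lc place of the inherited complement with $A_{\widetilde X}(\widetilde E)$ bounded; because this complement can be arranged to be a special complement with respect to the relevant (birational and finite) maps, Lemma~\ref{lem:kc descend special complement} and Lemma~\ref{lem:kc descend along lc blowup} let us descend $\widetilde E$ to a Koll\'ar component over $\eta\in(X,\Delta)$, and reversing the crepant and volume comparisons (together with Lemma~\ref{lem:every kc is lc place} and the perturbation trick of the first part to absorb the non-standard coefficients) bounds its log discrepancy, giving $\mldk(\eta,X,\Delta)\le A(n,\varepsilon,I)$.

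The main obstacle is exactly the middle step of the second implication: removing the (possibly non-standard) boundary coefficients while keeping every relevant invariant of the singularity — the local volume, the Cartier indices, and the log discrepancies of the candidate Koll\'ar components — uniformly bounded in terms of $n,\varepsilon$ and $I$. This is where Theorem~\ref{thm:lct>=vol} is indispensable: it converts the lower bound on $\hvol$ into the uniform control of log canonical thresholds and multiplicities needed to ensure that the auxiliary covers and modifications do not destroy the hypotheses, and it is precisely the ``bounded'' aspect of the special complements alluded to in the introduction.
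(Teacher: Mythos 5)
Your first implication (Conjecture~\ref{conj:bdd for mldK, vol>epsilon} $\Rightarrow$ Conjecture~\ref{conj:bdd mldk, Q-Gor, N-comp}) is correct and essentially the same as the paper's: apply Lemma~\ref{lem:every kc is lc place} to pick $\varepsilon_0$, apply the conjecture to $(X,(1-\varepsilon_0)D)$ (whose volume is controlled by Lemma~\ref{lem:vol inequality}), and rescale the log discrepancy by $1/\varepsilon_0$.  The citations of Lemma~\ref{lem:strict N-complement} and Lemma~\ref{lem:BCHM extract divisor} in the middle of that paragraph are superfluous but harmless.

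The second implication, however, does not follow the paper's route and has genuine gaps.  The paper never tries to \emph{remove} the boundary $\Delta$ by covers; instead it (a) reduces to the $\bQ$-Gorenstein case via a small birational modification $\pi\colon Y\to X$ with $K_Y$ $\bQ$-Cartier and $\pi$-ample (Proposition~\ref{prop:reduce to Q-Gorenstein}, using a special complement and Lemma~\ref{lem:kc descend special complement} to descend), and then (b) \emph{absorbs the boundary into the complement}: by Theorem~\ref{thm:lct>=vol}, $(X,(1+c)\Delta)$ is klt for some $c=c(n,\varepsilon)$, one chooses an $N$-complement $D$ of $\eta\in(X,(1+c)\Delta)$, and then $(1+c)\Delta+D$ is a bounded $\bQ$-complement of the boundary-free variety $X$; Conjecture~\ref{conj:bdd mldk, Q-Gor, N-comp} is applied directly to this, and the Koll\'ar component so produced is seen to be a Koll\'ar component of $(X,\Delta)$ by an interpolation argument between $(Y,E)$ plt and $(Y,E+(1+c)\Delta_Y+D_Y)$ lc.

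Your proposed construction via covers has concrete problems.  (1) The simultaneous index-one cover of $K_X$ and the components of $\Delta+D$ is not available: on a non-$\bQ$-factorial klt variety the individual components of $\Delta$ (and of $D$) need not be $\bQ$-Cartier, so they do not generate a finite subgroup of the local class group and the construction of Section~\ref{sect:index one covers} does not apply.  (2) The ``cyclic cover ramified along $\Delta$ to the orders prescribed by the denominators'' is not a well-defined operation for an arbitrary effective $\bQ$-divisor $\Delta$; you would need each component to be $\bQ$-linearly equivalent to zero with bounded order, which has not been established.  (3) Even granting such a cover, the crepant pullback $K_{\widetilde X}+\widetilde\Delta'=q^*(K_X+\Delta)$ with $\widetilde\Delta'\le 0$ produces a \emph{sub}-pair, not a klt variety; Conjecture~\ref{conj:bdd mldk, Q-Gor, N-comp} only asserts something for klt varieties $X$ with an $N$-complement, and it is not clear how to bring a sub-klt pair into that shape with control on the constants.  (4) A ramified cover is not crepant, so the finite degree formula of \cite{XZ-uniqueness} for local volumes does not apply; the claim that $\hvol(\widetilde\eta,\widetilde X,-G)\ge\varepsilon$ and the subsequent comparison with $\hvol(\widetilde\eta,\widetilde X,0)$ via the Izumi estimate is not justified and, as stated, uses Izumi inequalities in a setting (negative boundary, ramified cover) they were not proved for.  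In short, the ``heart of the argument'' you identify is precisely the step that is missing, and the paper sidesteps it entirely with the perturbation-and-interpolation trick rather than a covering trick.
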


As the first step towards the proof, we show that Conjecture \ref{conj:bdd for mldK, vol>epsilon} can be reduced to the $\bQ$-Gorenstein case.

\begin{prop} \label{prop:reduce to Q-Gorenstein}
Fix the dimension $n$. Assume that Conjecture \ref{conj:bdd for mldK, vol>epsilon} holds when $X$ is $\bQ$-Gorenstein. Then it holds in general.
\end{prop}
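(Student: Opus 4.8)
The plan is to make the base $\bQ$-Gorenstein by a small birational modification, transport the volume hypothesis to it, apply the assumed $\bQ$-Gorenstein case of Conjecture \ref{conj:bdd for mldK, vol>epsilon}, and then push the resulting Koll\'ar component back down to $X$. First I would dispose of the trivial cases. Since $K_X$ is Cartier at every codimension-one point of the normal variety $X$, the locus where $K_X$ fails to be $\bQ$-Cartier is a closed subset of codimension $\ge 2$; if $\eta$ lies outside it, then $(X,\Delta)$ is $\bQ$-Gorenstein near $\eta$ and the assumed case applies directly. Likewise, if $\Delta=0$ in a neighbourhood of $\eta$ then $K_X=K_X+\Delta$ is already $\bQ$-Cartier there. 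So, after shrinking $X$ to a neighbourhood of $\eta$, I may assume that $W:=\overline{\{\eta\}}$ has codimension $\ge 2$ in $X$ and that $\Delta$ is a non-zero effective Weil $\bQ$-divisor on $X$.

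Next I would apply Lemma \ref{lem:ample model of Weil div} to the Weil $\bQ$-divisor $\Delta$, obtaining a small birational morphism $\pi\colon Y\to X$ with $\Delta_Y:=\pi^{-1}_*\Delta$ being $\bQ$-Cartier and $\pi$-ample. Because $\pi$ is small it is crepant, so $K_Y+\Delta_Y=\pi^*(K_X+\Delta)$, whence $K_Y=\pi^*(K_X+\Delta)-\Delta_Y$ is $\bQ$-Cartier; thus $Y$ is $\bQ$-Gorenstein, $(Y,\Delta_Y)$ is klt with $\Coef(\Delta_Y)=\Coef(\Delta)\subseteq I$, and by Lemma \ref{lem:vol under birational map} we have $\hvol(\eta',Y,\Delta_Y)\ge\hvol(\eta,X,\Delta)\ge\varepsilon$, where $\eta'$ is the generic point of $W':=\pi^{-1}_*W$. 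Applying the $\bQ$-Gorenstein case of Conjecture \ref{conj:bdd for mldK, vol>epsilon} to $\eta'\in(Y,\Delta_Y)$ yields $A=A(n,\varepsilon,I)>0$ and a Koll\'ar component $E$ over $\eta'\in(Y,\Delta_Y)$ with $A_{Y,\Delta_Y}(E)\le A$; by crepancy $A_{X,\Delta}(E)=A_{Y,\Delta_Y}(E)\le A$. It then remains only to see that $E$ is of plt type over $(X,\Delta)$, for this gives $\mldk(\eta,X,\Delta)\le A_{X,\Delta}(E)\le A$.

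For the descent I would use Lemma \ref{lem:kc descend special complement}: it is enough to exhibit a special $\bQ$-complement $\Gamma$ of $(X,\Delta)$ with respect to $\pi$ that has $E$ as an lc place. Here $E$ is already an lc place of some $N$-complement of $\eta'\in(Y,\Delta_Y)$ by Lemma \ref{lem:strict N-complement}, and the $\pi$-ampleness of $\Delta_Y$ arranged above is the resource that upgrades this to a special complement: mixing that $N$-complement with $n=\dim X$ general members of $|m\Delta_Y|$ over $X$ for $m\gg 0$ — these members collectively avoid every point of $\Ex(\pi)$ since $\dim\Ex(\pi)<n$, and they retain $E$ as an lc place after a small coefficient perturbation by Lemma \ref{lem:every kc is lc place} — and then correcting the $\bQ$-linear equivalence class on $X$ should produce the desired $\Gamma$. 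Lemma \ref{lem:kc descend special complement} then gives $E$ of plt type over $(X,\Delta)$, completing the proof.

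The hard part will be this last step — building the special complement with respect to a small morphism. The difficulty is structural: for small $\pi$ the pull-back of any $\bQ$-Cartier complement of $(X,\Delta)$ is $\pi$-numerically trivial, so it can dominate a $\pi$-ample divisor only if its pull-back simultaneously carries an effective $\pi$-ample part and an effective $\pi$-anti-ample part, and one must reconcile this with $\bQ$-Cartierness on $X$ in the correct $\bQ$-linear class, with the log canonical condition, and with the constraint that $E$ stay an lc place. The engineered $\pi$-ampleness of $\Delta_Y$ together with a Bertini-type argument is expected to carry this through; everything else is routine crepant-pullback bookkeeping.
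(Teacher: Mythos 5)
There is a genuine gap in the descent step, and it comes from the order of operations. You invoke the $\bQ$-Gorenstein case of Conjecture \ref{conj:bdd for mldK, vol>epsilon} applied to $(Y,\Delta_Y)$ \emph{first}, obtaining a Koll\'ar component $E$ with $A_{Y,\Delta_Y}(E)\le A$, and only afterwards try to manufacture a special $\bQ$-complement $\Gamma$ of $(X,\Delta)$ having $E$ as an lc place. But nothing in the hypothesis ties this $E$ to any particular complement with $\pi$-ample pieces, and your repair mechanism does not close the gap. Concretely: the $N$-complement $D'$ of $(Y,\Delta_Y)$ from Lemma \ref{lem:strict N-complement} pushes down to a $\bQ$-complement $\pi_*D'$ of $(X,\Delta)$ (its pull-back is $D'$ since $\pi$ is small and $K_X+\Delta$ is $\bQ$-Cartier), but $D'$ need not contain any $\pi$-ample divisor. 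Your mixing with general members $D_1,\ldots,D_n\in|m\Delta_Y|$ over $X$ necessarily changes the $\bQ$-linear equivalence class; the only way to ``correct'' the class back to $-(K_Y+\Delta_Y)$ is to subtract a corresponding multiple of $\Delta_Y$, which destroys effectivity unless $\Supp(\Delta_Y)\subseteq\Supp(D')$. Alternatively, replacing $D'$ by a convex combination $(1-s)D'+sG$ with $G$ general in $|-(K_Y+\Delta_Y)|_\bQ$ keeps the class but generically pushes $A_{Y,\Delta_Y}(E)-\ord_E(\cdot)$ strictly positive, so $E$ ceases to be an lc place. Lemma \ref{lem:every kc is lc place} does not help here: it says lc type divisors over the \emph{scaled-down} pair $(X,\Delta+(1-\varepsilon_0)D)$ are lc places of $(X,\Delta+D)$, which is the opposite direction from what you need (you want to \emph{preserve} $E$ as an lc place while \emph{changing} $D$).

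The paper resolves exactly this tension by building the special complement $\Gamma$ on $X$ \emph{before} invoking the $\bQ$-Gorenstein case, and only then applying that case to the enlarged pair $(Y,\Delta_Y+(1-\varepsilon_0)\pi^*\Gamma)$ so that the resulting Koll\'ar component is automatically an lc place of the prescribed complement (via Lemma \ref{lem:every kc is lc place}, now used in its intended direction, together with Lemma \ref{lem:closure of plt blowup over min lc center}). Constructing $\Gamma$ uniformly requires more tools than you allot: Theorem \ref{thm:lct>=vol} to see that $(Y,(1+\gamma)\Delta_Y)$ stays klt, Lemma \ref{lem:index bound via volume} and Koll\'ar's effective base-point-freeness to get a bounded Cartier divisor $-r_0(K_Y+\Delta'_Y)$ that is $\pi$-generated, a Bertini choice of $D_Y$ whose components supply the $\pi$-ample pieces avoiding any given point of $\Ex(\pi)$, and Lemma \ref{lem:strict N-complement} applied to the auxiliary klt pair $(X,(1+\gamma)\Delta+\gamma D)$. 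Note also that the paper takes $K_Y$ $\pi$-ample, which makes $\Delta_Y$ the natural effective $\pi$-anti-ample piece balancing the $\pi$-ample part in $\pi^*\Gamma$; your opposite choice (making $\Delta_Y$ $\pi$-ample) leaves you without an obvious effective $\pi$-anti-ample divisor to play that role, which compounds the difficulty. So the broad strategy is right, but the critical step — producing a special complement compatible with a Koll\'ar component of bounded discrepancy — needs to be carried out before, not after, the $\bQ$-Gorenstein case is invoked.
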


\begin{proof}
Starting with any klt pair $(X,\Delta)$ in dimension $n$ whose coefficients lie in a finite set $I$ and any $\eta\in X$ with $\hvol(\eta,X,\Delta)\ge \varepsilon$, our plan is to find a small birational modification $\pi\colon Y\to X$ such that $Y$ is $\bQ$-Gorenstein, together with an $N$-complement $D$ of $\eta\in (X,\Delta)$ that is special with respect to $\pi$, where the integer $N$ only depends on $n,I$ and $\varepsilon$. Let us first explain why this is enough.

After shrinking $X$, we may assume that $\overline{\eta}$ (the closure of $\eta$) is the minimal lc center of $(X,\Delta+D)$. Let $\Delta_Y$ (resp. $D_Y$) be the strict transform of $\Delta$ (resp. $D$), let $W\subseteq \pi^{-1}(\overline{\eta})$ be a minimal lc center of $(Y,\Delta_Y+D_Y)$ and let $\xi$ be the generic point of $W$. Note that $\Coef(\Delta_Y)=\Coef(\Delta)\subseteq I$ and $\Coef(D_Y)\subseteq \frac{1}{N}\bZ \cap [0,1]$. By Lemma \ref{lem:every kc is lc place}, there exists some rational number $\varepsilon_0\in (0,1)$ depending only on $n,I$ and $N$ such that if $E$ is a Koll\'ar component over $\xi\in (Y,\Delta_Y+(1-\varepsilon_0)D_Y)$, then it is also an lc place of $(Y,\Delta_Y+D_Y)$. By Lemma \ref{lem:closure of plt blowup over min lc center}, we further deduce that $E$ is of plt type over $(Y,\Delta_Y+(1-\varepsilon_0)D_Y)$ and hence also over $(Y,\Delta_Y)$. By Lemma \ref{lem:kc descend special complement}, $E$ is a Koll\'ar component over $\eta\in (X,\Delta)$. Note that as $N$ only depends on $n,I,\varepsilon$ so does $\varepsilon_0$. On the other hand, by Lemma \ref{lem:vol under birational map} we have $\hvol(\xi,Y,\Delta_Y)\ge \hvol(\eta,X,\Delta)\ge \varepsilon$. By Lemma \ref{lem:vol inequality} we further obtain
\[
\hvol(\xi,Y,\Delta_Y+(1-\varepsilon_0)D_Y)\ge \left(\frac{\varepsilon_0}{1+\varepsilon_0} \right)^n \hvol(\xi,Y,\Delta_Y)\ge C_1
\]
for some constant $C_1>0$ that only depends on $n,I,\varepsilon$. Since we assume Conjecture \ref{conj:bdd for mldK, vol>epsilon} to hold for $\bQ$-Gorenstein singularities, the Koll\'ar component $E$ above can be chosen so that 
\[
A_{Y,\Delta_Y+(1-\varepsilon_0)D_Y}(E)\le A_0
\]
for some constant $A_0$ that only depends on $n,I,\varepsilon$ (a priori it also depends on $\varepsilon_0$ and $C_1$ but these two constants only depend on $n,I$ and $\varepsilon$). As $E$ is automatically a Koll\'ar component over $\eta\in (X,\Delta)$ and an lc place of $(Y,\Delta_Y+D_Y)$ from the above discussion, we get 
\[
A_{Y,\Delta_Y+(1-\varepsilon_0)D_Y}(E) = \varepsilon_0 \cdot A_{Y,\Delta_Y}(E) + (1-\varepsilon_0) A_{Y,\Delta_Y+D_Y}(E) = \varepsilon_0 \cdot A_{Y,\Delta_Y}(E),
\]
hence $\mldk(\eta,X,\Delta)\le A_{X,\Delta}(E)=A_{Y,\Delta_Y}(E)\le \frac{A_0}{\varepsilon_0}$. Since the right hand side only depends on $n,I,\varepsilon$, this proves Conjecture \ref{conj:bdd for mldK, vol>epsilon} in the general (non-$\bQ$-Gorenstein) case.

We now return to construct the map $\pi\colon Y\to X$ and the $N$-complement $D$. The argument is very similar to that for Theorem \ref{thm:delta-plt blowup strong version}. Let $\pi\colon Y\to X$ be a small birational modification such that $K_Y$ is $\bQ$-Cartier and $\pi$-ample (existence is guaranteed by Lemma \ref{lem:ample model of Weil div}). In particular, $Y$ is $\bQ$-Gorenstein. By Lemma \ref{lem:vol under birational map} we have $\hvol(y,Y,\Delta_Y)\ge \varepsilon$ for all $y\in Y$. Thus by Theorem \ref{thm:lct>=vol}, there exists some rational number $\gamma>0$ depending only on $n,\varepsilon$ such that $(Y,(1+\gamma)\Delta_Y)$ is klt. In addition:
\begin{itemize}
    \item By Lemma \ref{lem:index bound via volume}, there exists a positive integer $N_0$ depending only on $n$ and $\varepsilon$ such that $L:=N_0\cdot K_Y$ is Cartier.
    \item By Koll\'ar's effective base-point-free theorem, there exists another positive integer $m_0$ depending only on the dimension $n$ such that $m_0 L$ is $\pi$-generated.
\end{itemize}
Putting these two facts together with Bertini's theorem, we deduce that there exists an integer $r_0=nm_0N_0$ that only depends on $n,\varepsilon$, and an effective divisor $D_Y\sim_{\pi,\bQ} K_Y$ such that 
\begin{itemize}
    \item $r_0 D_Y$ has integer coefficients, 
    \item $(Y,(1+\gamma)\Delta_Y+\gamma D_Y)$ is klt, and
    \item for any $y\in \Ex(\pi)$ there exists some irreducible component of $D_Y$ that's $\bQ$-Cartier and ample over $X$ such that $y$ is not contained in its support.
\end{itemize}
For example, one can take $D_Y=\frac{1}{r_0}(D_1+\cdots+D_n)$ where $D_1,\cdots,D_n$ are general members of $|m_0N_0 K_Y|$. Note that by construction,
\[
K_Y+(1+\gamma)\Delta_Y+\gamma D_Y\sim_{\pi,\bQ} (1+\gamma)(K_Y+\Delta_Y)\sim_{\pi,\bQ} 0,
\]
thus $K_Y+(1+\gamma)\Delta_Y+\gamma D_Y = \pi^*(K_X+(1+\gamma)\Delta+\gamma D)$ where $D=\pi_* D_Y$, and the pair $(X,(1+\gamma)\Delta+\gamma D)$ is also klt. Since the coefficients of $(1+\gamma)\Delta+\gamma D$ is contained in a finite set that only depends on $n,I$ and $\varepsilon$, by Lemma \ref{lem:strict N-complement} we see that $\eta\in (X,(1+\gamma)\Delta+\gamma D)$ admits an $N$-complement $G$ for some integer $N$ only depends on $n,I,\varepsilon$. The divisor $\Gamma=\gamma(\Delta+D)+G$ is then an $N$-complement of $\eta\in (X,\Delta)$. It remains to show that $\Gamma$ is special with respect to $\pi$. But as $\Gamma\ge \gamma D$, this is clear from our choice of $D_Y$.
\end{proof}

We now return to the proof of Proposition \ref{prop:reduce to N-complement}.

\begin{proof}[Proof of Proposition \ref{prop:reduce to N-complement}]
By Proposition \ref{prop:reduce to Q-Gorenstein}, it suffices to show that Conjecture \ref{conj:bdd mldk, Q-Gor, N-comp} is equivalent to the $\bQ$-Gorenstein case of Conjecture \ref{conj:bdd for mldK, vol>epsilon}. First assume that Conjecture \ref{conj:bdd mldk, Q-Gor, N-comp} holds. Let $(X,\Delta)$ be an $n$-dimensional klt pair where $X$ is $\bQ$-Gorenstein and let $\eta\in X$ be such that $\hvol(\eta,X,\Delta)\ge \varepsilon$. By Theorem \ref{thm:lct>=vol}, there exists some constant $c=c(n,\varepsilon)>0$ such that $(X,(1+c)\Delta)$ is still klt. We may assume that $c\in\bQ$. By Lemma \ref{lem:strict N-complement}, there exists an integer $N$ depending only on $n$ and $\Coef(\Delta)$ such that $\eta\in (X,(1+c)\Delta)$ has an $N$-complement $D$ (a priori $N$ also depends on $c$ but we already know that $c$ only depends on $n$ and $\varepsilon$). Since Conjecture \ref{conj:bdd mldk, Q-Gor, N-comp} holds, we see that there exists some $A=A(n,N,\varepsilon)$ and some Koll\'ar component $E$ over $\eta\in X$ such that $A_{X,(1+c)\Delta+D}(E)=0$ and $A_X(E)\le A$. In particular, the constant $A$ only depends on $n,\varepsilon$ and $\Coef(\Delta)$, and clearly $A_{X,\Delta}(E)\le A_X(E)\le A$. Thus to prove Conjecture \ref{conj:bdd for mldK, vol>epsilon} for $\bQ$-Gorenstein singularities it suffices to show that $E$ is also a Koll\'ar component over $x\in (X,\Delta)$. But this is straightforward: if $\pi\colon Y\to X$ is the plt blowup that extracts $E$, then $(Y,E)$ is plt and $(Y,E+(1+c)\Delta_Y+D_Y)$ is lc (because $E$ is an lc place of $(X,(1+c)\Delta+D)$). By interpolation, it follows that $(Y,E+\Delta_Y)$ is also plt and thus $E$ is also a Koll\'ar component over $\eta\in (X,\Delta)$. This proves one direction of the equivalence.

Next assume that Conjecture \ref{conj:bdd for mldK, vol>epsilon} holds. Let $X,D,W,\eta$ be as in Conjecture \ref{conj:bdd mldk, Q-Gor, N-comp}. By Lemma \ref{lem:every kc is lc place}, there exists some rational $\varepsilon_0\in(0,1)$ depending only on $N$ such that every Koll\'ar component over $\eta\in (X,(1-\varepsilon_0)D)$ is also an lc place of $(X,D)$. Since $\hvol(\eta,X,(1-\varepsilon_0)D)\ge \left(\frac{\varepsilon_0}{1+\varepsilon_0}\right)^n \hvol(\eta,X)\ge \frac{\varepsilon_0^n\varepsilon}{(1+\varepsilon_0)^n}$ by Lemma \ref{lem:vol inequality}, and Conjecture \ref{conj:bdd for mldK, vol>epsilon} holds by assumption, we see that there exists some constant $A>0$ depending only on $n,N$ and $\varepsilon$ and some Koll\'ar component $E$ over $\eta\in (X,(1-\varepsilon_0)D)$ such that $A_{X,(1-\varepsilon_0)D}(E)\le A$ (a priori $A$ also depends on $\varepsilon_0$ and $\Coef(D)$, but these only depend on $n,N,\varepsilon$ by the above construction). From the above discussion, we have $A_{X,D}(E)=0$. Thus $A_X(E)=\frac{A_{X,(1-\varepsilon_0)D}(E)}{\varepsilon_0}\le \frac{A}{\varepsilon_0}$ is bounded from above by some constant that only depends on $n,N$ and $\varepsilon$. This proves that the other direction of the desired equivalence.
\end{proof}

\subsection{Further reduction} \label{sect:reduce to mld^lc and 1-comp}

We further break Conjecture \ref{conj:bdd mldk, Q-Gor, N-comp} into two smaller parts. The first part is the following weaker version of Conjecture \ref{conj:bdd mldk, Q-Gor, N-comp} (the difference is that we don't require the lc place of the complement to be of plt type over the singularity). Note that it is a special case of the uniform boundedness conjecture of mlds \cite{HLL-ACC-mld-3fold}*{Conjecture 7.2}, which has been verified in dimension two \cite{HL-surface-mld}.

\begin{conj} \label{conj:bdd mld for lc blowup}
Let $n,N\in\bN^*$ and let $\varepsilon>0$. Then there exists some $A_0=A_0(n,N,\varepsilon)$ such that for any $N$-complement $D$ of an $n$-dimensional klt variety $X$ and any lc center $W$ of $(X,D)$ such that $\hvol(\eta,X)\ge \varepsilon$ $($where $\eta$ is the generic point of $W)$, there exists a divisor $E$ over $\eta\in X$ such that
\[
A_{X,D}(E)=0, \quad\text{and}\quad A_X(E)\le A_0.
\]
\end{conj}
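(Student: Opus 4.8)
The plan is to single out one particular divisorial lc place over $\eta$ — the one obtained by blowing up a minimal stratum of a dlt modification of $(X,D)$ — and then to bound its log discrepancy over $X$ using the lower bound on the local volume. First I would shrink $X$ around $\eta$, which disturbs none of the hypotheses, so that $W=\overline{\{\eta\}}$ becomes the minimal lc center of $(X,D)$. Choose a $\bQ$-factorial dlt modification $\pi\colon (Y,\Gamma)\to (X,D)$ with $K_Y+\Gamma=\pi^*(K_X+D)$, so that the components of $\lfloor\Gamma\rfloor$ are lc places of $(X,D)$; since $W$ is a minimal lc center there is a minimal stratum $Z$ of $(Y,\Gamma)$ with $\pi(Z)=W$. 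Let $S_1,\dots,S_c$ be the components of $\lfloor\Gamma\rfloor$ through the generic point of $Z$, where $c=\codim_Y Z$; this generic point lies in the snc locus of $(Y,\Gamma)$, so $Y$ is smooth there. Let $E$ be the exceptional divisor of the blow-up of $Y$ along $Z$. Then $C_X(E)=\pi(Z)=W$, so $E$ is a divisor over $\eta\in X$, and a direct discrepancy computation gives $A_{Y,\Gamma}(E)=1+(c-1)-c=0$ — log canonicity of $(Y,\Gamma)$ rules out any fractional component of $\Gamma$ passing through $Z$ — hence $A_{X,D}(E)=A_{Y,\Gamma}(E)=0$, i.e.\ $E$ is an lc place of $(X,D)$. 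In particular $A_X(E)=\ord_E(D)=\mult_Z(\pi^*D)$, and decomposing $\pi^*D=\pi^{-1}_*D+\sum_i\ord_{E_i}(D)E_i$ into strict transform plus exceptional part yields
\[
A_X(E)=\mult_Z(\pi^{-1}_*D)+\sum_{j=1}^c A_X(S_j).
\]
Thus the statement reduces to bounding $\mult_Z(\pi^{-1}_*D)$ and the finitely many numbers $A_X(S_j)=\ord_{S_j}(D)$ in terms of $n$, $N$ and $\varepsilon$ (those $S_j$ that are strict transforms of components of $D$ contribute at most their coefficient, so only the exceptional ones are at issue).

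To obtain such bounds I would first reduce to the case where $X$ itself is terminal. Replacing $X$ by a $\bQ$-factorial terminalization $\sigma\colon X'\to X$, the lc places of $(X,D)$ coincide with those of the lc pair $(X',\Delta'+\sigma^*D)$ over the terminal variety $X'$, and the local volumes on $X'$ are still $\ge\varepsilon$ by Lemma~\ref{lem:vol under birational map}. This reduction is useful because the divisor produced above must ultimately be pushed back down to $X$, which is possible once it is realized as an lc place of a \emph{special} complement (Lemma~\ref{lem:kc descend special complement}); constructing such bounded special complements is exactly where Theorem~\ref{thm:lct>=vol}, Koll\'ar's effective base-point-freeness and the Cartier-index bound of Lemma~\ref{lem:index bound via volume} supply the required uniform control, much as in the proofs of Theorem~\ref{thm:delta-plt blowup strong version} and Proposition~\ref{prop:reduce to Q-Gorenstein}. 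A further reduction, again via special complements, brings us to the case of a reduced complement over terminal singularities, and in dimension $\le 3$ the bounds on $\mult_Z$ and on the $\ord_{S_j}(D)$ can then be read off from the classification of terminal threefold singularities, yielding the statement in dimension $\le 3$.

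The hard part will be this last step: extracting a bound on $\mult_Z(\pi^*D)$ — equivalently, on the value that the blow-up-of-the-minimal-stratum valuation takes on the complement — that is uniform in $n,N,\varepsilon$. The volume lower bound enters it only indirectly, through the Cartier-index bound (Lemma~\ref{lem:index bound via volume}) and the uniform Izumi inequality (Corollary~\ref{cor:Izumi via vol minimizer}), which turn log canonicity into an a priori bound on multiplicities at the relevant (possibly non-closed) points after passing to general closed points; a naive induction on $\codim W$ stalls precisely when a component of $\lfloor\Gamma\rfloor$ already has center $W$, so making the estimate work in arbitrary dimension seems to require genuinely new input. In low dimension this obstruction is circumvented by the explicit classification of terminal singularities once one has reduced to the reduced-complement case.
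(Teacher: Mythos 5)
Your proposal has two genuine difficulties, and they are related. The first is that the reduction
\[
A_X(E)=\mult_Z(\pi^{-1}_*D)+\sum_j A_X(S_j)
\]
(summing only over $\pi$-exceptional $S_j$; the strict-transform $S_j$ have $A_X(S_j)=1$ and are already included inside $\mult_Z(\pi^{-1}_*D)$, so your displayed formula double-counts them) is circular. Each exceptional $S_j$ is itself a divisorial lc place of $(X,D)$ with center containing $W$, and $A_X(S_j)=\ord_{S_j}(D)$ is exactly the quantity the conjecture asks you to bound. Whenever some $S_j$ has $C_X(S_j)=W$ --- which your construction does not prevent --- the induction on $\codim W$ cannot advance, as you yourself note. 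The paper's proof of Proposition~\ref{prop:mld of lc blowup bdd, dim=3} does not try to bound your particular blow-up valuation $E$; it goes looking for \emph{any one} lc place of bounded log discrepancy and builds that freedom into the argument. Concretely, it passes to a terminal modification $\pi\colon Y\to X$ with $\pi^*K_X=K_Y+\Gamma$, $\Gamma\ge 0$, so that every $\pi$-exceptional prime divisor on $Y$ automatically has $A_X\le 1$; then with $D_Y=\Gamma+\pi^*D$ it inspects a minimal lc center $W'$ of $(Y,D_Y)$. If $W'$ is a divisor on $Y$ then $W'$ itself is the sought $E$; if $\codim_Y W'=2$ it quotes Corollary~\ref{cor:bdd mldk, N-comp, codim=2}; and only when $W'$ is a closed point does it invoke the classification of terminal threefold singularities (Lemma~\ref{lem:bdd lc mld terminal case, dim=3}), which exhibits the germ inside a bounded family and concludes via Lemma~\ref{lem:mldk bdd, bdd germ}. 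In each case the bound is transported back to $X$ by the single inequality $A_X(E)=A_{Y,\Gamma}(E)\le A_Y(E)$, which holds because $\Gamma\ge 0$.

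The second difficulty is a misdirection: special complements and Lemma~\ref{lem:kc descend special complement} are irrelevant to the present statement. That machinery addresses the fact that a plt-type divisor over a birational model need not descend to a Koll\'ar component of the original singularity. But Conjecture~\ref{conj:bdd mld for lc blowup} only asks for a divisor $E$ over $\eta\in X$ with $A_{X,D}(E)=0$ and $A_X(E)$ bounded; any divisor over the terminal modification is automatically a divisor over $X$, and the crepant inequality above controls its log discrepancy for free. In the paper, the special-complement machinery --- together with Theorem~\ref{thm:lct>=vol}, Koll\'ar's effective base-point-freeness and Lemma~\ref{lem:index bound via volume} --- enters later, in Propositions~\ref{prop:reduce to Q-Gorenstein} and~\ref{prop:reduce to N-complement} and Lemma~\ref{lem:reduce to 1-comp}, which reduce the genuinely harder Koll\'ar-component statement (Conjecture~\ref{conj:bdd mldk, Q-Gor, N-comp}). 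Importing it here adds difficulty the statement does not require. The classification endgame you sketch for $n\le 3$ is in the right spirit, but in the paper it is applied to the bounded germ produced by the terminal modification, not to a multiplicity estimate on an arbitrary dlt model.
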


The other part is Conjecture \ref{conj:bdd mldk, Q-Gor, N-comp} for reduced complement. More precisely:  

\begin{lem} \label{lem:reduce to 1-comp}
Fix the dimension $n$. Assume that Conjecture \ref{conj:bdd mld for lc blowup} holds and that Conjecture \ref{conj:bdd mldk, Q-Gor, N-comp} holds when the complement $D$ is reduced $($i.e. all its coefficients are $1)$. Then Conjecture \ref{conj:bdd mldk, Q-Gor, N-comp} $($equivalently: Conjecture \ref{conj:bdd for mldK, vol>epsilon}$)$ holds. 
\end{lem}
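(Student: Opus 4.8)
The plan is to feed a single divisor of bounded log discrepancy, produced by Conjecture \ref{conj:bdd mld for lc blowup}, into the reduced case of Conjecture \ref{conj:bdd mldk, Q-Gor, N-comp}: extract that divisor by a prime blowup $\pi\colon Y\to X$, recognize the relevant part of the exceptional divisor as a \emph{reduced} $N$-complement of the klt variety $Y$ near a suitable point, and then push the resulting Kollár component down to $X$ by Lemma \ref{lem:kc descend along lc blowup}.

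First I would reduce to $\codim W\ge 2$: if $\eta$ has codimension one then $\mathrm{coeff}_W(D)=1$ and $W$ itself (after shrinking $X$ around $\eta$) is a Kollár component over $\eta$ with $A_{X,D}(W)=0$ and $A_X(W)=1$. Shrinking $X$ around $\eta$, I may also assume $W$ is the minimal lc center of $(X,D)$. By Conjecture \ref{conj:bdd mld for lc blowup} there is a divisor $F_0$ over $\eta$ (necessarily exceptional over $X$ as $\codim W\ge 2$) with $A_{X,D}(F_0)=0$ and $A_X(F_0)\le A_0=A_0(n,N,\varepsilon)$; since $(X,D)$ is lc, $F_0$ is of lc type by Lemma \ref{lem:BCHM extract divisor}. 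Let $\pi\colon Y\to X$ be the associated prime blowup and $D_Y=\pi^{-1}_*D$. Then $\pi$ is an lc blowup of $(X,0)$, $(Y,F_0+D_Y)$ is the crepant pullback of the lc pair $(X,D)$ hence lc, and from $K_Y+(1-A_X(F_0))F_0=\pi^*K_X$ one checks that $Y$ is klt. By Lemma \ref{lem:vol comparison lc blowup} applied to $\pi$ with zero boundary, $\hvol(y,Y)\ge \hvol(\pi(y),X)/\max\{1,A_0^n\}$ for all $y\in Y$; since $\pi(F_0)=W$, this gives $\hvol(y,Y)\ge\varepsilon':=\varepsilon/\max\{1,A_0^n\}$ for every $y\in F_0$ lying over the generic point of $W$.

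If $(Y,F_0)$ is plt near $F_0$, then $\pi$ is a plt blowup and $F_0$ is a Kollár component over $\eta\in X$, so we are done with $E=F_0$, $A=A_0$. Otherwise, after removing from $X$ the finitely many proper closed images of the lc centers of $(Y,F_0)$ that do not dominate $W$, I may assume $(Y,F_0)$ has an lc center $T\subsetneq F_0$ dominating $W$; let $\xi$ be its generic point. The observation that activates the reduced case is this: since $(X,D)$ is lc, every lc place $F$ of $(Y,F_0)$ satisfies $0\le A_{X,D}(F)=A_{Y,F_0+D_Y}(F)=-\ord_F(D_Y)\le 0$, so $\ord_F(D_Y)=0$, and applying this to an lc place with center $T$ gives $T\not\subseteq\Supp(D_Y)$. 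Hence on the open set $Y\setminus\Supp(D_Y)\ni\xi$ we have $D_Y=0$ and $K_Y+F_0=\pi^*(K_X+D)$ with $N(K_X+D)\sim 0$, so $F_0$ is a reduced $N$-complement of the klt variety $Y\setminus\Supp(D_Y)$ with $T$ as an lc center, and $\hvol(\xi,Y)\ge\varepsilon'$ because $T$ dominates $W$.

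Applying the reduced case of Conjecture \ref{conj:bdd mldk, Q-Gor, N-comp} to $(Y\setminus\Supp(D_Y),F_0)$ at $\xi$ yields a Kollár component $F$ over $\xi\in Y$ with $A_{Y,F_0}(F)=0$ and $A_Y(F)\le A_1=A_1(n,N,\varepsilon)$. Then $F$ is an lc place of $(Y,F_0)$ of plt type over $(Y,0)$, so Lemma \ref{lem:kc descend along lc blowup} (applied to the lc blowup $\pi$, exceptional divisor $F_0$, zero boundary) shows $F$ is of plt type over $(X,0)$; its center on $X$ is $\eta$ since $T$ dominates $W$, and $A_{X,D}(F)=A_{Y,F_0+D_Y}(F)=0$ since $D_Y=0$ near $\xi$. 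Finally $A_{Y,F_0}(F)=0$ gives $\ord_F(F_0)=A_Y(F)$, whence by $K_Y+(1-A_X(F_0))F_0=\pi^*K_X$ we get $A_X(F)=A_Y(F)-(1-A_X(F_0))\ord_F(F_0)=A_X(F_0)\cdot A_Y(F)\le A_0A_1$. Thus $E=F$ works, with $A=A_0\max\{1,A_1\}$ depending only on $n,N,\varepsilon$. I expect the most delicate step to be the second shrinking of $X$: one must verify that after discarding the images of the lc centers of $(Y,F_0)$ not dominating $W$, the pair either becomes plt near $F_0$ (landing in the first case) or keeps an lc center dominating $W$, and that both the volume bound $\hvol(\xi,Y)\ge\varepsilon'$ and the identification of the center of the descended component with $\eta$ survive this truncation.
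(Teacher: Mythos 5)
Your proposal follows the same overall strategy as the paper's proof (extract a bounded-discrepancy divisor $F_0$ via Conjecture~\ref{conj:bdd mld for lc blowup}, recognize a reduced complement on the lc blowup $Y$, apply the reduced case of Conjecture~\ref{conj:bdd mldk, Q-Gor, N-comp}, then descend via Lemma~\ref{lem:kc descend along lc blowup}), and indeed supplies some useful extra care the paper elides, notably the argument that $\ord_F(D_Y)=0$ forces $\xi\notin\Supp(D_Y)$, which is what makes $F_0$ a \emph{reduced $N$-complement} near $\xi$. However, there is one genuine gap.

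The step ``$F$ is of plt type over $(Y,0)$'' is not justified. The reduced case of Conjecture~\ref{conj:bdd mldk, Q-Gor, N-comp} only hands you a Koll\'ar component over $\xi\in Y$, which by Definition~\ref{defn:kc and delta-plt blowup} means $F$ is of plt type over $(U,0)$ for \emph{some} neighbourhood $U\subseteq Y$ of $\xi$. But Lemma~\ref{lem:kc descend along lc blowup} requires $F$ to be of plt type over $(Y,\Delta_Y)=(Y,0)$ globally, i.e.\ the prime blowup $\rho\colon Z\to Y$ should satisfy that $(Z,F)$ is plt in a neighbourhood of all of $F$, not merely near the fibre over $\xi$. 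Since you only picked $T$ to be \emph{some} lc centre of $(Y,F_0)$ dominating $W$, there can perfectly well be smaller lc centres of $(Y,F_0)$ strictly contained in $T$; these push forward to lc centres of $(Z,F)$ away from the fibre over $\xi$, and then $(Z,F)$ fails to be plt there. Shrinking $X$ around $\eta$ does not remove these, because they can still dominate $W$.

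The paper bridges exactly this local-to-global gap by choosing $T$ (called $Z$ there) to be a \emph{minimal} lc centre of $(Y,F_0)$ and invoking Lemma~\ref{lem:closure of plt blowup over min lc center} with $(X,\Delta)=(Y,0)$, $D=F_0$: any lc centre of $(Z,F)$ other than $F$ itself would be an lc centre of $(Z,F_{0,Z}+F)$ mapping to an lc centre of $(Y,F_0)$ strictly contained in $T$, contradicting minimality. Once you replace ``an lc centre $T$ dominating $W$'' by ``a minimal lc centre $T$'' and cite Lemma~\ref{lem:closure of plt blowup over min lc center} before applying Lemma~\ref{lem:kc descend along lc blowup}, the argument closes. (Minimality of $T$, combined with the earlier shrinking so that $W$ is the minimal lc centre of $(X,D)$, also automatically ensures $\pi(\xi)=\eta$ and that the volume bound applies at $\xi$, which you handled correctly.)
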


\begin{proof}
We use the notation and assumptions in Conjecture \ref{conj:bdd mld for lc blowup}. Let $E$ be a divisor over $X$ with center $W$ such that $E$ is an lc place of $(X,D)$ and $A_X(E)\le A_0:=A_0(n,N,\varepsilon)$. We may assume that $A_0\ge 1$. By Lemma \ref{lem:BCHM extract divisor}, there exists a birational morphism $\pi\colon Y\to X$ such that $E$ is the unique exceptional divisor and $-E$ is ample. Since $E$ is an lc place of $(X,D)$ we have $\pi^*(K_X+D)=K_Y+D_Y+E\ge K_Y+E$, hence $(Y,E)$ is lc and any lc place of $(Y,E)$ is also an lc place of $(X,D)$. By Lemma \ref{lem:vol comparison lc blowup}, we have $\hvol(y,Y)\ge A_0^{-n}\varepsilon$ for all $y\in Y$. If $(Y,E)$ is already plt then there is nothing to prove. Otherwise let $Z\subseteq Y$ be a minimal lc center of $(Y,E)$ and let $\xi$ be the generic point of $Z$. Note that $\pi(\xi)=\eta$. Since Conjecture \ref{conj:bdd mldk, Q-Gor, N-comp} holds for $\xi\in Y$ with the reduced complement $E$ by assumption, and $\hvol(\xi,Y)\ge A_0^{-n}\varepsilon$, we see that there exists some Koll\'ar component $F$ over $\xi\in Y$ such that $A_{Y,E}(F)=0$ and $A_Y(F)\le A_1$ for some constant $A_1=A_1(n,A_0,\varepsilon)=A_1(n,N,\varepsilon)>0$. In particular, $F$ is an lc place of $(Y,E)$ and hence is also an lc place of $(X,D)$. Since $\pi^*K_X=K_Y+(1-A_X(E))E$, it follows that $A_X(F) = A_Y(F)+(A_X(E)-1)\cdot \ord_F(E)=A_X(E)\cdot A_Y(F)\le A_0 A_1$. Since $Z$ is a minimal lc center of $(Y,E)$, Lemma \ref{lem:closure of plt blowup over min lc center} implies that $F$ is of plt type over $(Y,E)$. By Lemma \ref{lem:kc descend along lc blowup}, $F$ is a Koll\'ar component over $X$. Thus Conjecture \ref{conj:bdd mldk, Q-Gor, N-comp} holds with $A(n,N,\varepsilon)=A_0 A_1$. 
\end{proof}

\section{Boundedness of \texorpdfstring{$\mldk$}{mldK}}

In this section, we study Conjecture \ref{conj:bdd for mldK} in codimension two, as well as Conjecture \ref{conj:bdd for mldK, vol>epsilon} (equivalently: Conjecture \ref{conj:bdd mldk, Q-Gor, N-comp}) in dimension $3$. In particular, we confirm the special boundedness of threefold klt singularities whose volumes are bounded from below. As a preliminary step, we first show that Conjecture \ref{conj:bdd mldk, Q-Gor, N-comp} holds for singularities $x\in X$ that belong to an analytically bounded family.

\begin{lem} \label{lem:mldk bdd, bdd germ}
Let $N$ be a positive integer and let $B\subseteq \cX\to B$ be a $\bQ$-Gorenstein family of klt singularities. Then there exists some $A>0$ depending only on $N$ and the family $B\subseteq \cX\to B$ such that for any klt singularity $x\in X$ with $(x\in X^{\an})\in (B\subseteq \cX^{\an}\to B)$ and any $N$-complement $D$ of $x\in X$, there exists some Koll\'ar component $E$ over $x\in X$ such that
\[
A_{X,D}(E)=0, \quad\text{and}\quad A_X(E)\le A.
\]
\end{lem}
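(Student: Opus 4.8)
The plan is to reduce the statement to the family $\cX\to B$ itself, to show that the $N$-complements which actually occur form a bounded family of lc pairs, and then to extract a Koll\'ar component uniformly in that bounded family. \emph{Reduction to the family.} Whether a divisor over $X$ is a Koll\'ar component, the value of the log discrepancy function $A_X(\cdot)$, and whether a divisor is an lc place of a given $\bQ$-complement, all depend only on the completion $\widehat{\cO_{X,x}}$ together with the completion of the complement. Since $\widehat{\cO_{X,x}}\cong\widehat{\cO_{\cX_b,b}}$ for some closed point $b\in B$, I would transport $D$ to a formal $N$-complement of $b\in\cX_b$ and, since $N$-complements are cut out by finitely many equations, realize it by an honest algebraic one via Artin approximation; thus one may assume $x\in X$ equals $b\in\cX_b$. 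As $\cX\to B$ is of finite type with klt fibers, lower semicontinuity \cite{BL-vol-lsc} and constructibility \cite{Xu-quasi-monomial} of the local volume yield a uniform $\varepsilon_0>0$ with $\hvol(b,\cX_b)\ge\varepsilon_0$ for all $b\in B$.

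\emph{Boundedness of the occurring pairs.} After shrinking each $\cX_b$ around $b$, I claim the pairs $(\cX_b,D)$, as $b$ ranges over $B$ and $D$ over all $N$-complements of $b\in\cX_b$, form a log bounded family. First, since $b$ is an lc center of $(\cX_b,D)$ there is an lc place $E$ over $b$ with $A_{\cX_b}(E)=0$, whence $A_{\cX_b,tD}(E)=(1-t)A_{\cX_b}(E)<0$ for $t>1$, so $\lct_b(\cX_b;D)=1$; combined with the elementary inequality $\lct_b(\cX_b;D)\le A_{\cX_b}(\ord_b)/\mult_b D$ this gives $\mult_b D\le A_{\cX_b}(\ord_b)$, and $A_{\cX_b}(\ord_b)$ is bounded on the family $\cX\to B$. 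Together with the constraint that $ND$ is an integral Weil divisor in the class of $-NK_{\cX_b}$, the divisors $D$ localized near $b$ are then parametrized by a scheme $T$ of finite type over $B$; after stratifying one obtains a bounded family of lc pairs $(\cX_T,\cD_T)\to T$, each carrying a section marking the point $b$ and admitting a simultaneous log resolution.

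\emph{Uniform extraction of a Koll\'ar component.} After a further stratification, take a simultaneous dlt modification of $(\cX_T,\cD_T)\to T$. On each stratum, running the argument of \cite{Xu-pi_1-finite} fiberwise and in families (as in Lemma \ref{lem:kc exist on every dlt model}), and invoking Lemma \ref{lem:closure of plt blowup over min lc center} together with the fact that $\{b\}$ is a minimal lc center of $(\cX_b,D)$, produces a Koll\'ar component $E$ over $b\in\cX_b$ that is an lc place of $(\cX_b,D)$. Since the dlt modification varies in a bounded family, the function $b\mapsto A_{\cX_b}(E)$ is constructible on the quasi-compact $T$, hence bounded above by some $A$ depending only on $N$ and $\cX\to B$. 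Transporting back via the first step gives the required $E$ over $x\in X$ with $A_{X,D}(E)=0$ and $A_X(E)\le A$.

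\emph{Main obstacle.} The delicate step is the boundedness of the occurring pairs: $N$-complements live on affine varieties, where linear systems are infinite-dimensional, so a priori they form an unbounded family. The point is that the lc condition at $x$ pins $D$ down near $x$ — which is all that matters for lc places over $x$ — and this has to be combined with $ND\sim -NK$; this is essentially the boundedness-of-complements input behind \cite{HLQ-ACC}*{Theorems 1.7 and 1.8}, from which the present lemma can alternatively be deduced. A secondary subtlety, handled by Xu's construction and Lemma \ref{lem:closure of plt blowup over min lc center}, is ensuring that the extracted component has center exactly the closed point $x$ and can be chosen compatibly over the family.
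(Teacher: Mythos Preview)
Your proposal has a genuine gap in the ``boundedness of the occurring pairs'' step. Bounding $\mult_b D$ (together with $ND\sim -NK_{\cX_b}$) does not parametrize the germs of $D$ by a finite-type scheme: on an affine variety the linear system $|{-}NK|$ is infinite-dimensional, and even germs of divisors with bounded multiplicity at a point are not bounded (e.g.\ $(x^2+y^m=0)\subseteq\bA^2$ for varying $m$). You recognize this in your ``Main obstacle'' paragraph and defer to \cite{HLQ-ACC}, but then Steps~2--3 of your outline are not actually doing any work: the simultaneous dlt modification and family-MMP argument presupposes exactly the boundedness you have not established. One could hope to salvage the approach by showing that the relevant lc places over $b$ depend only on a fixed jet of $D$, but the order of that jet would have to be bounded in terms of the very constant $A$ you are trying to produce, so this is circular.

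The paper's argument sidesteps the parametrization of complements entirely by a trick with ACC of log canonical thresholds (Lemma~\ref{lem:every kc is lc place}): there is a rational $\varepsilon_0\in(0,1)$, depending only on $N$ and the dimension, such that \emph{any} Koll\'ar component over $x\in(X,(1-\varepsilon_0)D)$ is automatically an lc place of $(X,D)$; moreover for such a component $E$ one has $A_{X,(1-\varepsilon_0)D}(E)=\varepsilon_0\cdot A_X(E)$. Thus the problem becomes bounding $\mldk(x,X,(1-\varepsilon_0)D)$. Now $(X,(1-\varepsilon_0)D)$ is klt, its coefficients lie in a fixed finite set, and its local volume is bounded below by $\varepsilon_0^n\cdot\hvol(x,X)$ via Lemma~\ref{lem:vol inequality}; since $\hvol(x,X)$ is uniformly bounded below on the family by constructibility \cite{Xu-quasi-monomial}, one may invoke \cite{HLQ-ACC}*{Theorem~1.7} (the analytically bounded case of Conjecture~\ref{conj:vol>epsilon implies bounded}) together with Lemma~\ref{lem:mldk bdd if delta-plt blowup exist} to conclude. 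The point is that HLQ-ACC is being applied to the \emph{klt} pair $(X,(1-\varepsilon_0)D)$, not to the strictly lc pair $(X,D)$, and this is exactly what makes the citation go through without having to bound the family of complements themselves.
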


\begin{proof}
By assumption, $\Coef(D)\subseteq \frac{1}{N}\bN\cap [0,1]$. Thus by Lemma \ref{lem:every kc is lc place} there also exists some $\varepsilon_0>0$ depending only on $N$ and $n=\dim X$ such that every Koll\'ar component $E$ over $x\in (X,(1-\varepsilon_0)D)$ is automatically an lc place of $(X,D)$ and in particular we have
\[
A_{X,(1-\varepsilon_0)D}(E)=A_X(E)-(1-\varepsilon_0)\ord_E(D)=A_X(E)-(1-\varepsilon_0)A_X(E)=\varepsilon_0 A_X(E).
\]
Therefore, to prove the lemma, it suffices to show that $\mldk(x,X,(1-\varepsilon_0)D)$ is bounded from above by some constant that only depends on $N$ and $B\subseteq \cX\to B$. 

Since the volume function is constructible \cite{Xu-quasi-monomial}*{Theorem 1.3}, there exists some $\varepsilon>0$ depending only on the family $B\subseteq \cX\to B$ such that $\hvol(x,X)\ge \varepsilon$. By Lemma \ref{lem:vol inequality}, we then deduce that $\hvol(x,X,(1-\varepsilon_0)D)\ge \varepsilon_0^n \hvol(x,X)\ge \varepsilon_0^n \varepsilon$. By \cite{HLQ-ACC}*{Theorem 1.7} and Lemma \ref{lem:mldk bdd if delta-plt blowup exist}, this further implies that there exists some $A=A(n,\varepsilon_0,\varepsilon)>0$ such that $\mldk(x,X,(1-\varepsilon_0)D)\le A$. Tracing back the proof, we see that the constant $A$ only depends on $N$ and the family $B\subseteq \cX\to B$. 
\end{proof}

\subsection{Codimension two case}

We next prove Conjecture \ref{conj:bdd for mldK} in codimension $2$ when $\bar{I}\subseteq \bQ$. This result is also needed when we prove Conjecture \ref{conj:bdd for mldK, vol>epsilon} for threefold singularities.

After localizing at the codimension two point, we immediately reduce to the surface case, albeit over a non-algebraically closed field. Thus, throughout this subsection, all the objects (singularities, divisors, etc.) we consider are defined over a field $\bk$ of characteristic $0$ that is not necessarily algebraically closed. The main technical result is the following:

\begin{prop} \label{prop:mldk bdd, dim=2, non-closed field}
Let $N\in\bN^*$ and let $x\in X$ be a smooth surface germ. Then there exists some constant $A_1>0$ depending only on $N$ such that for any $G\subseteq \Aut(x\in X)$ and any $G$-invariant $N$-complement $D$ of $x\in X$, there exists a $G$-invariant divisor $E$ $($defined over $\bk)$ over $x\in X$ with $A_{X,D}(E)=0$ and $A_X(E)\le A_1$.
\end{prop}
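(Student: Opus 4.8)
The plan is to reduce the statement to pure surface geometry and then feed in the classification of log canonical surface pairs. Since $X$ is a smooth surface germ we have $K_X\sim 0$, so an $N$-complement is just $D=\tfrac1N\operatorname{div} f$ for some $f\in\cO_{X,x}$ with $(X,D)$ lc and $\{x\}$ an lc center; in particular $\mld(x,X,D)=0$, $\{x\}$ is a (necessarily minimal) lc center, and there is at least one divisorial valuation centered at $x$ that is an lc place of $(X,D)$ (minimal lc centers are centers of divisorial lc places). The two things to prove are: (i) some lc place $E$ over $x$ has $A_X(E)\le A_1$ with $A_1=A_1(N)$; and (ii) $E$ can be chosen canonically in terms of $(X,D)$ alone, so that it is automatically preserved by every $g\in\Aut(x\in X)$ with $g_*D=D$, and defined over $\bk$.

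For (ii) I would take $E$ to be the lc place of $(X,D)$ over $x$ with the smallest log discrepancy and argue that it is unique. On a smooth surface the exceptional divisorial valuations centered at $x$ are exactly those obtained by finite sequences of point blowups, and they form a tree rooted at $x$ along whose edges $A_X(\,\cdot\,)$ strictly increases (of two adjacent divisors the one created later has the larger log discrepancy). The lc places of $(X,D)$ with center the minimal lc center $\{x\}$ form a connected subcomplex of this tree — on a dlt modification $Y\to X$ extracting all lc places, the components of $\lfloor B_Y\rfloor$ over $x$ have connected union — hence a connected subtree; and a connected subtree of a rooted tree has a unique vertex of minimal depth, which is the same as minimal $A_X$. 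So $E$ is unique, therefore fixed by any automorphism of the germ that fixes $D$, in particular $G$-invariant; and since $(X,D)$ is defined over $\bk$, uniqueness forces $E$ to be Galois-stable, i.e. defined over $\bk$.

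For (i), the bound on $A_X(E)$. If $\mult_x D\ge 2$ then $\mult_x D=2$, $E$ is the blowup of $x$ and $A_X(E)=2$, so assume $\mult_x D<2$. Writing $E=E_m$ as the last exceptional divisor in a tower $X_m\to\cdots\to X_0=X$ of point blowups at the successive centers $p_i$ of $E$, minimality of $E$ means none of $E_1,\dots,E_{m-1}$ is an lc place, so the crepant coefficients obey $c_i\le 1-\tfrac1N$ for $i<m$ and $c_m=1$, with $c_{i+1}=c_i-1+\mult_{p_i}(\text{strict transforms of the rest})$. Combining this recursion with $\mult_x(\operatorname{div} f)=N\cdot\mult_x D\le 2N$, the fact that multiplicities along a branch of a plane-curve resolution are non-increasing, and that every quantity in sight lies in $\tfrac1N\bZ$, pins $m$ and the intermediate multiplicities — hence $A_X(E)$ — to bounds depending only on $N$. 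More conceptually: the germs $(X,D)$ allowed here (smooth surface, $\{x\}$ an lc center, $\Coef(D)\subseteq\tfrac1N\bZ\cap[0,1]$) form, up to formal isomorphism over $\bbk$, a finite list depending only on $N$ by the classification of log canonical surface pairs with coefficients in a fixed finite set, and $A_X(E)$ is a formal-isomorphism invariant, so it is bounded by some $A_1(N)$.

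The step I expect to be the real obstacle is (i): converting the coefficient restriction $\Coef(D)\subseteq\tfrac1N\bZ$ into an honest bound on the resolution graph of $(X,D)$ (equivalently, on the multiplicity sequences of $\operatorname{div} f$ that are compatible with $\{x\}$ being an lc center). This is precisely where the argument uses that we are on a surface — no analogous classification is available for $3$-folds, which is why the paper must work so much harder there — while the $G$-invariance in (ii) is essentially formal once uniqueness of the minimal lc place is in hand.
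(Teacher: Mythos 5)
Your overall plan --- base change to $\bbk$, bound $A_X(E)$ for \emph{some} divisorial lc place via boundedness of the germs $(X,D)$, then argue the $A_X$-minimal lc place is canonically determined by $(X,D)$ and hence automatically $G$-invariant and $\bk$-rational --- is the same as the paper's, which combines Lemma \ref{lem:mldk bdd, bdd germ} over $\bbk$ with the uniqueness statement of Lemma \ref{lem:mld minimizer unique dim=2}. Your step (i) is essentially fine (the ``conceptual'' version is exactly the paper's Lemma \ref{lem:mldk bdd, bdd germ}; the combinatorial sketch would need more work but is not the real issue).

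Step (ii) has a genuine gap. You assert that the divisorial lc places of $(X,D)$ centered at $x$ form a connected subtree of the rooted tree of infinitely near points, and justify this by Koll\'ar--Shokurov connectedness on a dlt modification. But the adjacency graph on a resolution and the parent/child tree of infinitely near points have the same vertex set and \emph{different} edge sets, so connectedness in one does not imply connectedness in the other; and in fact the claim is false. Take $X=\bA^2$ and let $D=\tfrac12 D_0$ where $D_0$ is the (reduced) curve $x(y-x)(y^2-x^3)=0$; this is a $2$-complement of $0\in X$ and $(X,D)$ is lc. Blow up the origin to get $E_1$: then $A_{X,D}(E_1)=0$, an lc place. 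Blow up the point $p_1\in E_1$ where the (smooth, tangent) strict transform of the cuspidal branch meets $E_1$, to get $E_2$: one computes $A_{X,D}(E_2)=\tfrac12>0$, so $E_2$ is \emph{not} an lc place. Blow up $p_2=E_1'\cap E_2$ (which is also on the strict transform of the cusp) to get $E_3$: then $A_{X,D}(E_3)=0$, an lc place. In the infinitely near tree $E_2$ lies strictly between $E_1$ and $E_3$, so $\{E_1,E_3\}$ is not a connected subtree, even though $E_1$ and $E_3$ intersect on $X_3$ and so are adjacent in the resolution dual graph. Thus your premise fails, and ``a connected subtree has a unique vertex of minimal depth'' does not apply.

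What is true --- and what the paper's Lemma \ref{lem:mld minimizer unique dim=2} actually proves --- is a different and weaker statement: there is a unique lc place $E$ over $x$ that is \emph{dominated by} every other divisorial lc place, i.e.\ whose canonical blowup tower is an initial segment of the tower of every other lc place. This gives uniqueness of the $A_X$-minimizer without any claim of subtree connectivity. The proof is an induction on the minimal blowup depth $\ell(x,X,D)$: if the ordinary blowup $E_1$ is an lc place we are done; otherwise one passes to $X_1$ with the crepant pullback boundary and applies Koll\'ar--Shokurov connectedness \emph{at this stage} (where the coefficient of $E_1$ is $<1$) to conclude the non-klt locus meets $E_1$ in a single point $x_1$, then recurses on $x_1\in(X_1,D_1)$. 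A single application of connectedness to the full dlt modification, as in your argument, does not recover this step-by-step funnelling.
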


Note that when $\bk=\bar{\bk}$ and $G$ is trivial, this is already given by Lemma \ref{lem:mldk bdd, bdd germ}, so it remains to check that the corresponding Koll\'ar component can be chosen so that it is $G$-invariant and descends to the base field $\bk$. The key is the following uniqueness result.
 
\begin{lem} \label{lem:mld minimizer unique dim=2}
Let $x\in X$ be a smooth surface germ over $\bbk$ and let $D$ be a $\bQ$-complement of $x\in X$. Then the log discrepancy $A_X(E)$ of divisorial lc places of $x\in (X,D)$ is minimized by a unique divisor $E$. In particular, if $x\in (X,D)$ is defined over $\bk$ then $E$ is also defined over $\bk$ and it is invariant under $\Aut(x\in (X,D))$.
\end{lem}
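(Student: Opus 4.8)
The plan is to reduce the problem to a minimization over finitely many divisors appearing on a single model, and then to read off uniqueness from the combinatorics of a resolution.

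\emph{Step 1: reduction to a finite set.} Fix a log resolution $\pi\colon Y\to X$ of $(X,D)$ and write $K_Y+D_Y=\pi^*(K_X+D)$. Since $X$ is a smooth surface, $\Supp(D_Y)$ is SNC and every divisor over $X$ is obtained from $Y$ by successively blowing up closed points. If one blows up a point $p$ on a model dominating $Y$, the new exceptional divisor $E'$ has $A_{X,D}(E')=2-\mult_p(D_{\mathrm{cur}})$ for the current crepant transform $D_{\mathrm{cur}}$; by SNC this vanishes only when $p$ is the transverse intersection of two components of $\lfloor D_{\mathrm{cur}}\rfloor$, i.e. of two lc places, and in that case $A_X(E')=A_X(F)+A_X(G)$ where $F,G$ are the two curves through $p$. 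As $A_X(F),A_X(G)\ge 1$ (and $\ge 2$ for those among $F,G$ exceptional over $x$), the value $A_X(E')$ is strictly larger than $A_X$ of at least one lc place already present. Iterating, every divisorial lc place of $x\in(X,D)$ either is one of the finitely many exceptional prime divisors $E_1,\dots,E_s$ of $\pi$ with $A_{X,D}(E_i)=0$, or has $A_X>A_X(E_i)$ for some such $E_i$. Hence the infimum of $A_X$ over divisorial lc places centered at $x$ is attained inside the finite set $\{E_i\}$, and it remains to prove that the minimizer there is unique.

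\emph{Step 2: uniqueness.} Take $\pi$ to be the \emph{minimal} log resolution, so that each exceptional curve is a smooth rational curve. Because $K_Y+D_Y=\pi^*(K_X+D)$, the Koll\'ar--Shokurov connectedness theorem shows $\Nklt(Y,D_Y)$ is connected near $\pi^{-1}(x)$; combined with adjunction on each exceptional $\bP^1$ — which gives $\sum_{E_l\sim E_k}\big(1-A_{X,D}(E_l)\big)+(\text{contribution of the }D\text{-components})=2$, so that at most two neighbours of a given vertex can again be lc places — this shows the $E_i$ with $A_{X,D}(E_i)=0$ are arranged along a single chain $E_0-E_1-\dots-E_r$ in the dual graph $\Gamma$ of $\pi^{-1}(x)$. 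Analysing the complementary identity $\sum_{E_l\sim E_k}A_X(E_l)=(-E_k^2)\,A_X(E_k)+\deg_\Gamma(E_k)-2$ along this chain, and using that the exceptional locus of any resolution of a smooth surface germ must contain a $(-1)$-curve (so it cannot be a chain of $(-2)$-curves), one checks that $k\mapsto A_X(E_k)$ is strictly convex and cannot be constant at its minimum; hence its minimum is attained at a single vertex, which by Step 1 is the unique divisor minimizing $A_X$ among all divisorial lc places of $x\in(X,D)$. I expect this last point — ruling out a ``flat'' minimum of $A_X$ along the chain — to be the main obstacle, and it is exactly where the structure of surface resolutions (the shape of $\Gamma$ and the self-intersection numbers) is really used.

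\emph{Step 3: descent and invariance.} Since the minimizing divisor $E$ is canonically attached to $x\in(X,D)$, any automorphism of the germ $x\in(X,D)$, and any element of $\Gal(\bbk/\bk)$ when $(X,D)$ is defined over $\bk$, carries $E$ to a divisorial lc place with the same center and the same value of $A_X$, hence fixes $E$; thus $E$ is $\Aut(x\in(X,D))$-invariant and defined over $\bk$. This is precisely the input needed for Proposition \ref{prop:mldk bdd, dim=2, non-closed field}: given a $G$-invariant $N$-complement $D$, one applies the lemma over $\bbk$ to produce the unique — hence $G$-invariant and $\bk$-rational — divisor $E$ with $A_{X,D}(E)=0$, and then controls $A_X(E)$ by the bound already available over the algebraically closed base (Lemma \ref{lem:mldk bdd, bdd germ}, via the relation $A_{X,(1-\varepsilon_0)D}(E)=\varepsilon_0\,A_X(E)$).
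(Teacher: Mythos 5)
Your Step 1 is correct and in the same spirit as the paper's observation that, over a smooth surface germ, any divisor is reached by a canonical chain of point blowups along which $A_X$ strictly increases. Your Step 3 is also fine once uniqueness is established. The problem is Step 2, where — as you yourself flag — there is a genuine gap.

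The "strict convexity along the chain" is not proved, and the identity you write down does not easily yield it. The relation $\sum_{E_l\sim E_k}A_X(E_l)=(-E_k^2)\,A_X(E_k)+\deg_\Gamma(E_k)-2$ is correct, but unwinding it at an interior vertex $E_k$ of the chain gives
\[
A_X(E_{k-1})+A_X(E_{k+1}) = (-E_k^2)\,A_X(E_k)+ m_k - \sum_{\text{off-chain } E_l\sim E_k} A_X(E_l),
\]
where $m_k$ is the number of off-chain neighbours. The off-chain correction is $\le 0$ (and strictly negative if $E_k$ meets any exceptional curve with coefficient $<1$), and $-E_k^2$ can be $1$ (the minimal log resolution of a pair with smooth ambient typically does contain $(-1)$-curves, unlike the minimal resolution of a singular germ). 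Both effects push towards concavity, not convexity. So the "flat minimum" you worry about is not excluded by the combinatorics you invoke, and the appeal to "the exceptional locus must contain a $(-1)$-curve" actually works against you. You would need an independent argument here, and the chain-plus-adjunction framework does not seem to furnish one.

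The paper proves a stronger and cleaner statement, by a different route: it shows there is a divisorial lc place $E$ over $x$ that is \emph{dominated by every other} divisorial lc place, i.e.\ $E$ appears in the canonical blowup sequence of every other such divisor. Since $A_X$ strictly increases along the blowup sequence, this gives uniqueness of the minimiser for free. The induction is on the minimal number $\ell(x,X,D)$ of blowups needed to extract an lc place. If the exceptional divisor $E_1$ of $\Bl_x X$ is itself an lc place, it is dominated by everything. If not, one pulls back crepantly to $(X_1,D_1)$; since $E_1$ is not an lc place, $\mathrm{Nklt}(X_1,D_1)\cap E_1$ is a finite set, which by Koll\'ar--Shokurov connectedness is a \emph{single} point $x_1$. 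Every further divisorial lc place must therefore be centred at $x_1$ on $X_1$, so every blowup sequence makes the same second move, and one concludes by induction applied to $x_1\in(X_1,D_1)$. Connectedness is thus applied step by step to pin down the next centre, rather than globally to the dual graph of a resolution; this bypasses all the combinatorics of $(-1)$-curves and self-intersections that your Step 2 would otherwise have to control. I would recommend replacing Step 2 with this inductive argument.
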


\begin{proof}
Every divisor $E$ over $x\in X$ can be extracted via successive blowups of its centers (in particular, the sequence of blowups is canonical):
\begin{equation} \label{eq:canonical seq extract E}
    E\subseteq X_m\to \cdots\to X_1:=\Bl_x X\to X_0:=X.
\end{equation}
For any such sequence, if $E_i$ is the exceptional divisor of $X_i\to X_{i-1}$, then $C_{X_i}(E_{i+1})\in E_i$ and hence $A_X(E_{i+1})>A_X(E_i)$ for all $i$. We denote by $\ell(E):=m$ the number of blowups in this sequence and let
\[
\ell(x,X,D):=\min\{\ell(E)\,|\,C_X(E)=\{x\}, E \mbox{ is an lc place of } (X,D)\}.
\]

We say that a divisor $E$ over $x\in X$ dominates another $E'$, if $E'$ appears as an exceptional divisor in the canonical sequence \eqref{eq:canonical seq extract E} of blowups associated to $E$. By the above discussion, $A_X(E)>A_X(E')$ if $E$ dominates $E'$ and $E\neq E'$. Thus it suffices to show that there exists a divisor over $x\in (X,D)$ that is an lc place of $(X,D)$ and that at the same time is dominated by all other divisorial lc place of $(X,D)$.

We prove this stronger statement by induction on $\ell(x,X,D)$. Let $E$ be a divisorial lc place of $(X,D)$ that computes $\ell(x,X,D)$, i.e. $\ell(E)=\ell(x,X,D)$. Let $E_1$ be the exceptional divisor of the ordinary blowup $\pi_1\colon X_1=\Bl_x X\to X$. When $\ell(E)=1$, we have $E=E_1$ and it is dominated by all divisorial lc place of $(X,D)$, since all blowup sequence has to begin with the ordinary blowup. Thus the statement holds when $\ell(x,X,D)=1$. If $\ell(E)>1$, then $E_1$ is not an lc place of $(X,D)$. Since $x$ is an lc center of $(X,D)$, we have $\mult_x D>1$ (otherwise $(X,D)$ is canonical by \cite{KM98}*{Theorem 4.5} and the lc centers would have dimension one) and $\pi_1^*(K_X+D)=K_{X_1}+D_1$, where $D_1=\pi_{1*}^{-1}D+(\mult_x D-1)E_1\ge 0$. By assumption $E_1$ is not an lc place of $(X_1,D_1)$, thus the lc center $W$ of $(X_1,D_1)$ intersect $E_1$ at a finite number of points. By Koll\'ar-Shokurov connectedness \cite{KM98}*{Theorem 5.48}, we deduce that $W\cap E=\{x_1\}$ consists of a single point (as a set). It follows that $\ell(x,X,D)=\ell(x_1,X_1,D_1)+1$ and all lc places of $x\in (X,D)$ dominates the ordinary blowup of $x_1$ on $X_1$. The result now follows by induction.
\end{proof}

\begin{proof}[Proof of Proposition \ref{prop:mldk bdd, dim=2, non-closed field}]
Let $(X_{\bbk},D_{\bbk}):=(X,D)\times_\bk \bbk$. By Lemma \ref{lem:mldk bdd, bdd germ}, there exists some constant $A_1>0$ depending only on $N$ and some divisorial lc place $F$ of $x\in (X_{\bbk},D_{\bbk})$ such that $A_{X_{\bbk}}(F)\le A_1$. By Lemma \ref{lem:mld minimizer unique dim=2}, the log discrepancy of divisorial lc places of $x\in (X_{\bbk},D_{\bbk})$ is minimized by some $G$-invariant divisor $E$ that's defined over $\bk$. In particular, $A_X(E)\le A_1$. 
This proves the statement of the proposition. 
\end{proof}

We are now ready to prove a version of Conjecture \ref{conj:bdd for mldK} at codimension two points.

\begin{prop} \label{prop:bdd mldk, codim=2}
Let $I=\bar{I}\subseteq [0,1]\cap \bQ$ be a DCC set. Then there exists some constant $A$ depending only on $I$ such that 
\[
\mldk(\eta,X,\Delta)\le A
\]
for any klt pair $(X,\Delta)$ with $\Coef(\Delta)\subseteq I$ and any codimension two point $\eta\in X$.
\end{prop}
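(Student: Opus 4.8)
The plan is to localize at $\eta$, reduce to a surface singularity over a residue field that may no longer be algebraically closed, pass to a smooth birational model carrying a bounded complement, apply Proposition~\ref{prop:mldk bdd, dim=2, non-closed field}, and then upgrade the divisor it produces to an honest Koll\'ar component over $(X,\Delta)$. First I would replace $X$ by $\Spec\cO_{X,\eta}$ and the ground field by $\bk:=\kappa(\eta)$, still of characteristic zero; this reduces the statement to bounding $\mldk(x,X,\Delta)$ for a two-dimensional klt pair $(X,\Delta)$ over $\bk$ with $\Coef(\Delta)\subseteq I$, where $x$ is the closed point (Koll\'ar components over $\eta$ correspond to Koll\'ar components over $x$ on the localization, with the same log discrepancies). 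By Lemma~\ref{lem:strict N-complement} with $n=2$ there is an integer $N=N(I)$ and an $N$-complement $D$ of $x\in(X,\Delta)$, which I would choose so that $x$ is the minimal lc center of $(X,\Delta+D)$. Let $\mu\colon Y\to X$ be the minimal resolution, so $Y$ is a smooth surface over $\bk$, and write $K_Y+\Gamma_Y=\mu^*(K_X+\Delta+D)$. Because $\mu$ is the minimal resolution of a klt surface one has $K_Y\le\mu^*K_X$, and a short computation with the negative definite intersection matrix of the exceptional curves shows $\Gamma_Y\ge 0$ and $N(K_Y+\Gamma_Y)\sim 0$; after a harmless modification over the reduced part of $\Gamma_Y$ (to create a $0$-dimensional lc center) one arranges that $\Gamma_Y$ is an $N$-complement of some point $y\in\mu^{-1}(x)$ of the smooth surface $Y$.

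Next I would apply Proposition~\ref{prop:mldk bdd, dim=2, non-closed field} to $y\in Y$ with the $N$-complement $\Gamma_Y$ (base changing to $\bbk$ and carrying the $\Gal(\bbk/\bk)$-action as the group $G$, so that the output is $\bk$-rational): this yields a constant $A_1=A_1(N)=A_1(I)$ and a $\bk$-rational divisor $\tilde E$ over $y$ with $A_{Y,\Gamma_Y}(\tilde E)=0$ and $A_Y(\tilde E)\le A_1$. Since $\Gamma_Y=\mu^*(\Delta+D)$ is the crepant pull-back, $\tilde E$ is an lc place of $(X,\Delta+D)$, and comparing log discrepancies gives $A_{X,\Delta}(\tilde E)=\ord_{\tilde E}(D)\le\ord_{\tilde E}(\Delta+D)=A_Y(\tilde E)\le A_1$. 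Thus I have a $\bk$-rational divisor over $x\in X$ which is an lc place of the bounded complement $(X,\Delta+D)$ and has log discrepancy at most $A_1$ with respect to $(X,\Delta)$.

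It remains to convert $\tilde E$ into a Koll\'ar component over $x\in(X,\Delta)$ without losing the bound. Extract $\tilde E$ by Lemma~\ref{lem:BCHM extract divisor} as $\pi\colon Z\to X$, with $\tilde E$ the only exceptional divisor and $-\tilde E$ $\pi$-ample; then $(Z,\pi^{-1}_*(\Delta+D)+\tilde E)=\pi^*(K_X+\Delta+D)$ is lc, hence so is $(Z,\pi^{-1}_*\Delta+\tilde E)$. If this pair is plt then $\tilde E$ is already a Koll\'ar component over $x\in(X,\Delta)$ and $\mldk(x,X,\Delta)\le A_1$. Otherwise, let $\tilde z$ be a minimal lc center of $(Z,\pi^{-1}_*\Delta+\tilde E)$; by Lemma~\ref{lem:kc exist on every dlt model} there is a Koll\'ar component $F$ over $\tilde z$ that is an lc place of $(Z,\pi^{-1}_*\Delta+\tilde E)$, by Lemma~\ref{lem:closure of plt blowup over min lc center} it is of plt type over $(Z,\pi^{-1}_*\Delta)$, and by Lemma~\ref{lem:kc descend along lc blowup} (the prime blowup of $\tilde E$ being an lc blowup) it descends to a Koll\'ar component over $x\in(X,\Delta)$. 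From $\pi^*(K_X+\Delta)=K_Z+\pi^{-1}_*\Delta+(1-A_{X,\Delta}(\tilde E))\tilde E$ one computes $A_{X,\Delta}(F)=A_{X,\Delta}(\tilde E)\cdot\ord_F(\tilde E)\le A_1\cdot\ord_F(\tilde E)$, so it suffices to bound $\ord_F(\tilde E)$, i.e.\ the log discrepancy $A_{Z,\pi^{-1}_*\Delta}(F)$ of a plt-type lc place of a surface pair whose remaining boundary is the \emph{reduced} divisor $\tilde E$ through a closed lc center; passing to a quasi-\'etale smooth cover of $Z$ this reduces to the blow-up of a node of a reduced divisor on a smooth surface, where the quantity is a universal constant. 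Carrying the whole argument out $G$-equivariantly, the resulting Koll\'ar component is $\bk$-rational and has log discrepancy bounded by a constant $A=A(I)$, as required.

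The main obstacle is the non-algebraically closed base field: one cannot simply pass to $\bbk$ and quote Lemma~\ref{lem:mldk bdd, bdd germ}, since $\mldk$ may strictly drop under field extension, so the bounded Koll\'ar component has to be produced over $\bk$ itself. This is exactly what the uniqueness of the minimizing divisorial lc place on a smooth surface germ (Lemma~\ref{lem:mld minimizer unique dim=2}, behind Proposition~\ref{prop:mldk bdd, dim=2, non-closed field}) buys, but it forces every step — the minimal resolution, the extraction, and the upgrade to a Koll\'ar component — to be performed $G$-equivariantly while keeping log discrepancies uniformly controlled; the delicate point is the last one, bounding $\ord_F(\tilde E)$ in the upgrade step, which is where the two-dimensional classification (here appearing in the mild form of reduced complements on smooth surfaces) enters.
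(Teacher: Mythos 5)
Your overall strategy — localize at $\eta$ to get a surface over $\bk=\kappa(\eta)$, produce a bounded complement via Lemma~\ref{lem:strict N-complement}, pass to a smooth model, invoke Proposition~\ref{prop:mldk bdd, dim=2, non-closed field}, and then descend the lc place to a Koll\'ar component over $(X,\Delta)$ using Lemmas~\ref{lem:closure of plt blowup over min lc center} and~\ref{lem:kc descend along lc blowup} — mirrors the paper's, and the log-discrepancy bookkeeping ($A_{X,\Delta}(F)=A_{X,\Delta}(\tilde E)\cdot\ord_F(\tilde E)$, etc.) is correct. But there are two concrete gaps.

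First, the passage to a smooth model: you take the minimal resolution $\mu\colon Y\to X$ and assert that ``after a harmless modification over the reduced part of $\Gamma_Y$'' one can arrange that $\Gamma_Y$ is an $N$-complement of some \emph{closed point} $y\in\mu^{-1}(x)$. This is not harmless and you do not explain it. It can certainly happen that the minimal lc center of $(Y,\Gamma_Y)$ is a $\mu$-exceptional curve $C$ with $(Y,\Gamma_Y)$ plt along it, in which case there is no closed lc center of $(Y,\Gamma_Y)$ over $x$ to feed into Proposition~\ref{prop:mldk bdd, dim=2, non-closed field}. Blowing up a general point of $C$ does not create a $0$-dimensional lc center (the new exceptional has log discrepancy $1$), and adding boundary to create one would change the complement, and hence the control on $N$. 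The paper avoids this entirely by replacing the minimal resolution with the \emph{index one cover} $(\tx\in\tX)\to(x\in X)$: the map is finite, the preimage of $x$ is a single closed point $\tx$, and Lemma~\ref{lem:strict comp quasi-etale pullback} gives for free that $\tx$ is the closed lc center of the pulled-back $N$-complement on the smooth germ $\tX$. If you insist on the minimal resolution, you need an argument for the curve-center case (e.g.\ that $C$ itself, or an exceptional over $C$, is already of plt type over $(X,\Delta)$ with bounded log discrepancy), and you have not supplied one.

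Second, the final bound on $\ord_F(\tilde E)=A_{Z,\pi^{-1}_*\Delta}(F)$: you say this ``reduces to the blow-up of a node of a reduced divisor on a smooth surface, where the quantity is a universal constant.'' That is only correct when $\pi^{-1}_*\Delta=0$ near the minimal lc center. When $\Delta\neq0$, the boundary on the smooth cover is $\tDelta+\tilde E$ with $\tDelta$ having coefficients in $\tfrac1N\bZ$, and the Koll\'ar component that is an lc place of this reduced complement is typically not the exceptional of the ordinary blowup; the paper's analysis of this subcase uses a weighted blowup whose weights are read off the Newton polygon of $\tDelta+\tilde E$, yielding a bound of the form $A\le N$ (so depending on $I$, not universal). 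Your parenthetical remark at the very end acknowledges that this is where ``the two-dimensional classification'' enters, but the step as written is an assertion, not an argument, and the claimed universality is false. If you incorporate the Newton-polygon analysis here (or something equivalent) and replace the minimal resolution with the index one cover, the proof goes through and essentially coincides with the paper's.
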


\begin{proof}
After localizing at $\eta\in X$, we may assume that $\eta$ is a closed point $x$ and $x\in (X,\Delta)$ is a surface klt singularity (over a field $\bk$ that's not necessarily algebraically closed). By Lemma \ref{lem:strict N-complement}, there exists an $N$-complement $D$ of $x\in (X,\Delta)$ for some integer $N$ that only depends on the coefficient set $I$. Let $\pi\colon (\tx\in \tX)\to (x\in X)$ be the universal cover. Let $\tD=\pi^*D$ and $\tDelta=\pi^*\Delta$. Then $\tx\in \tX$ is a smooth surface germ and $\tD+\tDelta$ is an $N$-complement of $\tx\in\tX$ by Lemma \ref{lem:strict comp quasi-etale pullback}. 

First consider a special case, i.e. when $D$ is reduced. If $(X,D)$ is not plt, then $x$ is an lc center of $(X,D)$ and $\Delta=0$. In this case $\tD$ is a $1$-complement of $\tx\in \tX$, thus it is nodal and the exceptional divisor $\tE$ of the ordinary blowup of $\tx$ is an lc place of $(\tX,\tD)$. Clearly $\tE$ descends to a Koll\'ar component $E$ over $x\in X$ and $A_X(E)\le A_{\tX}(\tE)\le 2$ by \cite{KM98}*{Proof of Proposition 5.20}. If $(X,D)$ is plt, then $\tD$ is smooth, $X=\tX/\mu_m$ for some $m\in\bN$, and we can choose coordinates $u,v$ at $\tx\in \tX$ that diagonalize the $\mu_m$-action and with $\tD=(u=0)$. Since $\tx$ is an lc center of $(\tX,\tDelta+\tD)$, by adjunction we see that $(\tD,\tDelta|_{\tD})$ is strictly lc, hence $\tDelta|_{\tD}=\tx$. Since $\tDelta+\tD$ is an $N$-complement of $\tx\in \tX$ and in particular $N\tDelta$ has integer coefficients, we deduced from $\tDelta|_{\tD}=\tx$ that the local defining equation of $N\tDelta$ has the form $v^N+u\phi(u,v)=0$ for some $\phi\in\bk[[u,v]]$. Note that $\phi\neq 0$ since $(\tX,\tDelta)$ is klt. 

We seek a weighted blowup up with $\wt(u)=a$, $\wt(v)=b$ that provides a Koll\'ar component over $\tx\in (\tX,\tDelta)$ which is also an lc place of $(\tX,\tDelta+\tD)$. For this we look at the Newton polygon (denoted as $Q$) of $v^N+u\phi(u,v)$, i.e. the convex hull of the union of $(p,q)+\bR_{\ge 0}^2$ where $(p,q)\in \bN^2$ varies among pairs for which $u^p v^q$ has non-zero coefficients in $v^N+u\phi(u,v)$. Certainly $(0,N)\in Q$. Let $(r,s)\in Q$ be the vertex that lies on the same edge as $(0,N)$. We choose the weights $a,b$ so that $\gcd(a,b)=1$, $ar+bs=bN$. In particular, $a\le N-s\le N$. It is not hard to check that the corresponding exceptional divisor $\tE$ is a a Koll\'ar component over $\tx\in (\tX,\tDelta)$ (essentially because it induces a degeneration of $\tx\in (\tX,\tDelta)$ to the klt singularity $\tx\in (\tX,\frac{1}{N}(v^N+u^r v^s=0))$, c.f. Lemma \ref{lem:wt blowup kc criterion}). Since the coordinates $u,v$ diagonalize the $\mu_m$-action, the divisor $\tE$ is $\mu_m$-invariant, hence it descends to a Koll\'ar component over $x\in (X,\Delta)$ by \cite{LX-stability-kc}*{Lemma 2.13}. By \cite{KM98}*{Proposition 5.20} and a direct calculation, we then have $A_{X,\Delta}(E)\le A_{\tX,\tDelta}(\tE)=a\le N$. As $N$ only depends on $I$, we conclude the proof in the case when the $\bQ$-complement $D$ is reduced.

In the general case, we apply Proposition \ref{prop:mldk bdd, dim=2, non-closed field} to get a constant $A_1$ that only depends on $N$ (thus it depends on $I$ only), and an $\Aut(\tx\in \tX)$-invariant divisor $\tE$ over $\tx\in\tX$ such that $A_{\tX,\tDelta+\tD}(\tE)=0$ and $A_{\tX}(\tE)\le A_1$. By \cite{KM98}*{Proposition 5.20}, the divisor $\tE$ induces a divisor $E$ over $x\in X$ that is an lc place of $(X,\Delta+D)$ and still satisfies $A_X(E)\le A_1$. In particular, $A_{X,\Delta}(E)\le A_X(E)\le A_1$ and $E$ is of lc type over $(X,\Delta)$. If $E$ is already a Koll\'ar component then we are done. Otherwise, if $Y\to X$ is the prime blowup of $E$ then the minimal lc centers of $(Y,\Delta_Y+E)$ are $0$-dimensional and supported on $\lfloor \Diff_E(\Delta_Y) \rfloor$. But as $-(K_Y+\Delta_Y+E)|_E=-(K_E+\Diff_E(\Delta_Y))$ is ample, we see that $E\cong \bP^1$ and $\deg \lfloor \Diff_E(\Delta_Y) \rfloor =1$, hence the minimal lc center of $(Y,\Delta_Y+E)$ is a $\bk$-rational closed point $y\in Y$. Note that $E$ is a reduced $N$-complement of $(Y,\Delta_Y)$ thus by the special case treated above we may choose a Koll\'ar component $E$ over $y\in (Y,\Delta_Y)$ such that $A_{Y,\Delta_Y}(E)\le A_0$ for some constant $A_0$ that only depends on $I$. By Lemma \ref{lem:kc descend along lc blowup}, we know that $E$ is also a Koll\'ar component over $x\in (X,\Delta)$. As in the proof of Lemma \ref{lem:reduce to 1-comp}, we also have $A_{X,\Delta}(E)\le A_0 A_1$. Since both constants $A_0, A_1$ only depend on $I$, we are done.
\end{proof}

\begin{cor} \label{cor:bdd mldk, N-comp, codim=2}
Conjecture \ref{conj:bdd mldk, Q-Gor, N-comp} holds at codimension $2$ points.
\end{cor}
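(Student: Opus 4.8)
The plan is to localize at the given codimension-two point and then invoke, almost verbatim, the extraction performed in the proof of Proposition \ref{prop:bdd mldk, codim=2}, specialized to the case $\Delta=0$ and to the prescribed $N$-complement $D$.

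After replacing $X$ by its localization at the codimension-two lc center of $(X,D)$, we are reduced to the following: $x\in X$ is a klt surface singularity over a (not necessarily algebraically closed) field $\bk$ of characteristic zero, the closed point $x$ is an lc center of $(X,D)$, and one must produce a Koll\'ar component $E$ over $x\in X$ with $A_{X,D}(E)=0$ and $A_X(E)$ bounded solely in terms of $N$. In this dimension the hypothesis $\hvol(\eta,X)\ge\varepsilon$ plays no role: every klt surface germ is a quotient singularity, and the bound obtained below depends only on $N$, so the resulting constant $A$ will not depend on $\varepsilon$ either.

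To produce $E$ I would follow the general case of the proof of Proposition \ref{prop:bdd mldk, codim=2}. Passing to the universal cover $\pi\colon(\tx\in\tX)\to(x\in X)$, the surface $\tX$ is smooth, and by Lemma \ref{lem:strict comp quasi-etale pullback} the divisor $\tD:=\pi^*D$ is an $N$-complement of $\tx\in\tX$ compatible with the $\bk$-structure and with the covering group of $\pi$. Proposition \ref{prop:mldk bdd, dim=2, non-closed field} then supplies a constant $A_1=A_1(N)$ and an invariant, $\bk$-rational divisor $\tE$ over $\tx\in\tX$ with $A_{\tX,\tD}(\tE)=0$ and $A_{\tX}(\tE)\le A_1$, which descends via \cite{KM98}*{Proposition 5.20} to a divisor $E$ over $x\in X$ that is of lc type, is an lc place of $(X,D)$, and satisfies $A_X(E)\le A_1$. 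If the prime blowup of $E$ is plt, then $E$ itself is the required Koll\'ar component. Otherwise the same dichotomy as in the closing paragraph of the proof of Proposition \ref{prop:bdd mldk, codim=2} applies: on the prime blowup $Y\to X$ of $E$ the pair $(Y,E)$ has a $\bk$-rational $0$-dimensional minimal lc center $y$ near which $E$ restricts to a reduced complement, so the reduced-complement case of that proposition furnishes a Koll\'ar component $F$ over $y\in Y$ whose log discrepancy is bounded by an absolute constant; by Lemma \ref{lem:kc descend along lc blowup}, $F$ is a Koll\'ar component over $x\in X$, still an lc place of $(X,D)$, and $A_X(F)$ is bounded in terms of $A_1$, exactly as in the proof of Lemma \ref{lem:reduce to 1-comp}. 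This proves the corollary with $A=A(N)$.

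There is no genuinely new difficulty: the substantive content is already contained in Proposition \ref{prop:mldk bdd, dim=2, non-closed field}, and ultimately in the uniqueness of the minimizing divisorial lc place on a smooth surface germ (Lemma \ref{lem:mld minimizer unique dim=2}), which is what lets the extracted divisor be taken invariant and $\bk$-rational so that descent along $\pi$ is legitimate. The only points demanding care are verifying that the divisor one produces is an lc place of the \emph{given} complement $D$ rather than an auxiliary one, and that passing through the blowup $Y\to X$ keeps every constant a function of $N$ alone.
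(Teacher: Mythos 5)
Your argument is correct, but it takes a longer route than the paper's. The paper derives the corollary in two lines by treating Proposition \ref{prop:bdd mldk, codim=2} as a black box: by Lemma \ref{lem:every kc is lc place} there is a rational $\varepsilon_0=\varepsilon_0(N)\in(0,1)$ such that every Koll\'ar component over $\eta\in(X,(1-\varepsilon_0)D)$ is automatically an lc place of $(X,D)$; applying Proposition \ref{prop:bdd mldk, codim=2} to the klt pair $(X,(1-\varepsilon_0)D)$, whose coefficient set is finite and determined by $N$, produces a Koll\'ar component $E$ over $\eta$ with $A_{X,(1-\varepsilon_0)D}(E)\le A(N)$; since then $A_{X,D}(E)=0$, one gets $A_X(E)=A_{X,(1-\varepsilon_0)D}(E)/\varepsilon_0$, and $E$ is already a Koll\'ar component over $X$ because $(1-\varepsilon_0)D\ge 0$. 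This is exactly the rescaling step from the second half of the proof of Proposition \ref{prop:reduce to N-complement}, with the volume hypothesis rendered irrelevant by Proposition \ref{prop:bdd mldk, codim=2}. Your proposal instead unpacks the proof of Proposition \ref{prop:bdd mldk, codim=2} and re-runs it in the special case $\Delta=0$ with the prescribed complement fed directly to Proposition \ref{prop:mldk bdd, dim=2, non-closed field} on the universal cover, then uses the reduced-complement case and Lemma \ref{lem:kc descend along lc blowup} when the extracted lc place fails to be a Koll\'ar component. Both arguments ultimately rest on the same machinery (Lemma \ref{lem:mld minimizer unique dim=2} and the descent lemmas), so yours is sound; the paper's is simply more economical because it reuses the already-proved proposition rather than duplicating its interior, at the small cost of introducing the auxiliary $\varepsilon_0$.
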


\begin{proof}
This is an immediate by Proposition \ref{prop:bdd mldk, codim=2} and the proof of Proposition \ref{prop:reduce to N-complement}.
\end{proof}

\subsection{Threefold singularities}

In this subsection, we will prove Conjecture \ref{conj:bdd for mldK, vol>epsilon} in dimension $3$. Note that it suffices to consider the case when $\eta\in X$ is a closed point, since we already prove Conjecture \ref{conj:bdd for mldK} at codimension two points. According to Proposition \ref{prop:reduce to N-complement} and Lemma \ref{lem:reduce to 1-comp}, the first step is to verify Conjecture \ref{conj:bdd mld for lc blowup} for $3$-dimensional klt singularities.

\begin{prop} \label{prop:mld of lc blowup bdd, dim=3}
Let $N\in\bN^*$ and let $\varepsilon>0$. Then there exists some $A_0=A_0(3,N,\varepsilon)$ such that for any $3$-dimensional klt singularity $x\in X$ with $\hvol(x,X)\ge \varepsilon$ and any $N$-complement $D$ of $x\in X$, there exists some divisor $E$ over $x\in X$ such that
\[
A_{X,D}(E)=0, \quad\text{and}\quad A_X(E)\le A_0.
\]
In other words, Conjecture \ref{conj:bdd mld for lc blowup} holds in dimension $3$.
\end{prop}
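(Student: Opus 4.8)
The plan is to reduce to a terminal threefold and then exploit the classification of three–dimensional terminal singularities \cite{Mori-terminal-classification}. First I would take a terminalization $\pi\colon Y\to X$ (which exists by \cite{BCHM}): then $Y$ is terminal, hence klt, $K_Y+\Gamma_Y=\pi^*K_X$ for an effective $\pi$‑exceptional $\Gamma_Y$, and every extracted divisor has log discrepancy $\le 1$ over $X$. Set $D_Y:=\pi^*(K_X+D)-K_Y$; then $(Y,D_Y)$ is lc, crepant to $(X,D)$, with $N(K_Y+D_Y)\sim 0$ near $\pi^{-1}(x)$. Since $N(K_X+D)$ is Cartier, all the numbers $A_{X,D}(E_i)$ attached to the extracted divisors lie in $\frac1N\bZ$, so $\Coef(D_Y)\subseteq\frac1N\bZ\cap[0,1]$ and $D_Y$ is an $N$‑complement of $Y$. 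By Lemma \ref{lem:vol under birational map} we get $\hvol(y,Y)\ge\hvol(x,X)\ge\varepsilon$ for all $y\in Y$, and because crepant pullback preserves lc places while $A_X(F)\le A_Y(F)$ for every divisor $F$ over $Y$, it suffices to produce a divisorial lc place $F$ of $(Y,D_Y)$ with center contained in $\pi^{-1}(x)$ and $A_Y(F)\le A_0(3,N,\varepsilon)$.

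Now let $W\subseteq\pi^{-1}(x)$ be an lc center of $(Y,D_Y)$ of smallest dimension; then $\pi(W)=\{x\}$, and I split into three cases. If $\dim W=2$, then $W$ is a coefficient–one component of $D_Y$, i.e. a prime divisor on $Y$, hence itself a divisorial lc place with $A_Y(W)=1$. If $\dim W=1$, I localize at the generic point $\xi$ of $W$: then $D_Y$ becomes an $N$‑complement of the surface germ obtained by localizing $Y$ at $\xi$ (over a possibly non–algebraically–closed residue field), with $\xi$ an lc center, and Corollary \ref{cor:bdd mldk, N-comp, codim=2}, whose proof requires no volume lower bound, supplies a Koll\'ar component $F$ over $\xi$ which is an lc place of this localized pair and satisfies $A_Y(F)\le A(N)$ and $C_X(F)=x$. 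The essential case is $\dim W=0$: a closed point $y\in Y$ that is an isolated lc center of the lc threefold pair $(Y,D_Y)$.

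For that case I would first bound the Cartier index $r$ of $K_Y$ at $y$ by $27/\varepsilon$ using Lemma \ref{lem:index bound via volume}, and pass to the index–one cover $\rho\colon\tilde Y\to Y$ of $K_Y$. Then $\tilde Y$ is Gorenstein terminal, so $K_{\tilde Y}\sim 0$ locally and, by \cite{Mori-terminal-classification}, $\tilde y\in\tilde Y$ is a compound Du Val hypersurface singularity $\{g=0\}\subseteq\bA^4$ with $g$ in one of finitely many normal forms. By Lemma \ref{lem:strict comp quasi-etale pullback} the pullback $\tilde D:=\rho^*D_Y$ is again an $N$‑complement with $\tilde y$ an lc center, and since $N\tilde D\sim -NK_{\tilde Y}\sim 0$ locally we may write $\tilde D=\frac1N\{\phi=0\}$ for some $\phi\in\cO_{\tilde Y,\tilde y}$. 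Comparing with the blowup of $\tilde y$ (along whose exceptional divisors $A_{\tilde Y}=2$), the lc hypothesis forces $\mathrm{ord}_{\tilde y}\phi\le 2N$, so $\phi$ has bounded order at $\tilde y$; running through the normal forms of $g$, I would then exhibit a weighted blowup of $\bA^4$ adapted to $g$ and to the lowest–weight part of $\phi$ whose exceptional divisor $E$ lies over $\tilde Y$, is an lc place of $(\tilde Y,\frac1N\{\phi=0\})$, and has $A_{\tilde Y}(E)$ bounded in terms of $N$. Descending $E$ through the quasi–\'etale cover $\rho$ (taking $E$ to be $\mu_r$‑invariant) finally yields the required lc place of $(Y,D_Y)$ over $y$, with a bound depending only on $N$ and $\varepsilon$, the latter entering only through $r$.

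The main obstacle is exactly this last step in the zero–dimensional case. Restricting to a general hyperplane section through $\tilde y$ does \emph{not} help: already on a smooth $\bA^3$ with $\phi$ a homogeneous cubic, every hyperplane section of $\phi$ has multiplicity three, so the section of the pair is never lc; one really has to work on $\tilde Y$ itself and use the compound Du Val normal forms together with the bound $\mathrm{ord}_{\tilde y}\phi\le 2N$ to pin down the weights of an extracting weighted blowup. Verifying that this weighted blowup divisor is indeed an lc place of the complement, and keeping every constant under control through the terminalization and the index–one cover, are the remaining technical points.
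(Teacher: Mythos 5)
Your decomposition exactly mirrors the paper's: terminalize, push the complement crepantly, and split on the dimension of the minimal lc center $W\subseteq\pi^{-1}(x)$, handling $\dim W=2$ directly, $\dim W=1$ via the codimension-two result (Corollary~\ref{cor:bdd mldk, N-comp, codim=2}), and $\dim W=0$ by passing to the Gorenstein terminal (hence cDV) index-one cover. Up to that point you are reproducing the paper's proof, including the preparatory observations that $\hvol(y,Y)\ge\varepsilon$, that $A_X(F)\le A_Y(F)$, and that the index of $K_Y$ at the isolated center is bounded by $27/\varepsilon$.

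The genuine gap is exactly where you flag it: the zero-dimensional case. You propose to run through the compound Du Val normal forms of $g$, bound $\ord_{\tilde y}\phi$, and then ``exhibit a weighted blowup'' whose exceptional divisor is an lc place of $(\tilde Y,\tfrac1N\{\phi=0\})$ with bounded log discrepancy and which is $\mu_r$-invariant. This is precisely the combinatorial casework the paper is at pains to avoid, and as you note yourself, you have not actually carried it out; nor is it clear that a single weighted blowup of $\bA^4$ adapted to $g$ and to the lowest-weight part of $\phi$ always works (the interplay between the cDV equation and an arbitrary complement $\phi$ of order up to $2N$ is delicate). The paper sidesteps this entirely with a different device (Lemma~\ref{lem:bdd lc mld terminal case, dim=3}): writing the cover as a $\mu_r$-eigenhypersurface $\tilde X=\{f=0\}\subseteq\bA^4$ and the complement as $\tfrac1{N_1}\{g=0\}|_{\tilde X}$ with $N_1\tilde D$ Cartier, one replaces the hard problem on the threefold by a problem on the \emph{fourfold} quotient $y\in Y:=\bA^4/\mu_r$ with boundary $\Delta=X+\tfrac1{N_1}\widetilde\Delta$ (suitably quotiented). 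Since $r\le 27/\varepsilon$, the germ $y\in Y$ lies in a finite, hence analytically bounded, list, so Lemma~\ref{lem:mldk bdd, bdd germ} (which rests on the already-known analytically-bounded case of the main conjecture) supplies a Koll\'ar component $F$ over $y\in Y$ that is an lc place of $(Y,\Delta)$ with $A_Y(F)\le A_1(N,\varepsilon)$. The crucial final move, which is absent from your sketch, is surface adjunction on the plt blowup $Y'\to Y$: because $(Y',F+X'+\Gamma')$ is lc and $X$ has coefficient one in $\Delta$, the strict transform $X'$ is normal along $E:=(F|_{X'})_{\mathrm{red}}$, the intersection multiplicity $\mult_E(F|_{X'})$ is at most $1$, and $A_X(E)=A_{Y,X}(F)\cdot\mult_E(F|_{X'})\le A_Y(F)$. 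This restriction produces the sought lc place of $(X,D)$ over the cDV point with a uniform bound, with no weighted-blowup case analysis at all. If you want to complete your approach, the cleanest fix is to replace your weighted-blowup plan in the $\dim W=0$ case with this ``bound on $\bA^4/\mu_r$, then restrict by adjunction'' argument.
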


We first consider the terminal singularity case. Eventually we will reduce the proof to this special case.

\begin{lem} \label{lem:bdd lc mld terminal case, dim=3}
Let $N\in\bN^*$ and let $\varepsilon>0$. Then there exists some $A_1=A_1(3,N,\varepsilon)$ such that for any $3$-dimensional terminal singularity $x\in X$ with $\hvol(x,X)\ge \varepsilon$ and any $N$-complement $D$ of $x\in X$, there exists some divisor $E$ over $x\in X$ such that
\[
A_{X,D}(E)=0, \quad\text{and}\quad A_X(E)\le A_1.
\]
\end{lem}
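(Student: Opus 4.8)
The plan is to reduce to the Gorenstein, i.e. compound Du Val, case by passing to the index one cover, and then to combine Mori's classification of three-dimensional terminal singularities with explicit weighted blowups, in the spirit of the surface argument in Proposition~\ref{prop:bdd mldk, codim=2}.

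\emph{Reduction to cDV singularities.} Since $\hvol(x,X)\ge\varepsilon$, Lemma~\ref{lem:index bound via volume} bounds the Cartier index $r$ of $K_X$ at $x$ by $27/\varepsilon$. Let $\pi\colon(\tx\in\tX)\to(x\in X)$ be the index one cover of $K_X$; it is quasi-\'etale of degree $r\le 27/\varepsilon$, the germ $\tX$ is Gorenstein terminal, and $K_{\tX}=\pi^*K_X\sim 0$ near $\tx$, so by \cite{Mori-terminal-classification} the point $\tx\in\tX$ is a compound Du Val point, i.e. analytically a hypersurface $\{f=0\}\subseteq\bA^4$ with $\mult_0 f=2$. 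By Lemma~\ref{lem:strict comp quasi-etale pullback}, $\tD:=\pi^*D$ is a $\bQ$-complement of $\tx\in\tX$, and $N(K_{\tX}+\tD)=\pi^*\big(N(K_X+D)\big)\sim 0$; since $K_{\tX}\sim 0$ near $\tx$ this forces $N\tD\sim 0$, so $\tD=\tfrac1N\{u=0\}$ for some $u\in\fm_{\tx}$, with $\lct_{\tx}(\tX;\{u=0\})=\tfrac1N$ computed by a divisor centered at $\tx$. If I can produce a $\Gal(\tX/X)$-invariant divisor $\tE$ over $\tx$ with $A_{\tX,\tD}(\tE)=0$ and $A_{\tX}(\tE)$ bounded in terms of $N$ and $r$, then $\tE$ descends to a divisor $E$ over $x$ which is an lc place of $(X,D)$ with $A_X(E)\le A_{\tX}(\tE)$, and the lemma follows. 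Note $\hvol(\tx,\tX)=r\cdot\hvol(x,X)\ge\varepsilon$.

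\emph{Using the classification and the volume bound.} By Mori's list, up to an analytic coordinate change $f$ is a standard form: the $cA_n$ forms $xy+g(z,t)$, or one of the $cD_n,cE_6,cE_7,cE_8$ forms (suspensions of plane curve singularities). The bound $\hvol(\tx,\tX)\ge\varepsilon$ forces the discrete parameter (the integer $n$ in $cA_n$, and likewise in $cD_n$) to be bounded in terms of $\varepsilon$, since the local volume of these singularities tends to $0$ as that parameter grows; so one is left with finitely many analytic deformation types (the remaining continuous moduli of $f$ will not affect the construction). I also choose coordinates $x_1,\dots,x_4$ so that the cyclic group $\Gal(\tX/X)$ acts diagonally. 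For a diagonal weight $a=(a_1,\dots,a_4)\in\bN^4$ the exceptional divisor $E_a$ of the weighted blowup of $\bA^4$ restricts to a prime divisor over $\tX$ provided $v_a(f)<|a|$ and $\{\mathrm{in}_a(f)=0\}$ is irreducible, and then
\[
A_{\tX,\tD}(E_a)=|a|-v_a(f)-\tfrac1N v_a(u),\qquad A_{\tX}(E_a)=|a|-v_a(f),
\]
$v_a$ being the monomial valuation with weights $a$. I choose $a$ so that (i) $\{\mathrm{in}_a(f)=0\}$ has at worst canonical singularity at the origin, which by Lemma~\ref{lem:wt blowup kc criterion} makes $E_a|_{\tX}$ a Koll\'ar component, and (ii) $A_{\tX,\tD}(E_a)=0$, which pins $v_a(u)$ against $v_a(f)$. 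The equality $\lct_{\tx}(\tX;\{u=0\})=\tfrac1N$, together with the standard form of $f$, bounds the Newton polyhedron of $u$ along the relevant faces, hence bounds $|a|$ and therefore $A_{\tX}(E_a)=|a|-v_a(f)$ in terms of $N$ and $\varepsilon$; this step is entirely parallel to the plane-curve Newton polygon analysis in the proof of Proposition~\ref{prop:bdd mldk, codim=2}.

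\emph{Main obstacle.} The technical heart is the last step: carrying out the weight selection uniformly across the (finitely many, after the volume reduction) cases of Mori's list, while simultaneously keeping $E_a$ torus-equivariant so that it descends through $\pi$, arranging that $E_a|_{\tX}$ is an honest lc place of $(\tX,\tD)$ rather than merely a divisor of small log discrepancy, and bounding $|a|$ using only $\lct_{\tx}(\tX;\{u=0\})=\tfrac1N$. A subsidiary annoyance is that the projectivized tangent cone $\{\mathrm{in}_a(f)=0\}$ may be reducible (for instance when the quadratic part of $f$ has rank two), in which case one must pass to a $\Gal$-invariant component or perturb the weights; this is handled exactly as in the reduced-complement case treated in Proposition~\ref{prop:bdd mldk, codim=2}. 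Finally, feeding this lemma into Lemma~\ref{lem:vol comparison lc blowup} and the reduction to a minimal lc center (as in Lemma~\ref{lem:reduce to 1-comp}) will upgrade the general klt case to the terminal case and promote the lc place to a Koll\'ar component, yielding Proposition~\ref{prop:mld of lc blowup bdd, dim=3}.
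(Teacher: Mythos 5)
Your reduction to the cDV case via the index one cover is the same as the paper's, and your observation that $N\tD$ is already Cartier near $\tx$ (since $K_{\tX}$ is) is a correct minor simplification of the paper's step where it introduces a separate integer $N_1$. From that point on, however, the two arguments diverge sharply, and yours is incomplete precisely where it matters.

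The paper does \emph{not} classify $f$ or perform any Newton-polyhedron analysis. Instead it embeds the problem into the ambient quotient: writing $(x\in X)\cong (0\in\{f=0\}/\mu_r)$ and $N_1\tD=\{g=0\}|_{\tX}$, it forms $y\in Y:=\bA^4/\mu_r$ with boundary $\Delta$ defined by crepant descent of $\tX+\tfrac1{N_1}\{g=0\}$, verifies via inversion of adjunction that $\Delta$ is a $\bQ$-complement of $y\in Y$ with bounded index and coefficient set, and then observes that $y\in Y$ is a cyclic quotient singularity of $\bA^4$ of order $r\le 27/\varepsilon$, hence lies in a bounded family. Lemma~\ref{lem:mldk bdd, bdd germ} then produces a Koll\'ar component $F$ over $y\in Y$ which is an lc place of $(Y,\Delta)$ with $A_Y(F)\le A_1$; restricting $F$ to the strict transform $X'$ on the plt blowup and using the classification of lc surface pairs (to control $\mult_E(F|_{X'})$) yields the desired divisor $E$ over $x\in X$. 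Crucially, $E$ need not be a Koll\'ar component --- the statement only asks for a divisor with $A_{X,D}(E)=0$ and $A_X(E)\le A_1$, and demanding a Koll\'ar component as you do is an over-constraint that makes your task unnecessarily harder (the upgrade to a Koll\'ar component is done separately in Lemma~\ref{lem:reduce to 1-comp} and Proposition~\ref{prop:mldk bdd, dim=3, 1-comp}).

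Your alternative plan --- pinning down the cDV type via Mori's list, bounding the discrete parameter using the volume hypothesis, and then choosing a $\Gal$-invariant weight vector by hand --- is not wrong in spirit (the volume of $cA_n$ does tend to $0$ as $n\to\infty$: with weights $(n+1,n+1,2,2)$ on $xy+z^{n+1}+t^{n+1}$ one gets $\hvol\le 32/(n+1)$), but it is substantially more laborious and, as you yourself flag under ``Main obstacle,'' the core step of selecting weights uniformly across the cases and bounding $|a|$ from $\lct_{\tx}(\tX;\{u=0\})=1/N$ is left unresolved. In dimension $2$ that Newton-polygon analysis is a single plane-curve computation; here one would be working with a four-variable Newton polyhedron constrained by each of the $cA/cD/cE$ normal forms and an arbitrary eigenfunction $u$, plus the equivariance requirement. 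The paper's ambient-quotient trick sidesteps all of this: the bounded-family input (Lemma~\ref{lem:mldk bdd, bdd germ}) absorbs the continuous moduli, the finiteness of linear $\mu_r$-actions on $\bA^4$ absorbs the group, and no case analysis of the hypersurface equation is needed at all. If you want to complete your route you would need to actually carry out that weight selection; as written, the proposal identifies the right starting reduction but does not supply the argument that the paper replaces with the ambient-quotient construction.
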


\begin{proof}
Certainly the plan is to use the classification of terminal singularities \cite{Mori-terminal-classification} (see also \cite{Reid-young-person-guide}*{Section 6}). Assume that $x\in X$ has index $r$ and let $p\colon (\tx\in \tX)\to (x\in X)$ be the index $1$ cover. By \cite{XZ-uniqueness}*{Theorem 1.3} and Lemma \ref{lem:index bound via volume}, we have $r\le \frac{27}{\varepsilon}$ and $\hvol(\tx,\tX)=r\cdot \hvol(x,X)\ge \varepsilon$. Let $\tD=p^*D$. By assumption, $N\tD$ has integer coefficient; another application of Lemma \ref{lem:index bound via volume} then gives an integer $N_1$ depending only on $N,\varepsilon$ such that $N_1 \tD$ is Cartier. By \cite{Reid-young-person-guide}*{Theorem in (6.1)}, there exists a linear $\mu_r$-action on $\bA^{4}$ and an eigenfunction $f\in \cO_{\bA^4,0}$ such that analytically locally $x\in X$ is isomorphic to $0\in (f=0)/\mu_r$. Since the statement of the lemma is analytically local, we may assume that $(x\in X)=\left(0\in (f=0)/\mu_r \right)$. In particular $\tX\cong (f=0)\subseteq \bA^4$. Since $N_1 \tD$ is Cartier and $\mu_r$-invariant, we may also write $N_1 \tD = (g=0)|_{\tX}$ for some eigenfunction $g\in \cO_{\bA^4,0}$. 

Let $\widetilde{\Delta}=(g=0)\subseteq \bA^4$ and consider the pair $y\in (Y,\Delta):=0\in (\bA^4,\tX+\frac{1}{N_1}\widetilde{\Delta})/\mu_r$, namely $Y=\bA^4 /\mu_r$ and if $q\colon \bA^4\to Y$ is the quotient map then the divisor $\Delta$ is chosen such that $K_{\bA^4}+\tX+\frac{1}{N_1}\widetilde{\Delta}=q^*(K_Y+\Delta)$. By Lemma \ref{lem:strict comp quasi-etale pullback}, $\tD$ is a $\bQ$-complement of $\tx\in \tX$. By inversion of adjunction (see e.g. \cite{Kol13}*{Theorem 4.9}), this implies that $(\bA^4,\tX+\frac{1}{N_1}\tDelta)$ is lc in a neighbourhood of $\tX$. Since $\tX$ is clearly an lc center of this pair, and the origin is an lc center of $(\tX,\tD)$, we see that the minimal lc center of $(\bA^4,\tX+\frac{1}{N_1}\tDelta)$ is the origin. Hence $\tX+\frac{1}{N_1}\tDelta$ is a $\bQ$-complement of $0\in \bA^4$, and $\Delta$ is a $\bQ$-complement of $y\in Y$ (again by Lemma \ref{lem:strict comp quasi-etale pullback}). 

Note that $\Coef(\Delta)\subseteq \frac{1}{rN_1}\bZ \cap [0,1]$ and $\Delta=X+\Gamma$ for some effective divisor $\Gamma$ such that $\Gamma|_X=D$. Since $r\le \frac{27}{\varepsilon}$ and for each $r\in \bN$ there are only finitely many non-isomorphic linear actions of $\mu_r$ on $\bA^4$, the singularities $y\in Y$ that arise from the above construction belong to a bounded family, hence by Lemma \ref{lem:mldk bdd, bdd germ} we deduce that there exist some constant $A_1=A_1(r,N_1)=A_1(\varepsilon,N)>0$ and some Koll\'ar component $F$ over $y\in Y$ that is also an lc place of $(Y,\Delta)$ such that $A_{Y}(F)\le A_1$. Let $\pi\colon Y'\to Y$ be the associated plt blowup and let $X',\Gamma'$ be the strict transform of $X,\Gamma$ on $Y'$. Let $E$ be an irreducible component of $F|_{X'}$. By adjunction, $E$ is an lc place of $(X,\Gamma|_X)=(X,D)$. We claim that $E$ is the sought divisor. Clearly $E$ is centered at $x\in X$. Since $F$ is an lc place of $(Y,\Delta)$ we have $K_{Y'}+F+X'+\Gamma'=\pi^*(K_Y+\Delta)$ and the pair $(Y',F+X'+\Gamma')$ is lc. By the classification of lc surface pairs (see e.g. \cite{Kol13}*{Example 3.28}), it follows that $X'$ is normal at the generic point of $E$ and $\mult_E(F|_{X'})\le 1$. 
We also have $K_{Y'}+F+X'=\pi^*(K_Y+X)+A_{Y,X}(F)\cdot F$ which by adjunction gives $A_X(E)=A_{Y,X}(F)\cdot \mult_E(F|_{X'})\le A_Y(F)\le A_1$. This completes the proof.
\end{proof}

We now prove the general case of Proposition \ref{prop:mld of lc blowup bdd, dim=3} by reducing it to the terminal case. 

\begin{proof}[Proof of Proposition \ref{prop:mld of lc blowup bdd, dim=3}]
Let $\pi\colon Y\to X$ be a terminal modification of $x\in X$ (whose existence is given by e.g. \cite{Kol13}*{Theorem 1.33}). It has the property that $\pi^*K_X=K_Y+\Gamma$ for some effective exceptional divisor $\Gamma$, $Y$ is $\bQ$-Gorenstein and has only terminal singularities. Moreover, by Lemma \ref{lem:vol under birational map} we have $\hvol(y,Y)\ge \hvol(x,X)\ge \varepsilon$ for all $y\in Y$. Let $D_Y=\Gamma+\pi^*D$. Since $K_Y+D_Y=\pi^*(K_X+D)$ and $D$ is an $N$-complement of $x\in X$, we see that $D_Y$ is an $N$-complement of $Y$, the pair $(Y,D_Y)$ is strictly log canonical, and it has a minimal lc center $W$ that's contained in $\pi^{-1}(x)$. If $\dim W=2$, then we may take $E=W$ as $A_X(W)=A_{Y,\Gamma}(W)\le 1$. If $\dim W=1$ then it has codimension $2$ in $Y$. By Corollary \ref{cor:bdd mldk, N-comp, codim=2}, there exist some constant $A_1>0$ depending only on $N$ and some divisor $E$ over $x\in X$ (in fact it has center $W$ on $Y$) such that $A_{X,D}(E)=A_{Y,D_Y}(E)=0$ and $A_X(E)=A_{Y,\Gamma}(E)\le A_Y(E)\le A_1$. Finally if $\dim W=0$ then $W=\{y\}$ for some $y\in Y$. Since $y\in Y$ has terminal singularity, we know by Lemma \ref{lem:bdd lc mld terminal case, dim=3} that there exist another constant $A_2>0$ (depending only on $N$ and $\varepsilon$) and a divisor $E$ over $y\in Y$ such that $A_{X,D}(E)=A_{Y,D_Y}(E)=0$ and $A_X(E)=A_{Y,\Gamma}(E)\le A_Y(E)\le A_2$. Therefore, the proposition holds with $A_0=\max\{1,A_1,A_2\}$. 
\end{proof}

Going back to the proof of Conjecture \ref{conj:bdd for mldK, vol>epsilon}, the second step is to verify Conjecture \ref{conj:bdd mldk, Q-Gor, N-comp} for reduced $\bQ$-complements.

\begin{prop} \label{prop:mldk bdd, dim=3, 1-comp}
Let $\varepsilon>0$. Then there exists some constant $A>0$ depending only on $\varepsilon$ such that for any $3$-dimensional klt singularity $x\in X$ with $\hvol(x,X)\ge \varepsilon$ and any reduced $\bQ$-complement $D$, there exists some Koll\'ar component $E$ over $x\in X$ such that 
\[
A_{X,D}(E)=0, \quad\text{and}\quad A_X(E)\le A.
\]
In other words, Conjecture \ref{conj:bdd mldk, Q-Gor, N-comp} holds for reduced $\bQ$-complements in dimension $3$.
\end{prop}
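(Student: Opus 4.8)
The strategy is to reduce, via a sequence of birational modifications, to a situation covered either by the codimension-two result (Corollary \ref{cor:bdd mldk, N-comp, codim=2}) or by the classification of three-dimensional terminal singularities \cite{Mori-terminal-classification}, in the spirit of Lemma \ref{lem:reduce to 1-comp} and of the proof of Lemma \ref{lem:bdd lc mld terminal case, dim=3}. Since $D$ is reduced it is a $1$-complement of $x\in X$, so Proposition \ref{prop:mld of lc blowup bdd, dim=3} (with $N=1$) furnishes a divisor $E_0$ over $x\in X$ with $A_{X,D}(E_0)=0$ and $A_X(E_0)\le A_0=A_0(\varepsilon)$. By Lemma \ref{lem:BCHM extract divisor} let $\rho\colon Y\to X$ be its prime blowup; crepancy gives $\rho^*(K_X+D)=K_Y+D_Y+E_0$ with $(Y,D_Y+E_0)$ lc (here $D_Y=\rho^{-1}_*D$), so $(Y,E_0)$ is lc and every lc place of $(Y,E_0)$ is an lc place of $(X,D)$, and by Lemma \ref{lem:vol comparison lc blowup} we have $\hvol(y,Y)\ge A_0^{-3}\varepsilon=:\varepsilon'$ for all $y$ after shrinking $X$ around $x$. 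If $(Y,E_0)$ is plt, then $E_0$ itself is the desired Koll\'ar component. Otherwise fix a minimal lc center $W$ of $(Y,E_0)$ inside $\rho^{-1}(x)$; since $(Y,D_Y+E_0)$ is lc one checks that $W\not\subseteq\Supp(D_Y)$, so after shrinking $Y$ around the generic point $\xi$ of $W$ we may assume $D_Y=0$ and that $E_0$ is a reduced $1$-complement of $\xi\in Y$, and $\dim W\in\{0,1\}$.

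Suppose first $\dim W=1$, so $W$ has codimension two in $Y$. By Corollary \ref{cor:bdd mldk, N-comp, codim=2} there is a Koll\'ar component $F$ over $\xi\in Y$ with $A_{Y,E_0}(F)=0$ and $A_Y(F)\le A_1$ for an absolute constant $A_1$. Because $W$ is a minimal lc center of $(Y,E_0)$, Lemma \ref{lem:closure of plt blowup over min lc center} shows that $F$ is of plt type over $(Y,0)$ globally, and then Lemma \ref{lem:kc descend along lc blowup}, applied to the lc blowup $\rho$, shows that $F$ is a Koll\'ar component over $x\in X$. From $\rho^*K_X=K_Y+(1-A_X(E_0))E_0$ and $\ord_F(E_0)=A_Y(F)$ one gets $A_X(F)=A_X(E_0)\cdot A_Y(F)\le A_0A_1$, while $A_{X,D}(F)=A_{Y,D_Y+E_0}(F)=A_{Y,E_0}(F)=0$; this settles the case.

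Now suppose $\dim W=0$, say $W=\{y\}$, so $y\in Y$ is a three-dimensional klt singularity with $\hvol(y,Y)\ge\varepsilon'$ on which the reduced $1$-complement $E_0$ has $\{y\}$ as an isolated lc center. Here I would take a terminal modification $\mu\colon Z\to Y$ of $y\in Y$ (so $Z$ is $\bQ$-Gorenstein terminal, $\mu^*K_Y=K_Z+\Gamma_\mu$ with $\Gamma_\mu\ge 0$ exceptional, and $\hvol(z,Z)\ge\varepsilon'$ for all $z$ by Lemma \ref{lem:vol under birational map}), and set $D_Z=\Gamma_\mu+\mu^*E_0$, so that $K_Z+D_Z=\mu^*(K_Y+E_0)$, the pair $(Z,D_Z)$ is lc, and $1\cdot(K_Z+D_Z)\sim 0$ near $\mu^{-1}(y)$, i.e. $D_Z$ is a $1$-complement of $Z$ there. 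Let $W'\subseteq\mu^{-1}(y)$ be a minimal lc center of $(Z,D_Z)$. If $\dim W'=2$, then $W'$ is a divisor on $Z$ with $A_X(W')=A_X(E_0)\cdot A_Y(W')\le A_0$ whose ample model over $X$ realizes it as a Koll\'ar component; if $\dim W'=1$, then it has codimension two in $Z$ and Corollary \ref{cor:bdd mldk, N-comp, codim=2} together with the descent lemmas (as in the previous paragraph, now through the tower $X\leftarrow Y\leftarrow Z$) produces a Koll\'ar component over $x\in X$ with bounded log discrepancy. The essential new case is $\dim W'=0$, $W'=\{z\}$ with $z\in Z$ a three-dimensional \emph{terminal} singularity carrying the $1$-complement $D_Z$ with $\{z\}$ as an isolated lc center. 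By Lemma \ref{lem:index bound via volume} the index $r$ of $z\in Z$ is at most $27/\varepsilon'$, so passing to the index-one cover and invoking Mori's classification \cite{Mori-terminal-classification} one may assume that $z\in Z$ is a hypersurface inside $\bA^4/\mu_r$ with $D_Z$ the restriction of a bounded complement; since for bounded $r$ there are only finitely many linear $\mu_r$-actions on $\bA^4$, the germ $0\in\bA^4/\mu_r$ lies in a bounded family, and Lemma \ref{lem:mldk bdd, bdd germ} produces a Koll\'ar component $F$ over $0\in\bA^4/\mu_r$, an lc place of that complement, with $A_{\bA^4/\mu_r}(F)\le A_2=A_2(\varepsilon)$. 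Restricting $F$ to the hypersurface by adjunction and the classification of lc surface pairs, exactly as in the proof of Lemma \ref{lem:bdd lc mld terminal case, dim=3}, yields a divisor over $z\in Z$ that is an lc place of $(Z,D_Z)$ with log discrepancy at most $A_2$.

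The main obstacle is the last step of the $\dim W'=0$ case: the divisor obtained by restriction is a priori only an lc place of $(Z,D_Z)$, whereas we need a Koll\'ar component over $z\in Z$. I expect this to be handled either by verifying directly that the restricted divisor is of plt type --- using the plt-ness of the ambient Koll\'ar component $F$, the normality statement from the classification of lc surface pairs, and the induced $\bG_m$-degeneration of the hypersurface to a (klt) cone, along the lines of Lemma \ref{lem:wt blowup kc criterion} --- or, failing that, by iterating the reduction of Lemma \ref{lem:reduce to 1-comp} once more at the minimal lc center of the prime blowup of this lc place (which by minimality again drops into the curve case or the terminal-point case) and proving that this process terminates, for instance by controlling the log discrepancies. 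Once a Koll\'ar component over $z\in Z$ with bounded log discrepancy is in hand, the remaining work is the routine but somewhat delicate bookkeeping needed to descend it through the terminal modification $\mu$ and then through $\rho$ to a Koll\'ar component over $x\in X$, using Lemma \ref{lem:closure of plt blowup over min lc center}, Lemma \ref{lem:kc descend along lc blowup} and Lemma \ref{lem:kc descend special complement}, keeping track that all constants involved depend only on $\varepsilon$.
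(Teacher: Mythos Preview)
You have correctly located the main gap: in the terminal case, restricting a Koll\'ar component from $\bA^4/\mu_r$ to the hypersurface (as in the proof of Lemma \ref{lem:bdd lc mld terminal case, dim=3}) yields only an lc place, not a Koll\'ar component, and neither of your proposed repairs works. Direct verification of plt-ness of the restriction fails in general, and your iteration has no terminating invariant: each extraction of an lc place with log discrepancy $\le A_2$ drops the volume lower bound by a factor $A_2^{-3}$ via Lemma \ref{lem:vol comparison lc blowup}, so the constants are not uniform and the process need not stop. There is also a descent problem through the terminal modification $\mu\colon Z\to Y$: neither Lemma \ref{lem:kc descend along lc blowup} (which needs a single exceptional divisor with $-E$ ample) nor Lemma \ref{lem:kc descend special complement} (which needs a special complement, and $\mu^*E_0$ has no reason to contain a $\mu$-ample piece through every point of $\Ex(\mu)$) applies to $\mu$ as stated.

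The paper's argument is structurally different and avoids both issues. It does not invoke Proposition \ref{prop:mld of lc blowup bdd, dim=3} here, and it never takes a terminal modification. Instead it inducts on $d(x,X)$, the number of divisors over $x$ with $A_X\le 1$. Taking the \emph{simultaneous} index-one cover $\tx\in\tX$ of $K_X$ and $D$ (so $\tX$ is Gorenstein canonical and $\tD$ is Cartier), if $\tX$ is not cDV then \cite{KM98}*{Lemma 5.30, Theorem 5.35} gives an $H$-invariant divisor $\tF$ with $A_{\tX}(\tF)=1$; Cartierness of $\tD$ forces $\ord_{\tF}(\tD)\ge 1$, so $\tF$ is an lc place, and it descends to $F$ over $X$ with $A_X(F)\le 1$. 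Extracting $F$ gives $\pi\colon Y\to X$ with $K_Y\le\pi^*K_X$, so the volume bound $\varepsilon$ is \emph{preserved} and $d(y,Y)<d(x,X)$, closing the induction via Lemma \ref{lem:kc descend along lc blowup} and Corollary \ref{cor:bdd mldk, N-comp, codim=2}. The base case (cDV cover) is handled by a separate Lemma \ref{lem:mldk bdd, dim=3, 1-comp, terminal}, which produces genuine Koll\'ar components by explicit weighted blowups (checked via Lemma \ref{lem:wt blowup kc criterion}) in a three-case analysis depending on whether $\tX$ is smooth and whether the local equation of $\tD$ lies in $\fm^2$.
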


Again we first treat a special case. Recall that a singularity $x\in X$ is called a cDV singularity if by taking general complete intersections of hypersurfaces that contains $x$ one gets a Du Val singularity.

\begin{lem} \label{lem:mldk bdd, dim=3, 1-comp, terminal}
Proposition \ref{prop:mldk bdd, dim=3, 1-comp} holds when the simultaneous index one cover of $K_X$ and $D$ $($see Section \ref{sect:index one covers}$)$ is a cDV singularity. 
\end{lem}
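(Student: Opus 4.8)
The plan is to lift the problem to the simultaneous index one cover, where ``cDV'' supplies a hypersurface presentation, and then to sandwich the singularity inside a quotient of $\bA^4$ that lies in a bounded family, so that Lemma~\ref{lem:mldk bdd, bdd germ} applies. Let $p\colon(\tx\in\tX)\to(x\in X)$ be the simultaneous index one cover of $K_X$ and $D$, with finite abelian Galois group $G$. By Lemma~\ref{lem:index bound via volume} together with $\hvol(x,X)\ge\varepsilon$, the Cartier indices of $K_X$ and of $D$ are each at most $27/\varepsilon$, so $|G|$ is bounded in terms of $\varepsilon$; moreover $\hvol(\tx,\tX)=|G|\cdot\hvol(x,X)\ge\varepsilon$. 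Set $\tD=p^*D$, a $G$-invariant reduced Cartier divisor, which by Lemma~\ref{lem:strict comp quasi-etale pullback} is a reduced $\bQ$-complement of $\tx\in\tX$. Since $\tX$ is cDV it is a hypersurface germ; as $G$ is abelian one may choose $G$-semi-invariant coordinates $x_1,\dots,x_4$ on $\bA^4$ with $\tX=(f=0)$ for a $G$-eigenfunction $f$, and write $\tD=(h=0)|_{\tX}$ for a $G$-eigenfunction $h\in\cO_{\bA^4,0}$. Let $q\colon\bA^4\to Y:=\bA^4/G$ and define $\Delta_Y=X+\Gamma$ by $q^*(K_Y+\Delta_Y)=K_{\bA^4}+\tX+\{h=0\}$; then $\Gamma|_X=D$ and $\Coef(\Delta_Y)\subseteq\tfrac1{|G|}\bZ\cap[0,1]$.

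By inversion of adjunction ($\tX$ is Cartier in the smooth $\bA^4$), the pair $(\bA^4,\tX+\{h=0\})$ is lc near $\tX$ since $(\tX,\tD)$ is lc, and the origin is an lc center because $\tx$ is an lc center of $(\tX,\tD)$; thus $\tX+\{h=0\}$ is a $\bQ$-complement of $0\in\bA^4$, and as $\bA^4$ is smooth, $K_{\bA^4}+\tX+\{h=0\}$ is Cartier and hence $\sim 0$ on the germ. Pushing down, $\Delta_Y$ is an $N_0$-complement of $y\in Y$ with $N_0\mid|G|$, so $N_0\le(27/\varepsilon)^2$, and a $\bQ$-complement of $y\in Y$ (Lemma~\ref{lem:strict comp quasi-etale pullback}). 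Since $G$ is a finite abelian group of bounded order and for each such group there are only finitely many linear actions on $\bA^4$ up to isomorphism, the singularities $y\in Y=\bA^4/G$ arising this way form a bounded $\bQ$-Gorenstein family of klt singularities; by Lemma~\ref{lem:mldk bdd, bdd germ} there are a constant $A_1=A_1(\varepsilon)>0$ and a Koll\'ar component $F$ over $y\in Y$, which is an lc place of $(Y,\Delta_Y)$, with $A_Y(F)\le A_1$. Let $\pi\colon Y'\to Y$ be the plt blowup of $F$ and $X',\Gamma'$ the strict transforms of $X,\Gamma$; then $(Y',F+X'+\Gamma')$ is lc, and by the classification of lc surface pairs (adjunction along $X'$, exactly as in Lemma~\ref{lem:bdd lc mld terminal case, dim=3}) $X'$ is normal at the generic point of each component $E$ of $F|_{X'}$, with $\mult_E(F|_{X'})\le1$. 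For any such $E$ (automatically centered over $x$) adjunction shows $E$ is an lc place of $(X,D)$, and from $K_{Y'}+F+X'=\pi^*(K_Y+X)+A_{Y,X}(F)\cdot F$ one gets $A_X(E)=A_{Y,X}(F)\cdot\mult_E(F|_{X'})\le A_Y(F)\le A_1$.

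The remaining, and hardest, step is to upgrade the lc place $E$ to a Koll\'ar component over $x\in X$ of bounded log discrepancy; for this I would run the inductive descent of Lemma~\ref{lem:reduce to 1-comp}. If $E$ is not of plt type, let $\rho\colon V\to X$ be its prime blowup; then $(V,E)$ is lc with $\hvol(v,V)\ge A_1^{-3}\varepsilon$ for all $v\in V$ by Lemma~\ref{lem:vol comparison lc blowup}, the minimal lc center $Z$ of $(V,E)$ has codimension $\ge2$, and the index one cover of $K_V$ and $E$ is again Gorenstein and klt, hence again cDV, so the hypothesis propagates. If $\dim Z=1$ we conclude by Corollary~\ref{cor:bdd mldk, N-comp, codim=2}; if $Z$ is a point we apply the same construction to $v\in(V,E)$, and a noetherian induction on the number of blowups needed to extract the minimal lc place (in the style of Lemma~\ref{lem:mld minimizer unique dim=2}) guarantees termination. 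The Koll\'ar component over the generic point of $Z$ so produced is of plt type over $(V,E)$ by Lemma~\ref{lem:closure of plt blowup over min lc center} and descends to a Koll\'ar component over $x\in X$ by Lemma~\ref{lem:kc descend along lc blowup}, with $A_X$ bounded in terms of $\varepsilon$ by the bookkeeping in Lemma~\ref{lem:reduce to 1-comp}. The two points that will need the most care are the semi-invariant linearization of the $G$-action on the cDV germ (so that $Y=\bA^4/G$ genuinely stays in a bounded family) and the termination of this descent.
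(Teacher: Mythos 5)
The first half of your argument is sound and essentially reproduces the paper's Lemma~\ref{lem:bdd lc mld terminal case, dim=3}: present $\tX$ as an $H$-invariant hypersurface in $\bA^4$, set $Y=\bA^4/H$ with the induced boundary, apply Lemma~\ref{lem:mldk bdd, bdd germ} on this bounded family, and restrict the resulting Koll\'ar component $F$ on $Y$ to the strict transform $X'$ of $X$. That gives an lc place $E$ of $(X,D)$ with $A_X(E)$ bounded. But Lemma~\ref{lem:bdd lc mld terminal case, dim=3} is stated with this weaker conclusion precisely because $F|_{X'}$ has no reason to be a Koll\'ar component over $x\in X$: $X$ sits inside $Y$ as a divisor, not as a birational model, so none of the descent results apply.

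The step you identify as the hardest is where the proposal breaks. You assert that after the prime blowup $\rho\colon V\to X$ of $E$, the simultaneous index one cover of $K_V$ and $E$ at the new lc center is ``again Gorenstein and klt, hence again cDV.'' That implication is false: Gorenstein canonical does not imply cDV (e.g.\ $0\in(x^3+y^3+z^3+w^3=0)\subseteq\bA^4$ has general hyperplane section an elliptic cone, not Du Val). The gap between Gorenstein canonical and cDV is exactly what the proof of Proposition~\ref{prop:mldk bdd, dim=3, 1-comp} handles via the crepant-divisor extraction of \cite{KM98}*{Lemma 5.30 and Theorem 5.35}; you cannot close it by re-invoking the lemma under proof, and the paper's induction uses a careful measure $d(x,X)$ (the number of divisors of log discrepancy $\le 1$ over $x$) that your sketch never controls. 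The comparison with Lemma~\ref{lem:mld minimizer unique dim=2} also does not transfer: that lemma depends on the canonical factorization of a surface divisor into point blowups, which has no analogue in dimension three.

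The paper's actual proof of Lemma~\ref{lem:mldk bdd, dim=3, 1-comp, terminal} therefore avoids the ambient $\bA^4$ entirely in this second half. It works directly on $\tX=(f=0)$ with $\tD=(g=0)$, splitting into cases on $\mult(g)$, the rank of the degree-two part of $f$, and (when $g=t$) the multiplicity of the remaining boundary term, and in most cases writes down an explicit $H$-invariant weighted blowup whose exceptional divisor is simultaneously an lc place of $(\tX,\tD)$, of bounded log discrepancy, and a Koll\'ar component by Lemma~\ref{lem:wt blowup kc criterion}. In the delicate subcases it finds an involution $\tau$ so that $\tX/\langle H,\tau\rangle$ is a bounded quotient singularity of dimension three, and then applies Lemma~\ref{lem:mldk bdd, bdd germ} to that quotient of $\tX$, not to $\bA^4/H$; the resulting Koll\'ar component then genuinely descends to $x\in X$ by \cite{LX-stability-kc}*{Lemma 2.13}. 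The structural lesson is that to retain the Koll\'ar component property you must stay on quotients or birational models of $X$ itself; passing through the ambient fourfold irretrievably loses it.
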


\begin{proof}
Let $\tx\in\tX$ be the simultaneous index one cover of $K_X$ and $D$ so that $(x\in X)\cong (\tx\in \tX)/H$ for some finite abelian group $H$. By Lemma \ref{lem:index bound via volume}, we have $|H|\le \frac{27}{\varepsilon}$. Let $\tD\subseteq \tX$ be the preimage of $D$. 
Then $\tD$ is a Cartier divisor and we may write $\tD=(g=0)$ for some $g\in \fm$ (where $\fm\subseteq \cO_{\tX,\tx}$ denotes the maximal ideal). We divide the proof into the following three cases:
\begin{enumerate}
    \item $\tx\in \tX$ is a smooth point.
    \item $\tx\in \tX$ is singular and $g\in \fm^2$.
    \item $\tx\in \tX$ is singular and $g\not\in \fm^2$.
\end{enumerate}

\begin{case}
$\tx\in \tX$ is a smooth point. Then analytically $(x\in X)\cong (0\in \bA^3)/H$ belongs to a bounded family of singularities (since the size of $H$ is bounded from above and for each choice of $H$ there are only finitely many non-isomorphic actions of $H$ on $0\in \bA^3$). By Lemma \ref{lem:mldk bdd, bdd germ}, we see that there exists some constant $A_1>0$ that only depends on $\varepsilon$ (since the upper bound of the index $r$ only relies on $\varepsilon$) and some Koll\'ar component $E$ over $x\in X$ such that $A_{X,D}(E)=0$ and $A_X(E)\le A_1$. Thus the lemma holds in this case.
\end{case}

\begin{case} \label{case:mult(D)>=2}
$\tx\in \tX$ is singular and $g\in \fm^2$. 
Since $\tx\in \tX$ is a cDV singularity, analytically we may identify it with $0\in (f=0)\subseteq \bA^4$ for some $f\in \hat{\cO}_{\bA^4,0}$ with $\mult(f)=2$. Moreover, since $H$ is abelian, we can assume that it acts diagonally on the coordinates $y,z,w,t$ of $\bA^4$, all monomials in $f$ have the same eigenvalue, and the isomorphism $\tX\cong (f=0)$ is $H$-equivariant. Since $(\tX,\tD)$ is lc, we also know that $(\tX,\fm^2)$ is lc and any lc place of $(\tX,\fm^2)$ is also an lc place of $(\tX,\tD)$. Therefore, it suffices to find an $H$-invariant lc place of $(\tX,\fm^2)$ with bounded log discrepancy that is a Koll\'ar component. By adjunction \cite{Kol13}*{Theorem 4.9}, we see that a general plane section of $X$ has semi-log canonical singularities, hence is a nodal curve. It follows that the quadratic part in the Taylor expansion of $f$ has rank at least $2$.

First assume that the rank is at least $3$. Let $\tE$ be the exceptional divisor of the ordinary blowup of $\tx\in\tX$. It is straightforward to check that $\tE$ is an lc place of $(\tX,\fm^2)$, $A_{\tX}(\tE)=2$, and $\tE$ is a Koll\'ar component ($\tE$ is a quadric surface and in particular has klt singularities). Clearly $\tE$ is $H$-invariant (in fact it is invariant under $\Aut(\tx\in\tX)$), thus by \cite{LX-stability-kc}*{Lemma 2.13} it descends to a Koll\'ar component $E$ over $x\in X$ with $A_{X,D}(E)=0$ and $A_X(E)\le 2$. Hence the lemma holds in this case.

Assume next that the rank of the quadratic part of $f$ is two. We claim that there exists a finite group $G\subseteq \Aut(\tx\in \tX)$ of order at most $2|H|^4$ containing $H$ such that $W:=\tX/G$ belongs to an analytically bounded family of singularities (that depends only on $\varepsilon$). If $f$ contains the monomial $t^2$, then after an $H$-equivariant change of variables (c.f. the proof of \cite{Reid-young-person-guide}*{Page 395, Proposition}, especially \cite{Reid-young-person-guide}*{Page 394, Rule III}), we may assume that $f=t^2+h(y,z,w)$ for some $\mult(h)\ge 2$. Let $\tau$ be the involution $(y,z,w,t)\mapsto (y,z,w,-t)$ and let $G\subseteq \Aut(\tx\in \tX)$ be the (abelian) subgroup generated by $H$ and $\tau$. Clearly $|G|\le 2|H|$ and $\tX/\tau$ is a smooth point, thus $\tX/G=(\tX/\tau)/(G/\tau)$ is a quotient singularity. Since $|G/\tau|\le |H|\le \frac{27}{\varepsilon}$ we see that there are only finitely many non-isomorphic quotients as before and hence $W=\tX/G$ belongs to an analytically bounded family of singularities. 

If $f$ does not contain any of $y^2,z^2,w^2,t^2$, then as its quadratic part has rank $2$ we have $f=yz+h(w,t)$ after a change of variable (again by \cite{Reid-young-person-guide}*{Page 395, Proposition}). This time let $\tau$ be the involution $(y,z,w,t)\mapsto (z,y,w,t)$ and let $G\subseteq \Aut(\tx\in \tX)$ be the subgroup generated by $H$ and $\tau$. It is not hard to see that for every $\varphi\in G$ either $\varphi$ or $\tau\varphi$ acts diagonally on $(y,z,w,t)$ with order at most $|H|$, hence $|G|\le 2|H|^4$. Let $G_0<G$ be the subgroup generated by all the conjugates of $\tau$; it is also the smallest normal subgroup that contains $\tau$. By a theorem of Chevalley, $\bA^4/G_0\cong \bA^4$ and in fact it is not hard to check that the quotient map is given by $(y,z,w,t)\mapsto (y^{r_0}+z^{r_0},yz,w,t)$ for some $r_0\in\bN^*$. It then follows from the equation of $f$ that $(\tx\in \tX)/G_0$ is a smooth point and $\tX/G=(\tX/G_0)/(G/G_0)$ is a quotient singularity. As $|G/G_0|$ is at most $|H|^4\le \frac{27^4}{\varepsilon^4}$, we conclude as before that there are only finitely many non-isomorphic quotients and $W=\tX/G$ belongs to an analytically bounded family of singularities which only depends on $\varepsilon$. This proves the claim.

Let $(W,\Delta_W+\fa)$ be the crepant $G$-quotient of $(\tX,\fm^2)$, where $\fa$ is a $\bQ$-ideal. Note that $(\tX,\fm^2)$ is strictly lc, thus the same holds for $(W,\Delta_W+\fa)$. By construction the coefficients of $\Delta_W+\fa$ lie in $\frac{1}{|G|}\bN$. By Lemma \ref{lem:mldk bdd, bdd germ}, there exist some constant $A'_2>0$ depending only on $\varepsilon$ and a Koll\'ar component $\tE$ over $W$ that is an lc place of $(W,\Delta_W+\fa)$ such that $A_{W}(\tE)\le A'_2$. By \cite{LX-stability-kc}*{Lemma 2.13}, it pulls back to a $G$-invariant Koll\'ar component over $\tX$ and hence also induces a Koll\'ar component $E$ over $X$. By \cite{KM98}*{Proposition 5.20} we know that $E$ is an lc place of $(X,D)$ and $A_X(E)\le |G|\cdot A_{W,\Delta_W}(\tE)\le A_2$ for some constant $A_2>0$ that only depends on $\varepsilon$. Therefore we are done in this case.
\end{case}

\begin{case}
$\tx\in \tX$ is singular and $g\not\in \fm^2$. Then up to a change of coordinate we may assume that $g=t$ and $\tX\cong (f=0)\subseteq \bA^4$ with $\mult(f)=2$. We may write $f=t f_1(y,z,w,t)+h(y,z,w)$ and in particular $\tD\cong (h=0)\subseteq \bA^3$. By adjunction, the surface $\tD$ has slc singularity and the origin is an lc center. Thus $2\le \mult(h)\le 3$. 

Suppose first that $\mult(h)=3$. Let $h_1$ be the homogeneous term of degree $3$ in $h$. Let $q\colon \tD'\to \tD$ be the ordinary blowup of the origin. Then $q^*K_{\tD}=K_{\tD'}+F$ where $F\cong (h_1(y,z,w)=0)\subseteq \bP^2$ is the $q$-exceptional divisor. Since $\tD$ is slc, so is the pair $(\tD',F)$, thus by adjunction we know that the cubic curve $F$ is at most nodal (for us it is enough to know that $h_1$ is irreducible). Note that $\mult(f_1)=1$ since $\mult(f)=2$. If the linear term in $f_1$ is not proportional to $t$, then we may apply an $H$-equivariant change of coordinates between $y,z,w$ so that $f_1=ay+bt+(\mult\ge 2)$ ($a\neq 0$). Consider the weighted blowup with $\wt(y,z,w,t)=(1,1,1,2)$. By Lemma \ref{lem:wt blowup kc criterion}, the exceptional divisor $E$ is a Koll\'ar component, since the corresponding initial term $ayt+h_1(y,z,w)$ gives a $cA$-type singularity. Note that $E$ is an $H$-invariant lc place of $(X,D)$ and $A_X(E)=2$.

The other possibility is that the linear term in $f_1$ is proportional to $t$, i.e. $f_1=at+(\mult\ge 2)$ ($a\neq 0$). Consider the weighted blowup with $\wt(y,z,w,t)=(2,2,2,3)$. The corresponding initial term gives the hypersurface $(at^2+h_1(y,z,w)=0)\subseteq \bA^4$ which is of $cD$-type by Lemma \ref{lem:cD type criterion}. Thus the exceptional divisor $E$ of the weighted blowup is a Koll\'ar component by Lemma \ref{lem:wt blowup kc criterion}. It is also an $H$-invariant lc place of $(X,D)$ and $A_X(E)=3$.

Next suppose that $\mult(h)=2$. Then the quadratic part of $h$ has rank $1$, otherwise $\tD$ is either a union of two planes or has $A_m$ singularity for some $m\in\bN^*$, contradicting the assumption that the origin is an lc center of $\tD$. Thus we may choose coordinates so that $h=y^2+h_0(z,w)$ and $\mult(h_0)\ge 3$. By Weierstrass preparation theorem (applied to $f$), we then have $f=\unit\cdot (y^2+ay+b)$ for some $a,b\in (z,w,t)\subseteq \bC[[z,w,t]]$, which can be turned into $f=\unit\cdot (y^2+c(z,w,t))$ with another change of variable $y\mapsto y-\frac{a}{2}$ which leaves $t$ fixed. Here we need to further justify that our application of the Weierstrass preparation theorem is $H$-equivariant. Indeed, one way to prove the Weierstrass preparation theorem is through the Weierstrass division theorem: there exists some unique $s_1\in \bC[[y,z,w,t]]$ and $s_2\in \bC[[z,w,t]][y]$ such that $y^2=f\cdot s_1+s_2$ and the degree of $y$ in $s_2$ is $\le 1$; one then checks that $s_1$ is a unit and if $s_2=-ay-b$ then we get the desired form $f=\unit\cdot (y^2+ay+b)$. Since all monomials in $f$ have the same eigenvalues under the $H$-action, the same holds for both $s_1$ and $s_2$ by their uniqueness. Thus with a $H$-equivariant change of coordinates we may assume that $f$ has the form $f=y^2+c(z,w,t)$ and at the same time $g=t$. We can now argue as in Case \ref{case:mult(D)>=2}: if $\tau$ is the involution $(y,z,w,t)\mapsto (-y,z,w,t)$ then it leaves $\tD$ invariant and $\tX/\tau$ is a smooth point. Hence if $G\subseteq \Aut(\tx\in \tX)$ is the (abelian) subgroup generated by $H$ and $\tau$, then $|G/\tau|\le |H|\le \frac{27}{\varepsilon}$ and $W:=\tX/G=(\tX/\tau)/(G/\tau)$ is a quotient singularity and belongs to an analytically bounded family of singularities. The crepant $G$-quotient $(W,\Delta_W)$ of $(\tX,\tD)$ has coefficients in $\frac{1}{|G|}\bN$ and therefore as in Case \ref{case:mult(D)>=2} we get some constant $A_3$ that depends only on $\varepsilon$ and some Koll\'ar component $E$ over $x\in X$ that is an lc place of $(X,D)$ and satisfies $A_X(E)\le A_3$.
\end{case}

Putting the three cases together, we see that the proof of the lemma is finished by setting $A=\max\{3,A_1,A_2,A_3\}$.
\end{proof}

The following result is used in the above proof.

\begin{lem} \label{lem:cD type criterion}
Let $n\ge 3$ and let $f\in \bC[[x_2,\cdots,x_n]]$. Assume that $\mult(f)=3$ and let $f_3$ be the leading term of $f$. Then the hypersurface singularity $(x_1^2+f(x_2,\cdots,x_n)=0)\subseteq\bA^n$ is of $cD$-type if and only if its singular locus has codimension at least $2$ and $f_3$ is not a cube.
\end{lem}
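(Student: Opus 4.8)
The plan is to reduce the statement to the classification of Du Val surface singularities by passing to a general linear section. Set $g=x_1^2+f$; since $\mult f=3$, the quadratic part of $g$ is $x_1^2$, of rank one. I would use the fact (see e.g. \cite{Reid-young-person-guide}) that a hypersurface singularity is compound Du Val, and is of a prescribed type among $cA,cD,cE$, exactly when a general two-dimensional linear section is a Du Val surface singularity of the corresponding type, and that this surface type is independent of the general section chosen. Since the quadratic part of $g$ has rank one, so does the quadratic part of any general two-dimensional linear section, so such a section is never of type $A$; hence $X$ is of $cD$-type if and only if a general two-dimensional linear section is Du Val of type $D_k$ for some $k\ge 4$. (When $n=3$, $X$ is itself a surface, no section is taken, and $cD$-type simply means Du Val of type $D_k$.)

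First I would identify this general section. A general linear $L\cong\bA^2$ through the origin meets $X$ in a surface $S$ which, after a linear change of coordinates and completing the square in the coordinate coming from $x_1$ (using that $x_1^2$ has rank one), has the form $S=(u^2+h(v,w)=0)$ with $\mult h=3$; moreover the leading cubic $h_3$ of $h$ is exactly the restriction of $f_3$ to a general two-plane through the origin of $\bA^{n-1}_{x_2,\dots,x_n}$ (for $n=3$ one just has $h=f$, $h_3=f_3$). By Bertini, for general $L$ one has $\mathrm{Sing}(S)=\mathrm{Sing}(X)\cap L$, which is supported at the origin if and only if $\dim\mathrm{Sing}(X)\le n-3$, equivalently $\codim_X\mathrm{Sing}(X)\ge 2$.

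Next I would invoke the classification of Du Val singularities to show that, among hypersurface surface singularities $u^2+h(v,w)$ with $\mult h=3$, those of type $D_k$ for some $k\ge 4$ are precisely those for which $S$ is isolated and $h_3$ is not a perfect cube, while those of type $E_6,E_7,E_8$ have $h_3$ a perfect cube. For the nontrivial implication: if $h_3$ is not a cube, then either $h_3$ has three distinct linear factors, so $(h=0)$ has an ordinary triple point, $S$ is automatically isolated and $S\cong D_4$; or $h_3=\ell^2m$ with the linear forms $\ell,m$ not proportional, and then, provided $S$ is isolated (equivalently $h$ is squarefree, equivalently $\codim_X\mathrm{Sing}(X)\ge 2$ by the previous step), a Puiseux expansion and a coordinate change bring $h$ to a $D_k$ normal form $v^2w+w^{k-1}$ with $k\ge 5$. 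Combining this with the preceding paragraph, the general two-dimensional section is of type $D_k$ for some $k\ge 4$ if and only if $\codim_X\mathrm{Sing}(X)\ge 2$ and $h_3$ is not a perfect cube.

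Finally I would prove the elementary fact that the restriction of the cubic form $f_3$ to a general two-plane through the origin is a perfect cube if and only if $f_3$ itself is. One direction is immediate; for the other, ``being a perfect cube'' is a closed condition on the moving plane, so if a general restriction is a cube then so is every restriction, and by Bertini the cubic hypersurface $(f_3=0)$ is then non-reduced. Writing $f_3=\ell^2m$ with $\ell,m$ linear: if $\ell,m$ are not proportional, a general plane restriction is $(\ell|_P)^2(m|_P)$ with $\ell|_P,m|_P$ not proportional, which has a double root and a distinct simple root and so is not a cube, a contradiction; hence $f_3=\ell^3$. Assembling the four steps yields that $X$ is of $cD$-type if and only if $\codim_X\mathrm{Sing}(X)\ge 2$ and $f_3$ is not a perfect cube. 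The main obstacle in making this rigorous is the third step --- identifying the general section via the splitting lemma and matching it to the $D_k$ normal form when the residual cubic $h_3$ has a repeated linear factor; the Bertini arguments and the cubic-form computation are routine.
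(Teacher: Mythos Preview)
Your approach is correct and is essentially the paper's: reduce to the surface case by general linear sections and then appeal to the Du Val classification (the paper simply cites \cite{KM98}*{Step 4 on Page 126} for the $n=3$ step rather than writing out the $D_k$ normal-form argument). One slip to fix: since $X\subseteq\bA^n$ has dimension $n-1$, to obtain a surface section you need $L\cong\bA^3$, not $\bA^2$; the paper avoids the ``complete the square'' manoeuvre by cutting with hyperplanes of the special form $a_2x_2+\cdots+a_nx_n=0$, which keeps the equation in the shape $x_1^2+f$ throughout and makes the preservation of both the $cD$-type condition and the ``$f_3$ not a cube'' condition immediate.
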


\begin{proof}
The conditions are preserved by taking general hyperplane sections of the form $a_2 x_2+\cdots+a_n x_n=0$. Thus we may assume that $n=3$, where the statement follows from \cite{KM98}*{Step 4 on Page 126}.
\end{proof}

\begin{proof}[Proof of Proposition \ref{prop:mldk bdd, dim=3, 1-comp}]
Let $N=\lfloor \frac{27}{\varepsilon} \rfloor !$. By Lemma \ref{lem:index bound via volume}, we know that $D$ is an $N$-complement.
Let $A_1$ (resp. $A_2$) be the constant from  Corollary \ref{cor:bdd mldk, N-comp, codim=2} (resp. Lemma \ref{lem:mldk bdd, dim=3, 1-comp, terminal}). We will show that the proposition holds with $A=\max\{A_1,A_2\}$. We do this by induction on $d(x,X)$, the number of divisors $E$ over $x\in X$ such that $A_X(E)\le 1$. By \cite{KM98}*{Proposition 2.36(2)}, $d(x,X)<+\infty$. Let $\tx\in\tX$ be the simultaneous index one cover of $K_X$ and $D$ so that $(x\in X)\cong (\tx\in \tX)/H$ for some finite abelian group $H$. Let $\tD\subseteq \tX$ be the preimage of $D$. Then $\tx\in\tX$ is a Gorenstein canonical singularity and $\tD$ is Cartier. If $\tX$ is a cDV singularity, then the statement already follows from Lemma \ref{lem:mldk bdd, dim=3, 1-comp, terminal}. In particular, the proposition holds when $d(x,X)=0$ (i.e. when $x\in X$ is terminal), since the index one cover of a threefold terminal singularity is cDV. If $\tx\in\tX$ is not a cDV singularity, then by \cite{KM98}*{Lemma 5.30 and Theorem 5.35} there exists an $H$-invariant divisor $\tF$ over $\tx\in \tX$ with $A_{\tX}(\tF)=1$. Since $\tD$ is Cartier, we obtain $\ord_{\tF}(\tD)\ge 1$ and hence $A_{\tX,\tD}(\tF)\le 0$. As $(\tX,\tD)$ is lc, it follows that $\tF$ is an lc place of $(\tX,\tD)$. Let $F$ be the induced divisor over $X$. Then by \cite{KM98}*{Proposition 5.20} we have $A_X(F)\le 1$ and $A_{X,D}(F)=0$. By Lemma \ref{lem:BCHM extract divisor}, there exists a birational morphism $\pi\colon Y\to X$ with a unique exceptional divisor $F$ such that $-F$ is $\pi$-ample. Let $D_Y=\pi^{-1}_* D$. Since $F$ is an lc place of $(X,D)$, any lc place $E$ of $(Y,F)$ is also an lc place of $(X,D)$. Moreover, if $E$ is of plt type over $Y$, then it is a Koll\'ar component over $x\in X$ by Lemma \ref{lem:kc descend along lc blowup}. By construction, $K_Y\le \pi^*K_X$, hence $A_X(E)\le A_Y(E)$ for any divisor $E$ over $Y$. Thus it suffices to find an lc place $E$ of $(Y,F)$ that is of plt type over $Y$ and has log discrepancy $A_Y(E)\le A$. If the minimal lc center of $(Y,F)$ has codimension $\le 2$ this follows from Corollary \ref{cor:bdd mldk, N-comp, codim=2} and our choice of the constant $A$. Otherwise $F$ is a reduced $\bQ$-complement of $y\in Y$ for some $y\in \pi^{-1}(x)$. By Lemma \ref{lem:vol under birational map}, we have $\hvol(y,Y)\ge \varepsilon$. From the definition we also have $d(y,Y)\le d(x,X)-1$, hence in this case the result follows from the induction hypothesis and we are done.
\end{proof}

\begin{cor} \label{cor:bdd for mld^K, vol>epsilon, dim=3}
Conjecture \ref{conj:bdd for mldK, vol>epsilon} holds in dimension three.
\end{cor}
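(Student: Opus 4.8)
The plan is to assemble the reduction steps and the low-dimensional verifications established above, keeping careful track of the quantifiers. Fix a finite set $I\subseteq[0,1]\cap\bQ$ and $\varepsilon>0$, let $(X,\Delta)$ be a $3$-dimensional klt pair with $\Coef(\Delta)\subseteq I$, and let $\eta\in X$ be a point with $\hvol(\eta,X,\Delta)\ge\varepsilon$. I would argue according to the codimension of $\eta$.

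If $\eta$ has codimension $\le 2$, the bound is already in hand: a codimension one point is trivial, since there $X$ is regular and the unique divisor centered at $\eta$ has log discrepancy $\le 1$, while the codimension two case is Proposition \ref{prop:bdd mldk, codim=2}, which even gives a constant depending only on $I$ (and applies to DCC, not just finite, coefficient sets). So the substance is the case of a closed point $\eta=x$.

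For closed points, the strategy is to run the chain of reductions set up above. By Proposition \ref{prop:reduce to N-complement} it suffices to verify Conjecture \ref{conj:bdd mldk, Q-Gor, N-comp} in dimension $3$; by Lemma \ref{lem:reduce to 1-comp} this in turn follows from two inputs, namely Conjecture \ref{conj:bdd mld for lc blowup} in dimension $3$ and Conjecture \ref{conj:bdd mldk, Q-Gor, N-comp} restricted to reduced $\bQ$-complements in dimension $3$. The former is exactly Proposition \ref{prop:mld of lc blowup bdd, dim=3}, and the latter is Proposition \ref{prop:mldk bdd, dim=3, 1-comp}, with the non-closed lc centers that show up along the way absorbed into Corollary \ref{cor:bdd mldk, N-comp, codim=2}. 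Composing these implications produces the desired uniform upper bound $A=A(3,\varepsilon,I)$ for $\mldk(x,X,\Delta)$, hence Conjecture \ref{conj:bdd for mldK, vol>epsilon} in dimension three.

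Since all of the geometric work is already done — the construction of bounded special complements behind Proposition \ref{prop:reduce to N-complement}, the terminal-modification argument behind Proposition \ref{prop:mld of lc blowup bdd, dim=3}, and the index-one-cover casework based on the classification of threefold terminal and cDV singularities behind Proposition \ref{prop:mldk bdd, dim=3, 1-comp} — the only real care needed here is the bookkeeping of dependencies: one should check that the integer $N$ fed into Conjecture \ref{conj:bdd mldk, Q-Gor, N-comp} via Lemma \ref{lem:strict N-complement} depends only on the dimension and $I$, so that the output constant $A$ indeed depends only on $3$, $\varepsilon$ and $I$, as the statement demands. That tracking of dependencies, rather than any geometric difficulty, is the point to watch.
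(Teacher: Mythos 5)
Your assembly of the pieces matches the paper's own proof, which cites exactly Proposition \ref{prop:bdd mldk, codim=2}, Lemma \ref{lem:reduce to 1-comp}, Proposition \ref{prop:mld of lc blowup bdd, dim=3} and Proposition \ref{prop:mldk bdd, dim=3, 1-comp}. One small imprecision in your bookkeeping remark: in the proof of Proposition \ref{prop:reduce to N-complement} the complement is produced for the modified boundary $(1+c)\Delta$ with $c=c(n,\varepsilon)$, so the integer $N$ from Lemma \ref{lem:strict N-complement} depends on $n$, $I$, and $\varepsilon$ (through the coefficient set of $(1+c)\Delta$), not merely on $n$ and $I$; this is harmless since the final bound $A$ is in any case permitted to depend on $\varepsilon$, but the dependence is worth stating correctly.
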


\begin{proof}
This is a combination of Proposition \ref{prop:bdd mldk, codim=2}, Lemma \ref{lem:reduce to 1-comp}, Proposition \ref{prop:mld of lc blowup bdd, dim=3} and Proposition \ref{prop:mldk bdd, dim=3, 1-comp}.
\end{proof}

\begin{cor} \label{cor:special bdd dim=3}
Conjecture \ref{conj:vol>epsilon implies bounded} holds in dimension three.
\end{cor}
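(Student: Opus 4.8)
The plan is to assemble the two principal results that the paper has, by this point, established in dimension three. By Corollary \ref{cor:bdd for mld^K, vol>epsilon, dim=3}, Conjecture \ref{conj:bdd for mldK, vol>epsilon} holds when $n=3$: for fixed $\varepsilon>0$ and a finite set $I\subseteq[0,1]\cap\bQ$, there is a constant $A=A(3,\varepsilon,I)$ with $\mldk(\eta,X,\Delta)\le A$ for every $3$-dimensional klt pair $(X,\Delta)$ with $\Coef(\Delta)\subseteq I$ and every point $\eta$ satisfying $\hvol(\eta,X,\Delta)\ge\varepsilon$. This is precisely the hypothesis under which Theorem \ref{thm:delta-plt blowup strong version} applies in dimension $n=3$. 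So the entire content of the corollary is a matter of feeding Corollary \ref{cor:bdd for mld^K, vol>epsilon, dim=3} into Theorem \ref{thm:delta-plt blowup strong version}; no genuinely new argument is required.

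In detail: given the data of Conjecture \ref{conj:vol>epsilon implies bounded} in dimension three, namely a klt singularity $x\in(X,\Delta=\sum_{i=1}^m a_i\Delta_i)$ with $a_i\ge\eta$, each $\Delta_i$ an effective Weil divisor, and $\hvol(x,X,\Delta)\ge\varepsilon$, I would set $\varepsilon_0:=\min\{\varepsilon,\eta\}>0$. Then $a_i\ge\varepsilon_0$ for all $i$ and $\hvol(x,X,\Delta)\ge\varepsilon_0$, so conditions (1)--(3) of Theorem \ref{thm:delta-plt blowup strong version} hold with $\varepsilon_0$ in place of $\varepsilon$ and $n=3$. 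Since Conjecture \ref{conj:bdd for mldK, vol>epsilon} is available in dimension three, that theorem yields a constant $\delta=\delta(3,\varepsilon_0)$, hence depending only on $\varepsilon$ and $\eta$, such that $x\in(X,\Delta)$ admits a $\delta$-plt blowup, and it also gives that the set of all such singularities is log bounded up to special degeneration. This is exactly both assertions of Conjecture \ref{conj:vol>epsilon implies bounded} in dimension three.

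The one point I would be careful to check is that Corollary \ref{cor:bdd for mld^K, vol>epsilon, dim=3} has indeed been proven unconditionally in dimension three, since Theorem \ref{thm:delta-plt blowup strong version} is only conditional on Conjecture \ref{conj:bdd for mldK, vol>epsilon} in the ambient dimension. That corollary is itself a bookkeeping combination of Proposition \ref{prop:bdd mldk, codim=2} (the codimension-two case), Lemma \ref{lem:reduce to 1-comp} (reduction to reduced complements, together with Conjecture \ref{conj:bdd mld for lc blowup}), Proposition \ref{prop:mld of lc blowup bdd, dim=3} (the lc-blowup mld bound in dimension three), and Proposition \ref{prop:mldk bdd, dim=3, 1-comp} (the reduced-complement case in dimension three), all of which are established earlier; so the chain is complete and the proof of Corollary \ref{cor:special bdd dim=3} is immediate. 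There is essentially no obstacle here beyond verifying that every link in this chain is in place and that the coefficient/volume thresholds are matched by passing to $\min\{\varepsilon,\eta\}$.
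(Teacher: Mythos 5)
Your proposal is correct and matches the paper's proof exactly: both combine Corollary \ref{cor:bdd for mld^K, vol>epsilon, dim=3} with Theorem \ref{thm:delta-plt blowup strong version}, and your observation about passing to $\min\{\varepsilon,\eta\}$ is the right (routine) way to reconcile the two parameters in Conjecture \ref{conj:vol>epsilon implies bounded} with the single $\varepsilon$ in Theorem \ref{thm:delta-plt blowup strong version}.
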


\begin{proof}
This is a combination of Theorem \ref{thm:delta-plt blowup strong version} and Corollary \ref{cor:bdd for mld^K, vol>epsilon, dim=3}.
\end{proof}

\section{Examples and discussions}

In general one should be more careful when making analogy between $\mldk$ and the usual mld. In this section, we give some examples that illustrate the difference between them, and discuss some related questions. Our first example shows that $\mldk$ can be arbitrarily large in a given dimension, and may fail to be lower semi-continuous.
 
\begin{expl} \label{ex:mld^K depend on coef set}
Let $1<a<b$ be coprime positive integers. Consider the klt surface singularity $0\in (X=\bA^2_{xy},\Delta=\frac{1}{a}(x^a+y^b=0))$ and the weighted blowup $\pi\colon Y\to X$ with $\wt(x)=b$ and $\wt(y)=a$. The exceptional divisor $E$ is a Koll\'ar component and by adjunction one can check that $\Diff_E(\Delta_Y)=(1-\frac{1}{a})P+(1-\frac{1}{b})Q+\frac{1}{a}R$ where $P$ and $Q$ are the two singular points of $Y$ and $R\neq P,Q$ is a smooth point. It is then not hard to see that the alpha invariant $\alpha(E,\Diff_E(\Delta_Y))=1$ and hence $E$ is the only Koll\'ar component over $0\in (X,\Delta)$ by \cite{Pro-plt-blowup}*{Theorem 4.3}. It follows that 
\[
\mldk(0,X,\Delta)=A_{X,\Delta}(E)=a.
\]
By \cite{LX-stability-kc}*{Theorem 1.3}, we also have $\hvol(x,X,\Delta)=A_{X,\Delta}(E)^2\cdot (-(E^2))=\frac{a}{b}$. Thus unlike the boundedness conjecture for mld, the upper bound on $\mldk$ in Conjecture \ref{conj:bdd for mldK, vol>epsilon}, if exists, necessarily depends on the coefficient set $I$. Note also that $\mldk(x,X,\Delta)\le 2$ when $x\neq 0$. Hence the example also shows that $\mldk$ is not lower semi-continuous. 
\end{expl}

When $X$ is a surface and $\Delta=0$, one can check that $\mldk(x,X)$ coincides with the usual minimal log discrepancy $\mld(x,X)$ (see \cite{LX-mld=mldK-surface}). Our next example shows that starting in dimension three $\mldk(x,X,\Delta)$ differs from $\mld(x,X,\Delta)$, even when $\Delta=0$.

\begin{expl} 
Let $n\ge 3$ and consider the canonical hypersurface singularity
\[
0\in (x_0^{n+1}+x_1^n+x_2^n\cdots+x_n^n=0)\subseteq \bC^{n+1}.
\]
It can be resolved by the ordinary blow up, and the corresponding exceptional divisor $E$ is the only divisor that computes the mld. But $E$ is not a Koll\'ar component: it has a singular point of multiplicity $n$, in particular it's not klt.
\end{expl}

Although Example \ref{ex:mld^K depend on coef set} shows that $\mldk$ does not satisfy lower semi-continuity, we may still ask if it satisfies the other conjectural property of the mld, i.e. the ascending chain condition (ACC).

\begin{que}
Let $n\in\bN^*$ and let $I\subseteq [0,1]$ be a DCC set. Does the set
\[
\{\mldk(x,X,\Delta)\,|\,\dim X=n,\,\Coef(\Delta)\in I,\mbox{ and } x\in (X,\Delta)\mbox{ is klt }\}
\]
satisfy the ACC?
\end{que}

Recall that the Weil index of a Fano variety $X$ is the largest integer $q$ such that $-K_X\sim_\bQ qA$ for some Weil divisor $A$ on $X$. A Fano variety $X$ is said to be weakly special if $(X,D)$ is log canonical for every effective $\bQ$-divisor $D\sim_\bQ -K_X$. If Conjecture \ref{conj:bdd for mldK} were true, it would imply the following conjecture; otherwise the orbifold cone construction will produce counterexamples to Conjecture \ref{conj:bdd for mldK}.

\begin{conj}
Let $n\in\bN^*$. Then there exists some constant $N>0$ depending on $n$ such that the Weil index of any $n$-dimensional weakly special Fano variety is at most $N$.
\end{conj}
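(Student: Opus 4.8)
The plan is to convert the Weil index into a normalized volume at an auxiliary cone singularity and then reduce to the (expected) boundedness of K‑semistable Fano varieties. Let $X$ be an $n$‑dimensional K‑semistable Fano variety with Weil index $q$, so that $-K_X\sim_\bQ qA$ for some primitive ample Weil divisor $A$; note $A=-\tfrac1q K_X$ is proportional to $-K_X$. Since $(X,0)$ is klt, the orbifold affine cone $C:=C_a(X,A)=\Spec\bigoplus_{m\ge 0}H^0(X,mA)$ is an $(n+1)$‑dimensional klt singularity; let $o\in C$ be its vertex and $\pi\colon Y\to C$ the blowup of $o$, with exceptional divisor $E\cong X$ and $-E|_E\sim_\bQ A$. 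From $K_Y=\pi^*K_C+(A_C(E)-1)E$ and adjunction on $E$ (using $K_E=(K_Y+E)|_E$ and $(\pi^*K_C)|_E\sim_\bQ 0$) one gets $K_X\sim_\bQ A_C(E)\cdot E|_E\sim_\bQ -A_C(E)\cdot A$, and comparing with $-K_X\sim_\bQ qA$ gives $A_C(E)=q$. (In particular $E$ is a Koll\'ar component over $o\in C$, since $(E,\Diff_E(0))=(X,0)$ is klt.)

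Because $X$ is K‑semistable and $A$ is proportional to $-K_X$, the canonical valuation $\ord_E$ minimizes the normalized volume at $o\in C$ (the K‑semistable case of the cone construction, see \cite{LX18}, cf. \cite{LX-stability-kc}, \cite{Li-normalized-volume}). Hence
\[
\hvol(o,C)=\hvol_C(\ord_E)=A_C(\ord_E)^{n+1}\cdot\vol_{C,o}(\ord_E)=q^{n+1}\cdot(A^n)=q^{n+1}\cdot\frac{(-K_X)^n}{q^n}=q\cdot(-K_X)^n .
\]
On the other hand, exactly as in the proof of Theorem \ref{thm:lct>=vol} (applied in dimension $n+1$, using \cite{LX-cubic-3fold}*{Lemma A.1}), every $(n+1)$‑dimensional klt singularity has local volume at most $(n+1)^{n+1}$. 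Combining these,
\[
q\le \frac{(n+1)^{n+1}}{(-K_X)^n}.
\]

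Thus the conjecture would follow from a positive lower bound $(-K_X)^n\ge v_0(n)$ valid for all $n$‑dimensional K‑semistable Fano varieties, and this is where I expect the real difficulty to lie. Such a bound is a consequence of the boundedness of $n$‑dimensional K‑semistable Fano varieties — which, together with \cite{Jia17} (and the fact that K‑semistability forces $\alpha(X)$ to be bounded below by a positive dimensional constant, so that \cites{Birkar-bab-1,Birkar-bab-2} also apply), shows that $(-K_X)^n$ takes only finitely many values — so granting present boundedness results the statement is clear. Alternatively one may argue through the $\delta$‑invariant: $\delta(X)\ge 1$ combined with the identity $S_X(G)=q\big/\!\big(c(n+1)\big)$ for a general very ample prime divisor $G\sim_\bQ cA$ gives $q\le c(n+1)$, reducing the problem to bounding the index of $A$, which by Lemma \ref{lem:index bound via volume} is again governed by a lower bound on local volumes. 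The point of isolating the conjecture is to ask for a proof that does not pass through the global boundedness machinery — for instance one staying local on $C$, applying the uniform Izumi estimate of Corollary \ref{cor:Izumi via vol minimizer} to the divisors $\{s=0\}$, $s\in H^0(X,mA)$, on $C$ — and making such an argument work appears to require genuinely new input.
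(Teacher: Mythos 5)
The statement you are addressing is labelled a \emph{conjecture} in the paper, and the paper offers no proof of it; the only accompanying text is the heuristic remark that weighted projective spaces can have arbitrarily large Weil index but are K-semistable only when they are $\bP^n$. So there is no paper proof to compare against, and what you have written should be judged on its own terms as an attempted proof.

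Your cone computation is correct: with $C=C_a(X,A)$ and $E$ the vertex blowup, one has $A_C(E)=q$ and $\vol_{C,o}(\ord_E)=(A^n)=(-K_X)^n/q^n$, so $\hvol_C(\ord_E)=q\cdot(-K_X)^n$. The identification of $\ord_E$ as the normalized volume minimizer when $X$ is K-semistable and $A\sim_\bQ -\tfrac1q K_X$ is also standard (this is the K-semistable Fano cone correspondence of Li--Xu / Li--Liu). Combining with $\hvol(o,C)\le (n+1)^{n+1}$ yields the clean inequality
\[
q\cdot(-K_X)^n\;\le\;(n+1)^{n+1}.
\]
This is a genuinely useful observation and, as far as I can tell, is exactly the sort of local reformulation the paper has in mind when it frames the conjecture near Lemma~\ref{lem:index bound via volume} and \cite{XZ-uniqueness}.

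However, this is a reduction, not a proof, and the gap you flag is real. Everything hinges on a uniform positive lower bound $(-K_X)^n\ge v_0(n)$ for $n$-dimensional K-semistable Fano varieties. That lower bound is \emph{not} an elementary input: it is essentially equivalent to boundedness of K-semistable Fanos in each dimension, which is itself at the same level of depth as (and intertwined with) the conjectures discussed in this paper. Invoking \cite{Jia17} and \cite{Birkar-bab-1,Birkar-bab-2} does not close the circle, because those results require a volume lower bound (or an $\alpha$-invariant lower bound) as a hypothesis — they do not produce one. The $\delta$-invariant variant you sketch, $q\le c(n+1)$ via $S_X(G)=q/(c(n+1))$ for $G\sim_\bQ cA$, has the same circularity: the computation is correct, but bounding $c$ is again a Cartier/Weil index bound, which via Lemma~\ref{lem:index bound via volume} loops back to the same volume lower bound. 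You recognize this yourself in the final paragraph; I agree with your diagnosis that what is missing is a genuinely local argument on $C$ (e.g.\ pushing the uniform Izumi estimate of Corollary~\ref{cor:Izumi via vol minimizer} against sections of $\bigoplus_m H^0(X,mA)$) that does not pass through global boundedness. As written, then, the proposal does not establish the conjecture; it reformulates it in a way that isolates the essential difficulty, which is valuable but should not be presented as a proof.
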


We suspect that the above conjecture may hold even for K-semistable Fano varieties. Our main intuition comes from toric examples: while a weighted projective space can have arbitrarily large Weil index, it is K-semistable only when it's $\bP^n$. If this is true, it may provide strong evidence towards Shokurov's boundedness conjecture for mld, since by \cite{XZ-uniqueness} every klt singularity admits a unique K-semistable valuation.

\bibliography{ref}

\end{document}